\documentclass[10pt,reqno]{amsart}
\pagestyle{plain}
\usepackage{fullpage}
\usepackage{amsfonts,amsmath,amssymb,amsthm,bbm,enumerate,mathabx,verbatim}
\usepackage[a4paper]{geometry}

\newtheorem{theorem}{Theorem}[section]
\newtheorem{lemma}[theorem]{Lemma}
\newtheorem{proposition}[theorem]{Proposition}
\newtheorem{corollary}[theorem]{Corollary}

\theoremstyle{definition}

\newtheorem{definition}[theorem]{Definition}
\newtheorem{notation}[theorem]{Notation}

\newtheorem{remark}[theorem]{Remark}

\begin{document}
\title[Large Data Well-posedness in the Energy Space of Chern-Simons-Schr\"odinger]{Large Data Well-posedness in the Energy Space of the Chern-Simons-Schr\"odinger System}
\author{Zhuo Min Lim}
\address{Cambridge Centre for Analysis,
University of Cambridge. Centre for Mathematical Sciences, Wilberforce Road, Cambridge, CB3 0WB, United Kingdom.}
\email{Z.M.Lim@maths.cam.ac.uk}
\date{\today}

\begin{abstract}
	We consider the initial-value problem for the Chern-Simons-Schr\"{o}dinger system,
	which is a gauge-covariant Schr\"{o}dinger system in $\mathbb{R}_t\times\mathbb{R}^2_x$
	with a long-range electromagnetic field.
	We show that, in the Coulomb gauge, it is locally well-posed in $H^s$ for $s\ge 1$,
	and the solution map satisfies a local-in-time weak Lipschitz bound.
	By energy conservation, we also obtain a global regularity result.
	The key is to retain the non-perturbative part of the derivative nonlinearity in the principal operator,
	and exploit the dispersive properties of the resulting paradifferential-type principal operator
	using adapted $U^p$ and $V^p$ spaces.
\end{abstract}

\maketitle

\section{Introduction}

	The Chern-Simons-Schr\"{o}dinger system \cite{jackiw_pi_PHYS.1990, jackiw_pi_PHYS.1992}
	is a gauge-covariant version of the familiar cubic nonlinear Schr\"{o}dinger system in 2 spatial dimensions.
	Precisely, it has the form
	\begin{equation}\label{ZEQN:CSS}
		\left\{
		\begin{aligned}
			\mathbf{D}_t\phi - \mathrm{i}\,\mathbf{D}_i\mathbf{D}_i\phi &= -\mathrm{i}\kappa|\phi|^2\phi
		\;,\\
			\partial_1A_2 - \partial_2A_1 &= -\mbox{$\frac{1}{2}$}|\phi|^2
		\;,\\
			\partial_tA_i - \partial_iA_0 &= -\epsilon_{ij}\mathrm{Im}\left(\overline{\phi}\;\mathbf{D}_j\phi\right)
		\end{aligned}
		\right.
	\end{equation}
	where $\mathbf{D}_\alpha$ are the covariant derivative operators defined by
	\[
		\mathbf{D}_\alpha := \partial_\alpha + \mathrm{i}A_\alpha
	\;.\]
	Here and in the rest of the article, the index $\alpha=0$ refers to the time variable $t$, and $\alpha=1,2$
	refers to the spatial variables $x_1,x_2$.
	Latin indices are assumed to refer only to the spatial variables.
	Repeated indices are always summed over.
	Finally, $\epsilon_{ij}$ denotes the standard anti-symmetric 2-form with $\epsilon_{12}=1$.

	The Chern-Simons-Schr\"{o}dinger system (\ref{ZEQN:CSS}) is a non-relativistic Lagrangian field theory
	whose action is given by
	\[
		\iint_{\mathbb{R}_t\times\mathbb{R}_x^2} \left(\frac{1}{2}\mathrm{Im}\left(\overline{\phi}\mathbf{D}_t\phi\right)
			+ \frac{1}{2}\left|\mathbf{D}_x\phi\right|^2 + \frac{\kappa}{4}|\phi|^4\right)\mathrm{d}x\;\mathrm{d}t
			+ \frac{1}{2}\iint_{\mathbb{R}_t\times\mathbb{R}_x^2} A\wedge\mathrm{d}A
	\]
	where $A=A_0\mathrm{d}t+A_1\mathrm{d}x_1+A_2\mathrm{d}x_2$ is the electromagnetic potential 1-form.
	The system (\ref{ZEQN:CSS}) describes the effective dynamics of a large system of non-relativistic charged quantum particles,
	interacting with each other and also with the self-generated electromagnetic field.
	It has been proposed as a theoretical model for various condensed matter phenomena
	such as the quantum Hall effect and high temperature superconductivity.
	The real-valued parameter $\kappa$ is called the coupling strength and measures the strength of the binary interactions.

	The Chern-Simons-Schr\"{o}dinger system (\ref{ZEQN:CSS}) enjoys the following two conservation laws:
	that of the total mass,
	\[
		\mathcal{M}(t) := \frac{1}{2}\int_{\mathbb{R}^2} |\phi(t,x)|^2\;\mathrm{d}x = \mathcal{M}(0)
	\;,\]
	and that of the energy
	\[
		\mathcal{E}(t) := \int_{\mathbb{R}^2} \left(\frac{1}{2}\left|\mathbf{D}_x\phi(t,x)\right|^2 + \frac{\kappa}{4}|\phi(t,x)|^4\right)\mathrm{d}x
			= \mathcal{E}(0)
	\;.\]

	In this article, we are concerned with the issue of well-posedness of (\ref{ZEQN:CSS}) for large initial data, on or above the energy regularity.

	Before we can address the initial-value problem, however,
	note that (\ref{ZEQN:CSS}) is gauge-invariant in the sense that
	if $(\phi,A)$ is a solution, then so is
	\[
		\left(\mathrm{e}^{\mathrm{i}\chi}\phi, A+\mathrm{d}\chi\right)
	\]
	for any sufficiently well-behaved function $\chi:\mathbb{R}_t\times\mathbb{R}^2_x\rightarrow\mathbb{R}$.
	Thus, in order that the evolution of (\ref{ZEQN:CSS}) be well-defined,
	this gauge-invariance must be eliminated by imposing an additional constraint equation,
	that is, by fixing a gauge.

	In this article, we will work only in the {\em Coulomb gauge}, which is defined by
	\[
		\mathrm{div}\,A_x := \partial_1A_1 + \partial_2A_2 = 0
	\;.\]
	With the Coulomb gauge condition, straightforward manipulations reduce (\ref{ZEQN:CSS}) to the following equivalent system
	\begin{equation}\label{ZEQN:CSSCoul}
		\left\{
		\begin{aligned}
			\left(\partial_t-\mathrm{i}\triangle\right)\phi &= -2A_x\cdot\nabla\phi - \mathrm{i}A_0\phi  - \mathrm{i}|A_x|^2\phi - \mathrm{i}\kappa|\phi|^2\phi \;,\\
			-\triangle A_i &= -\mbox{$\frac{1}{2}$}\epsilon_{ij}\partial_j\left(|\phi|^2\right) \;,\\
			-\triangle A_0 &= -\mathrm{Im}\left(\nabla\overline{\phi}\wedge\nabla\phi\right) - \mathrm{rot}\left(A|\phi|^2\right) \;.
		\end{aligned}
		\right.
	\end{equation}
	In the above, we have denoted the cross product by $a\wedge b := a_1b_2 - a_2b_1$,
	and we have also denoted by $A_x=(A_1,A_2)$ the spatial components of $A$,
	and by $\nabla=(\partial_1,\partial_2)$ the spatial derivatives.
	We will use these conventions throughout this article.
	Observe that, in the Coulomb gauge, the electromagnetic potentials $A_\alpha$ are no longer dynamical variables,
	but are uniquely determined at each time $t$ by solving a Poisson equation.
	In particular, for the initial value problem (\ref{ZEQN:CSSCoul}), one need only prescribes $\phi(0)$ as initial data.

	Our goal in this article is to prove that, for $s\ge 1$, the system (\ref{ZEQN:CSSCoul}) is locally well-posed in $H^s$,
	and that a $H^s$ solution can be continued so long as its $H^1$ norm does not blow up.
	In particular, global well-posedness holds in the defocusing case $\kappa>0$, and also for initial data having sufficiently small $H^1$ norm when $\kappa\le 0$.
	
	In a forthcoming article, we will use our global well-posedness result to establish scattering in weighted spaces of large-data solutions to (\ref{ZEQN:CSSCoul})
	when $\kappa>0$.

	To our knowledge, the first well-posedness result for (\ref{ZEQN:CSSCoul}) was established by Berg\'{e}-de\,Bouard-Saut
	in \cite{berge_debouard_saut_Nonlinearity.1995}.
	With a regularisation argument, they also established, in the same paper,
	global existence of $H^1$ solutions for $H^1$ initial data having sufficiently small total mass,
	but they did not prove that such solutions are unique.
	Unconditional uniqueness in $L^\infty_tH^1$ of solutions for (\ref{ZEQN:CSSCoul}) was later demonstrated by Huh in \cite{huh_AbstrApplAnal.2013}
	using clever energy estimates,
	but the continuous dependence of these $H^1$ solutions on their initial data remains open.
	We note that neither of these approaches require exploiting the dispersive features of (\ref{ZEQN:CSSCoul}).

	On the issue of low-regularity well-posedness of (\ref{ZEQN:CSS}),
	the best result at the present moment is due to Liu-Smith-Tataru \cite{liu_smith_tataru_IMRN.2014}.
	They proved in the local well-posedness of (\ref{ZEQN:CSS}) for small data
	in all subcritical Sobolev spaces $H^\sigma$ with any $\sigma>0$.
	Their work differs from the present article in the use of the heat gauge $A_0=\mathrm{div}\,A_x$ rather than the Coulomb gauge,
	and the small data assumption is then crucial to construct the heat gauge.
	Their proof relies on various technical local smoothing and maximal function spaces 
	originally developed in the analysis of the Schr\"{o}dinger map system
	\cite{ionescu_kenig_DifferentialIntegralEquations.2006, ionescu_kenig_CommunMathPhys.2007, bejenaru_ionescu_kenig_AdvMath.2007};
	see also \cite{smith_JMathPhys.2014} for a more thorough comparison between (\ref{ZEQN:CSS}) and the Schr\"{o}dinger map system.
	We remark that our approach is very much technically simpler than theirs.

	On the issue of global well-posedness of (\ref{ZEQN:CSSCoul}), there have been at least two results.
	The first result is due to Oh-Pusateri \cite{oh_pusateri_IMRN.2015} who proved that, given initial data which are small both in $H^2$
	and in some weighted Sobolev spaces,
	the corresponding solution to (\ref{ZEQN:CSSCoul}) exists globally, and moreover scatters to a linear Schr\"{o}dinger solution in a weaker topology.
	The second result is due to Liu-Smith \cite{liu_smith_RevMatIberoamer.2016}, who studied (\ref{ZEQN:CSSCoul}) under equivariant symmetry.
	They proved global well-posedness and linear scattering in the critical space $L^2_x$ for equivariant solutions.

\subsection{Statement of Results}

	In this article, we show that the Chern-Simons-Schr\"{o}dinger system in the Coulomb gauge, (\ref{ZEQN:CSSCoul}),
	is locally well-posed for large initial data in $H^s$, $s\ge 1$.
	Denoting by $\mathbb{B}_{H^s}(D)$ the closed ball in $H^s$ of radius $D$, we state our main result as follows.

	\begin{theorem}[Main Theorem] \label{ZTHM:MainThm}
		Let $s\ge 1$.
		\begin{enumerate}[(i)]
		\item
			For any $D>0$, there exists $T=T(s,D)>0$ such that, given any initial data $\phi^{\mathrm{in}}\in\mathbb{B}_{H^s}(D)$,
			there exists 
			a unique solution $\phi\in C_b((-T,T),H^s)$ to (\ref{ZEQN:CSSCoul}) with $\phi(0)=\phi^{\mathrm{in}}$, which is the uniform limit of smooth solutions.

		\item
			With $D>0$ and $T=T(s,D)$ as above, the solution map
			\[
				\mathbb{B}_{H^s}(D)\ni \phi(0) \mapsto \phi\in C_b((-T,T),H^s)
			\]
			is continuous, and satisfies the local-in-time weak Lipschitz bound
			\begin{equation} \label{ZEQN:WkLipBd}
				\left\|\phi-\phi'\right\|_{L^\infty_t((-T,T),H^{s-1})} \le C\left\|\phi(0)-\phi'(0)\right\|_{H^{s-1}}
			\;.\end{equation}
		\end{enumerate}
		
		Moreover, persistence of regularity holds: for any $D_1>0$, there exists $T_\star=T_\star(s,D_1)>0$ and $C_\star=C_\star(s,D_1)>0$
		such that any $H^s$ solution $\phi$, whose initial data satisfy $\|\phi(0)\|_{H^1}\le D_1$,
		can be continued to $(-T_\star,T_\star)$, and
		\begin{equation} \label{ZEQN:NormGrowthEstmt}
			\left\|\phi\right\|_{L^\infty_t((-T_\star,T_\star),H^s)} \le C_\star\left\|\phi(0)\right\|_{H^s}
		\;.\end{equation}
		In particular we have the blow-up criterion:
		A maximal-in-time $H^s$ solution $\phi$ to (\ref{ZEQN:CSSCoul}) is global if and only if $\|\phi(t)\|_{H^1}$ does not blow up in finite time.
	\end{theorem}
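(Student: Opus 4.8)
The plan is to reduce every assertion of the theorem to a single core statement: local existence and uniqueness of a solution, together with a priori $H^s$ bounds and a weak Lipschitz bound, on a time interval whose length depends only on $\|\phi(0)\|_{H^1}$. Parts (i) and (ii), persistence of regularity, and the blow-up criterion all follow from such a statement by standard arguments --- a Bona--Smith regularisation argument for continuous dependence in $H^s$, and a continuation/bootstrap argument for the blow-up criterion. As a preliminary step I would dispose of the elliptic equations in (\ref{ZEQN:CSSCoul}): writing $A_i = -\frac{1}{2}\epsilon_{ij}(-\triangle)^{-1}\partial_j(|\phi|^2)$ as a Riesz-type operator applied to $|\phi|^2$, Calder\'on--Zygmund theory and Sobolev embeddings control $A_x$ and its derivatives by low Lebesgue norms of $\phi$; likewise $A_0 = (-\triangle)^{-1}\big[\,\mathrm{Im}(\nabla\overline{\phi}\wedge\nabla\phi) + \mathrm{rot}(A|\phi|^2)\,\big]$ is controlled quadratically in $\nabla\phi$ and cubically in $\phi$. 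The upshot is that both potentials are, in the norms relevant below, controlled by $\|\phi\|_{H^1}$ alone (the parts interacting with high frequencies of $\phi$ carrying in addition a single power of a higher norm).

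The heart of the matter is the derivative nonlinearity $-2A_x\cdot\nabla\phi$, which at energy regularity is not amenable to a perturbative treatment by Strichartz estimates because of the apparent loss of one derivative. Following the strategy announced in the abstract, I would perform a paradifferential splitting $A_x\cdot\nabla\phi = T_{A_x}\cdot\nabla\phi + \mathcal{R}(A_x,\nabla\phi)$, where $T_{A_x}$ denotes the low--high paraproduct and the remainder $\mathcal{R}$ collects the high--low and high--high frequency interactions. The remainder is perturbative: since $A_x$ carries one more degree of smoothing than $\phi$ (it is $|\phi|^2$ with one derivative removed), the frequency balance in $\mathcal{R}$ always returns the lost derivative, so these terms are estimated by bilinear Strichartz-type estimates. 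The paraproduct term $-2T_{A_x}\cdot\nabla\phi$ is retained on the left, producing the paradifferential Schr\"odinger operator $\mathcal{L} := \partial_t - \mathrm{i}\triangle + 2T_{A_x}\cdot\nabla$, which I would study Littlewood--Paley block by block. The crucial structural fact is the Coulomb gauge condition $\mathrm{div}\,A_x = 0$: it makes the first-order term anti-self-adjoint modulo a lower-order commutator, so that, block by block, $\mathcal{L}$ is a magnetic Schr\"odinger operator with self-adjoint generator, hence generates a unitary evolution satisfying an energy identity with no loss. This is the point at which the dispersive estimates --- frequency-localised Strichartz and bilinear Strichartz bounds matching those of the free flow, with implicit constants depending only on a low norm of $A_x$, hence on $\|\phi\|_{H^1}$ --- can be established, either by a direct parametrix/wave-packet construction for a Schr\"odinger operator with a low-frequency divergence-free magnetic potential, or by conjugating the first-order term away on each frequency block modulo controllable errors.

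Equipped with these linear estimates, I would run the iteration in a function space $X^s$ adapted to $\mathcal{L}$, built from $U^2_{\mathcal{L}}$- and $V^2_{\mathcal{L}}$-type norms at regularity $H^s$ and carrying energy, Strichartz and bilinear Strichartz information; the $U^p/V^p$ framework then supplies the transference of the linear estimates, the square-summability over frequencies, and the duality needed to close the multilinear estimates. It remains to estimate in $X^s$ the Duhamel contributions of: the perturbative part $\mathcal{R}(A_x,\nabla\phi)$ of the derivative nonlinearity; the electric term $-\mathrm{i}A_0\phi$, via the elliptic bounds for $A_0$ together with Strichartz and Hardy-type inequalities; the cubic term $-\mathrm{i}|A_x|^2\phi$; and the cubic term $-\mathrm{i}\kappa|\phi|^2\phi$, which is the standard two-dimensional $L^4_{t,x}$-Strichartz estimate. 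A contraction on a ball of $X^s$ over a time interval $T$ then gives existence and uniqueness. Arranging the multilinear estimates so that the top-order $H^s$ norm of $\phi$ appears only linearly --- multiplied by factors controlled by $\|\phi\|_{H^1}$ or small with $T$, the coefficients $A_x$, $A_0$, $|A_x|^2$ being $H^1$-controlled --- yields a Gronwall-type bound and makes $T$ depend only on $\|\phi\|_{H^1}$; this is exactly persistence of regularity and the estimate (\ref{ZEQN:NormGrowthEstmt}). The weak Lipschitz bound (\ref{ZEQN:WkLipBd}) is obtained by running the same estimates on the linear equation satisfied by the difference $\phi - \phi'$, but at regularity $H^{s-1}$ --- the loss of one derivative relative to $H^s$ being forced by the derivative nonlinearity, which is also why one does not expect Lipschitz dependence in $H^s$ itself --- and continuous dependence in $H^s$ then follows by a Bona--Smith argument combining persistence of regularity, the weak Lipschitz bound, and a frequency-envelope refinement.

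The main obstacle I anticipate is precisely the construction and deployment of the dispersive estimates for the paradifferential operator $\mathcal{L}$: proving Strichartz and bilinear Strichartz bounds, uniformly over Littlewood--Paley blocks, for a Schr\"odinger operator perturbed by a merely $H^1$-controlled, time-dependent, divergence-free magnetic potential, and then packaging them in the $U^p/V^p$ spaces so that \emph{every} nonlinear interaction --- in particular the remainder $\mathcal{R}$ of the derivative term and the electric term $A_0\phi$ --- closes with the top-order norm entering linearly. A secondary but delicate technical point is the bookkeeping of the paradifferential decompositions, to ensure that every term not retained in $\mathcal{L}$ genuinely recovers the derivative that was lost.
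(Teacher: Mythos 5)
Your overall strategy coincides with the paper's: paraproduct-split the derivative nonlinearity, retain the low--high part in the principal operator, exploit $\mathrm{div}\,A_x=0$ to make that operator formally skew-adjoint and hence $L^2$-conservative, build $U^2/V^2$ spaces adapted to it, close a contraction, and prove the weak Lipschitz bound one derivative down at $H^{s-1}$. There are, however, two genuine divergences worth recording. First, the linear theory: you aim for \emph{lossless} frequency-localised Strichartz and \emph{bilinear} Strichartz estimates for the paradifferential operator, to be obtained by a parametrix or by conjugating away the first-order term, and you correctly flag this as the main obstacle. The paper avoids it entirely: it proves only a linear Strichartz estimate with a loss of $\mu^{1/q}$ derivatives (Proposition \ref{ZPROP:NewPrOpStrichartz}), by the soft Burq--G\'erard--Tzvetkov device of subdividing $[0,T)$ into $\approx\mu$ intervals of length $\mu^{-1}$ and applying $L^2$ conservation plus the free Strichartz estimate on each; no bilinear or multilinear Strichartz estimates are used anywhere (the paper says so explicitly, and this is precisely why the method stops at $s\ge 1$). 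Since every multilinear estimate gains a factor $T^{1/2}$, the derivative loss is harmless in the subcritical regime $s\ge1$, so the hardest step of your plan is unnecessary. Relatedly, existence of the linear flow in the (generalised) Sobolev scale is obtained not from a parametrix but from a commutator/mass-transfer argument (Lemmas \ref{ZLEM:UniqLem1} and \ref{ZLEM:NewPrOpMassXfer}) that quantifies how little mass leaks between widely separated frequency blocks, using $\nabla B\in L^1_tL^\infty_x$. Second, the upgrade from $H^{s-1}$-convergence to $H^s$-continuity: you propose Bona--Smith with frequency envelopes; the paper instead runs the entire scheme in a generalised Sobolev space $H^{\mathfrak m}$ with $\mathfrak m(\lambda)/\lambda^s\to\infty$ chosen adapted to the compact set of initial data (Lemma \ref{ZLEM:SobWtLem}), so that uniform $H^{\mathfrak m}$ bounds combined with $H^{s-1}$ convergence interpolate to $H^s$ convergence. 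The two devices play the same role; the Sobolev-weight version is what forces all constants in the linear theory to depend only on $[\mathfrak m]$ and not on the finer profile of $\mathfrak m$, which is how the existence time ends up depending only on $\|\phi(0)\|_{H^s}$. With these two substitutions your outline matches the paper's proof.
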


	Using energy conservation, we can then obtain the following global well-posedness result as an easy corollary of Theorem \ref{ZTHM:MainThm}.

	\begin{corollary}[Global well-posedness in energy space] \label{ZCOR:MainThmCor}
		Let $s\ge 1$ and let $\phi$ be a local-in-time $H^s$ solution to (\ref{ZEQN:CSSCoul}).
		Either assume $\kappa>0$, or, assume $\kappa\le 0$ and $\|\phi(0)\|_{L^2}$ is sufficiently small depending on $\kappa$.
		Then $\|\phi(t)\|_{H^1}$ is controlled by the conserved quantities,
		\[
			\|\phi(t)\|_{H^1} \le C\left(\mathcal{M}(0), \mathcal{E}(0)\right)
		\;.\]
		Consequently, $\phi$ can be continued a global solution $\phi\in C_{\mathrm{b}}(\mathbb{R},H^s)$.
	\end{corollary}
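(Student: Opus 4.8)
The plan is to show that, under either hypothesis, the conserved mass $\mathcal{M}(0)$ and energy $\mathcal{E}(0)$ control $\|\phi(t)\|_{H^1}$ uniformly in $t$, and then feed this into the blow-up criterion of Theorem \ref{ZTHM:MainThm}. The first task is to bound the covariant gradient $\|\mathbf{D}_x\phi(t)\|_{L^2}$. When $\kappa>0$ both summands in $\mathcal{E}$ are nonnegative, so $\tfrac12\|\mathbf{D}_x\phi\|_{L^2}^2\le\mathcal{E}(0)$ outright. When $\kappa\le 0$ one must absorb the negative quartic term, and the tool I would use is the diamagnetic inequality $|\nabla|\phi||\le|\mathbf{D}_x\phi|$ — valid pointwise because $\mathrm{Re}\bigl(\overline{\phi}\,\mathbf{D}_j\phi\bigr)=\mathrm{Re}\bigl(\overline{\phi}\,\partial_j\phi\bigr)$, the term $\mathrm{i}A_j|\phi|^2$ being purely imaginary — combined with the two-dimensional Gagliardo--Nirenberg inequality applied to $|\phi|$:
\[
\|\phi\|_{L^4}^4 = \bigl\| |\phi| \bigr\|_{L^4}^4 \lesssim \bigl\| |\phi| \bigr\|_{L^2}^2\,\bigl\| \nabla|\phi| \bigr\|_{L^2}^2 \le \|\phi\|_{L^2}^2\,\|\mathbf{D}_x\phi\|_{L^2}^2 \lesssim \mathcal{M}(0)\,\|\mathbf{D}_x\phi\|_{L^2}^2 .
\]
Substituting into the definition of $\mathcal{E}$ gives $\mathcal{E}(0)\ge\bigl(\tfrac12-c|\kappa|\mathcal{M}(0)\bigr)\|\mathbf{D}_x\phi\|_{L^2}^2$; hence, once $\|\phi(0)\|_{L^2}$ (equivalently $\mathcal{M}(0)$) is small enough that the prefactor is positive, one again obtains $\|\mathbf{D}_x\phi\|_{L^2}^2\lesssim_{\kappa}\mathcal{E}(0)$, with $\mathcal{E}(0)\ge 0$ automatic in that regime.

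The second task is to pass from the covariant gradient to the ordinary one via $\nabla\phi=\mathbf{D}_x\phi-\mathrm{i}A_x\phi$ and to estimate the connection term. From $-\triangle A_i=-\tfrac12\epsilon_{ij}\partial_j(|\phi|^2)$ one has $A_x=\tfrac12(-\triangle)^{-1}\nabla^\perp(|\phi|^2)$, an operator of order $-1$, so Hardy--Littlewood--Sobolev (with the Riesz transforms bounded on $L^{4/3}$) yields $\|A_x\|_{L^4}\lesssim\bigl\||\phi|^2\bigr\|_{L^{4/3}}=\|\phi\|_{L^{8/3}}^2\le\|\phi\|_{L^2}\|\phi\|_{L^4}$, which by the first step is bounded by a constant depending only on $\mathcal{M}(0)$ and $\mathcal{E}(0)$. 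Therefore $\|A_x\phi\|_{L^2}\le\|A_x\|_{L^4}\|\phi\|_{L^4}\le C(\mathcal{M}(0),\mathcal{E}(0))$, and together with $\|\phi\|_{L^2}^2=2\mathcal{M}(0)$ this gives $\|\phi(t)\|_{H^1}\le C(\mathcal{M}(0),\mathcal{E}(0))$ throughout the maximal interval of existence. Since the right-hand side is a fixed finite constant, $\|\phi(t)\|_{H^1}$ cannot blow up in finite time, and the blow-up criterion in Theorem \ref{ZTHM:MainThm} — equivalently, iterating the local theory with the uniform life-span $T_\star(s,D_1)$ attached to $D_1=C(\mathcal{M}(0),\mathcal{E}(0))$ and controlling the $H^s$ norm on each step by $(\ref{ZEQN:NormGrowthEstmt})$ — promotes $\phi$ to a global solution in $C_{\mathrm{b}}(\mathbb{R},H^s)$.

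I expect essentially no serious obstacle here once Theorem \ref{ZTHM:MainThm} is in hand: the only point requiring a little care is the focusing case $\kappa\le 0$, where one must quantify, via the sharp Gagliardo--Nirenberg constant, exactly how small $\|\phi(0)\|_{L^2}$ must be for the energy to remain coercive, and verify that the chain of estimates does not become circular — all of $\|\phi\|_{L^4}$, $\|A_x\|_{L^4}$ and $\|A_x\phi\|_{L^2}$ are ultimately expressed in terms of $\|\mathbf{D}_x\phi\|_{L^2}$, which has already been closed off against the conserved quantities before the connection term is touched.
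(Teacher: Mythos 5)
Your proposal is correct and reaches the same conclusion by the same overall strategy (a priori $H^1$ control from $\mathcal{M}(0),\mathcal{E}(0)$, then the blow-up criterion of Theorem \ref{ZTHM:MainThm}), but the focusing case is handled by a genuinely different mechanism. The paper estimates $\|\phi\|_{H^1}^2\lesssim\mathcal{M}(0)+\|\mathbf{D}_x\phi\|_{L^2}^2+\mathcal{M}(0)\|\phi\|_{L^4}^4$ via Hardy--Littlewood--Sobolev on $A_x$, bounds $\|\mathbf{D}_x\phi\|_{L^2}^2\lesssim\mathcal{E}(0)+\|\phi\|_{L^4}^4$ when $\kappa\le 0$, and then uses the ordinary Gagliardo--Nirenberg inequality $\|\phi\|_{L^4}^2\lesssim\|\phi\|_{L^2}\|\phi\|_{H^1}$ to absorb the resulting $\bigl(\mathcal{M}(0)+\mathcal{M}(0)^2\bigr)\|\phi\|_{H^1}^2$ terms into the left-hand side for small mass; the smallness threshold thus also involves the implicit constants from the $A_x$ estimate. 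You instead invoke the diamagnetic inequality $|\nabla|\phi||\le|\mathbf{D}_x\phi|$ to run Gagliardo--Nirenberg directly against the covariant gradient, which closes the coercivity estimate $\mathcal{E}(0)\ge\bigl(\tfrac12-c|\kappa|\mathcal{M}(0)\bigr)\|\mathbf{D}_x\phi\|_{L^2}^2$ \emph{before} the connection term enters, and only afterwards converts $\mathbf{D}_x\phi$ to $\nabla\phi$ using the same Hardy--Littlewood--Sobolev bound $\|A_x\|_{L^4}\lesssim\|\phi\|_{L^2}\|\phi\|_{L^4}$. Your route is linear rather than an absorption argument, is the more standard one for gauge-covariant energies, and yields a smallness condition governed only by $|\kappa|$ and the (sharp, if desired) Gagliardo--Nirenberg constant; the paper's version avoids the diamagnetic inequality at the cost of a mass threshold that also depends on the constant in the $A_x$ estimate. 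Both are complete proofs, and both reduce the global statement to the blow-up criterion in the same way.
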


\subsection{Overview of the proof}
	We now outline the main ideas of the proof of Theorem \ref{ZTHM:MainThm}.
	Observe that (\ref{ZEQN:CSSCoul}) is time-reversible,
	therefore we will, in the rest of the article, focus exclusively on proving well-posedness forward in time.

	The primary difficulty in establishing a well-posedness result for (\ref{ZEQN:CSSCoul}) at limited regularity,
	when energy methods alone are insufficient,
	is the presence
	of the nonlinear term $2A_x\cdot\nabla\phi$, involving a derivative of $\phi$,
	in the right-hand side of the first equation of (\ref{ZEQN:CSSCoul}).
	Indeed, the application of standard dispersive estimates, such as the Strichartz estimates, in the naive iteration scheme
	incurs a loss of derivatives on the right-hand side,
	and the estimates will fail to close.
	
	To make matters worse, the electromagnetic interaction is long-range in the sense that $A_x$ does not decay more quickly than $1/|x|$ for large $|x|$.
	This slow decay can be seen from the representation formula
	\[
		A_i(t,x) = \frac{1}{4\pi}\epsilon_{ij}\int_{\mathbb{R}^2} \frac{x_j-y_j}{|x-y|^2}\left|\phi(t,y)\right|^2\mathrm{d}y
	\]
	given by the Biot-Savart law.
	The slow decay causes severe difficulty in using local smoothing estimates \cite{kenig_ponce_vega_InventMath.2004}
	to recover the loss of derivatives by performing estimates in appropriate weighted function spaces.

	The above considerations suggest that the difficult nonlinearity $2A_x\cdot\nabla\phi$ is non-perturbative,
	and motivates the strategy in the present work.
	Our strategy is primarily inspired by the proof, due to Bejenaru-Tataru,
	of global well-posedness in the energy space of the Maxwell-Schr\"{o}dinger system \cite{bejenaru_tataru_CommunMathPhys.2009}.

	We perform a paraproduct decomposition on this derivative nonlinearity $2A_x\cdot\nabla\phi$.
	For a time-dependent spatial 1-form $B:[0,T)\times\mathbb{R}^2\rightarrow\mathbb{R}^2$, define the operators $\mathfrak{P}_B$ and $\mathfrak{Q}_B$ by
	\begin{equation*}\begin{split}
		\mathfrak{P}_Bw &:= 
			\sum_{\lambda\ge 1} \left[\mathrm{P}_{\le 2^{-5}\lambda}B_i\,\mathrm{P}_\lambda\partial_iw
				+ \mathrm{P}_\lambda\left(\mathrm{P}_{\le 2^{-5}\lambda}B_i\;\partial_iw\right)\right]
	\;,\\
		\mathfrak{Q}_Bw &:=
			\sum_{\lambda\ge 1} \left[\mathrm{P}_\lambda B_i\,\mathrm{P}_{< 2^5\lambda}\partial_iw
				+ \mathrm{P}_{<2^5\lambda}\left(\mathrm{P}_\lambda B_i\;\partial_iw\right)\right]
	\;,\end{split}\end{equation*}
	where $\mathrm{P}_\lambda$ are inhomogeneous Littlewood-Paley frequency restriction operators,
	i.e. $\mathrm{P}_1$ restricts to all low frequencies,
	and the sum above is taken over dyadic frequencies.
	We refer the reader to the next section for an explanation of the notations.
	We can then write
	\[
		2A_x\cdot\nabla\phi = \mathfrak{P}_{A_x}\phi + \mathfrak{Q}_{A_x}\phi
	\;.\]
	Heuristically, the term $\mathfrak{Q}_{A_x}\phi$ is well-behaved pertubatively.
	Indeed,
	because the derivative acts on a low frequency term in
	the term $\mathfrak{Q}_{A_x}\phi$,
	we expect to this term to obey better bounds than $\phi\nabla A_x$.
	Now,
	from the second equation in (\ref{ZEQN:CSSCoul}), we expect $\nabla A_x$ to have the regularity of $|\phi|^2$.
	Therefore, the term $\mathfrak{Q}_{A_x}\phi$ should be better behaved than
	the standard power nonlinearity $|\phi|^2\phi$,
	and in particular should be amenable to a perturbative treatment.

	The term $\mathfrak{P}_{A_x}\phi$ is the truly non-perturbative part of the derivative nonlinearity $2A_x\cdot\nabla\phi$.
	Therefore, we retain it in our principal operator and rewrite the first equation of (\ref{ZEQN:CSSCoul}) as the quasilinear evolution equation,
	\begin{equation}\label{ZEQN:IntroQuasilinearFormln}
		\left(\partial_t-\mathrm{i}\triangle+\mathfrak{P}_{A_x}\right)\phi
		=
		-\mathfrak{Q}_{A_x}\phi - \mathrm{i}A_0\phi  - \mathrm{i}|A_x|^2\phi - \mathrm{i}\kappa|\phi|^2\phi
	\;.\end{equation}

	An essential feature of the present work, then, is the understanding of principal operators of the form
	$(\partial_t-\mathrm{i}\triangle+\mathfrak{P}_B)$.
	At the very least, we require that the homogeneous linear equation
	\begin{equation}\label{ZEQN:IntroNewPrOpHomg}
		\left(\partial_t-\mathrm{i}\triangle+\mathfrak{P}_B\right)u = 0
	\end{equation}
	should be well-posed in Sobolev spaces, and the solutions should moreover satisfy appropriate dispersive estimates.
	To this end, we impose the conditions that $B\in L^\infty_t([0,T),L^\infty_x)$, $\mathrm{div}\,B=0$ and $\nabla B\in L^1_t([0,T),L^\infty_x)$,
	and call such time-dependent spatial 1-forms {\em admissible forms}.
	Note that the condition $\mathrm{div}\,B=0$ formally guarantees that the evolution of (\ref{ZEQN:IntroNewPrOpHomg}) conserves the $L^2_x$ norm.
	We show that, provided $B$ is an admissible form, (\ref{ZEQN:IntroNewPrOpHomg}) can be uniquely solved in Sobolev spaces on the time interval $[0,T)$,
	and the solutions satisfy Strichartz estimates with a loss of derivatives.

	In order to utilise this functional framework for solving the inhomogeneous equation
	\begin{equation}\label{ZEQN:IntroNewPrOpInhomg}
		\left(\partial_t-\mathrm{i}\triangle+\mathfrak{P}_B\right)u = f
	\end{equation}
	in an appropriate Sobolev space $H$,
	we define the associated $U^p$ and $V^p$ spaces
	\cite{koch_tataru_CommPureApplMath.2005, koch_tataru_IMRN.2007, hadac_herr_koch_AnnIHP.2009},
	namely $U^p_BH$ and $V^p_BH$,
	which are adapted to the principal operator $(\partial_t-\mathrm{i}\triangle+\mathfrak{P}_B)$.
	This gives us a functional calculus for solving (\ref{ZEQN:IntroNewPrOpInhomg})
	in the spaces $U^2_BH$.
	The construction of our functional framework is accomplished in Section \ref{ZSECT:TheModPrOp}.

	We can now apply our functional calculus to solve (\ref{ZEQN:CSSCoul}) using the following iteration scheme
	\begin{equation}\label{ZEQN:IntroItrtnScheme}
		\left\{
		\begin{aligned}
			\left(\partial_t-\mathrm{i}\triangle+\mathfrak{P}_{A_x^{[n-1]}}\right)\phi^{[n]} &=
				-\mathfrak{Q}_{A_x^{[n]}}\phi^{[n]} - \mathrm{i}A_0^{[n]}\phi^{[n]} 
				- \mathrm{i}\left|A^{[n]}_x\right|^2\phi^{[n]} - \mathrm{i}\kappa\left|\phi^{[n]}\right|^2\phi^{[n]} \;,\\
			-\triangle A_i^{[n]} &= -\mbox{$\frac{1}{2}$}\epsilon_{ij}\partial_j\left(\left|\phi^{[n]}\right|^2\right) \;,\\
			-\triangle A_0^{[n]} &= -\mathrm{Im}\left(\nabla\overline{\phi^{[n]}}\wedge\nabla\phi^{[n]}\right)
				- \mathrm{rot}\left(A^{[n]}\left|\phi^{[n]}\right|^2\right) \;,\\
			\phi^{[n]}(0) &= \phi^{\mathrm{in}} \;,\\
		\end{aligned}
		\right.
	\end{equation}
	which is initialised with $A_x^{[0]}=0$.
	Our functional calculus now allows us to solve (\ref{ZEQN:IntroItrtnScheme}) at each iteration $n$
	via a contraction mapping argument in the function space $U^2_{A_x^{[n-1]}}H$ where $H$ is a generalised Sobolev space containing $H^s$.
	The key point is that every $A_x^{[n]}$ generated by this iterative scheme will be an admissible form
	whose size depends only on the size $D$ of the initial data $\phi^{\mathrm{in}}$.
	As a consequence, the existence time of (\ref{ZEQN:IntroItrtnScheme}) is bounded below independently of $n$,
	and the $L^\infty_tH$ norm of the iterates $\phi^{[n]}$ are also bounded above independently of $n$.
	These are accomplished in Section \ref{ZSECT:ConstructItrtnScheme}.

	The convergence of the iteration scheme (\ref{ZEQN:IntroItrtnScheme}) is addressed in Section \ref{ZSECT:CnvgItrtnScheme}.
	We are able to obtain a weak Lipschitz bound between the iterates,
	\[
		\left\|\phi^{[n+1]}-\phi^{[n]}\right\|_{L^\infty_tH^{s-1}} \le \frac{1}{2}\left\|\phi^{[n]}-\phi^{[n-1]}\right\|_{L^\infty_tH^{s-1}}
	\]
	which shows that the iterates $\{\phi^{[n]}\}_{n=1}^\infty$ converge in $L^\infty_tH^{s-1}$.
	On the other hand, $\{\phi^{[n]}\}_{n=1}^\infty$ are bounded uniformly in $L^\infty_tH^{\mathfrak{m}}$
	for some generalised Sobolev space $H^{\mathfrak{m}}$,
	such that the embedding $H^s\hookrightarrow H^{\mathfrak{m}}$ is compact.
	Thus, by interpolation, the iterates $\{\phi^{[n]}\}_{n=1}^\infty$ converge in $L^\infty_tH^s$ as well,
	and it is straightforward to check that the limit is the desired solution to the system (\ref{ZEQN:CSSCoul}).
	The same arguments also prove the continuity of the solution map, and the weak Lipschitz bound between two solutions.

	Finally, we remark that our strategy uses only linear dispersive estimates and not bilinear or multilinear Strichartz estimates.
	Consequently we are not able to address well-posedness of (\ref{ZEQN:CSSCoul}) below $H^1$.
	The reason is that $A_0$ exhibits very bad $\mbox{high}\times\mbox{high}\rightarrow\mbox{low}$ interactions,
	and the proof of Lemma \ref{ZLEM:DiffEstmt_N2t} breaks down when $s<1$.

\subsection{Acknowledgements}
	The author gratefully acknowledges generous financial support from St. John's College, Cambridge.
	This work, which forms part of the author's PhD thesis, is also supported by the UK
	Engineering and Physical Sciences Research Council (EPSRC) grant EP/H023348/1 for the
	University of Cambridge Centre for Doctoral Training, the Cambridge Centre for Analysis.

\section{Notations and Preliminaries} \label{ZSECT:Preliminaries}

	We fix $s\ge 1$ once and for all.
	All constants in this article are allowed to depend on the coupling strength $\kappa$ but,
	unless otherwise stated, not on any other parameters.
	If $A$ and $B$ are nonnegative quantities, we write $A\lesssim B$ if there is a constant $C$ such that $A\le CB$.
	We write $A\approx B$ if $A\lesssim B$ and $B\lesssim A$.

	Throughout this article we will use the standard Lebesgue spaces $L^r_x:=L^r(\mathbb{R}^2_x)$,
	mixed space-time Lebesgue spaces $L^q_tL^r_x$,
	and spaces $C_b([0,T),X)$ of continuous bounded functions where $X$ is a Banach space of functions on $\mathbb{R}^2_x$.
	Almost always in this article, the time interval is not taken to be all of $\mathbb{R}$,
	but rather a finite time interval $[0,T)$ for some $T>0$.
	For ease of notation, we therefore denote $L^q_tL^r_x[T] := L^q_t([0,T),L^r_x)$
	and $C_{\mathrm{b}}X[T] := C_{\mathrm{b}}([0,T),X)$.

\subsection{Fourier analysis}
	We will occasionally take Fourier transforms over the spatial variables $x$, but never over the time variable $t$.
	Our convention for the Fourier transform will be
	\[
		\widehat{u}(\xi) := \mathcal{F}u(\xi) := \int_{\mathbb{R}^2} \mathrm{e}^{-\mathrm{i}x\cdot\xi}u(x)\,\mathrm{d}x
	\;.\]
	We denote the Riesz transform by
	\[
		\mathfrak{R}_i := \frac{\partial_i}{|\nabla|}
	\;.\]
	It is a standard fact in harmonic analysis that the Riesz transforms are bounded linear maps $L^p(\mathbb{R}^2_x)\rightarrow L^p(\mathbb{R}^2_x)$
	for every $p\in (1,\infty)$.

	We will very often make use of the Biot-Savart law,
	\[
		\frac{\partial_i}{(-\triangle)}F(x) = -\frac{1}{2\pi}\int_{\mathbb{R}^2} \frac{x_i-y_i}{|x-y|^2}F(y)\,\mathrm{d}y
	\;.\]
	This representation formula is amenable to the Hardy-Littlewood-Sobolev inequality
	for functions supported at low frequencies,
	when Bernstein's inequality does not directly apply due to the presence of the singular Fourier multiplier $|\mathrm{D}_x|^{-1}$.

	We now recall the inhomogeneous Littlewood-Paley decomposition.
	Denote by
	\[
		\mathfrak{D} := \left\{2^k\;\big|\; k\in\mathbb{Z}_{\ge 0}\right\}
	\]
	the set of all dyadic frequencies.
	Fix, once and for all, a smooth, radial, non-increasing function $\varphi_1:\mathbb{R}^2_\xi\rightarrow\mathbb{R}$
	such that $\varphi_1(\xi)\equiv 1$ on $|\xi|\le 1$,
	and $\varphi_1(\xi)\equiv 0$ on $|\xi|\ge 2$.
	For $\lambda\in\mathfrak{D}, \lambda\ge 2$, set
	\[
		\varphi_\lambda(\xi) := \varphi_1\left(\mbox{$\frac{1}{\lambda}$}\xi\right)
			- \varphi_1\left(\mbox{$\frac{2}{\lambda}$}\xi\right)
	\;.\]
	For all $\lambda\in\mathfrak{D}$, we define $\mathrm{P}_\lambda:=\varphi_\lambda(\mathrm{D}_x)$
	the standard Littlewood-Paley restriction.
	Equivalently,
	\[
		\mathrm{P}_\lambda u(x) = \int_{\mathbb{R}^2} \widecheck{\varphi_\lambda}(x-y)u(y)\,\mathrm{d}y
	\;.\]
	Henceforth, we will reserve the letters $\lambda,\mu,\nu,\rho$ for dyadic frequencies, i.e. elements of $\mathfrak{D}$.
	When summing over $\lambda,\mu,\nu,\rho$, the summation is implicitly taken over all of $\mathfrak{D}$
	unless otherwise stated.

	We define
	\[
		\mathrm{P}_{\le\lambda} := \sum_{\mu\le\lambda} \mathrm{P}_\mu
	\;,\quad
		\mathrm{P}_{<\lambda} := \mathrm{P}_{\le\frac{1}{2}\lambda}
	\;.\]
	We will also, for ease of exposition, abuse notation in using the following operators
	\[
		\mathrm{P}_{\ll\lambda} := \mathrm{P}_{\le 2^{-m}\lambda} \;,\quad
		\mathrm{P}_{\lesssim\lambda} := \mathrm{P}_{\le 2^{m}\lambda} \;,\quad
		\mathrm{P}_{\approx\lambda} := \mathrm{P}_{\lesssim\lambda}-\mathrm{P}_{\ll\lambda}
	\;,\]
	where $m$ denotes fixed universal positive integers, whose values may change from line to line and can be appropriately chosen by the reader if so desired.

	In this article, we will equip the Sobolev space $H^\sigma$ with the equivalent Besov space norm,
	\[
		\left\|w\right\|_{H^\sigma}^2 := \sum_\lambda \lambda^{2\sigma}\left\|\mathrm{P}_\lambda w\right\|_{L^2_x}
	\;.\]
	These norms will be consistent with those of the following family of function spaces.

	\begin{definition}
		A {\em Sobolev weight} is a function $\mathfrak{m}:\mathfrak{D}\rightarrow(0,\infty)$ such that $\mathfrak{m}(1)=1$, and
		there exist constants $c\le 1$ and $C\ge 1$ such that
		\[
			c\le \frac{\mathfrak{m}(2\lambda)}{\mathfrak{m}(\lambda)} \le C \quad\mbox{for all } \lambda\in\mathfrak{D}
		\;.\]
		Given a Sobolev weight $\mathfrak{m}$, define the {\em generalised Sobolev space}
		$H^{\mathfrak{m}}\subset\mathcal{S}'(\mathbb{R}^2_x)$ to be the Hilbert space whose inner product is given by
		\[
			(v,w)_{H^{\mathfrak{m}}}
			:=
			\sum_\lambda \mathfrak{m}(\lambda)^2 \int_{\mathbb{R}^2} \mathrm{P}_\lambda v(x)\,\overline{\mathrm{P}_\lambda w(x)}\,\mathrm{d}x
		\;.\]
		Moreover, for a Sobolev weight $\mathfrak{m}$, define the quantities $[\mathfrak{m}]_\star, [\mathfrak{m}]^\star, [\mathfrak{m}]$ by
		\[
			[\mathfrak{m}]_\star := \inf_\lambda \log_2\left(\frac{\mathfrak{m}(2\lambda)}{\mathfrak{m}(\lambda)}\right) \;,\quad
			[\mathfrak{m}]^\star := \sup_\lambda \log_2\left(\frac{\mathfrak{m}(2\lambda)}{\mathfrak{m}(\lambda)}\right)
		\;,\]
		\[
			[\mathfrak{m}] := \max\left(-[\mathfrak{m}]_\star, [\mathfrak{m}]^\star\right)
		\;.\]
	\end{definition}
	
	\begin{remark}
		By definition, we have $[\mathfrak{m}]_\star\le 0$ while $[\mathfrak{m}]^\star, [\mathfrak{m}]\ge 0$.
		Furthermore,
		\begin{equation} \label{ZEQN:SobWtRmk_EQN01}
			\mathfrak{m}(\lambda)\le 2^{k[\mathfrak{m}]}\mathfrak{m}(\mu)
			\quad\mbox{whenever }
			\left|\log_2\left(\frac{\lambda}{\mu}\right)\right|\le k
		\;.\end{equation}
	\end{remark}
	
	\begin{lemma} \label{ZLEM:HmDuality}
		Let $\mathfrak{m}$ be a Sobolev weight.
		Let $(H^{\mathfrak{m}})^\ast$ be the dual space of $H^{\mathfrak{m}}$
		extending the $L^2_x$ self-duality.
		Then $(H^{\mathfrak{m}})^\ast$ is isomorphic to $H^{\mathfrak{m}^{-1}}$
		with equivalent norms. More precisely,
		\begin{equation} \label{ZEQN:HmDuality_EQN01}
			C_1\left([\mathfrak{m}]\right)\left\|v\right\|_{H^{\mathfrak{m}^{-1}}}
			\le
			\left\|v\right\|_{(H^{\mathfrak{m}})^\ast}
			\le
			C_2\left([\mathfrak{m}]\right)\left\|v\right\|_{H^{\mathfrak{m}^{-1}}}
		\end{equation}
		for some constants $C_2\ge C_1>0$ depending only on $[\mathfrak{m}]$.
	\end{lemma}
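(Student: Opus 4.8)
The plan is to establish the duality $(H^{\mathfrak{m}})^\ast \cong H^{\mathfrak{m}^{-1}}$ by explicitly identifying the action of a tempered distribution $v \in H^{\mathfrak{m}^{-1}}$ as a bounded linear functional on $H^{\mathfrak{m}}$ via the $L^2_x$ pairing, and then showing every bounded functional arises this way. First I would observe that, since the Littlewood-Paley projectors $\mathrm{P}_\lambda$ have almost-disjoint frequency supports, the inner product on $H^{\mathfrak{m}}$ makes it isometric (via $w \mapsto (\mathfrak{m}(\lambda)\mathrm{P}_\lambda w)_\lambda$, or rather via the expansion in a suitable frequency-localized basis) to a weighted $\ell^2$-sum of copies of $L^2_x$. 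More concretely, for $v, w$ with $w \in H^{\mathfrak{m}}$ one has, using $\mathrm{P}_\mu \mathrm{P}_\lambda = 0$ unless $|\log_2(\lambda/\mu)| \le 1$,
\[
	\left| \int_{\mathbb{R}^2} v\, \overline{w}\, \mathrm{d}x \right|
	= \left| \sum_{\lambda} \sum_{\mu : |\log_2(\lambda/\mu)|\le 1} \int_{\mathbb{R}^2} \mathrm{P}_\mu v\, \overline{\mathrm{P}_\lambda w}\, \mathrm{d}x \right|
	\le \sum_{\lambda}\sum_{\mu} \left\| \mathrm{P}_\mu v \right\|_{L^2_x} \left\| \mathrm{P}_\lambda w \right\|_{L^2_x},
\]
where the inner sum is over the finitely many $\mu$ with $|\log_2(\lambda/\mu)| \le 1$; by \eqref{ZEQN:SobWtRmk_EQN01} each such $\mu$ satisfies $\mathfrak{m}(\mu)^{-1} \le 2^{[\mathfrak{m}]} \mathfrak{m}(\lambda)^{-1}$, so inserting $\mathfrak{m}(\mu)\mathfrak{m}(\mu)^{-1}$ and $\mathfrak{m}(\lambda)^{-1}\mathfrak{m}(\lambda)$ and applying Cauchy-Schwarz in $\lambda$ yields the upper bound $\lesssim 2^{[\mathfrak{m}]} \|v\|_{H^{\mathfrak{m}^{-1}}} \|w\|_{H^{\mathfrak{m}}}$. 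This proves the functional $w \mapsto \int v\overline{w}$ is bounded with norm at most $C_2([\mathfrak{m}])\|v\|_{H^{\mathfrak{m}^{-1}}}$, giving the right-hand inequality in \eqref{ZEQN:HmDuality_EQN01}, and shows $H^{\mathfrak{m}^{-1}} \hookrightarrow (H^{\mathfrak{m}})^\ast$.

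For the reverse direction, given $\ell \in (H^{\mathfrak{m}})^\ast$, I would use the Riesz representation theorem on the Hilbert space $H^{\mathfrak{m}}$ to write $\ell(w) = (u, w)_{H^{\mathfrak{m}}}$ for a unique $u \in H^{\mathfrak{m}}$, and then set $v := \sum_\lambda \mathfrak{m}(\lambda)^2 \mathrm{P}_\lambda u$ (interpreted as a tempered distribution, with convergence in $\mathcal{S}'$ following from the polynomial growth of $\mathfrak{m}(\lambda)^2$ against the rapid decay of $\mathrm{P}_\lambda$ applied to a test function). One checks directly from the definition of the $H^{\mathfrak{m}}$ inner product that $\int v\overline{w}\,\mathrm{d}x = (u,w)_{H^{\mathfrak{m}}}$ for Schwartz $w$, and that $\|v\|_{H^{\mathfrak{m}^{-1}}}^2 = \sum_\lambda \mathfrak{m}(\lambda)^{-2}\|\mathrm{P}_\lambda v\|_{L^2_x}^2 = \sum_\lambda \mathfrak{m}(\lambda)^2 \|\mathrm{P}_\lambda u\|_{L^2_x}^2 = \|u\|_{H^{\mathfrak{m}}}^2 = \|\ell\|_{(H^{\mathfrak{m}})^\ast}^2$ — here one must be slightly careful that $\mathrm{P}_\lambda v$ is not exactly $\mathfrak{m}(\lambda)^2 \mathrm{P}_\lambda u$ because of the overlap of adjacent Littlewood-Paley pieces, so instead of an exact identity one gets two-sided bounds with constants depending on $[\mathfrak{m}]$, exactly as in the first part. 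This produces the left-hand inequality $C_1([\mathfrak{m}])\|v\|_{H^{\mathfrak{m}^{-1}}} \le \|v\|_{(H^{\mathfrak{m}})^\ast}$ and shows the embedding is onto.

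The main obstacle, such as it is, is bookkeeping rather than anything deep: one must track how the Sobolev weight transfers between adjacent dyadic blocks through \eqref{ZEQN:SobWtRmk_EQN01}, since $\mathrm{P}_\lambda \mathrm{P}_\mu \ne 0$ for $\mu \in \{\lambda/2, \lambda, 2\lambda\}$ means the frequency-localized pieces are not perfectly orthogonal and the naive identification of $H^{\mathfrak{m}}$ with $\bigoplus_\lambda \mathfrak{m}(\lambda)^2 L^2_x$ holds only up to constants depending on $[\mathfrak{m}]$. One should also verify that the pairing $\int v \overline w\, \mathrm{d}x$ extended from Schwartz functions is well-defined and independent of approximating sequence, which follows from density of $\mathcal{S}(\mathbb{R}^2)$ in $H^{\mathfrak{m}}$ (itself a consequence of the Littlewood-Paley characterization and truncation of the dyadic sum). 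Everything else — $\sigma$-additivity, the Cauchy-Schwarz applications, the Riesz representation step — is standard, so I would present the argument compactly, emphasizing the weight-transfer estimate and the isometry-up-to-constants with $\bigoplus_\lambda \mathfrak{m}(\lambda)^2 L^2_x$, and leave the routine density and $\mathcal{S}'$-convergence checks to the reader.
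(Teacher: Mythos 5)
Your argument for the upper bound in (\ref{ZEQN:HmDuality_EQN01}) — splitting the pairing into near-diagonal Littlewood--Paley interactions and applying Cauchy--Schwarz with the weight transfer (\ref{ZEQN:SobWtRmk_EQN01}) — is exactly the paper's argument. For the lower bound you diverge: the paper never invokes Riesz representation, but instead, given $v\in H^{\mathfrak{m}^{-1}}$, directly constructs the test function $w:=\sum_\mu\mathfrak{m}(\mu)^{-2}\mathrm{P}_\mu v$ and bounds $(w,v)_{L^2_x}=\sum_\mu\mathfrak{m}(\mu)^{-2}(\mathrm{P}_\mu v,v)_{L^2_x}$ from below using $\varphi_\mu^2\le\varphi_\mu$, so that $(w,v)_{L^2_x}\ge\sum_\mu\mathfrak{m}(\mu)^{-2}\|\mathrm{P}_\mu v\|_{L^2_x}^2\gtrsim\|v\|_{H^{\mathfrak{m}^{-1}}}\|w\|_{H^{\mathfrak{m}}}$. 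Your $w=\sum_\lambda\mathfrak{m}(\lambda)^2\mathrm{P}_\lambda u$ is the mirror image of this construction, accessed through the Hilbert-space Riesz theorem. What your route buys is an explicit proof of surjectivity of $H^{\mathfrak{m}^{-1}}\rightarrow(H^{\mathfrak{m}})^\ast$, which the lemma's phrase ``is isomorphic to'' strictly requires and which the paper leaves implicit; what it costs is the extra bookkeeping of $\mathcal{S}'$-convergence, density of Schwartz functions, and the uniqueness of the representing distribution needed to transfer the bound $\|v\|_{H^{\mathfrak{m}^{-1}}}\lesssim\|\ell\|_{(H^{\mathfrak{m}})^\ast}$ back to an arbitrary given $v$.

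One detail to repair: with $v:=\sum_\lambda\mathfrak{m}(\lambda)^2\mathrm{P}_\lambda u$ the identity $\int v\,\overline{w}\,\mathrm{d}x=(u,w)_{H^{\mathfrak{m}}}$ is false, since the inner product pairs $\mathrm{P}_\lambda u$ with $\mathrm{P}_\lambda w$ and hence produces the multiplier $\varphi_\lambda^2$, not $\varphi_\lambda$. The correct representative is $v:=\sum_\lambda\mathfrak{m}(\lambda)^2\mathrm{P}_\lambda^2u$; your caveat about overlapping adjacent blocks addresses the norm comparison $\|v\|_{H^{\mathfrak{m}^{-1}}}\approx\|u\|_{H^{\mathfrak{m}}}$ but not this exactness issue, which matters because the representation identity must hold on the nose for the surjectivity claim. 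The fix is one line and does not affect the rest of the argument.
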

	\begin{proof}
		Given $v\in H^{\mathfrak{m}^{-1}}$, we may set
		\[
			w := \sum_\mu \mathfrak{m}(\mu)^{-2}\mathrm{P}_\mu v
		\;.\]
		Clearly $w\in H^{\mathfrak{m}}$, and from (\ref{ZEQN:SobWtRmk_EQN01}) we have
		\[
			\left\|w\right\|_{H^\mathfrak{m}} \le C\left([\mathfrak{m}]\right)\left\|v\right\|_{H^{\mathfrak{m}^{-1}}}
		\;.\]
		Therefore,
		\begin{equation*}\begin{split}
			\left(w,v\right)_{L^2_x}
		=\;&
			\sum_\mu \mathfrak{m}(\mu)^{-2}\left(\mathrm{P}_\mu v,v\right)_{L^2_x}
		\ge
			\sum_\mu \mathfrak{m}(\mu)^{-2}\left\|\mathrm{P}_\mu v\right\|_{L^2_x}^2
		\ge
			C\left([\mathfrak{m}]\right)\left\|v\right\|_{H^{\mathfrak{m}^{-1}}}\left\|w\right\|_{H^{\mathfrak{m}}}
		\;.\end{split}\end{equation*}
		This verifies the first inequality in (\ref{ZEQN:HmDuality_EQN01}).
		
		We turn to the second inequality in (\ref{ZEQN:HmDuality_EQN01}).
		Given $v\in H^{\mathfrak{m}^{-1}}, w\in H^{\mathfrak{m}}$, we have from the Cauchy-Schwarz inequality that
		\begin{equation*}\begin{split}
			\left|(w,v)_{L^2_x}\right|
		\le\;&
			\sum_\lambda \left|\left(\mathrm{P}_\lambda w, \mathrm{P}_{\approx\lambda}v\right)_{L^2_x}\right|
		\le
			\left\|w\right\|_{H^{\mathfrak{m}}}
			\left(\sum_\lambda \mathfrak{m}(\lambda)^{-2}\left\|\mathrm{P}_{\approx\lambda}v\right\|_{L^2_x}^2\right)^{\frac{1}{2}}
		\;.\end{split}\end{equation*}
		Since (\ref{ZEQN:SobWtRmk_EQN01}) gives us
		\[
			\sum_\lambda \mathfrak{m}(\lambda)^{-2}\left\|\mathrm{P}_{\approx\lambda}v\right\|_{L^2_x}^2
			\le
			C\left([\mathfrak{m}]\right)\left\|v\right\|_{H^{\mathfrak{m}^{-1}}}^2
		\;,\]
		we deduce the second inequality in (\ref{ZEQN:HmDuality_EQN01}).
	\end{proof}

\subsection{Strichartz estimates}
	We now recall the well-known Strichartz estimates \cite{kato_AnnIHP.1987, cazenave_weissler_ManuscriptaMath.1988}
	for the Schr\"{o}dinger equation in two space dimensions.

	\begin{definition} \label{ZDEFN:StrichartzPair}
		We say $(q,r)\in[2,\infty]^2$ is a {\em Strichartz pair} if $\frac{2}{q}+\frac{2}{r}=1$ and $r<\infty$.
	\end{definition}

	\begin{lemma}[Strichartz estimates]
		Suppose $(q_1,r_1)$ and $(q_2,r_2)$ are Strichartz pairs.
		Assume $u:[0,T)\times\mathbb{R}^2_x\rightarrow\mathbb{C}$ is an $L^2_x$ solution to the Schr\"{o}dinger equation,
		\[
			\left(\partial_t-\mathrm{i}\triangle\right) u = f
		\;.\]
		Then the estimate
		\[
			\left\|u\right\|_{L^{q_1}_tL^{r_1}_x[T]} \lesssim \|u(0)\|_{L^2_x} + \|f\|_{L^{q_2'}_tL^{r_2'}_x[T]}
		\]
		holds with the implicit constant depending on $q_1,q_2$ but not on $T$.
		Here $q_2',r_2'$ denotes the H\"{o}lder conjugates of $q_2,r_2$ respectively, i.e. $1=\frac{1}{q_2}+\frac{1}{q_2'}=\frac{1}{r_2}+\frac{1}{r_2'}$.
	\end{lemma}

\subsection{$U^p$ and $V^p$ spaces}
	As mentioned in the Introduction,
	we need a functional framework built on spaces of the $U^p$ and $V^p$ type
	\cite{koch_tataru_CommPureApplMath.2005, koch_tataru_IMRN.2007, hadac_herr_koch_AnnIHP.2009}.
	We now recall the definitions and basic properties of these spaces.

	\begin{definition}
		Let $T>0$ and let $X$ be a separable Banach space over $\mathbb{C}$. Let $p\in[1,\infty)$.
		We define a $U^pX[T]$ {\em atom} to be a function $a:[0,T)\rightarrow X$ of the form
		\[
			a(t) = \sum_{k=0}^{K-1} \mathbbm{1}_{[t_k,t_{k+1})}(t)a_k
		\]
		where $K\in\mathbb{Z}_{>0}$, $0=t_0<t_1<\ldots<t_K=T$, and $\sum_{k=0}^{K-1} \|a_k\|_X^p = 1$.

		The Banach space $U^pX[T]$ is defined to be the atomic space over the $U^pX[T]$ atoms.
		More precisely, $U^pX[T]$ consists of all functions $a:[0,T)\rightarrow X$ admitting a representation
		\[
			u = \sum_{j=1}^\infty c_ja_j \;,\quad a_j \mbox{ are $U^pX[T]$ atoms }\;,\quad \{c_j\}_{j=1}^\infty\in \ell^1
		\;,\]
		equipped with the norm
		\[
			\left\|u\right\|_{U^pX[T]} := \inf\left\{\sum_{j=1}^\infty |c_j|
				\;\Big|\; u = \sum_{j=1}^\infty c_ja_j \;,\; \{c_j\}_{j=1}^\infty\in\ell^1 \;,\;
					a_j\mbox{ are $U^pX[T]$ atoms}
			\right\}
		\;.\]

		We define $\mathrm{D}U^pX[T]$ to be the space of distributional derivatives of functions in $U^pX[T]$, equipped with the norm
		\[
			\left\|f\right\|_{\mathrm{D}U^pX[T]}
			:=
			\left\|\int_0^t f(t')\,\mathrm{d}t'\right\|_{U^pX[T]}
		\;.\]
	\end{definition}

	Observe that, if $0<T_1<T_2$ then the restriction map
	\[
		u \mapsto \mathbbm{1}_{[0,T_1)}(t)u
	\]
	is continuous linear $U^pX[T_2]\rightarrow U^pX[T_1]$ and satisfies
	\[
		\left\|u\right\|_{U^pX[T_1]} := \left\|\mathbbm{1}_{[0,T_1)}(t)u\right\|_{U^pX[T_1]} \le \left\|u\right\|_{U^pX[T_2]}
	\;.\]
	
	\begin{definition}
		Let $T>0$ and let $X$ be a separable Banach space over $\mathbb{C}$. Let $p\in[1,\infty)$.
		We define $V^pX[T]$ to be the Banach space of functions $v:[0,T)\rightarrow X$ with the norm
		\[
			\left\|v\right\|_{V^pX[T]} := \sup_{\mathfrak{t}} \left(\sum_{k=0}^{K-1} \left\|v(t_{k+1})-v(t_k)\right\|_X^p\right)^{\frac{1}{p}}
		\]
		where the supremum is taken over all partitions $\mathfrak{t}=\{t_k\}_{k=0}^K$ with $0=t_0<t_1<\ldots<t_K=T$,
		and we define $v(T):=0$.

		Observe that a $V^pX[T]$ function possesses left and right limits at every $t\in[0,T)$.
		We define $V^p_{\mathrm{rc}}X[T]$
		to be the closed subspace of $V^pX[T]$ of right-continuous functions $[0,T)\rightarrow X$.
	\end{definition}

	We will require the following two crucial properties of the $U^p$ and $V^p$ spaces.
	We refer to \cite{hadac_herr_koch_AnnIHP.2009, koch_NOTES.2014} for their proofs.
	
	\begin{lemma}[Embeddings] \label{ZLEM:UpVpEmbedding}
		Let $T>0$ and let $X$ be a separable Banach space over $\mathbb{C}$.
		Let $1\le p<q<\infty$. Then we have the continuous embeddings
		\[
			U^pX[T] \hookrightarrow V^p_{\mathrm{rc}}X[T] \hookrightarrow U^qX[T] \hookrightarrow L^\infty_t X[T]
		\]
		whose operator norms depend only on $p,q$ and not on $T$ or $X$.
	\end{lemma}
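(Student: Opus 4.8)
\[
	U^pX[T] \hookrightarrow V^p_{\mathrm{rc}}X[T] \hookrightarrow U^qX[T] \hookrightarrow L^\infty_tX[T]
\]
one link at a time, in that order.

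\emph{Step 1: $U^pX[T]\hookrightarrow V^p_{\mathrm{rc}}X[T]$.} First I would check this on a single $U^pX[T]$ atom $a=\sum_{k=0}^{K-1}\mathbbm{1}_{[t_k,t_{k+1})}a_k$. For any partition $\mathfrak{s}=\{s_\ell\}$, the jump $a(s_{\ell+1})-a(s_\ell)$ is a difference of two of the $a_k$'s (or of one $a_k$ and $0$, using $a(T)=0$), so a telescoping/regrouping argument together with the elementary inequality $\|a_k-a_{k'}\|_X^p\lesssim_p \|a_k\|_X^p+\|a_{k'}\|_X^p$ gives $\|a\|_{V^pX[T]}\le 2\|a\|_{U^pX[T]}=2$ (with a $p$-independent constant, as each $a_k$ is used at most twice across a fixed partition). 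Since $V^pX[T]$ is a Banach space and the $V^p$ norm is subadditive, this bound passes to arbitrary $u=\sum_j c_ja_j$ by the triangle inequality, yielding $\|u\|_{V^pX[T]}\le 2\|u\|_{U^pX[T]}$. That $u$ lands in the right-continuous subspace $V^p_{\mathrm{rc}}X[T]$ is immediate since each atom is right-continuous and right-continuity is preserved under the $\ell^1$-convergent sum.

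\emph{Step 2: $V^p_{\mathrm{rc}}X[T]\hookrightarrow U^qX[T]$ for $p<q$.} This is the heart of the matter and the step I expect to be the main obstacle. Given $v\in V^p_{\mathrm{rc}}X[T]$, the idea is a Calder\'on--Zygmund-type stopping-time decomposition at dyadic heights: for each $n\ge 0$ let $\mathfrak{t}^{(n)}$ be the partition obtained by stopping whenever the oscillation of $v$ since the last stopping point exceeds $2^{-n}\|v\|_{V^pX[T]}$, and let $v_n$ be the piecewise-constant right-continuous approximation of $v$ adapted to $\mathfrak{t}^{(n)}$. Then $v_{n+1}-v_n$ is a step function whose steps have size $\lesssim 2^{-n}\|v\|_{V^pX[T]}$, and the number $K_n$ of steps in $\mathfrak{t}^{(n)}$ satisfies $K_n\,(2^{-n}\|v\|_{V^pX[T]})^p\lesssim \|v\|_{V^pX[T]}^p$, i.e. $K_n\lesssim 2^{np}$. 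Hence $v_{n+1}-v_n$ is a scalar multiple $c_n$ of a $U^qX[T]$ atom with $|c_n|\lesssim K_n^{1/q}\,2^{-n}\|v\|_{V^pX[T]}\lesssim 2^{n(p/q-1)}\|v\|_{V^pX[T]}$, and $\sum_n 2^{n(p/q-1)}<\infty$ precisely because $p<q$. Since $v_n\to v$ (using right-continuity to handle the limit) and $v_0$ is itself a bounded scalar multiple of an atom, we get $v=v_0+\sum_n(v_{n+1}-v_n)\in U^qX[T]$ with $\|v\|_{U^qX[T]}\lesssim_{p,q}\|v\|_{V^pX[T]}$. The delicate point to be careful about is the convergence of the telescoping series in $U^qX[T]$ and the verification that $\sum_n(v_{n+1}-v_n)$ really reconstructs $v$ rather than a modification of it; right-continuity of $v$ and of all $v_n$ is exactly what makes this work.

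\emph{Step 3: $U^qX[T]\hookrightarrow L^\infty_tX[T]$.} This is the easiest link: a single atom $a$ satisfies $\|a(t)\|_X\le \sup_k\|a_k\|_X\le(\sum_k\|a_k\|_X^q)^{1/q}=1$ pointwise, so $\|a\|_{L^\infty_tX[T]}\le 1$, and taking the $\ell^1$ infimum over representations gives $\|u\|_{L^\infty_tX[T]}\le\|u\|_{U^qX[T]}$. Composing the three estimates gives the full embedding chain with operator norms depending only on $p$ and $q$; crucially, none of the constants produced above involves $T$ or the Banach space $X$, since every estimate was carried out either pointwise in $t$ or on the combinatorial level of partitions. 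This is exactly as claimed, and as noted these facts are standard and their detailed proofs may be found in \cite{hadac_herr_koch_AnnIHP.2009, koch_NOTES.2014}.
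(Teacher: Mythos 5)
The paper gives no proof of this lemma, deferring entirely to \cite{hadac_herr_koch_AnnIHP.2009, koch_NOTES.2014}; your sketch reproduces precisely the standard argument from those references (the atomic computations for $U^pX[T]\hookrightarrow V^p_{\mathrm{rc}}X[T]$ and $U^qX[T]\hookrightarrow L^\infty_tX[T]$, and the stopping-time decomposition for $V^p_{\mathrm{rc}}X[T]\hookrightarrow U^qX[T]$), and all three steps are correct in outline. The one detail worth making explicit in Step 2 is that the partitions $\mathfrak{t}^{(n)}$ must be constructed to be nested, so that $v_{n+1}-v_n$ is genuinely a step function adapted to $\mathfrak{t}^{(n+1)}$ with jumps of size $\lesssim 2^{-n}\|v\|_{V^pX[T]}$; with that, your counting bound $K_n\lesssim 2^{np}$ and the geometric summability from $p<q$ are exactly the standard estimates.
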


	\begin{lemma}[Duality] \label{ZLEM:UpVpDuality}
		Let $T>0$ and let $X$ be a separable Banach space over $\mathbb{C}$ such that .
		Let $p\in(1,\infty)$ and let $p':=\frac{p}{p-1}$ be the H\"{o}lder conjugate of $p$.
		Then
		\[
			\left(\mathrm{D}U^pX[T]\right)^* = V^{p'}_{\mathrm{rc}}X^\ast[T]
		\]
		in the sense that, for $f\in\mathrm{D}U^pX[T]$,
		\[
			\left\|f\right\|_{\mathrm{D}U^pX[T]}
			\le
			C(p)
			\sup\left\{
				\left|\int_0^T \langle v(t),f(t)\rangle_{X^*,X}\,\mathrm{d}t\right|
				\;\bigg|\;
				v\in V^{p'}_{\mathrm{rc}}X^\ast[T],
				\left\|v\right\|_{V^{p'}_{\mathrm{rc}}X^\ast[T]} \le 1
			\right\}
		\;.\]
	\end{lemma}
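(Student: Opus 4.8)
The plan is to deduce the duality from the standard pairing between $U^pX[T]$ and $V^{p'}X^\ast[T]$, with $\mathrm D$ playing the role of integration by parts. Throughout, identify $f\in\mathrm DU^pX[T]$ with $u\in U^pX[T]$, $u(t):=\int_0^t f(t')\,\mathrm dt'$; this is an isometry onto the closed subspace $\{u\in U^pX[T]:u(0)=0\}$, under which $\partial_t u=f$ and $\|u\|_{U^pX[T]}=\|f\|_{\mathrm DU^pX[T]}$. It then suffices to (a) construct a bilinear form $B$ on $U^pX[T]\times V^{p'}X^\ast[T]$ with $|B(u,v)|\le\|u\|_{U^pX[T]}\|v\|_{V^{p'}X^\ast[T]}$ and $B(u,v)=\int_0^T\langle v(t),f(t)\rangle\,\mathrm dt$, and (b) show that every $L\in(U^pX[T])^\ast$ equals $B(\cdot,v)$ for a unique $v\in V^{p'}_{\mathrm{rc}}X^\ast[T]$ with $\|v\|_{V^{p'}X^\ast[T]}\le C(p)\|L\|_{(U^pX[T])^\ast}$. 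Granting (a)--(b), the displayed inequality and the norm equivalence $(\mathrm DU^pX[T])^\ast\cong V^{p'}_{\mathrm{rc}}X^\ast[T]$ both follow quickly.

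\emph{Step 1 (the pairing).} For $u\in U^pX[T]$, $v\in V^{p'}X^\ast[T]$, and a partition $\mathfrak t=\{t_k\}_{k=0}^K$ I set (with the convention $v(T):=0$)
\[
  B_{\mathfrak t}(u,v):=\sum_{k=1}^{K}\bigl\langle v(t_{k-1})-v(t_k),\,u(t_{k-1})\bigr\rangle_{X^\ast,X}\,.
\]
A summation by parts shows that for step functions $u$ one has $B_{\mathfrak t}(u,v)=\int_0^T\langle v,\partial_t u\rangle\,\mathrm dt$; moreover, subdividing an interval on which $u$ is constant leaves $B_{\mathfrak t}(u,v)$ unchanged, while a general refinement changes it only by corrections controlled by the oscillation of $v$ on short intervals, which vanish along any refining sequence since $V^{p'}$ functions have one-sided limits everywhere. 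Hence $B(u,v):=\lim_{\mathfrak t}B_{\mathfrak t}(u,v)$ exists. Taking $\mathfrak t$ to be the partition of a $U^p$-atom $u$ and applying H\"older's inequality gives $|B(u,v)|\le\|v\|_{V^{p'}X^\ast[T]}(\sum_k\|u(t_{k-1})\|_X^p)^{1/p}=\|v\|_{V^{p'}X^\ast[T]}$; by the atomic decomposition and the triangle inequality, $|B(u,v)|\le\|v\|_{V^{p'}X^\ast[T]}\|u\|_{U^pX[T]}$ for all $u$. In particular $B(u,v)=\int_0^T\langle v(t),f(t)\rangle\,\mathrm dt$ for the associated $f$, so $\sup\{|\int_0^T\langle v,f\rangle|:\|v\|_{V^{p'}_{\mathrm{rc}}X^\ast[T]}\le1\}\le\|f\|_{\mathrm DU^pX[T]}$ --- the easy half of the equivalence.

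\emph{Step 2 (surjectivity; the main obstacle).} Given $L\in(U^pX[T])^\ast$, I define $v(t)\in X^\ast$ by $\langle v(t),x\rangle:=L(\mathbbm{1}_{[t,T)}x)$; since $\mathbbm{1}_{[t,T)}x$ is a $U^p$-atom up to the scalar $\|x\|_X$, one has $\|v(t)\|_{X^\ast}\le\|L\|_{(U^pX[T])^\ast}$, and (using separability of $X$) $v$ admits a well-defined right-continuous modification with comparable $V^{p'}$ norm. For the $V^{p'}$ bound, fix a partition $\{t_k\}$, choose $x_k\in X$, $\|x_k\|_X\le1$, nearly norming $v(t_{k+1})-v(t_k)$, put $c_k:=\|v(t_{k+1})-v(t_k)\|_{X^\ast}^{p'-1}$, and apply $L$ to the $U^p$ function $\sum_k c_k\,\mathbbm{1}_{[t_k,t_{k+1})}x_k$, whose $U^p$ norm is at most $(\sum_k c_k^p)^{1/p}$; using $\langle v(t_{k+1})-v(t_k),x_k\rangle=-L(\mathbbm{1}_{[t_k,t_{k+1})}x_k)$ and $(p'-1)p=p'$, the resulting inequality rearranges to $(\sum_k\|v(t_{k+1})-v(t_k)\|_{X^\ast}^{p'})^{1/p'}\le\|L\|_{(U^pX[T])^\ast}$, so $v\in V^{p'}_{\mathrm{rc}}X^\ast[T]$. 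That $B(\cdot,v)=L$ is then checked on the dense set of step functions --- where both sides reduce to the same summation-by-parts expression, since $B(\mathbbm{1}_{[t,T)}x,v)=\langle v(t),x\rangle=L(\mathbbm{1}_{[t,T)}x)$ --- and extended by the continuity of $B$ from Step 1. Injectivity is immediate: $B(\cdot,v)=0$ forces $\langle v(t),x\rangle=0$ for all $x$ at every point of right-continuity, hence $v\equiv0$.

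\emph{Step 3 (conclusion).} Let $f\in\mathrm DU^pX[T]$ and $u(t)=\int_0^t f$, so that $\|u\|_{U^pX[T]}=\|f\|_{\mathrm DU^pX[T]}$. By Hahn--Banach pick $L\in(U^pX[T])^\ast$ with $\|L\|_{(U^pX[T])^\ast}\le1$ and $L(u)=\|u\|_{U^pX[T]}$; by Step 2 write $L=B(\cdot,v)$ with $\|v\|_{V^{p'}_{\mathrm{rc}}X^\ast[T]}\le C(p)$. Then
\begin{align*}
  \|f\|_{\mathrm DU^pX[T]}=\|u\|_{U^pX[T]}=L(u)=B(u,v)
  &=\int_0^T\langle v(t),f(t)\rangle\,\mathrm dt\\
  &\le C(p)\sup\Bigl\{\Bigl|\int_0^T\langle v,f\rangle\Bigr|:\|v\|_{V^{p'}_{\mathrm{rc}}X^\ast[T]}\le1\Bigr\}\,,
\end{align*}
which is the asserted bound; combined with Steps 1--2 this yields $(\mathrm DU^pX[T])^\ast=V^{p'}_{\mathrm{rc}}X^\ast[T]$. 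I expect the technical heart to lie in Step 2 --- producing the right-continuous $X^\ast$-valued representative, tracking the constant $C(p)$ through the modification, and proving the $V^{p'}$ bound uniformly over all partitions --- together with making rigorous the refinement limit defining $B$; both are carried out in detail in \cite{hadac_herr_koch_AnnIHP.2009, koch_NOTES.2014}, and the above is only the skeleton.
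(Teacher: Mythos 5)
The paper does not prove this lemma at all --- it explicitly defers to Hadac--Herr--Koch and Koch's lecture notes --- and your outline is precisely the argument given in those references: the partition-sum pairing $B(u,v)$ with the atom bound $|B(a,v)|\le\|v\|_{V^{p'}X^\ast[T]}$, the representation of a functional $L$ by $v(t):=L(\mathbbm{1}_{[t,T)}\,\cdot\,)$ with the $V^{p'}$ bound obtained by testing against step functions built from near-norming vectors, and Hahn--Banach to conclude. Your sketch is correct, and the only direction actually asserted in the lemma (the lower bound on $\|f\|_{\mathrm{D}U^pX[T]}$) follows from your Steps 2--3 for any separable Banach $X$, so the proposal is consistent with the paper's (cited) proof.
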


\section{The Modified Principal Operator} \label{ZSECT:TheModPrOp}

	The goal of this section is to establish the basic properties of solutions to the linear equation
	\begin{equation} \label{ZEQN:NewPrOpHomgCopy}
		\left(\partial_t-\mathrm{i}\triangle+\mathfrak{P}_{B}\right)u = 0
	\;,\end{equation}
	and then use these properties to define function spaces for constructing the iterates in the iteration scheme (\ref{ZEQN:IntroItrtnScheme}).
	The hypotheses we require on $B$ are summarised in the following definition.

	\begin{definition}
		Let $B=B_1(t,x)\mathrm{d}x_1+B_2(t,x)\mathrm{d}x_2$ be a time-dependent spatial 1-form
		defined on $[0,1)\times\mathbb{R}^2_x$.
		We say that $B$ is an {\em admissible form},
		if $B\in L^\infty_{t,x}[1]$, $\nabla B\in L^1_tL^\infty_x[1]$, and $\mathrm{div}\,B\equiv 0$.
	\end{definition}

	The first basic question is that of whether (\ref{ZEQN:NewPrOpHomgCopy}) gives a well-defined evolution
	in the generalised Sobolev spaces $H^{\mathfrak{m}}$.
	The following key Proposition will be proved in the next two subsections.

	\begin{proposition} \label{ZPROP:NewPrOpHmLWP}
		Let $B$ be an admissible form and $\mathfrak{m}$ be a Sobolev weight.
		Let $T\in(0,1]$ and $t_0\in[0,T)$.
		Then, given $u^{\mathrm{in}}\in H^{\mathfrak{m}}$,
		there exists a unique solution $u\in L^\infty_tH^{\mathfrak{m}}[T]$ to (\ref{ZEQN:NewPrOpHomgCopy}) with $u(t_0)=u^{\mathrm{in}}$.
		Moreover, this solution satisfies
		\begin{equation} \label{ZEQN:NewPrOpHmLWP_EQN01}
			\left\|u\right\|_{L^\infty_tH^{\mathfrak{m}}[T]}
			\le
			C\mathrm{e}^{C_1\|\nabla B\|_{L^1_tL^\infty_x[1]}}
			\left\|u^{\mathrm{in}}\right\|_{H^{\mathfrak{m}}}
		\end{equation}
		where $C,C_1>0$ are constants depending only on $[\mathfrak{m}]$.
	\end{proposition}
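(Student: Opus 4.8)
The plan is to establish existence via an energy estimate for each Littlewood-Paley piece, followed by summation in the $H^{\mathfrak{m}}$ norm, and to obtain uniqueness and the bound (\ref{ZEQN:NewPrOpHmLWP_EQN01}) from the same estimate applied to differences. The starting point is the observation that $\mathfrak{P}_B$ is a paradifferential operator: in the term $\mathfrak{P}_B w$, the coefficient $B$ is always frequency-localised below $2^{-5}\lambda$ while the differentiated factor is localised at frequency $\lambda$. Consequently, for a single frequency shell, $\mathrm{P}_\lambda \mathfrak{P}_B u$ only couples to $\mathrm{P}_\mu u$ for $\mu \approx \lambda$, so the evolution is essentially diagonal in frequency up to harmless off-diagonal tails. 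This is precisely where admissibility enters: $\mathrm{div}\, B = 0$ makes the principal part of $\mathfrak{P}_B$ formally skew-adjoint on $L^2_x$, so it does not contribute to the growth of the $L^2_x$ norm; the genuine growth is driven only by the commutator-type errors, which are controlled by $\nabla B$.

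Concretely, first I would test the equation (\ref{ZEQN:NewPrOpHomgCopy}) against $\mathrm{P}_\lambda^2 u$ (or rather form $\frac{d}{dt}\|\mathrm{P}_\lambda u\|_{L^2_x}^2$) and analyse the contribution of $\mathfrak{P}_B$. Writing $\mathfrak{P}_B u = \sum_\mu [\mathrm{P}_{\ll\mu}B_i \,\mathrm{P}_\mu \partial_i u + \mathrm{P}_\mu(\mathrm{P}_{\ll\mu}B_i\,\partial_i u)]$, the frequency-localisation forces the interaction with $\mathrm{P}_\lambda u$ to have $\mu \approx \lambda$. For the diagonal piece, one integrates by parts in $x$: the terms $\int \mathrm{P}_{\ll\lambda}B_i\, \partial_i(\mathrm{P}_\lambda u)\,\overline{\mathrm{P}_\lambda u}$ combine with their conjugates to produce $-\int (\partial_i \mathrm{P}_{\ll\lambda}B_i)|\mathrm{P}_\lambda u|^2$, which vanishes up to an error of size $\|\nabla B\|_{L^\infty_x}$ because $\mathrm{div}\,B = 0$ and $\partial_i \mathrm{P}_{\ll\lambda}B_i = \mathrm{P}_{\ll\lambda}\partial_i B_i = 0$ exactly; the remaining commutator $[\mathrm{P}_\lambda, \mathrm{P}_{\ll\lambda}B_i\,\partial_i]$ gains a derivative off the $\partial_i$ at the cost of $\|\nabla B\|_{L^\infty_x}$, by a standard Coifman-Meyer/Bernstein argument. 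Thus one arrives at the differential inequality
\[
	\frac{d}{dt}\left\|\mathrm{P}_\lambda u\right\|_{L^2_x}
	\le
	C\sum_{\mu\approx\lambda} \left\|\nabla B(t)\right\|_{L^\infty_x}\left\|\mathrm{P}_\mu u(t)\right\|_{L^2_x}
\;.\]
Multiplying by $\mathfrak{m}(\lambda)$, summing in $\ell^2_\lambda$, and using (\ref{ZEQN:SobWtRmk_EQN01}) to absorb the weight ratio $\mathfrak{m}(\lambda)/\mathfrak{m}(\mu)$ over the finitely many $\mu\approx\lambda$ into a constant depending only on $[\mathfrak{m}]$, Minkowski's inequality gives $\frac{d}{dt}\|u(t)\|_{H^{\mathfrak{m}}} \le C\|\nabla B(t)\|_{L^\infty_x}\|u(t)\|_{H^{\mathfrak{m}}}$, whence (\ref{ZEQN:NewPrOpHmLWP_EQN01}) follows from Grönwall. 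Uniqueness is immediate by applying the same estimate to the difference of two solutions with the same data. For existence, one runs a standard parabolic-regularisation or frequency-truncation scheme: solve $(\partial_t - \mathrm{i}\triangle + \mathrm{P}_{\le N}\mathfrak{P}_B\mathrm{P}_{\le N})u_N = 0$ (which is a bounded perturbation of the free Schrödinger flow, hence uniquely solvable in $H^{\mathfrak{m}}$ by Duhamel and contraction), note the $N$-uniform bound (\ref{ZEQN:NewPrOpHmLWP_EQN01}), and pass to the limit weakly-$\ast$ in $L^\infty_t H^{\mathfrak{m}}$; the limit solves (\ref{ZEQN:NewPrOpHomgCopy}) and inherits the bound by lower semicontinuity of the norm.

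The main obstacle I anticipate is the careful bookkeeping of the paraproduct error terms — specifically, verifying that every off-diagonal frequency interaction in $\langle \mathrm{P}_\lambda \mathfrak{P}_B u, \mathrm{P}_\lambda u\rangle$ is either exactly zero (by the $\mathrm{div}\,B = 0$ identity at the level of Fourier support) or bounded by $\|\nabla B\|_{L^\infty_x}$ times an $\ell^2$-summable, rapidly-decaying-in-frequency-separation quantity, with no residual loss of a derivative on $u$. In particular one must be slightly careful with the second half $\mathrm{P}_\mu(\mathrm{P}_{\ll\mu}B_i\,\partial_i u)$ of $\mathfrak{P}_B$, where the outer projection $\mathrm{P}_\mu$ and the differentiation do not commute through the coefficient; here the integration by parts must be arranged so that the derivative landing on $B$ (rather than on $u$) is what produces the $\nabla B$ factor, and the commutator $[\mathrm{P}_\mu, \mathrm{P}_{\ll\mu}B_i]$ supplies the extra negative power of $\lambda$ needed to cancel the $\partial_i$. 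Once this is done cleanly the rest is routine. A minor secondary point is that $t_0$ need not be $0$; since the estimate and scheme are time-translation covariant and the admissibility hypotheses are on $[0,1)$ with $T\le 1$, one simply solves forward and backward from $t_0$, using time-reversibility of the equation, which costs nothing in the constants.
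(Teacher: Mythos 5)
Your proof is correct in substance, and its core — the commutator bound $\partial_t\|\mathrm{P}_\mu u\|_{L^2_x}\lesssim\|\nabla B(t)\|_{L^\infty_x}\sum_{\lambda\approx\mu}\|\mathrm{P}_\lambda u\|_{L^2_x}$, obtained from $\mathrm{div}\,B=0$ (killing the diagonal term) plus the gain of a derivative in $[\mathrm{P}_\mu,\mathrm{P}_{\ll\lambda}B_i\partial_i]$ — is exactly the paper's Lemma \ref{ZLEM:UniqLem1}. Your uniqueness and a priori bound via Gr\"onwall are equivalent to the paper's argument; the only care needed is to integrate the frequency-localised inequality in time \emph{before} summing in $\ell^2_\mu$ (then Minkowski's integral inequality and the banded structure of the frequency coupling give the integral Gr\"onwall inequality for $\|u(t)\|_{H^{\mathfrak m}}$), which is precisely why the paper integrates first and then closes the estimate on short subintervals where $\|\nabla B\|_{L^1_tL^\infty_x}$ is small — a subdivision that plays the role of Gr\"onwall. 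Where you genuinely diverge is existence. The paper proves $L^2_x$ existence by a frequency truncation much like yours (Lemma \ref{ZLEM:NewPrOpL2Exstnce}), but then upgrades to $H^{\mathfrak m}$ by splitting the \emph{data} into shells $\mathrm{P}_\nu u^{\mathrm{in}}$, solving from each shell, and invoking the quantitative mass-transfer Lemma \ref{ZLEM:NewPrOpMassXfer}: a solution launched from frequency $\nu$ carries mass $\le(C_0\|\nabla B\|_{L^1_tL^\infty_x})^\ell/\ell!$ at frequencies $5\ell$ octaves away, and this factorial decay makes the $\ell^2_\mu\ell^1_\nu$ sum converge and produces the exponential constant. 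You instead truncate the \emph{operator} and run the compactness scheme directly in $H^{\mathfrak m}$; this works because $[\mathrm{P}_\mu,\mathrm{P}_{\le N}\mathfrak{P}_B\mathrm{P}_{\le N}]=\mathrm{P}_{\le N}[\mathrm{P}_\mu,\mathfrak{P}_B]\mathrm{P}_{\le N}$, so the commutator bound — and hence the $H^{\mathfrak m}$ energy estimate — is uniform in $N$, and the weak-$\ast$ limit inherits the bound. Your route is shorter and dispenses with the mass-transfer lemma entirely; the paper's route yields the stronger almost-orthogonality statement about frequency leakage, which is of independent interest but is not used elsewhere in the paper.
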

	
	\begin{remark}
		Eventually, when establishing the continuity of the solution map in Theorem \ref{ZTHM:MainThm},
		we will choose $\mathfrak{m}$ depending on the profile of the initial data.
		In Proposition \ref{ZPROP:NewPrOpHmLWP} and other results in this section,
		the fact that the various constants depend only on $[\mathfrak{m}]_\star$ and $[\mathfrak{m}]^\star$,
		and not on the finer details of $\mathfrak{m}$,
		will be crucial for the fact that the existence time in Theorem \ref{ZTHM:MainThm}
		depends only on the size of the initial data, and not on its profile.
	\end{remark}

\subsection{Uniqueness}
	We first address the issue of uniqueness.
	Of course, if $H^{\mathfrak{m}}$ is a sufficiently regular space, say if $[\mathfrak{m}]_\star\ge 2$,
	then unconditional uniqueness in $L^\infty_tH^{\mathfrak{m}}[T]$ is immediate from a simple energy argument.
	For lower regularity $H^{\mathfrak{m}}$ spaces, we have to work harder.

	\begin{lemma} \label{ZLEM:UniqLem1}
		Let $B$ be an admissible form and $\mathfrak{m}$ be a Sobolev weight.
		Then any solution $u\in L^\infty_tH^{\mathfrak{m}}[T]$ to (\ref{ZEQN:NewPrOpHomgCopy})
		satisfies the differential inequality
		\begin{equation}\label{ZEQN:UniqLem1_EQN01}
			\partial_t\left\|\mathrm{P}_\mu u(t)\right\|_{L^2_x}
			\le
			C\left\|\nabla B(t)\right\|_{L^\infty_x}
			\sum_{\lambda\;:\;\left|\log_2\left(\frac{\lambda}{\mu}\right)\right|\le 5} \left\|\mathrm{P}_\lambda u(t)\right\|_{L^2_x}
		\end{equation}
		where $C$ is a universal constant independent of $\mathfrak{m}$.
	\end{lemma}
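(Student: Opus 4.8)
The plan is to apply the Littlewood--Paley projection $\mathrm{P}_\mu$ to equation (\ref{ZEQN:NewPrOpHomgCopy}), pair the result against $\mathrm{P}_\mu u$ in $L^2_x$, and extract the desired differential inequality from the contribution of the $\mathfrak{P}_B$ term alone. First I would write
\[
    \left(\partial_t-\mathrm{i}\triangle\right)\mathrm{P}_\mu u = -\mathrm{P}_\mu\mathfrak{P}_B u
\;,\]
take the real part of the inner product with $\mathrm{P}_\mu u$, and observe that $\mathrm{Re}\left(\mathrm{i}\triangle\mathrm{P}_\mu u,\mathrm{P}_\mu u\right)_{L^2_x}=0$, so that
\[
    \left\|\mathrm{P}_\mu u(t)\right\|_{L^2_x}\,\partial_t\left\|\mathrm{P}_\mu u(t)\right\|_{L^2_x}
    = -\mathrm{Re}\left(\mathrm{P}_\mu\mathfrak{P}_B u,\,\mathrm{P}_\mu u\right)_{L^2_x}
\;.\]
After dividing by $\left\|\mathrm{P}_\mu u(t)\right\|_{L^2_x}$ it suffices to bound $\left\|\mathrm{P}_\mu\mathfrak{P}_B u(t)\right\|_{L^2_x}$ by the right-hand side of (\ref{ZEQN:UniqLem1_EQN01}); in fact it is cleaner to keep the bilinear pairing, since the key cancellation lives there.

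The main work is the frequency analysis of $\mathrm{P}_\mu\mathfrak{P}_B u$. By definition $\mathfrak{P}_B u=\sum_\lambda\left[\mathrm{P}_{\le 2^{-5}\lambda}B_i\,\mathrm{P}_\lambda\partial_i u+\mathrm{P}_\lambda\left(\mathrm{P}_{\le 2^{-5}\lambda}B_i\,\partial_i u\right)\right]$, and because the inner factor $B$ is frequency-localised to $\lesssim 2^{-5}\lambda$ while $\mathrm{P}_\lambda$ localises the high factor, each summand has output frequency $\approx\lambda$; applying $\mathrm{P}_\mu$ then restricts to $\left|\log_2(\lambda/\mu)\right|\le 5$ (up to adjusting the numerical constant), which already produces the summation range in (\ref{ZEQN:UniqLem1_EQN01}). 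The remaining issue is the derivative $\partial_i$ hitting $\mathrm{P}_\lambda u$, which costs a factor $\lambda\approx\mu$ and must be absorbed. This is where $\mathrm{div}\,B=0$ enters: writing $\mathrm{P}_{\le 2^{-5}\lambda}B_i\,\partial_i w = \partial_i\left(\mathrm{P}_{\le 2^{-5}\lambda}B_i\, w\right)-\left(\partial_i\mathrm{P}_{\le 2^{-5}\lambda}B_i\right)w$ and noting $\partial_i\mathrm{P}_{\le 2^{-5}\lambda}B_i=\mathrm{P}_{\le 2^{-5}\lambda}(\mathrm{div}\,B)=0$, one trades $B_i\partial_i$ for $\partial_i(B_i\,\cdot\,)$. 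In the pairing $\left(\mathrm{P}_\mu\mathfrak{P}_B u,\mathrm{P}_\mu u\right)_{L^2_x}$ one then integrates this $\partial_i$ by parts onto the other $\mathrm{P}_\mu u$ factor; the two resulting half-derivative terms are symmetric and combine, leaving the remainder to be estimated. I would handle the $\mathrm{P}_\lambda(\cdots)$ summand analogously, moving the outer $\mathrm{P}_\lambda$ onto the $\mathrm{P}_\mu u$ factor (legitimate since $\mathrm{P}_\mu\mathrm{P}_\lambda$ is again a bounded multiplier supported where $\lambda\approx\mu$).

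After the integration by parts, the surviving terms are of the form $\left(\mathrm{P}_{\le 2^{-5}\lambda}B_i\,\mathrm{P}_\lambda u,\,\partial_i\mathrm{P}_\mu u\right)$-type but with the derivative now split symmetrically, or else genuinely lower order; in all of them one factor carries $\nabla B$ and no net derivative falls on $u$ beyond what can be moved to $\mathrm{P}_\mu u$ and re-extracted as $\|\mathrm{P}_\mu u\|_{L^2_x}$. Concretely one estimates by Hölder, $\left\|\mathrm{P}_{\le 2^{-5}\lambda}B_i\right\|_{L^\infty_x}\lesssim\left\|B(t)\right\|_{L^\infty_x}$ for the zeroth-order piece and $\left\|\nabla\mathrm{P}_{\le 2^{-5}\lambda}B_i\right\|_{L^\infty_x}\lesssim\left\|\nabla B(t)\right\|_{L^\infty_x}$ for the commutator piece — but one must check that the apparently dangerous term carrying a full $\lambda\approx\mu$ and only $\|B\|_{L^\infty}$ is precisely the one killed by symmetrisation, so that only $\|\nabla B(t)\|_{L^\infty_x}$ remains, as claimed. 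The main obstacle, then, is this bookkeeping: organising the paraproduct so that the top-order term cancels via $\mathrm{div}\,B=0$ plus symmetry, and verifying that every residual term genuinely carries a copy of $\nabla B(t)$ rather than $B(t)$, with all the $\mathrm{P}_\mu u$, $\mathrm{P}_\lambda u$ factors producing exactly the finite restricted sum on the right of (\ref{ZEQN:UniqLem1_EQN01}) with a constant independent of $\mathfrak{m}$ (which is automatic, since $\mathfrak{m}$ never appears in this $L^2_x$-level computation). A minor point to dispatch at the end is that $t\mapsto\|\mathrm{P}_\mu u(t)\|_{L^2_x}$ is differentiable a.e. with the stated bound — this follows from $\mathrm{P}_\mu u\in C_t L^2_x$ (by the equation, since $\triangle\mathrm{P}_\mu u$ and $\mathrm{P}_\mu\mathfrak{P}_B u$ lie in $L^\infty_t L^2_x$) together with the distributional identity above.
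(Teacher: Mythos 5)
Your overall architecture matches the paper's: commuting $\mathrm{P}_\mu$ through the equation, using $\mathrm{div}\,B=0$ to make $\mathrm{Re}\left(\mathfrak{P}_B\mathrm{P}_\mu u,\mathrm{P}_\mu u\right)_{L^2_x}$ vanish, and reading off the restricted sum $\left|\log_2(\lambda/\mu)\right|\le 5$ from the frequency localisation of $\mathfrak{P}_B$. You are also right that one must keep the bilinear pairing rather than bound $\left\|\mathrm{P}_\mu\mathfrak{P}_Bu\right\|_{L^2_x}$ outright, since the latter is genuinely of size $\mu\left\|B\right\|_{L^\infty_x}\left\|\mathrm{P}_{\approx\mu}u\right\|_{L^2_x}$.

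The gap is in the step you defer as ``bookkeeping'': you never exhibit a mechanism that places a gradient on $B$. The Leibniz identity $B_i\partial_iw=\partial_i(B_iw)-(\partial_iB_i)w$ combined with $\mathrm{div}\,B=0$ only ever produces the divergence of $B$, which is zero; it cannot generate the factor $\left\|\nabla B(t)\right\|_{L^\infty_x}$ that the lemma asserts. Your phrase about estimating ``$\|\nabla\mathrm{P}_{\le 2^{-5}\lambda}B_i\|_{L^\infty_x}$ for the commutator piece'' presupposes a term in which $\nabla B$ appears explicitly, but no step in your outline creates one, and after your symmetrisation the surviving remainder is not of Leibniz type — it is the operator commutator $[\mathrm{P}_\mu,\mathrm{P}_{<2^{-5}\lambda}B_i]\partial_i\mathrm{P}_\lambda u$. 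The paper estimates this via the kernel representation
\[
	\left(\mathrm{P}_{<2^{-5}\lambda}B_i\,\mathrm{P}_\lambda\mathrm{P}_\mu\partial_iu-\mathrm{P}_\mu\left(\mathrm{P}_{<2^{-5}\lambda}B_i\,\mathrm{P}_\lambda\partial_iu\right)\right)(x)
	=\int_{\mathbb{R}^2}\widecheck{\varphi_\mu}(x-y)\left(\mathrm{P}_{<2^{-5}\lambda}B_i(x)-\mathrm{P}_{<2^{-5}\lambda}B_i(y)\right)\partial_i\mathrm{P}_\lambda u(y)\,\mathrm{d}y
\;,\]
integrating by parts in $y$ to move $\partial_i$ onto $\widecheck{\varphi_\mu}$ (the term where it falls on $B_i(y)$ vanishing precisely by $\mathrm{div}\,B=0$), applying the mean value bound $\left|B(x)-B(y)\right|\le\left\|\nabla B\right\|_{L^\infty_x}\left|x-y\right|$, and concluding with Young's inequality against the kernel $|x|\left|\nabla\widecheck{\varphi_\mu}(x)\right|$, whose $L^1_x$ norm is $O(1)$ uniformly in $\mu$ because the factor $\mu$ incurred by $\nabla\widecheck{\varphi_\mu}$ is exactly cancelled by the factor $\mu^{-1}$ from $|x|$. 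This mean-value/kernel step is where the derivative loss is converted into $\left\|\nabla B(t)\right\|_{L^\infty_x}$; it is the entire analytic content of the lemma, and without it (or an equivalent argument exploiting the Lipschitz regularity of the symbol $\varphi_\mu$ at scale $\mu$) your scheme does not close.
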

	\begin{proof}
		Since $u$ solves (\ref{ZEQN:NewPrOpHomgCopy}), we have
		\begin{equation}\label{ZEQN:UniqLem1_EQN11}
			\left(\partial_t-\mathrm{i}\triangle+\mathfrak{P}_B\right)\mathrm{P}_\mu u
			=
			\left(\mathfrak{P}_B\mathrm{P}_\mu - \mathrm{P}_\mu\mathfrak{P}_B \right)u
		\;.\end{equation}
		Now, from the definition we have
		\begin{equation*}\begin{split}
			\left(\mathfrak{P}_B\mathrm{P}_\mu - \mathrm{P}_\mu\mathfrak{P}_B \right)u
		=\;&
			\sum_{\lambda\;:\; \left|\log_2\left(\frac{\lambda}{\mu}\right)\right|\le 5}
			\left[
				\mathrm{P}_{<2^{-5}\lambda}B_i\;\mathrm{P}_\lambda\mathrm{P}_\mu\partial_iu
				- \mathrm{P}_\mu\left(\mathrm{P}_{<2^{-5}\lambda}B_i\;\mathrm{P}_\lambda\partial_iu\right)
			\right]
		\\&
			+
			\sum_{\lambda\;:\; \left|\log_2\left(\frac{\lambda}{\mu}\right)\right|\le 5}
			\mathrm{P}_\lambda
			\left[
				\mathrm{P}_{<2^{-5}\lambda}B_i\;\mathrm{P}_\mu\partial_iu
				- \mathrm{P}_\mu\left(\mathrm{P}_{<2^{-5}\lambda}B_i\; \partial_iu\right)
			\right]
		\\=:\;&
			\mathrm{I} + \mathrm{II}
		\;.\end{split}\end{equation*}

		We claim the estimate
		\begin{equation}\label{ZEQN:UniqLem1_EQN12}
			\left\|\mathrm{I}(t)\right\|_{L^2_x}
		\lesssim
			\left\|\nabla B(t)\right\|_{L^\infty_x}
			\sum_{\lambda\;:\;\left|\log_2\left(\frac{\lambda}{\mu}\right)\right|\le 5} \left\|\mathrm{P}_\lambda u(t)\right\|_{L^2_x}
		\;.\end{equation}
		Indeed, recalling that $\mathrm{div}\,B=0$, we have
		\begin{equation*}\begin{split}
			\Big|\mathrm{P}_{<2^{-5}\lambda}&B_i\;\mathrm{P}_\lambda\mathrm{P}_\mu\partial_i u
			-
			\mathrm{P}_\mu\big(\mathrm{P}_{<2^{-5}\lambda}B_i\;\mathrm{P}_\lambda\partial_i u\big)\Big|(t,x)
		\\=\;&
			\left|\int_{\mathbb{R}^2} \widecheck{\varphi_\mu}(x-y)
			\left(\mathrm{P}_{<2^{-5}\lambda}B_i(t,x)
			- \mathrm{P}_{<2^{-5}\lambda}B_i(t,y)\right)\partial_i\mathrm{P}_\lambda u(t,y)\;\mathrm{d}y\right|
		\\=\;&
			\left|\int_{\mathbb{R}^2} \partial_i\widecheck{\varphi_\mu}(x-y) \left(\mathrm{P}_{<2^{-5}\lambda}B_i(t,x)
			- \mathrm{P}_{<2^{-5}\lambda}B_i(t,y)\right)\mathrm{P}_\lambda u(t,y)\;\mathrm{d}y\right|
		\\\lesssim\;&
			\left\|\nabla B(t)\right\|_{L^\infty_x}
			\int_{\mathbb{R}^2} \left|x-y\right|\left|\nabla\widecheck{\varphi_\mu}(x-y)\right|\left|\mathrm{P}_\lambda u(t,y)\right|\,\mathrm{d}y
		\;.\end{split}\end{equation*}
		Applying Young's convolution inequality, and noting that $\left\|x\nabla\widecheck{\varphi_\mu}\right\|_{L^1_x}$ is a constant independent of $\mu$,
		we obtain
		\[
			\left\|\mathrm{P}_{<2^{-5}\lambda}B_i(t)\;\mathrm{P}_\lambda\mathrm{P}_\mu\partial_i u(t)
			-
			\mathrm{P}_\mu\big(\mathrm{P}_{<2^{-5}\lambda}B_i(t)\;\mathrm{P}_\lambda\partial_i u(t)\big)\right\|_{L^2_x}
		\lesssim
			\left\|\nabla B(t)\right\|_{L^\infty_x}
			\left\|\mathrm{P}_\lambda u(t)\right\|_{L^2_x}
		\;.\]
		Summing up over $\lambda$ gives the desired estimate (\ref{ZEQN:UniqLem1_EQN12}).

		We can prove a similar estimate for $\mathrm{II}(t)$. Precisely, we have
		\begin{equation}\label{ZEQN:UniqLem1_EQN13}
			\left\|\mathrm{II}(t)\right\|_{L^2_x}
		\lesssim
			\left\|\nabla B(t)\right\|_{L^\infty_x}
			\sum_{\lambda\;:\;\left|\log_2\left(\frac{\lambda}{\mu}\right)\right|\le 5} \left\|\mathrm{P}_\lambda u(t)\right\|_{L^2_x}
		\;.\end{equation}
		Indeed, observe that only frequency components of $u$ near $\mu$ will make a nonzero contribution to the sum defining $\mathrm{II}(t)$.
		Therefore, we have
		\[
			\mathrm{II}(t) =
			\sum_{\rho\;:\;\left|\log_2\left(\frac{\rho}{\mu}\right)\right|\le 5} \mathrm{P}_\rho
			\left(
				\sum_{\lambda\;:\;\left|\log_2\left(\frac{\lambda}{\mu}\right)\right|\le 5}
				\left[
					\mathrm{P}_{<2^{-5}\rho}B_i\;\mathrm{P}_\lambda\mathrm{P}_\mu\partial_i u
					- \mathrm{P}_\mu\left(\mathrm{P}_{<2^{-5}\rho}B_i\;\mathrm{P}_\lambda\partial_i u\right)
				\right]
			\right)
		\;.\]
		For each $\rho$, the expression $\mathrm{P}_\rho(\cdots)$ above can be estimated in the exact same manner as our estimate of $\mathrm{I}(t)$.
		Then, since we sum only over finitely many $\rho$, we obtain (\ref{ZEQN:UniqLem1_EQN13}) as a result.

		By combining the estimates (\ref{ZEQN:UniqLem1_EQN12}), (\ref{ZEQN:UniqLem1_EQN13}),
		we obtain
		\[
			\left\|\left(\left(\mathfrak{P}_B\mathrm{P}_\mu - \mathrm{P}_\mu\mathfrak{P}_B \right)u\right)(t)\right\|_{L^2_x}
			\lesssim
			\left\|\nabla B(t)\right\|_{L^\infty_x}
			\sum_{\lambda\;:\;\left|\log_2\left(\frac{\lambda}{\mu}\right)\right|\le 5} \left\|\mathrm{P}_\lambda u(t)\right\|_{L^2_x}
		\;.\]
		Hence, multiplying (\ref{ZEQN:UniqLem1_EQN11}) by $\overline{\mathrm{P}_\mu u}$ and integrating by parts, 
		which is justified since the terms in (\ref{ZEQN:UniqLem1_EQN11}) are smooth, we obtain
		\[
			\partial_t\left\|\mathrm{P}_\mu u(t)\right\|_{L^2_x}^2
			\lesssim
			\left\|\nabla B(t)\right\|_{L^\infty_x}\left\|\mathrm{P}_\mu u(t)\right\|_{L^2_x}
			\sum_{\lambda\;:\;\left|\log_2\left(\frac{\lambda}{\mu}\right)\right|\le 5} \left\|\mathrm{P}_\lambda u(t)\right\|_{L^2_x}
		\]
		which gives (\ref{ZEQN:UniqLem1_EQN01}).
	\end{proof}

	\begin{proof}[Proof of the uniqueness statement of Proposition \ref{ZPROP:NewPrOpHmLWP}]
		By linearity, we only need to prove that any $L^\infty_tH^{\mathfrak{m}}[T]$ solution to (\ref{ZEQN:NewPrOpHomgCopy})
		with initial data $u(t_0)=0$ must necessarily be zero.

		Let $\varepsilon_0=\varepsilon_0([\mathfrak{m}])>0$ be a small constant to be chosen later.
		Choose a sufficiently large positive integer $K$ such that,
		for any interval $I\subset[0,T)$ of length $\le 2TK^{-1}$,
		we have $\|\nabla B\|_{L^1_t(I,L^\infty_x)}\le\varepsilon_0$.
		Write $[0,T)$ as the union of the $K-1$ overlapping small intervals $[kTK^{-1}, (k+2)TK^{-1})$ with $0\le k\le K-2$.
		Therefore it suffices to show,
		if $J$ is one of these small intervals and there exists $t_J\in J$ such that $u(t_J)=0$,
		then $u$ is zero on $J$.

		For $t\in J$, integrating (\ref{ZEQN:UniqLem1_EQN01}) from $t_J$ to $t$ gives
		\[
			\mathfrak{m}(\mu)\left\|\mathrm{P}_\mu u(t)\right\|_{L^2_x}
			\le
			C([\mathfrak{m}])
			\int_J \left\|\nabla B(t')\right\|_{L^\infty_x}
			\sum_{\lambda\;:\;\left|\log_2\left(\frac{\lambda}{\mu}\right)\right|\le 5} \mathfrak{m}(\lambda)\left\|\mathrm{P}_\lambda u(t')\right\|_{L^2_x}
			\,\mathrm{d}t'
		\]
		where the constant $C([\mathfrak{m}])$ comes from (\ref{ZEQN:SobWtRmk_EQN01}).
		Squaring both sides and applying Cauchy-Schwarz, we obtain
		\[
			\mathfrak{m}(\mu)^2\left\|\mathrm{P}_\mu u(t)\right\|_{L^2_x}^2
			\le
			C([\mathfrak{m}])
			\varepsilon_0
			\int_J \left\|\nabla B(t')\right\|_{L^\infty_x}
			\sum_{\lambda\;:\;\left|\log_2\left(\frac{\lambda}{\mu}\right)\right|\le 5} \mathfrak{m}(\lambda)^2\left\|\mathrm{P}_\lambda u(t')\right\|_{L^2_x}^2
			\,\mathrm{d}t'
		\;.\]
		By summing over $\mu$ and taking the supremum over $t\in J$, we deduce
		\[
			\left\|u\right\|_{L^\infty_tH^{\mathfrak{m}}[J]}^2
			\le
			C([\mathfrak{m}])
			\varepsilon_0^2
			\left\|u\right\|_{L^\infty_tH^{\mathfrak{m}}[J]}^2
		\;.\]
		Hence, if $\varepsilon_0$ were chosen small enough so that $C([\mathfrak{m}])\varepsilon_0^2<1$,
		then $\|u\|_{L^\infty_tH^{\mathfrak{m}}[J]}=0$ as required.
	\end{proof}

\subsection{Existence}
	We now turn our attention to the existence statement of Proposition \ref{ZPROP:NewPrOpHmLWP}.
	We first prove existence of solutions in the special case $H^{\mathfrak{m}}=L^2_x$.
	This is accomplished in Lemma \ref{ZLEM:NewPrOpL2Exstnce} by extracting a weak-star limit of solutions to regularised equations,
	which is possible due to the condition $\mathrm{div}\,B=0$.

	\begin{lemma} \label{ZLEM:NewPrOpL2Exstnce}
		Let $B$ be an admissible form. Let $T\in(0,1]$ and $t_0\in[0,t_0)$.
		Then, given $u^{\mathrm{in}}\in L^2_x$, there exists $u\in C_tL^2_x[T]$ solving (\ref{ZEQN:NewPrOpHomgCopy})
		such that $u$ is the unique $L^\infty_tL^2_x[T]$ weak-star limit of solutions to the regularised equations
		\begin{equation} \label{ZEQN:NewPrOpL2LWP_EQN01}
			\left\{
			\begin{aligned}
				\left(\partial_t-\mathrm{i}\triangle\right)u_\mu &= \chi_\mu(\mathrm{D}_x)\mathfrak{P}_Bu_\mu
			\;,\\
				u_\mu(t_0) &= \chi_\mu(\mathrm{D}_x)u^{\mathrm{in}}
			\end{aligned}\right.
		\end{equation}
		as $\mathfrak{D}\in\mu\to\infty$, where $\chi_\mu$ is the indicator function of the ball of radius $\mu$ in $\mathbb{R}^2$.
		Furthermore,
		\begin{equation} \label{ZEQN:NewPrOpL2LWP_EQN02}
			\left\|u(t)\right\|_{L^2_x} = \left\|u^{\mathrm{in}}\right\|_{L^2_x}
			\quad\mbox{for all } t \in [0,T)
		\;.\end{equation}
	\end{lemma}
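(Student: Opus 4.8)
The plan is to produce $u$ as a weak-$\ast$ limit of solutions to the regularised equations (\ref{ZEQN:NewPrOpL2LWP_EQN01}), to use the condition $\mathrm{div}\,B=0$ to bound those solutions uniformly in $\mu$, and then to invoke the already-established uniqueness statement of Proposition \ref{ZPROP:NewPrOpHmLWP} to pin down the limit and to upgrade the uniform bound to the conservation law (\ref{ZEQN:NewPrOpL2LWP_EQN02}). To begin with, for each fixed $\mu$ the equation (\ref{ZEQN:NewPrOpL2LWP_EQN01}) is uniquely solvable in $C_tL^2_x[T]$: the point is that $\mathfrak{P}_B$ loses exactly one derivative, since each summand of $\mathfrak{P}_Bw$ is frequency-localised near the dyadic scale $\lambda$ and $B\in L^\infty_{t,x}[1]$, so that Bernstein's inequality gives $\|\mathfrak{P}_Bw\|_{H^{-1}_x}\lesssim\|B\|_{L^\infty_x}\|w\|_{L^2_x}$ (in particular the defining series converges in $H^{-1}_x$). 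Hence $\chi_\mu(\mathrm{D}_x)\mathfrak{P}_B$ is a bounded operator on $L^2_x$ whose operator norm is $\lesssim\mu\|B\|_{L^\infty_{t,x}[1]}$ at almost every time, and the Duhamel formulation of (\ref{ZEQN:NewPrOpL2LWP_EQN01}) is solved by Picard iteration. Since $\chi_\mu(\mathrm{D}_x)u^{\mathrm{in}}$ has frequency support in $\{|\xi|\le\mu\}$ and both $\mathrm{e}^{\mathrm{i}t\triangle}$ and $\chi_\mu(\mathrm{D}_x)$ respect this support, $u_\mu(t)$ stays supported in $\{|\xi|\le\mu\}$, so that $\chi_\mu(\mathrm{D}_x)u_\mu=u_\mu$.

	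The uniform bound comes from the skew-adjointness of $\mathfrak{P}_B$. Integrating by parts and using that $B$ is real-valued and that $\mathrm{div}\,B=0$ (which annihilates the commutator term involving $\partial_iB_i$), one checks that the $L^2_x$-adjoint of $w\mapsto\mathrm{P}_{\le2^{-5}\lambda}B_i\,\mathrm{P}_\lambda\partial_iw$ is $w\mapsto-\mathrm{P}_\lambda(\mathrm{P}_{\le2^{-5}\lambda}B_i\,\partial_iw)$ and vice versa, whence $\mathfrak{P}_B$ is skew-adjoint on $L^2_x$. Pairing (\ref{ZEQN:NewPrOpL2LWP_EQN01}) with $\overline{u_\mu}$ and integrating over $\mathbb{R}^2_x$, the contribution of $\mathrm{i}\triangle$ has vanishing real part, and so does that of $\chi_\mu(\mathrm{D}_x)\mathfrak{P}_B$ once $\chi_\mu(\mathrm{D}_x)$ is transferred onto $u_\mu$ (using $\chi_\mu(\mathrm{D}_x)u_\mu=u_\mu$) and skew-adjointness is applied. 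Hence $\partial_t\|u_\mu(t)\|_{L^2_x}^2=0$, and $\|u_\mu(t)\|_{L^2_x}=\|\chi_\mu(\mathrm{D}_x)u^{\mathrm{in}}\|_{L^2_x}\le\|u^{\mathrm{in}}\|_{L^2_x}$ for all $t\in[0,T)$, uniformly in $\mu$.

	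Consequently $\{u_\mu\}$ is bounded in $L^\infty_tL^2_x[T]=(L^1_tL^2_x[T])^\ast$, so by Banach--Alaoglu it has weak-$\ast$ limit points. Let $u$ be one, attained along a subsequence $\mu_j\to\infty$. From the $H^{-1}_x$ estimate above and the equation, $\{\partial_tu_{\mu_j}\}$ is bounded in $L^\infty_tH^{-2}_x[T]$, so the $u_{\mu_j}$ are uniformly Lipschitz in time with values in $H^{-2}_x$; combining this equicontinuity with the uniform $L^\infty_tL^2_x$ bound and a compactness argument in $t$ (applied to the scalar functions $t\mapsto\langle u_{\mu_j}(t),h\rangle_{L^2_x}$ for $h$ in a countable dense subset of $L^2_x$) shows that $u_{\mu_j}(t)$ converges weakly in $L^2_x$ to $u(t)$ for every individual $t$. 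In particular $u$ is weakly continuous from $[0,T)$ into $L^2_x$, $u(t_0)=\lim_j\chi_{\mu_j}(\mathrm{D}_x)u^{\mathrm{in}}=u^{\mathrm{in}}$, and, passing to the limit in the weak formulation of (\ref{ZEQN:NewPrOpL2LWP_EQN01})---transferring $\mathrm{i}\triangle$, $\chi_{\mu_j}(\mathrm{D}_x)$ and $\mathfrak{P}_B$ onto the test function, using that $\chi_{\mu_j}(\mathrm{D}_x)$ converges to the identity on Schwartz functions and that $\mathfrak{P}_B$ maps Schwartz functions continuously into $L^1_tL^2_x[T]$---one finds that $u$ solves (\ref{ZEQN:NewPrOpHomgCopy}) in $\mathcal{D}'((0,T)\times\mathbb{R}^2_x)$. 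By the uniqueness statement of Proposition \ref{ZPROP:NewPrOpHmLWP}, applied with $\mathfrak{m}\equiv1$, such a $u$ is unique; hence every weak-$\ast$ limit point coincides with it, and the whole family $\{u_\mu\}$ converges weak-$\ast$ to $u$ in $L^\infty_tL^2_x[T]$.

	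It remains to establish (\ref{ZEQN:NewPrOpL2LWP_EQN02}) and strong continuity. Weak lower semicontinuity of the $L^2_x$ norm together with the pointwise weak convergence gives $\|u(t)\|_{L^2_x}\le\|u^{\mathrm{in}}\|_{L^2_x}$ for all $t$. To promote this to an equality, fix $t_1\in[0,T)$ and repeat the weak-$\ast$ compactness argument of the previous paragraphs with initial time $t_1$ and data $u(t_1)$: the resulting curve is an $L^\infty_tL^2_x[T]$ solution of (\ref{ZEQN:NewPrOpHomgCopy}) with norm at most $\|u(t_1)\|_{L^2_x}$ that agrees with $u$ at time $t_1$, so it coincides with $u$ by uniqueness; therefore $\|u(t)\|_{L^2_x}\le\|u(t_1)\|_{L^2_x}$ for all $t,t_1\in[0,T)$, which by symmetry forces $\|u(\cdot)\|_{L^2_x}\equiv\|u^{\mathrm{in}}\|_{L^2_x}$. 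Finally, a weakly continuous $L^2_x$-valued curve with constant norm is norm-continuous, so $u\in C_tL^2_x[T]$. I expect the main difficulty to be not any single estimate but rather extracting these pointwise-in-time conclusions---the initial condition, the conservation law, strong continuity---from the merely weak-$\ast$ convergence in $L^\infty_tL^2_x$, which is precisely what forces the two structural inputs used above: the $H^{-1}_x$ bound on $\mathfrak{P}_B$, which provides the temporal equicontinuity needed to speak of $u(t)$ at a single $t$, and the skew-adjointness coming from $\mathrm{div}\,B=0$, without which the regularised energy estimate would give only a bound with an exponential factor in $\|\nabla B\|_{L^1_tL^\infty_x}$ rather than the identity (\ref{ZEQN:NewPrOpL2LWP_EQN02}).
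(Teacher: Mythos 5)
Your proof is correct and follows essentially the same route as the paper's: regularise with $\chi_\mu(\mathrm{D}_x)$, obtain the uniform $L^2_x$ bound from $\mathrm{div}\,B=0$, extract a weak-star limit, identify it as the unique $L^\infty_tL^2_x[T]$ solution (so that the whole family converges), derive the conservation law by re-running the construction from an arbitrary intermediate time and invoking uniqueness, and upgrade to strong continuity via uniform convexity. The one substantive refinement is that you correctly record $\mathfrak{P}_B$ as \emph{skew}-adjoint on $L^2_x$ (the paper calls it ``formally symmetric''), which is indeed the property that makes $\mathrm{Re}\left(\mathfrak{P}_Bu,u\right)_{L^2_x}$ vanish; your added care about pointwise-in-time weak convergence via $H^{-2}$-equicontinuity is also a legitimate (and slightly more explicit) way to make sense of $u(t_0)=u^{\mathrm{in}}$ and of $\|u(t)\|_{L^2_x}$ at individual times.
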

	\begin{proof}
		The proof is a standard application of the energy method.
		The point is, since $\mathrm{div}\,B=0$, the operator $\mathfrak{P}_B$ is formally symmetric on $L^2_x$,
		and so the evolution of $(\partial_t-\mathrm{i}\triangle+\mathfrak{P}_B)$ conserves the $L^2_x$ norm.
		We provide the details for the sake of completeness.
		
		For ease of exposition, we assume that $t_0=0$ and remark that the proof below immediately generalises to any other
		initial time in $[0,1)$.
	
		For every $\mu\in\mathfrak{D}$, the right-hand side of the evolution equation in (\ref{ZEQN:NewPrOpL2LWP_EQN01})
		is continuous linear on $L^2_x$ with norm $\lesssim\mu\|B(t)\|_{L^\infty_x}$.
		Hence, (\ref{ZEQN:NewPrOpL2LWP_EQN01}) has a unique solution for given initial data $u^{\mathrm{in}}\in L^2_x$.
		This solution has compact frequency support and is thus smooth.
		Therefore, we may multiply by $\overline{u_\mu}$ and integrate by parts to obtain $\partial_t\|u_\mu\|_{L^2_x}^2=0$.
		We conclude $\|u_\mu\|_{L^\infty_tL^2_x[T]}\le\|u^{\mathrm{in}}\|_{L^2_x}$.
		
		By weak-star sequential compactness we may extract a subsequence $u_{\mu_k}\stackrel{\star}{\rightharpoonup}u$
		in $L^\infty_tL^2_x[T]$.
		Then we have
		\[
			\left\|u\right\|_{L^\infty_tL^2_x[T]}\le \left\|u^{\mathrm{in}}\right\|_{L^2_x}
		\;.\]
		In particular, by linearity, this limit is unique: If $\|u^{\mathrm{in}}\|_{L^2_x}=0$ then $u=0$.
		
		Since $\mathfrak{P}_B$ is formally symmetric, for any $v\in C_{\mathrm{b}}([0,T],H^1)$ we have
		\begin{equation*}\begin{split}
			\int_0^T \left(u(t),v(t)\right)_{L^2_x}\mathrm{d}t
		=\;&
			\lim_{k\to\infty} \int_0^T \left(u_{\mu_k}(t),v(t)\right)_{L^2_x}\mathrm{d}t
		\\=\;&
			\lim_{k\to\infty} \int_0^T\left(\mathrm{e}^{\mathrm{i}t\triangle}u^{\mathrm{in}}
				+ \int_0^t\mathrm{e}^{\mathrm{i}(t-t')\triangle}\mathfrak{P}_{B(t')}u_{\mu_k}(t')\,\mathrm{d}t' \;,\;
				\chi_{\mu_k}(\mathrm{D}_x)v(t)\right)_{L^2_x}\mathrm{d}t
		\\=\;&
			\lim_{k\to\infty}
				\left(u^{\mathrm{in}} \;,\; \chi_{\mu_k}(\mathrm{D}_x)\int_0^T \mathrm{e}^{-\mathrm{i}t\triangle}v(t)\,\mathrm{d}t\right)_{L^2_x}
		\\&
			+\lim_{k\to\infty}
				\int_0^T \left(u_{\mu_k}(t') \;,\; \mathfrak{P}_{B(t')}\int_{t'}^T
					\mathrm{e}^{-\mathrm{i}(t-t')\triangle}\chi_{\mu_k}(\mathrm{D}_x)v(t')\,\mathrm{d}t\right)_{L^2_x}\mathrm{d}t'
		\\=\;&
			\left(u^{\mathrm{in}},\int_0^T \mathrm{e}^{-\mathrm{i}t\triangle}v(t)\,\mathrm{d}t\right)_{L^2_x}
		\\&+
			\int_0^T \left(u(t'),\mathfrak{P}_{B(t')}\int_{t'}^T\mathrm{e}^{-\mathrm{i}(t-t')\triangle}v(t)\,\mathrm{d}t'\right)_{L^2_x}\mathrm{d}t'
		\\=\;&
			\int_0^T \left\langle
				\mathrm{e}^{\mathrm{i}t\triangle}u^{\mathrm{in}}
				+ \int_0^t\mathrm{e}^{\mathrm{i}(t-t')\triangle}\mathfrak{P}_{B(t')}u(t')\,\mathrm{d}t'
				\;,\; v(t)\right\rangle_{H^{-1},H^1}\mathrm{d}t
		\;.\end{split}\end{equation*}
		This verifies that
		\[
			u(t) = 
			\mathrm{e}^{\mathrm{i}t\triangle}u^{\mathrm{in}}
				+ \int_0^t\mathrm{e}^{\mathrm{i}(t-t')\triangle}\mathfrak{P}_{B(t')}u(t')\,\mathrm{d}t'
		\]
		as Bochner integrals into $H^{-1}$. In particular, $u$ solves (\ref{ZEQN:NewPrOpHomgCopy}) with initial data $u^{\mathrm{in}}$.
		
		Now, we may also solve (\ref{ZEQN:NewPrOpL2LWP_EQN01})
		backwards from any time in $[0,T)$.
		By applying the same argument above, we have
		\[
			\left\|u(0)\right\|_{L^2_x}\le \left\|u(t)\right\|_{L^2_x}
		\;.\]
		This verifies (\ref{ZEQN:NewPrOpL2LWP_EQN02}).
		
		Finally, as $\partial_tu\in L^\infty_tH^{-2}[T]$, we have $u\in C_{\mathrm{b}}H^{-2}[T]$.
		Since $\|u(t)\|_{L^2_x}$ is conserved and $L^2_x$ is a uniformly convex space, we deduce that $u\in C_{\mathrm{b}}L^2_x[T]$.
	\end{proof}
	
	We must now upgrade our $L^2_x$ existence result to other $H^{\mathfrak{m}}$ spaces.
	It is natural to split the given initial data into its frequency components $\mathrm{P}_\nu u^{\mathrm{in}}$ and
	solve (\ref{ZEQN:NewPrOpHomgCopy}) to get an $L^2_x$ solution $u_\nu$ with initial data $u_\nu(t_0)=\mathrm{P}_\nu u^{\mathrm{in}}$ for each $\nu$.
	Then, by linearity, an obvious candidate for the solution with initial data $u^{\mathrm{in}}$ is $u=\sum_\nu u_\nu$.
	However, since the evolution of (\ref{ZEQN:NewPrOpHomgCopy}) does not preserve the frequency support,
	it is not immediately obvious that the sum $\sum_\nu u_\nu$ converges in $L^\infty_tH^{\mathfrak{m}}[T]$.

	The fact that $B$ is an admissible form will be sufficient to guarantee this convergence.
	The key idea is that initial data, localised about a frequency scale $\nu$, will launch a solution
	which, within a fixed time interval, transfers only a very small amount of mass to frequency scales vastly different from $\nu$.
	The following lemma contains the precise, quantitative formulation of this idea.

	\begin{lemma} \label{ZLEM:NewPrOpMassXfer}
		Let $B$ be any admissible form.
		Let $T\in(0,1]$ and let $t_0\in[0,T)$.
		Let $\nu\in\mathfrak{D}$
		and let $v$ be a solution on $[0,T)$ to (\ref{ZEQN:NewPrOpHomgCopy}),
		whose initial data $v(t_0)\in L^2_x$ is frequency supported in $\{\frac{1}{2}\nu\le|\xi|\le 2\nu\}$.
		Then for $\ell\in\mathbb{Z}_{\ge 0}$, we have
		\begin{equation}\label{ZEQN:NewPrOpMassXfer_EQN01}
			\left\|\mathrm{P}_\mu v\right\|_{L^\infty_tL^2_x[T]}
			\le
				\frac{\left(C_0\|\nabla B\|_{L^1_tL^\infty_x[1]}\right)^{\ell}}{\ell!} \left\|v(t_0)\right\|_{L^2_x}
			\quad\mbox{whenever }
			\left|\log_2\left(\frac{\mu}{\nu}\right)\right|\ge 5\ell
		\;.\end{equation}
		Here $C_0>0$ is a universal constant independent of $T, \nu$ or $\ell$.
	\end{lemma}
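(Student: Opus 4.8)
The plan is to prove the estimate by induction on $\ell$. The base case $\ell=0$ is immediate from the $L^2$ conservation law (\ref{ZEQN:NewPrOpL2LWP_EQN02}), which gives $\|\mathrm{P}_\mu v\|_{L^\infty_tL^2_x[T]}\le\|v\|_{L^\infty_tL^2_x[T]}=\|v(t_0)\|_{L^2_x}$ for every $\mu$. For the inductive step, the key observation is that the commutator $(\mathfrak{P}_B\mathrm{P}_\mu-\mathrm{P}_\mu\mathfrak{P}_B)$ only couples $\mathrm{P}_\mu v$ to frequency components $\mathrm{P}_\lambda v$ with $|\log_2(\lambda/\mu)|\le 5$ (this is exactly what Lemma \ref{ZLEM:UniqLem1} exploits), so a single application of the commutator estimate moves at most $5$ dyadic scales. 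Thus from the equation (\ref{ZEQN:UniqLem1_EQN11}) for $\mathrm{P}_\mu v$, writing it in Duhamel form against the free Schr\"odinger propagator, and using that the linear evolution $(\partial_t-\mathrm{i}\triangle+\mathfrak{P}_B)$ conserves $L^2_x$, I would obtain
\[
	\left\|\mathrm{P}_\mu v(t)\right\|_{L^2_x}
	\le
	C\int_{t_0}^t \left\|\nabla B(t')\right\|_{L^\infty_x}
		\sum_{\lambda\,:\,|\log_2(\lambda/\mu)|\le 5}\left\|\mathrm{P}_\lambda v(t')\right\|_{L^2_x}\,\mathrm{d}t'
\]
whenever $\mathrm{P}_\mu v(t_0)=0$, i.e. whenever $|\log_2(\mu/\nu)|\ge 5$; this follows by integrating (\ref{ZEQN:UniqLem1_EQN01}) from $t_0$.

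Now suppose $|\log_2(\mu/\nu)|\ge 5(\ell+1)$. Then every $\lambda$ appearing in the sum above satisfies $|\log_2(\lambda/\nu)|\ge 5(\ell+1)-5=5\ell$, so the inductive hypothesis applies to each such term:
\[
	\left\|\mathrm{P}_\lambda v\right\|_{L^\infty_tL^2_x[T]}
	\le
	\frac{\left(C_0\|\nabla B\|_{L^1_tL^\infty_x[1]}\right)^{\ell}}{\ell!}\left\|v(t_0)\right\|_{L^2_x}
\;.
\]
Substituting this into the integral inequality, noting that the sum over $\lambda$ has only a bounded number (independent of $\mu,\nu,\ell$) of terms, and taking the supremum over $t\in[0,T)$, I get
\[
	\left\|\mathrm{P}_\mu v\right\|_{L^\infty_tL^2_x[T]}
	\le
	C\,\|\nabla B\|_{L^1_tL^\infty_x[T]}\cdot
	\frac{\left(C_0\|\nabla B\|_{L^1_tL^\infty_x[1]}\right)^{\ell}}{\ell!}\left\|v(t_0)\right\|_{L^2_x}
\;,
\]
and since $T\le 1$ we may bound $\|\nabla B\|_{L^1_tL^\infty_x[T]}\le\|\nabla B\|_{L^1_tL^\infty_x[1]}$. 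Choosing $C_0$ to absorb the universal constant $C$ (and the cardinality of the $\lambda$-sum) in such a way that $C/C_0\le 1/(\ell+1)$ uniformly — which is possible because $C$ is universal, so one may simply take $C_0\ge C$ and then the factor $1/(\ell+1)$ is gained from the extra power — gives the claimed bound at level $\ell+1$, namely with $(C_0\|\nabla B\|_{L^1_tL^\infty_x[1]})^{\ell+1}/(\ell+1)!$.

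The one point requiring care is the justification of the Duhamel representation and the integration by parts leading to the differential inequality (\ref{ZEQN:UniqLem1_EQN01}) for a solution that is merely in $L^\infty_tL^2_x[T]$ rather than smooth; but this is already handled by Lemma \ref{ZLEM:UniqLem1}, whose proof applies verbatim here since $v$ is an $L^2_x$ solution, and the smoothness of the individual terms in (\ref{ZEQN:UniqLem1_EQN11}) needed to justify the integration by parts comes from the frequency localisations. The main (mild) obstacle is purely bookkeeping: verifying that the constant $C_0$ can be chosen once and for all, independently of $\ell$, so that the combinatorial gain $1/\ell!$ propagates cleanly through the induction; this works precisely because the number of frequencies $\lambda$ with $|\log_2(\lambda/\mu)|\le 5$ is a fixed constant and the constant $C$ in the commutator estimate of Lemma \ref{ZLEM:UniqLem1} is universal.
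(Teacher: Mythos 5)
Your overall strategy (induction on $\ell$, base case from $L^2_x$ conservation, inductive step driven by the commutator differential inequality of Lemma \ref{ZLEM:UniqLem1}) is the same as the paper's, but there is a genuine gap in the inductive step: the factorial $1/\ell!$ does not propagate the way you claim. If you substitute the $L^\infty_t$-in-time inductive bound
$\|\mathrm{P}_\lambda v\|_{L^\infty_tL^2_x[T]}\le \frac{(C_0\|\nabla B\|_{L^1_tL^\infty_x[1]})^{\ell}}{\ell!}\|v(t_0)\|_{L^2_x}$
into the integral inequality, you obtain a bound of the form
$C\|\nabla B\|_{L^1_tL^\infty_x[1]}\cdot\frac{(C_0\|\nabla B\|_{L^1_tL^\infty_x[1]})^{\ell}}{\ell!}\|v(t_0)\|_{L^2_x}$,
and to convert this into $\frac{(C_0\|\nabla B\|_{L^1_tL^\infty_x[1]})^{\ell+1}}{(\ell+1)!}\|v(t_0)\|_{L^2_x}$ you need $C\le C_0/(\ell+1)$, i.e. $C_0\ge (\ell+1)C$. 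This cannot be arranged with a single $C_0$ independent of $\ell$; simply taking $C_0\ge C$ does \emph{not} "gain the factor $1/(\ell+1)$ from the extra power". As written, your induction only yields $C^\ell\,\|\nabla B\|^\ell$ without the $1/\ell!$, which is not the claimed estimate (and the factorial is exactly what the rest of the paper needs to sum the series into an exponential in (\ref{ZEQN:NewPrOpHmLWP_EQN11})).

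The fix — and this is what the paper does — is to strengthen the induction hypothesis to a pointwise-in-time, time-ordered bound. Concretely, one proves by induction that whenever $|\log_2(\mu/\nu)|\ge 5\ell$,
\[
	\left\|\mathrm{P}_\mu v(t)\right\|_{L^2_x}
	\le
	C_0^\ell\int_{t_0}^t\int_{t_0}^{t_\ell}\cdots\int_{t_0}^{t_2}\prod_{m=1}^\ell\left\|\nabla B(t_m)\right\|_{L^\infty_x}\mathrm{d}t_1\cdots\mathrm{d}t_\ell\,\left\|v(t_0)\right\|_{L^2_x}
\;,\]
equivalently $\left\|\mathrm{P}_\mu v(t)\right\|_{L^2_x}\le \frac{(C_0F(t))^\ell}{\ell!}\|v(t_0)\|_{L^2_x}$ with $F(t):=\int_{t_0}^t\|\nabla B(t')\|_{L^\infty_x}\mathrm{d}t'$. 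Plugging this into (\ref{ZEQN:UniqLem1_EQN01}) and integrating from $t_0$ (using $\mathrm{P}_\mu v(t_0)=0$) produces $\int_{t_0}^tF'(t')F(t')^\ell\,\mathrm{d}t'=F(t)^{\ell+1}/(\ell+1)$, and it is this integration of $F'F^\ell$ — the volume of the time-ordered simplex — that supplies the extra factor $1/(\ell+1)$ and closes the induction with a fixed $C_0$ absorbing only the universal commutator constant and the bounded number of frequencies $\lambda$. Your version discards this gain by passing to the supremum in time before integrating. The rest of your argument (frequency bookkeeping, vanishing of $\mathrm{P}_\mu v(t_0)$ for $|\log_2(\mu/\nu)|\ge 5$, reduction to Lemma \ref{ZLEM:UniqLem1}) is correct.
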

	\begin{proof}
		For ease of exposition, we shall assume $t_0=0$ and remark that the proof for general $t_0$ is similar.
		Put $C_0:=20C$ where $C$ is the constant appearing in (\ref{ZEQN:UniqLem1_EQN01}).
		It suffices to prove the stronger estimate
		\begin{equation}\label{ZEQN:NewPrOpMassXfer_EQN11}\begin{split}
			\left\|\mathrm{P}_\mu v(t)\right\|_{L^2_x}
		\le
			C_0^\ell \int_0^t\int_0^{t_\ell}\cdots\int_0^{t_2}
				&\prod_{m=1}^\ell \left\|\nabla B(t_m)\right\|_{L^\infty_x}
				\mathrm{d}t_1\cdots\mathrm{d}t_\ell\,
				\left\|v(0)\right\|_{L^2_x}
		\\&\mbox{whenever }
			\left|\log_2\left(\frac{\mu}{\nu}\right)\right|\ge 5\ell
		\;,\end{split}\end{equation}
		for $\ell\in\mathbb{Z}_{\ge 0}$, where, when $\ell=0$, the integral is defined to be $1$.
		
		We establish (\ref{ZEQN:NewPrOpMassXfer_EQN11}) by induction on $\ell$.
		The conservation of $L^2_x$ norm, from Lemma \ref{ZLEM:NewPrOpL2Exstnce}, gives the base case $\ell=0$.
		For $\ell\ge 1$, plugging the induction hypothesis for $\ell-1$ into every summand on
		the right-hand side of (\ref{ZEQN:UniqLem1_EQN01}), we obtain
		\begin{equation}\label{ZEQN:NewPrOpMassXfer_EQN12}
			\partial_t\left\|\mathrm{P}_\mu v(t)\right\|_{L^2_x}
		\le
			C_0
			\left\|\nabla B(t)\right\|_{L^\infty_x}
			C_0^{\ell-1}\int_0^{t}\cdots\int_0^{t_2}
				\prod_{m=1}^{\ell-1}\left\|\nabla B(t_m)\right\|_{L^\infty_x}\mathrm{d}t_1\cdots\mathrm{d}t_{\ell-1}
			\left\|v(0)\right\|_{L^2_x}
		\;.\end{equation}
		Since $|\log_2(\mu/\nu)|\ge 5\ell\ge 5$, we have by definition that $\mathrm{P}_\mu v(0)=0$.
		Therefore, a direct integration of (\ref{ZEQN:NewPrOpMassXfer_EQN12}) yields
		\[
			\left\|\mathrm{P}_\mu v(t)\right\|_{L^2_x}
			\le
			C_0^\ell
			\int_0^t
			\int_0^{t_{\ell}}\cdots\int_0^{t_2}
				\prod_{m=1}^\ell\left\|\nabla B(t_m)\right\|_{L^\infty_x}\mathrm{d}t_1\cdots\mathrm{d}t_{\ell}
			\left\|v(0)\right\|_{L^2_x}
		\]
		which completes the induction step.
	\end{proof}
	
	\begin{proof}[Proof of the existence statement of Proposition \ref{ZPROP:NewPrOpHmLWP} and of (\ref{ZEQN:NewPrOpHmLWP_EQN01})]
		Let $w\in H^{\mathfrak{m}}$ be given. Let $u_\nu$
		be the solution of (\ref{ZEQN:NewPrOpHomgCopy}) with initial data $u_\nu(t_0)=\mathrm{P}_\nu w$.
		To complete the proof of Proposition \ref{ZPROP:NewPrOpHmLWP}, it suffices to prove
		\begin{equation}\label{ZEQN:NewPrOpHmLWP_EQN11}
			\left\|\left\{\mathfrak{m}(\mu)\left\|\mathrm{P}_\mu u_\nu(t)\right\|_{L^2_x}\right\}_{\mu,\nu}\right\|_{\ell^2_\mu\ell^1_\nu}
			\le
			C\mathrm{e}^{C_1\|\nabla B\|_{L^1_tL^\infty_x[1]}}\left\|w\right\|_{H^{\mathfrak{m}}}
		\end{equation}
		for $C,C_1$ as in the statement of Proposition \ref{ZPROP:NewPrOpHmLWP}.
		Indeed,
		\[
			\sum_\mu \mathfrak{m}(\mu)^2\left\|\mathrm{P}_\mu\sum_\nu u_\nu(t)\right\|_{L^2_x}^2
			\le
			\left\|\left\{\mathfrak{m}(\mu)\left\|\mathrm{P}_\mu u_\nu(t)\right\|_{L^2_x}\right\}_{\mu,\nu}\right\|_{\ell^2_\mu\ell^1_\nu}^2
		\]
		which shows that the desired solution $u=\sum_\nu u_\nu$ belongs to $L^\infty_tH^{\mathfrak{m}}[T]$
		and satisfies the claimed estimate (\ref{ZEQN:NewPrOpHmLWP_EQN01}).
		
		Now, recall that from the definitions, we have
		\[
			\mathfrak{m}(\mu) \le 2^{5(\ell+1)[\mathfrak{m}]}\mathfrak{m}(\nu)
			\quad\mbox{whenever }
			5\ell\le\left|\log_2\left(\frac{\mu}{\nu}\right)\right|< 5(\ell+1)
		\;.\]
		Therefore, using Lemma \ref{ZLEM:NewPrOpMassXfer}, we have
		\[
			\mathfrak{m}(\mu)\sum_\nu\left\|\mathrm{P}_\mu u_\nu(t)\right\|_{L^2_x}
			\le
			2^{5[\mathfrak{m}]}\sum_{\ell=0}^\infty \sum_{\nu\;:\; 5\ell\le\left|\log_2\left(\frac{\nu}{\mu}\right)\right|< 5(\ell+1)}
				\frac{\left(C_0\left\|\nabla B\right\|_{L^1_tL^\infty_x[1]}\right)^\ell}{\ell!}
				2^{5\ell [\mathfrak{m}]}\mathfrak{m}(\nu)\left\|\mathrm{P}_\nu w\right\|_{L^2_x}
		\;.\]
		We set $C_1 = C_1([\mathfrak{m}]) := C_02^{5[\mathfrak{m}]}$ once and for all.
		Then, by Cauchy-Schwarz,
		\begin{equation*}\begin{split}
			\bigg(\mathfrak{m}(\mu)&\sum_\nu\left\|\mathrm{P}_\mu u_\nu(t)\right\|_{L^2_x}\bigg)^2
		\\\le\;&
			C([\mathfrak{m}])\mathrm{e}^{C_1\|\nabla B\|_{L^1_tL^\infty_x[1]}}
			\sum_{\ell=0}^\infty
			\left(
				\sum_{\nu\;:\; 5\ell\le\left|\log_2\left(\frac{\nu}{\mu}\right)\right|< 5(\ell+1)}
				\frac{\left(C_1\left\|\nabla B\right\|_{L^1_tL^\infty_x[1]}\right)^\ell}{\ell!}
				\mathfrak{m}(\nu)^2\left\|\mathrm{P}_\nu w\right\|_{L^2_x}^2
			\right)
		\;.\end{split}\end{equation*}
		Summing over $\mu$ then gives (\ref{ZEQN:NewPrOpHmLWP_EQN11}).
	\end{proof}
	
\subsection{Strichartz estimates}
	Having proved Proposition \ref{ZPROP:NewPrOpHmLWP} in the preceding two sections,
	we now show that the corresponding solutions enjoy local-in-time Strichartz estimates with loss of derivatives.
	
	\begin{proposition} \label{ZPROP:NewPrOpStrichartz}
		Let $B$ be an admissible form and $\mathfrak{m}$ be a Sobolev weight.
		Let $T\in(0,1]$ and $t_0\in[0,T)$, and let $w\in H^{\mathfrak{m}}$.
		Let $u$ be the solution to (\ref{ZEQN:NewPrOpHomgCopy}) with initial data $u(t_0)=w$.
		Let $(q,r)$ be a Strichartz pair.
		Then the estimate
		\begin{equation} \label{ZEQN:NewPrOpStrichartz_EQN01}
			\left\|\mathrm{P}_\mu u\right\|_{L^q_tL^r_x[T]}
			\le
			C\left(1+\left\|B\right\|_{L^\infty_tL^\infty_x[1]}\right)
			\mathrm{e}^{C_1\|\nabla B\|_{L^1_tL^\infty_x[1]}}
			\mu^{\frac{1}{q}}
			\mathfrak{m}(\mu)^{-1}
			\left\|u\right\|_{H^{\mathfrak{m}}}
		\end{equation}
		holds for some constants $C=C([\mathfrak{m}],q)>0$ and $C_1=C_1([\mathfrak{m}])>0$.
	\end{proposition}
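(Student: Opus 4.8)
The plan is to reduce the Strichartz estimate for the modified operator $(\partial_t - \mathrm{i}\triangle + \mathfrak{P}_B)$ to the standard Strichartz estimate for the free Schr\"odinger equation, treating $\mathfrak{P}_B u$ as a forcing term on the right-hand side, and then controlling that forcing term using the $L^\infty_t H^{\mathfrak{m}}$ bound from Proposition \ref{ZPROP:NewPrOpHmLWP}. The first step is to fix a dyadic frequency $\mu$ and write the equation for $\mathrm{P}_\mu u$ as
\[
	\left(\partial_t - \mathrm{i}\triangle\right)\mathrm{P}_\mu u = -\mathrm{P}_\mu\mathfrak{P}_B u =: g_\mu \;,
\]
so that the inhomogeneous Strichartz estimate gives
\[
	\left\|\mathrm{P}_\mu u\right\|_{L^q_t L^r_x[T]} \lesssim \left\|\mathrm{P}_\mu u(t_0)\right\|_{L^2_x} + \left\|g_\mu\right\|_{L^1_t L^2_x[T]} \;.
\]
The main work is then to estimate $\|g_\mu\|_{L^1_t L^2_x[T]}$ and, crucially, to recover the gain $\mu^{1/q}$ rather than simply placing $\mathrm{P}_\mu u$ on the left in an $L^\infty_t L^2_x$ norm.

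To get the $\mu^{1/q}$ factor I would apply the inhomogeneous Strichartz estimate at a \emph{single} frequency: since $\mathrm{P}_\mu u$ is frequency-localised near $\mu$, Bernstein's inequality allows one to trade the forcing term $g_\mu$ (also supported near $\mu$ up to harmless tails) for a gain. More precisely, the clean way is to note that the free propagator applied to $\mathrm{P}_\mu u(t_0)$ already satisfies $\|\mathrm{e}^{\mathrm{i}t\triangle}\mathrm{P}_\mu w\|_{L^q_t L^r_x} \lesssim \mu^{1/q}\|\mathrm{P}_\mu w\|_{L^2_x}$ — this follows by combining the dispersive decay $\|\mathrm{e}^{\mathrm{i}t\triangle}\mathrm{P}_\mu w\|_{L^\infty_x} \lesssim \mu^2 |t|^{-1}\|w\|_{L^1_x}\lesssim \cdots$ with $L^2$ conservation and interpolating, or alternatively by rescaling from the unit-frequency Strichartz estimate on a time interval of length $\mu^{-2}$ and then summing $\mu^2 T$ such intervals, each contributing a unit, which produces exactly the $\mu^{1/q}$ loss. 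For the Duhamel term, the same frequency-localised inhomogeneous estimate gives $\|\mathrm{P}_\mu u\|_{L^q_t L^r_x[T]} \lesssim \mu^{1/q}\big(\|\mathrm{P}_\mu u(t_0)\|_{L^2_x} + \|g_\mu\|_{L^1_t L^2_x[T]}\big)$.

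It remains to bound $\|g_\mu\|_{L^1_t L^2_x[T]} = \|\mathrm{P}_\mu \mathfrak{P}_B u\|_{L^1_t L^2_x[T]}$. From the definition of $\mathfrak{P}_B$ only dyadic pieces $\mathrm{P}_\lambda u$ with $\lambda \approx \mu$ contribute, and each term is of the schematic form $\mathrm{P}_{\ll\lambda}B_i \cdot \mathrm{P}_\lambda\partial_i u$; placing $B$ in $L^\infty_{t,x}$ and $\mathrm{P}_\lambda\partial_i u$ in $L^\infty_t L^2_x$, using Bernstein to write $\|\mathrm{P}_\lambda\partial_i u\|_{L^2_x} \lesssim \lambda\|\mathrm{P}_\lambda u\|_{L^2_x}$, and integrating trivially in $t$ over $[0,T)$ with $T\le 1$, gives
\[
	\left\|g_\mu\right\|_{L^1_t L^2_x[T]} \lesssim \left\|B\right\|_{L^\infty_t L^\infty_x[1]}\, \mu \!\!\sum_{\lambda\,:\,|\log_2(\lambda/\mu)|\le 5}\!\! \left\|\mathrm{P}_\lambda u\right\|_{L^\infty_t L^2_x[T]} \;.
\]
Then multiply and divide by $\mathfrak{m}(\lambda)$, use \eqref{ZEQN:SobWtRmk_EQN01} to compare $\mathfrak{m}(\lambda)^{-1}$ with $\mathfrak{m}(\mu)^{-1}$ at the cost of a constant $C([\mathfrak{m}])$, and bound $\sum_{\lambda\approx\mu}\mathfrak{m}(\lambda)\|\mathrm{P}_\lambda u\|_{L^\infty_t L^2_x[T]} \lesssim \|u\|_{L^\infty_t H^{\mathfrak{m}}[T]}$, which by Proposition \ref{ZPROP:NewPrOpHmLWP} is $\lesssim \mathrm{e}^{C_1\|\nabla B\|_{L^1_t L^\infty_x[1]}}\|w\|_{H^{\mathfrak{m}}}$. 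Combining, and noting the extra factor $\mu$ from Bernstein is absorbed by writing things in terms of $\mu^{1/q}$ after one more application of the single-frequency inhomogeneous estimate (the forcing-side exponent pairing costs only the $\mu^{1/q}$ we already accounted for, \emph{not} an additional $\mu$, because one should instead put $\mathrm{P}_\lambda\partial_i u$ on the left as $\|\mathrm{P}_\lambda u\|_{L^q_t L^r_x}$ whenever a derivative would otherwise appear — this circular point is the one genuinely delicate bookkeeping issue), yields \eqref{ZEQN:NewPrOpStrichartz_EQN01} with $C = C([\mathfrak{m}],q)$ and $C_1 = C_1([\mathfrak{m}])$ exactly as in Proposition \ref{ZPROP:NewPrOpHmLWP}. \textbf{The main obstacle} is precisely this last point: ensuring that the derivative in $\mathfrak{P}_B$ does not cost a full power of $\mu$ on top of the $\mu^{1/q}$ already present. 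The resolution is to run a bootstrap or fixed-point on the quantity $\sum_\mu \mu^{-1/q}\mathfrak{m}(\mu)\|\mathrm{P}_\mu u\|_{L^q_t L^r_x[T]}$ directly, rather than estimating frequency-by-frequency in isolation, so that the derivative hitting $\mathrm{P}_\lambda u$ is balanced against the $L^q_t L^r_x$ norm of $\mathrm{P}_\lambda u$ itself (with its own $\lambda^{1/q}$ weight), and the $L^\infty_{t,x}$ norm of $B$ together with the short time $T\le 1$ supply the needed smallness or at worst a harmless constant factor. Everything else is routine: dyadic summation, Bernstein, and the Sobolev-weight comparison \eqref{ZEQN:SobWtRmk_EQN01}.
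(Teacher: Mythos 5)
Your reduction to the inhomogeneous Strichartz estimate for $(\partial_t-\mathrm{i}\triangle)\mathrm{P}_\mu u=-\mathrm{P}_\mu\mathfrak{P}_Bu$ is the right starting point, and you correctly isolate the real difficulty: the derivative in $\mathfrak{P}_B$ costs a full factor of $\mu$ whereas the statement only allows $\mu^{1/q}$. But the resolution you propose --- a bootstrap on $\sum_\mu\mu^{-1/q}\mathfrak{m}(\mu)\|\mathrm{P}_\mu u\|_{L^q_tL^r_x[T]}$ so that $\nabla\mathrm{P}_\lambda u$ is measured in $L^q_tL^r_x$ rather than $L^\infty_tL^2_x$ --- does not close. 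To feed the $L^q_tL^r_x$ norm of $\mathrm{P}_\lambda u$ back through Duhamel you must place $\mathrm{P}_{\ll\lambda}B_i\,\partial_i\mathrm{P}_\lambda u$ in some dual Strichartz space $L^{q_2'}_tL^{r_2'}_x$ with $r_2'\le 2$; but with $B$ only in $L^\infty_{t,x}$ the product lies in $L^q_tL^r_x$ with $r\ge 2$, and H\"older in the space variable goes the wrong way ($L^r_x$ does not embed into $L^{r_2'}_x$ on $\mathbb{R}^2$), so no admissible pairing is available. Even setting that aside, absorbing the resulting term on the left would require smallness that is not present: $T\le 1$ is fixed independently of $B$, $\|B\|_{L^\infty_{t,x}}$ is arbitrary, and the asserted constant is only linear in $1+\|B\|_{L^\infty_{t,x}}$, which a contraction argument would not produce. (Relatedly, your claimed derivation of the homogeneous bound by subdividing into intervals of length $\mu^{-2}$ would give $\mu^{2/q}$, not $\mu^{1/q}$; and no loss at all is needed on the homogeneous part, since free Strichartz is lossless --- the loss comes entirely from the forcing.)

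The missing idea, and the way the paper proceeds (following Burq--G\'erard--Tzvetkov), is to subdivide $[0,T)$ into $\le\mu$ disjoint intervals $J$ of length $\le\mu^{-1}$ and apply the standard inhomogeneous Strichartz estimate on each $J$ separately, keeping the forcing in $L^1_t(J,L^2_x)$: then
\[
	\left\|\mathrm{P}_\mu\mathfrak{P}_Bu\right\|_{L^1_t(J,L^2_x)}
	\le |J|\left\|\mathrm{P}_\mu\mathfrak{P}_Bu\right\|_{L^\infty_t(J,L^2_x)}
	\lesssim |J|\,\mu\left\|B\right\|_{L^\infty_{t,x}[1]}\sum_{\lambda\,:\,\left|\log_2(\lambda/\mu)\right|\le 5}\left\|\mathrm{P}_\lambda u\right\|_{L^\infty_t(J,L^2_x)}
\;,\]
and the factor $|J|\le\mu^{-1}$ exactly cancels the derivative. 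The energy bound (\ref{ZEQN:NewPrOpHmLWP_EQN01}) then controls each local contribution by a quantity independent of $J$, and summing the $q$-th powers over the $\le\mu$ intervals is precisely what produces the loss $\mu^{1/q}$. Without this time-localisation step your argument cannot recover the stated exponent.
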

	\begin{proof}
		Following the strategy of \cite{burq_gerard_tzvetkov_AmerJMath.2004},
		we divide $[0,T)$ into disjoint intervals each of length $\le \mu^{-1}$,
		so that there are $\le \mu$ such intervals.
		Consider one such interval $J=[t_1,t_2)$.
		Applying the usual Strichartz estimate to
		\[
			\left(\partial_t-\mathrm{i}\triangle\right)\mathrm{P}_\mu u = -\mathrm{P}_\mu\mathfrak{P}_B u
		\]
		over the interval $J$, we obtain
		\begin{equation*}\begin{split}
			\left\|\mathrm{P}_\mu u\right\|_{L^q_t(J,L^r_x)}
		\lesssim\;&
			\left\|\mathrm{P}_\mu u(t_1)\right\|_{L^2_x}
			+
			|J|\mu\left\|\mathrm{P}_\mu\mathfrak{P}_B u\right\|_{L^\infty_t(J,L^2_x)}
		\\\lesssim\;&
			\left\|\mathrm{P}_\mu u(t_1)\right\|_{L^2_x}
			+
			\left\|B\right\|_{L^\infty_{t,x}[1]}
			\sum_{\lambda\;:\;\left|\log_2\left(\frac{\lambda}{\mu}\right)\right|\le 5} \left\|\mathrm{P}_\lambda u\right\|_{L^\infty_t(J,L^2_x)}
		\;.\end{split}\end{equation*}
		Using (\ref{ZEQN:NewPrOpHmLWP_EQN01}) to bound the right-hand side, we obtain
		\begin{equation}\label{ZEQN:NewPrOpStrichartz_EQN11}
			\left\|\mathrm{P}_\mu u\right\|_{L^q_t(J,L^r_x)}
			\le
			C\left([\mathfrak{m}],q\right)
			\left(1+\left\|B\right\|_{L^\infty_{t,x}[1]}\right)
			\mathrm{e}^{C_1([\mathfrak{m}])\|\nabla B\|_{L^1_tL^\infty_x[1]}}
			\mathfrak{m}(\mu)^{-1}
			\left\|w\right\|_{H^{\mathfrak{m}}}
		\;.\end{equation}
		Note that the right-hand side of (\ref{ZEQN:NewPrOpStrichartz_EQN11}) is now independent of $J$.
		Hence, raising (\ref{ZEQN:NewPrOpStrichartz_EQN11})
		to the $q$-th power and summing over the intervals $J$,
		and recalling that there are $\le\mu$ such intervals,
		we obtain (\ref{ZEQN:NewPrOpStrichartz_EQN01}).
	\end{proof}
	
\subsection{Adapted function spaces}
	Having now established the basic properties of solutions to the linear homogeneous equation (\ref{ZEQN:NewPrOpHomgCopy}),
	we define the function spaces which we will use to construct the iteration scheme (\ref{ZEQN:ItrtnSchemeSuccinct}).
	
	\begin{notation}
		Let $B$ be an admissible form and $\mathfrak{m}$ be a Sobolev weight.
		For $t,t_0\in[0,1)$, denote
		\[
			\mathfrak{S}_B(t,t_0)w := U(t)
		\]
		where $U$ solves (\ref{ZEQN:NewPrOpHomgCopy}) on $[0,1)$ with initial data $U(t_0)=w\in H^{\mathfrak{m}}$.
	\end{notation}
	
	\begin{definition}
		Let $B$ be an admissible form and $\mathfrak{m}$ be a Sobolev weight.
		Let $T\in(0,1]$.
		
		Let $p\in[1,\infty)$. Define $U^p_BH^{\mathfrak{m}}[T]$
		to be the Banach space of functions $u:[0,T)\rightarrow H^{\mathfrak{m}}$ such that
		$\mathfrak{S}_B(0,t)u(t)$ belongs to $U^pH^{\mathfrak{m}}[T]$.
		The $U^p_BH^{\mathfrak{m}}[T]$ norm is given by
		\[
			\left\|u\right\|_{U^p_BH^{\mathfrak{m}}[T]} := \left\|\mathfrak{S}_B(0,t)u(t)\right\|_{U^pH^{\mathfrak{m}}[T]}
		\;.\]
		Define $\mathrm{D}U^p_BH^{\mathfrak{m}}[T]$ to consist of functions $f:[0,T)\times\mathbb{R}^2_x\rightarrow\mathbb{C}$
		such that $\mathfrak{S}_B(0,t)f(t)\in \mathrm{D}U^pH^{\mathfrak{m}}[T]$, equipped with the norm
		\[
			\left\|f\right\|_{\mathrm{D}U^p_BH^{\mathfrak{m}}[T]} := \left\|\mathfrak{S}_B(0,t)f(t)\right\|_{\mathrm{D}U^pH^{\mathfrak{m}}[T]}
			= \left\|\int_0^t\mathfrak{S}_B(0,t')f(t')\,\mathrm{d}t'\right\|_{U^pH^{\mathfrak{m}}[T]}
		\;.\]
		Lastly, define $V^p_BH^{\mathfrak{m}}[T]$ to be the Banach space of functions $v:[0,T)\rightarrow H^{\mathfrak{m}}$
		such that
		$\mathfrak{S}_B(0,t)v(t)$ belongs to $V^p_{\mathrm{rc}}H^{\mathfrak{m}}[T]$, equipped with the norm
		\[
			\left\|v\right\|_{V^p_BH^{\mathfrak{m}}[T]} := \left\|\mathfrak{S}_B(0,t)v(t)\right\|_{V^pH^{\mathfrak{m}}[T]}
		\;.\]
	\end{definition}
	
	As a first consequence of the definitions, of the uniqueness statement in Proposition \ref{ZPROP:NewPrOpHmLWP},
	and of Duhamel's formula, we have the following result.
	
	\begin{lemma} \label{ZLEM:DuhamelBd}
		Let $B$ be an admissible form and $\mathfrak{m},\mathfrak{n}$ be Sobolev weights with
		$\mathfrak{n}\le\mathfrak{m}$, so that $H^{\mathfrak{m}}\hookrightarrow H^{\mathfrak{n}}$.
		Let $T\in(0,1]$ and $p\in[1,\infty)$.
		
		Suppose $u\in L^\infty_tH^{\mathfrak{n}}[T]$ and $u(0)=u^{\mathrm{in}}\in H^{\mathfrak{m}}$ and
		\[
			\left(\partial_t-\mathrm{i}\triangle+\mathfrak{P}_B\right)u = f
		\]
		with $f\in\mathrm{D}U^p_BH^{\mathfrak{m}}[T]$.
		Then, in fact, $u$ must be given by
		\begin{equation} \label{ZEQN:DuhamelBd_EQN00}
			u(t) = \mathfrak{S}_B(t,0)u^{\mathrm{in}} + \int_0^t \mathfrak{S}_B(t,t')f(t')\,\mathrm{d}t'
		\;,\end{equation}
		and in particular, $u\in U^p_BH^{\mathfrak{m}}[T]$ and
		\begin{equation} \label{ZEQN:DuhamelBd_EQN01}
			\left\|u\right\|_{U^p_BH^{\mathfrak{m}}[T]}
			\lesssim
			\left\|u^{\mathrm{in}}\right\|_{H^{\mathfrak{m}}}
			+
			\left\|f\right\|_{\mathrm{D}U^p_BH^{\mathfrak{m}}[T]}
		\;.\end{equation}
	\end{lemma}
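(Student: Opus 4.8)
The plan is to exploit the defining property of the adapted spaces, namely that $\mathfrak{S}_B(0,t)$ intertwines the modified principal operator with honest differentiation in $t$, thereby reducing everything to the already-recalled abstract theory of $U^p$ and $V^p$ spaces. First I would verify the representation formula \eqref{ZEQN:DuhamelBd_EQN00}. Set $v(t) := \mathfrak{S}_B(0,t)u(t)$, which is well-defined since $u(t) \in H^{\mathfrak{n}} \hookleftarrow H^{\mathfrak{m}}$ and $\mathfrak{S}_B$ acts on $H^{\mathfrak{n}}$ by Proposition \ref{ZPROP:NewPrOpHmLWP}. From the equation $(\partial_t - \mathrm{i}\triangle + \mathfrak{P}_B)u = f$ together with the fact that $\mathfrak{S}_B(0,t)$ propagates solutions of the homogeneous equation, one formally computes $\partial_t v(t) = \mathfrak{S}_B(0,t) f(t)$; here the key identity is that if $U$ solves \eqref{ZEQN:NewPrOpHomgCopy} then $\frac{d}{dt}\big[\mathfrak{S}_B(0,t)U(t)\big] = 0$, which follows from uniqueness in Proposition \ref{ZPROP:NewPrOpHmLWP}. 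Integrating in $t$ gives $v(t) = v(0) + \int_0^t \mathfrak{S}_B(0,t')f(t')\,\mathrm{d}t' = u^{\mathrm{in}} + \int_0^t \mathfrak{S}_B(0,t')f(t')\,\mathrm{d}t'$, and applying $\mathfrak{S}_B(t,0) = \mathfrak{S}_B(0,t)^{-1}$ recovers \eqref{ZEQN:DuhamelBd_EQN00}. The one subtlety is to make this rigorous at the regularity of $H^{\mathfrak{n}}$: since $f \in \mathrm{D}U^p_B H^{\mathfrak{m}}[T]$, the function $\int_0^t \mathfrak{S}_B(0,t')f(t')\,\mathrm{d}t'$ lies in $U^pH^{\mathfrak{m}}[T]$ by definition, so the identity should be interpreted as an equality of $H^{\mathfrak{n}}$-valued (indeed $H^{-2}$-valued) functions and then bootstrapped, exactly as in the proof of Lemma \ref{ZLEM:NewPrOpL2Exstnce}.

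Granting \eqref{ZEQN:DuhamelBd_EQN00}, the norm bound \eqref{ZEQN:DuhamelBd_EQN01} is then essentially immediate from the definition of $U^p_BH^{\mathfrak{m}}[T]$. Indeed, by definition $\|u\|_{U^p_BH^{\mathfrak{m}}[T]} = \|\mathfrak{S}_B(0,t)u(t)\|_{U^pH^{\mathfrak{m}}[T]}$, and applying $\mathfrak{S}_B(0,t)$ to \eqref{ZEQN:DuhamelBd_EQN00} gives
\[
	\mathfrak{S}_B(0,t)u(t) = u^{\mathrm{in}} + \int_0^t \mathfrak{S}_B(0,t')f(t')\,\mathrm{d}t'.
\]
The constant function $t \mapsto u^{\mathrm{in}}$ is a $U^pH^{\mathfrak{m}}[T]$ function of norm $\|u^{\mathrm{in}}\|_{H^{\mathfrak{m}}}$ (it is a single atom with $K=1$), while the integral term has $U^pH^{\mathfrak{m}}[T]$ norm equal to $\|f\|_{\mathrm{D}U^p_BH^{\mathfrak{m}}[T]}$ by the very definition of the $\mathrm{D}U^p_B$ norm. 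The triangle inequality in $U^pH^{\mathfrak{m}}[T]$ yields \eqref{ZEQN:DuhamelBd_EQN01}.

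The main obstacle, then, is not the abstract bookkeeping but the justification of the intertwining identity $\partial_t\big[\mathfrak{S}_B(0,t)u(t)\big] = \mathfrak{S}_B(0,t)f(t)$ at low regularity, since a priori $u$ is only in $L^\infty_tH^{\mathfrak{n}}[T]$ and we do not know $\partial_t u$ lies in any good space. I would handle this by a duality/weak-formulation argument: test \eqref{ZEQN:DuhamelBd_EQN00} against smooth compactly supported $H^1$-valued functions $w$, use the formal self-adjointness of $\mathfrak{P}_B$ (which holds because $\mathrm{div}\,B = 0$), and the fact that $\mathfrak{S}_B$ is the solution operator, to reduce the claim to an integrated identity that makes sense for $H^{-1}$-valued distributions — precisely the manipulation carried out in the proof of Lemma \ref{ZLEM:NewPrOpL2Exstnce}. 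Once \eqref{ZEQN:DuhamelBd_EQN00} holds as an identity of $H^{-1}$-valued (or $H^{\mathfrak{n}}$-valued) functions, the right-hand side is seen to lie in $U^pH^{\mathfrak{m}}[T] \hookrightarrow L^\infty_tH^{\mathfrak{m}}[T]$ after applying $\mathfrak{S}_B(t,0)$, which simultaneously upgrades the regularity of $u$ to $C_t H^{\mathfrak{m}}$-type and delivers the membership $u \in U^p_BH^{\mathfrak{m}}[T]$ claimed in the statement. Uniqueness of the representation is inherited directly from the uniqueness statement of Proposition \ref{ZPROP:NewPrOpHmLWP}.
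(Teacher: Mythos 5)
Your conclusion is right and the second half of your argument (the norm bound via the constant atom plus the definition of the $\mathrm{D}U^p_B$ norm) is exactly the paper's, but you reach the representation formula by a different and more laborious route than the paper does. The paper does not attempt to differentiate $\mathfrak{S}_B(0,t)u(t)$ at all: it simply defines $v$ to be the right-hand side of \eqref{ZEQN:DuhamelBd_EQN00}, observes that $v$ solves the inhomogeneous equation with data $u^{\mathrm{in}}$ and lies in $U^p_BH^{\mathfrak{m}}[T]\hookrightarrow L^\infty_tH^{\mathfrak{m}}[T]$ with the claimed bound, and then notes that $u-v$ is an $L^\infty_tH^{\mathfrak{n}}[T]$ solution of the homogeneous equation \eqref{ZEQN:NewPrOpHomgCopy} with vanishing data, so the uniqueness statement of Proposition \ref{ZPROP:NewPrOpHmLWP} forces $u=v$. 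This is precisely why the hypotheses allow the a priori regularity of $u$ to live in the possibly weaker space $H^{\mathfrak{n}}$: the low-regularity uniqueness result was engineered to absorb exactly this step. Your route, by contrast, insists on the intertwining identity $\partial_t[\mathfrak{S}_B(0,t)u(t)]=\mathfrak{S}_B(0,t)f(t)$, and here there is a soft spot you should not gloss over: a general $f\in\mathrm{D}U^p_BH^{\mathfrak{m}}[T]$ is only the distributional derivative of a $U^p$ function, so the pointwise expression $\mathfrak{S}_B(0,t)f(t)$ and the integral $\int_0^t\mathfrak{S}_B(0,t')f(t')\,\mathrm{d}t'$ are not literally defined; the rigorous object is the primitive $F\in U^pH^{\mathfrak{m}}[T]$ itself, and your weak-formulation argument would have to be phrased entirely in terms of $F$. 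That can be done, but it amounts to reconstructing by hand what the subtraction-plus-uniqueness argument gives for free. You already invoke the correct uniqueness statement at the end of your proposal; applying it to $u-v$ rather than to "the representation" collapses the whole difficulty.
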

	\begin{proof}
		Let $v$ be given by the right-hand isde of (\ref{ZEQN:DuhamelBd_EQN00}).
		Clearly, $v\in U^p_BH^{\mathfrak{m}}[T]$ and satisfies
		\[
			\left\|v\right\|_{U^p_BH^{\mathfrak{m}}[T]}
			\lesssim
			\left\|u^{\mathrm{in}}\right\|_{H^{\mathfrak{m}}}
			+
			\left\|f\right\|_{\mathrm{D}U^p_BH^{\mathfrak{m}}[T]}
		\;.\]
		Now, by Proposition \ref{ZPROP:NewPrOpHmLWP} and the atomic structure of $U^p_BH^{\mathfrak{m}}[T]$,
		we have $U^p_BH^{\mathfrak{m}}[T]\hookrightarrow L^\infty_tH^{\mathfrak{m}}[T]$.
		Thus, $u-v\in L^\infty_tH^{\mathfrak{n}}[T]$.
		But $u-v$ is a solution to (\ref{ZEQN:NewPrOpHomgCopy}) with $(u-v)(0)=0$.
		Hence, by the uniqueness statement in Proposition \ref{ZPROP:NewPrOpHmLWP},
		we have $u-v=0$.
	\end{proof}
	
	Observe that Lemma \ref{ZLEM:UpVpEmbedding} generalises immediately to the above function spaces.
	More precisely, we have the following embedding result.
	
	\begin{lemma}[Embeddings] \label{ZLEM:UpVpEmbeddingB}
		Let $B$ be an admissible form and $\mathfrak{m}$ be a Sobolev weight.
		Let $T\in(0,1]$ and let $1\le p<q<\infty$. Then we have the continuous embeddings
		\[
			U^p_BH^{\mathfrak{m}}[T] \hookrightarrow V^p_BH^{\mathfrak{m}}[T] \hookrightarrow U^q_BH^{\mathfrak{m}}[T]
		\]
		whose operator norms depend on $p,q$ and not on $T$ or $B$ or $\mathfrak{m}$.
	\end{lemma}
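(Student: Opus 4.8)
\emph{Proof proposal.} The plan is to observe that all three $B$-adapted spaces are, by construction, isometric copies of the corresponding classical $U^p$ and $V^p$ spaces under the ``gauge'' map $u(\cdot)\mapsto\mathfrak{S}_B(0,\cdot)u(\cdot)$, and then simply to transport Lemma \ref{ZLEM:UpVpEmbedding} through this isometry. No analytic estimate is needed beyond what is already proved.

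First I would record the evolution identity for $\mathfrak{S}_B$: for all $t,t_1,t_0\in[0,1)$ and $w\in H^{\mathfrak{m}}$,
\[
	\mathfrak{S}_B(t,t_1)\,\mathfrak{S}_B(t_1,t_0)w = \mathfrak{S}_B(t,t_0)w
	\;,\qquad
	\mathfrak{S}_B(t_0,t_0)w = w
	\;.
\]
Both facts follow from the uniqueness statement of Proposition \ref{ZPROP:NewPrOpHmLWP}: the two functions $t\mapsto\mathfrak{S}_B(t,t_1)\mathfrak{S}_B(t_1,t_0)w$ and $t\mapsto\mathfrak{S}_B(t,t_0)w$ both lie in $L^\infty_tH^{\mathfrak{m}}[1]$, both solve (\ref{ZEQN:NewPrOpHomgCopy}) on $[0,1)$, and they agree at $t=t_1$; hence they coincide. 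In particular $\mathfrak{S}_B(t,0)$ and $\mathfrak{S}_B(0,t)$ are mutually inverse bounded operators on $H^{\mathfrak{m}}$. (The only point to be careful about is that the notion of solution is the same on every subinterval, which holds by the restriction property of $L^\infty_tH^{\mathfrak{m}}$, so that uniqueness may legitimately be invoked.)

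Next I would introduce the linear map $\Phi$ defined by $(\Phi u)(t) := \mathfrak{S}_B(0,t)u(t)$, with formal inverse $(\Phi^{-1}w)(t) := \mathfrak{S}_B(t,0)w(t)$; the evolution identity shows $\Phi\circ\Phi^{-1} = \Phi^{-1}\circ\Phi = \mathrm{Id}$ pointwise in $t$. By the very definitions of $U^p_BH^{\mathfrak{m}}[T]$, $V^p_BH^{\mathfrak{m}}[T]$ and $U^q_BH^{\mathfrak{m}}[T]$, the map $\Phi$ restricts to isometric isomorphisms
\[
	U^p_BH^{\mathfrak{m}}[T]\;\xrightarrow{\ \sim\ }\;U^pH^{\mathfrak{m}}[T]\;,\quad
	V^p_BH^{\mathfrak{m}}[T]\;\xrightarrow{\ \sim\ }\;V^p_{\mathrm{rc}}H^{\mathfrak{m}}[T]\;,\quad
	U^q_BH^{\mathfrak{m}}[T]\;\xrightarrow{\ \sim\ }\;U^qH^{\mathfrak{m}}[T]
	\;,
\]
surjectivity in each case being witnessed by $\Phi^{-1}$ applied to an element of the target. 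Since $H^{\mathfrak{m}}$ is a separable complex Hilbert space, Lemma \ref{ZLEM:UpVpEmbedding} applies with $X = H^{\mathfrak{m}}$ and yields $U^pH^{\mathfrak{m}}[T]\hookrightarrow V^p_{\mathrm{rc}}H^{\mathfrak{m}}[T]\hookrightarrow U^qH^{\mathfrak{m}}[T]$ with operator norms depending only on $p,q$. Conjugating this chain by the isometry $\Phi$ gives precisely $U^p_BH^{\mathfrak{m}}[T]\hookrightarrow V^p_BH^{\mathfrak{m}}[T]\hookrightarrow U^q_BH^{\mathfrak{m}}[T]$, with the same operator norms, which are therefore independent of $T$, $B$ and $\mathfrak{m}$.

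The ``hard part'', such as it is, is purely bookkeeping: checking that $\Phi$ is genuinely a bijection between the stated spaces, which reduces to the evolution identity above, itself a soft consequence of uniqueness rather than a computation. Everything else is an immediate transfer of the known $U^p$–$V^p$ embeddings.
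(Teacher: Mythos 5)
Your proposal is correct and is exactly the argument the paper has in mind: the paper gives no explicit proof, asserting only that Lemma \ref{ZLEM:UpVpEmbedding} ``generalises immediately,'' precisely because the adapted spaces are by definition isometric pullbacks of the classical $U^p$, $V^p_{\mathrm{rc}}$, $U^q$ spaces under $u(t)\mapsto\mathfrak{S}_B(0,t)u(t)$. Your additional verification of the evolution identity via the uniqueness statement of Proposition \ref{ZPROP:NewPrOpHmLWP} is the right way to make the ``immediately'' rigorous.
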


	To use the Duhamel formula in Lemma \ref{ZLEM:DuhamelBd},
	we will need to estimate the $\mathrm{D}U^p_BH^{\mathfrak{m}}[T]$ norm of the various nonlinearities we encounter.
	Such estimates can be efficiently obtained
	using the following duality result,
	which is the obvious generalisation of Lemma \ref{ZLEM:UpVpDuality}.
	
	\begin{lemma}[Duality] \label{ZLEM:UpVpDualityB}
		Let $B$ be an admissible form and $\mathfrak{m}$ be a Sobolev weight.
		Let $T\in (0,1]$ and $p\in(1,\infty)$, and let $p':=\frac{p}{p-1}$.
		Then
		\[
			\left(\mathrm{D}U^p_BH^{m}[T]\right)^\ast
			=
			V^{p'}_BH^{\mathfrak{m}^{-1}}[T]
		\]
		in the sense that
		\[
			\left\|f\right\|_{\mathrm{D}U^pH^{\mathfrak{m}}[T]}
			\le
			C\left(p,[\mathfrak{m}]\right)
			\sup_v\left|\int_0^T \int_{\mathbb{R}^2_x} \overline{v(t,x)}\,f(t,x)\,\mathrm{d}x\right|
		\]
		where the supremum is taken over all $v\in V^{p'}_BH^{\mathfrak{m}^{-1}}[T]$ with
		$\|v\|_{V^{p'}_BH^{\mathfrak{m}^{-1}}[T]}\le 1$.
	\end{lemma}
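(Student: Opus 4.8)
The plan is to reduce the claim, via the isometry $\mathfrak{S}_B(0,t)$, to the corresponding statement for the untwisted spaces, i.e.\ to Lemma \ref{ZLEM:UpVpDuality}. First I would set up the pairing carefully: by the very definition of the twisted spaces, $f\in\mathrm{D}U^p_BH^{\mathfrak{m}}[T]$ means $\tilde f(t):=\mathfrak{S}_B(0,t)f(t)\in\mathrm{D}U^pH^{\mathfrak{m}}[T]$, with equal norms; similarly $v\in V^{p'}_BH^{\mathfrak{m}^{-1}}[T]$ means $\tilde v(t):=\mathfrak{S}_B(0,t)v(t)\in V^{p'}_{\mathrm{rc}}H^{\mathfrak{m}^{-1}}[T]$ with equal norms. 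The key algebraic point is that $\mathfrak{S}_B(0,t)$ is, for each fixed $t$, an invertible bounded operator on the scale of generalised Sobolev spaces whose inverse is $\mathfrak{S}_B(t,0)$; moreover, since $\mathrm{div}\,B=0$ makes $\mathfrak{P}_B$ formally self-adjoint on $L^2_x$ (as noted in the proof of Lemma \ref{ZLEM:NewPrOpL2Exstnce}), the $L^2_x$ adjoint of $\mathfrak{S}_B(0,t)$ is $\mathfrak{S}_B(t,0)$. Hence for the $L^2_x$ pairing we have the exact identity
\[
	\int_{\mathbb{R}^2_x}\overline{v(t,x)}\,f(t,x)\,\mathrm{d}x
	=
	\int_{\mathbb{R}^2_x}\overline{\tilde v(t,x)}\,\tilde f(t,x)\,\mathrm{d}x
\;,\]
after justifying that $\mathfrak{S}_B(0,t)$ is unitary on $L^2_x$ (Lemma \ref{ZLEM:NewPrOpL2Exstnce}, equation (\ref{ZEQN:NewPrOpL2LWP_EQN02})) and bounded on $H^{\mathfrak{m}}$ and $H^{\mathfrak{m}^{-1}}$ (Proposition \ref{ZPROP:NewPrOpHmLWP}), so that all the pairings converge absolutely and the substitution is legitimate.

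With this identity in hand, the proof is essentially bookkeeping. I would apply Lemma \ref{ZLEM:UpVpDuality} with $X=H^{\mathfrak{m}}$, using Lemma \ref{ZLEM:HmDuality} to identify $(H^{\mathfrak{m}})^\ast\cong H^{\mathfrak{m}^{-1}}$ with norm equivalence constants depending only on $[\mathfrak{m}]$ (note $[\mathfrak{m}^{-1}]=[\mathfrak{m}]$). This gives
\[
	\left\|\tilde f\right\|_{\mathrm{D}U^pH^{\mathfrak{m}}[T]}
	\le
	C(p)\sup\left\{\left|\int_0^T\langle\tilde v(t),\tilde f(t)\rangle\,\mathrm{d}t\right|
	\;:\;\tilde v\in V^{p'}_{\mathrm{rc}}(H^{\mathfrak{m}})^\ast[T],\ \left\|\tilde v\right\|\le 1\right\}
\;,\]
and then, substituting the norm equivalence from Lemma \ref{ZLEM:HmDuality} and transporting $\tilde v$ back to $v=\mathfrak{S}_B(t,0)\tilde v$, together with the $L^2_x$ pairing identity above, yields
\[
	\left\|f\right\|_{\mathrm{D}U^p_BH^{\mathfrak{m}}[T]}
	\le
	C\left(p,[\mathfrak{m}]\right)
	\sup_v\left|\int_0^T\int_{\mathbb{R}^2_x}\overline{v(t,x)}\,f(t,x)\,\mathrm{d}x\right|
\]
over the unit ball of $V^{p'}_BH^{\mathfrak{m}^{-1}}[T]$, which is exactly the asserted inequality. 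One small technical check is the separability hypothesis needed to invoke Lemma \ref{ZLEM:UpVpDuality}: $H^{\mathfrak{m}}$ is a separable Hilbert space (it is isomorphic to a weighted $\ell^2$ sum of copies of $L^2_x$), so this is immediate.

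The main obstacle, I expect, is not any hard estimate but rather the careful verification that the $L^2_x$-based pairing used in the statement genuinely matches the abstract $X^\ast$--$X$ pairing appearing in Lemma \ref{ZLEM:UpVpDuality} after the change of variables by $\mathfrak{S}_B(0,t)$ — in particular, that $\mathfrak{S}_B(0,t)$ respects the duality bracket, which is where self-adjointness of $\mathfrak{P}_B$ (and hence $\mathfrak{S}_B(0,t)^\ast=\mathfrak{S}_B(t,0)$ on $L^2_x$) is used, and extending this to the $H^{\mathfrak{m}}$--$H^{\mathfrak{m}^{-1}}$ duality by density of, say, Schwartz functions. I would also need to observe that $v\mapsto\mathfrak{S}_B(0,t)v(t)$ is a bijective isometry $V^{p'}_BH^{\mathfrak{m}^{-1}}[T]\to V^{p'}_{\mathrm{rc}}H^{\mathfrak{m}^{-1}}[T]$ (and similarly for the $\mathrm{D}U^p$ spaces), so that the supremum over the twisted unit ball corresponds exactly to the supremum over the untwisted one; this is immediate from the definitions of the twisted spaces and the invertibility of $\mathfrak{S}_B$.
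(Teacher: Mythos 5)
Your proposal is correct and follows exactly the paper's own (very terse) argument: the paper likewise deduces the lemma from Lemma \ref{ZLEM:UpVpDuality} by combining the unitarity of $\mathfrak{S}_B$ on $L^2_x$ (Lemma \ref{ZLEM:NewPrOpL2Exstnce}) with the identification $(H^{\mathfrak{m}})^\ast\cong H^{\mathfrak{m}^{-1}}$ from Lemma \ref{ZLEM:HmDuality}. Your write-up simply makes explicit the transport of the pairing under $\mathfrak{S}_B(0,t)$, which the paper leaves implicit.
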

	\begin{proof}
		From Lemma \ref{ZLEM:NewPrOpL2Exstnce} we have that $\mathfrak{S}_B(t_1,t_0)$ are unitary maps on $L^2_x$.
		Moreover, Lemma \ref{ZLEM:HmDuality} guarantees that $H^{\mathfrak{m}^{-1}}$ and $(H^{\mathfrak{m}})^\ast$
		are isomorphic with equivalent norms.
		Hence, Lemma \ref{ZLEM:UpVpDualityB} follows immediately from Lemma \ref{ZLEM:UpVpDuality}.
	\end{proof}
	
	Our next Lemma shows that generalises the energy and Strichartz estimates,
	established earlier for free solutions to (\ref{ZEQN:NewPrOpHomgCopy}),
	to arbitrary $U^p_BH^{\mathfrak{m}}[T]$ functions.
	
	\begin{lemma} \label{ZLEM:OldLemma4.9}
		Let $B$ be an admissible form and $\mathfrak{m}$ be a Sobolev weight.
		Let $T\in (0,1]$ and $p\in[1,\infty)$, and let $(q,r)$ be a Strichartz pair.
		Then we have the estimates
		\begin{equation}\label{ZEQN:OldLemma4.9_EQN01}
			\left\|u\right\|_{L^\infty_tH^{\mathfrak{m}}[T]}
			\le
			C\left([\mathfrak{m}]\right)\mathrm{e}^{C_1([\mathfrak{m}])\|\nabla B\|_{L^1_tL^\infty_x[1]}}
			\left\|u\right\|_{U^p_BH^{\mathfrak{m}}[T]}
		\end{equation}
		and
		\begin{equation}\label{ZEQN:OldLemma4.9_EQN02}
			\left\|\mathrm{P}_\mu u\right\|_{L^q_tL^r_x[T]}
			\le
			C\left([\mathfrak{m}],q\right)
			\left(1+\left\|B\right\|_{L^\infty_{t,x}[1]}\right)
			\mathrm{e}^{C_1([\mathfrak{m}])\|\nabla B\|_{L^1_tL^\infty_x[1]}}
			\mu^{\frac{1}{q}}\mathfrak{m}(\mu)^{-1}
			\left\|u\right\|_{U^q_BH^{\mathfrak{m}}[T]}
		\end{equation}
	\end{lemma}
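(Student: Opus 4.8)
The plan is to reduce both estimates to the corresponding bounds for free solutions of (\ref{ZEQN:NewPrOpHomgCopy}) --- that is, to Proposition \ref{ZPROP:NewPrOpHmLWP} and Proposition \ref{ZPROP:NewPrOpStrichartz} --- by exploiting the atomic structure of the adapted spaces $U^p_B H^{\mathfrak{m}}[T]$. The first step is to record the elementary fact that, for each $t,t_0\in[0,1)$, the solution operator $\mathfrak{S}_B(t,t_0)$ is a bounded invertible map $H^{\mathfrak{m}}\to H^{\mathfrak{m}}$ satisfying the group law $\mathfrak{S}_B(t_2,t_1)\mathfrak{S}_B(t_1,t_0)=\mathfrak{S}_B(t_2,t_0)$; this is an immediate consequence of the uniqueness statement in Proposition \ref{ZPROP:NewPrOpHmLWP}, and in particular $\mathfrak{S}_B(t,0)$ and $\mathfrak{S}_B(0,t)$ are mutually inverse on $H^{\mathfrak{m}}$. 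Consequently, if $u\in U^p_B H^{\mathfrak{m}}[T]$ is such that $w:=\mathfrak{S}_B(0,\cdot)u(\cdot)$ is a single $U^pH^{\mathfrak{m}}[T]$ atom, say $w(t)=\sum_{k=0}^{K-1}\mathbbm{1}_{[t_k,t_{k+1})}(t)w_k$ with $\sum_k\|w_k\|_{H^{\mathfrak{m}}}^p=1$, then $u(t)=\mathfrak{S}_B(t,0)w_k$ for $t\in[t_k,t_{k+1})$; that is, on each subinterval $[t_k,t_{k+1})$ the function $u$ coincides with the restriction of the solution to (\ref{ZEQN:NewPrOpHomgCopy}) on $[0,1)$ issued from the datum $w_k$ at time $0$.

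For (\ref{ZEQN:OldLemma4.9_EQN01}): applying the bound (\ref{ZEQN:NewPrOpHmLWP_EQN01}) of Proposition \ref{ZPROP:NewPrOpHmLWP} (with $T=1$, $t_0=0$) to this free solution and restricting to $[t_k,t_{k+1})$ gives $\|u\|_{L^\infty_t([t_k,t_{k+1}),H^{\mathfrak{m}})}\le C([\mathfrak{m}])\mathrm{e}^{C_1([\mathfrak{m}])\|\nabla B\|_{L^1_tL^\infty_x[1]}}\|w_k\|_{H^{\mathfrak{m}}}$. Taking the supremum over $k$ and using $\sup_k\|w_k\|_{H^{\mathfrak{m}}}\le(\sum_k\|w_k\|_{H^{\mathfrak{m}}}^p)^{1/p}=1$ yields $\|u\|_{L^\infty_tH^{\mathfrak{m}}[T]}\le C([\mathfrak{m}])\mathrm{e}^{C_1([\mathfrak{m}])\|\nabla B\|_{L^1_tL^\infty_x[1]}}$ for a $U^p_B$-atom. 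For a general $u\in U^p_BH^{\mathfrak{m}}[T]$ one writes $w=\mathfrak{S}_B(0,\cdot)u(\cdot)=\sum_j c_j a_j$ with $a_j$ atoms and $\sum_j|c_j|$ arbitrarily close to $\|u\|_{U^p_BH^{\mathfrak{m}}[T]}$; the series $\sum_j c_j\mathfrak{S}_B(\cdot,0)a_j$ then converges absolutely in $L^\infty_tH^{\mathfrak{m}}[T]$ to $u$, and the triangle inequality together with the atom bound gives (\ref{ZEQN:OldLemma4.9_EQN01}).

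For (\ref{ZEQN:OldLemma4.9_EQN02}) the argument is identical, now taking $w=\mathfrak{S}_B(0,\cdot)u(\cdot)$ to be a $U^qH^{\mathfrak{m}}[T]$ atom, so that $\sum_k\|w_k\|_{H^{\mathfrak{m}}}^q=1$ (we may assume $q<\infty$, the case $(q,r)=(\infty,2)$ being an immediate consequence of (\ref{ZEQN:OldLemma4.9_EQN01})). On each $[t_k,t_{k+1})$ we invoke the Strichartz bound (\ref{ZEQN:NewPrOpStrichartz_EQN01}) of Proposition \ref{ZPROP:NewPrOpStrichartz} for the free solution issued from $w_k$, whose implicit constant is independent of the length of the time interval, to get $\|\mathrm{P}_\mu u\|_{L^q_t([t_k,t_{k+1}),L^r_x)}\le C([\mathfrak{m}],q)(1+\|B\|_{L^\infty_{t,x}[1]})\mathrm{e}^{C_1([\mathfrak{m}])\|\nabla B\|_{L^1_tL^\infty_x[1]}}\mu^{1/q}\mathfrak{m}(\mu)^{-1}\|w_k\|_{H^{\mathfrak{m}}}$. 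Raising to the $q$-th power and summing over the intervals $[t_k,t_{k+1})$ (which partition $[0,T)$), then using $\sum_k\|w_k\|_{H^{\mathfrak{m}}}^q=1$, produces the $U^q_B$-atom bound $\|\mathrm{P}_\mu u\|_{L^q_tL^r_x[T]}\le C([\mathfrak{m}],q)(1+\|B\|_{L^\infty_{t,x}[1]})\mathrm{e}^{C_1([\mathfrak{m}])\|\nabla B\|_{L^1_tL^\infty_x[1]}}\mu^{1/q}\mathfrak{m}(\mu)^{-1}$; superposing over atoms as before gives (\ref{ZEQN:OldLemma4.9_EQN02}).

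The only points requiring a little care, both essentially routine, are the verification of the group law for $\mathfrak{S}_B$ on the generalised Sobolev space $H^{\mathfrak{m}}$ --- which is what guarantees that on each atom-subinterval $u$ is genuinely a free solution of (\ref{ZEQN:NewPrOpHomgCopy}) to which the two Propositions apply --- and, in the $L^\infty$-estimate, the harmless use of the inclusion $\ell^p\hookrightarrow\ell^q$ for $p\le q$. Note that for the Strichartz estimate no such inclusion is needed, since the atoms entering the computation are already $U^q$ atoms; this is precisely why (\ref{ZEQN:OldLemma4.9_EQN02}) must be stated with the $U^q_B$ norm (and not a $U^p_B$ norm with $p<q$) on the right-hand side.
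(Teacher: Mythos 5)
Your proposal is correct and is exactly the argument the paper intends: its proof of this lemma is the one-line remark that the estimates follow from the atomic structure of $U^p_BH^{\mathfrak{m}}[T]$ together with Propositions \ref{ZPROP:NewPrOpHmLWP} and \ref{ZPROP:NewPrOpStrichartz}, and your write-up simply supplies the details (group law for $\mathfrak{S}_B$ via uniqueness, per-subinterval reduction to free solutions, $\ell^p$ versus $\ell^q$ bookkeeping, superposition over atoms).
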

	\begin{proof}
		Due to the atomic structure of the $U^p_BH^{\mathfrak{m}}[T]$ spaces,
		the asserted estimates are immediate consequences of Propositions \ref{ZPROP:NewPrOpHmLWP}
		and \ref{ZPROP:NewPrOpStrichartz}.
	\end{proof}
	
	With the above machinery, the following result, which lets us compare $U^p$ spaces associated to different admissible forms,
	is now straightforward.
	
	\begin{proposition} \label{ZPROP:ChangeOfAdmForm}
		Let $\mathfrak{m}$ be a Sobolev weight and $B,\varGamma$ be admissible forms.
		Let $T\in(0,1]$. Let $p\in(1,\infty)$.
		Then we have the embedding $U^p_BH^{\mathfrak{m}}[T]\hookrightarrow U^p_\varGamma H^{\lambda^{-1}\mathfrak{m}}[T]$ with
		\[
			\left\|u\right\|_{U^p_\varGamma H^{\lambda^{-1}\mathfrak{m}}[T]}
			\le
			C([\mathfrak{m}])\mathrm{e}^{C_1([\mathfrak{m}])\left(\|\nabla B\|_{L^1_tL^\infty_x[1]}+\|\nabla\varGamma\|_{L^1_tL^\infty_x[1]}\right)}
			T
			\left\|B-\varGamma\right\|_{L^\infty_{t,x}[T]}
			\left\|u\right\|_{U^p_BH^{\mathfrak{m}}[T]}
		\;.\]
	\end{proposition}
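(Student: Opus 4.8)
The plan is to carry a $U^p_B$ function over to the $\varGamma$-adapted scale by a Duhamel-type identity whose error is driven by $\mathfrak{P}_B-\mathfrak{P}_\varGamma=\mathfrak{P}_{B-\varGamma}$, the paraproduct being linear in the $1$-form. Set $\mathcal{K}(s):=\mathfrak{S}_\varGamma(0,s)\,\mathfrak{P}_{B(s)-\varGamma(s)}\,\mathfrak{S}_B(s,0)$. The first step is the identity
\[
	\mathfrak{S}_\varGamma(0,t)\,\mathfrak{S}_B(t,0)\,w = w - \int_0^t \mathcal{K}(s)\,w\;\mathrm{d}s \qquad(w\in H^{\mathfrak{m}})\;,
\]
which I would obtain as follows: the free evolution $v(t):=\mathfrak{S}_B(t,0)w$ solves the homogeneous $B$-equation, hence $(\partial_t-\mathrm{i}\triangle+\mathfrak{P}_\varGamma)v=-\mathfrak{P}_{B-\varGamma}v$; Lemma \ref{ZLEM:DuhamelBd}, applied with the form $\varGamma$ and the weight $\lambda^{-1}\mathfrak{m}$ (a Sobolev weight with $[\lambda^{-1}\mathfrak{m}]\le 1+[\mathfrak{m}]$, so all constants stay controlled by $[\mathfrak{m}]$), expresses $v$ by the $\varGamma$-Duhamel formula, and composing with $\mathfrak{S}_\varGamma(0,t)$ and using the flow law $\mathfrak{S}_\varGamma(0,t)\mathfrak{S}_\varGamma(t,s)=\mathfrak{S}_\varGamma(0,s)$ (uniqueness, Proposition \ref{ZPROP:NewPrOpHmLWP}) yields the claimed identity. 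The hypothesis $-\mathfrak{P}_{B-\varGamma}v\in\mathrm{D}U^p_\varGamma H^{\lambda^{-1}\mathfrak{m}}[T]$ needed for Lemma \ref{ZLEM:DuhamelBd} is a consequence of the next step, since the corresponding $\mathfrak{S}_\varGamma$-primitive is then absolutely continuous into $H^{\lambda^{-1}\mathfrak{m}}$ and so lies in $V^1\hookrightarrow U^p$.

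Writing $E:=C\,\mathrm{e}^{C_1(\|\nabla B\|_{L^1_tL^\infty_x[1]}+\|\nabla\varGamma\|_{L^1_tL^\infty_x[1]})}$ with $C,C_1$ depending only on $[\mathfrak{m}]$, the second step is the operator bound
\[
	\left\|\mathcal{K}(s)\,w\right\|_{H^{\lambda^{-1}\mathfrak{m}}}\le E\,\|B(s)-\varGamma(s)\|_{L^\infty_x}\,\|w\|_{H^{\mathfrak{m}}}\;.
\]
This follows by composing three bounds: Proposition \ref{ZPROP:NewPrOpHmLWP} for $\mathfrak{S}_B(s,0)$ on $H^{\mathfrak{m}}$ and for $\mathfrak{S}_\varGamma(0,s)$ on $H^{\lambda^{-1}\mathfrak{m}}$ (the latter solving backward in time), together with the paraproduct estimate $\mathfrak{P}_D\colon H^{\mathfrak{m}}\rightarrow H^{\lambda^{-1}\mathfrak{m}}$ of norm $\le C([\mathfrak{m}])\|D\|_{L^\infty_x}$, valid for any spatial $1$-form $D$: in each summand of $\mathfrak{P}_Dw$ a factor frequency-localised to $\lesssim\lambda$, of $L^\infty_x$-size $\lesssim\|D\|_{L^\infty_x}$, multiplies $\mathrm{P}_\lambda\partial_iw$, of $L^2_x$-size $\lesssim\lambda\|\mathrm{P}_{\approx\lambda}w\|_{L^2_x}$, while the output frequency stays $\approx\lambda$, so the gain $\lambda^{-1}$ in the target weight absorbs the derivative; summing in $\ell^2_\lambda$ and using (\ref{ZEQN:SobWtRmk_EQN01}) to pass between adjacent scales closes it.

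For the main argument, by the atomic structure of $U^p_BH^{\mathfrak{m}}[T]$ and linearity it is enough to take $u(t)=\mathfrak{S}_B(t,0)b_k$ for $t\in[t_k,t_{k+1})$, where $0=t_0<\dots<t_K=T$ and $\sum_k\|b_k\|_{H^{\mathfrak{m}}}^p=1$. Inserting $w=b_k$ into the identity and splitting the integral at $t_k$ gives, on $[t_k,t_{k+1})$,
\[
	\mathfrak{S}_\varGamma(0,t)u(t)=\beta_k-\int_{t_k}^{t}\mathcal{K}(s)\,b_k\;\mathrm{d}s
	\;,\qquad
	\beta_k:=b_k-\int_0^{t_k}\mathcal{K}(s)\,b_k\;\mathrm{d}s\;,
\]
so $\mathfrak{S}_\varGamma(0,\cdot)u=\mathrm{I}_1+\mathrm{I}_2$ with $\mathrm{I}_1:=\sum_k\mathbbm{1}_{[t_k,t_{k+1})}\beta_k$ a step function and $\mathrm{I}_2$ right-continuous, vanishing at each $t_k$, and piecewise absolutely continuous. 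From the second-step bound, $\|\beta_k\|_{H^{\lambda^{-1}\mathfrak{m}}}\le(1+E\,T\,\|B-\varGamma\|_{L^\infty_{t,x}[T]})\|b_k\|_{H^{\mathfrak{m}}}$, so $\mathrm{I}_1$ is a multiple of a $U^pH^{\lambda^{-1}\mathfrak{m}}[T]$ atom with $\|\mathrm{I}_1\|_{U^pH^{\lambda^{-1}\mathfrak{m}}[T]}\le 1+E\,T\,\|B-\varGamma\|_{L^\infty_{t,x}[T]}$; and on $[t_k,t_{k+1})$ we have $\|\partial_t\mathrm{I}_2(t)\|_{H^{\lambda^{-1}\mathfrak{m}}}\le E\|b_k\|_{H^{\mathfrak{m}}}\|B(t)-\varGamma(t)\|_{L^\infty_x}$, hence
\[
	\left\|\mathrm{I}_2\right\|_{V^1H^{\lambda^{-1}\mathfrak{m}}[T]}
	\lesssim
	E\sum_k\|b_k\|_{H^{\mathfrak{m}}}\int_{t_k}^{t_{k+1}}\|B(s)-\varGamma(s)\|_{L^\infty_x}\,\mathrm{d}s
	\le
	E\,T\,\|B-\varGamma\|_{L^\infty_{t,x}[T]}\;,
\]
where the last step uses $\max_k\|b_k\|_{H^{\mathfrak{m}}}\le(\sum_k\|b_k\|_{H^{\mathfrak{m}}}^p)^{1/p}=1$. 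Since $p>1$, Lemma \ref{ZLEM:UpVpEmbedding} gives $V^1H^{\lambda^{-1}\mathfrak{m}}[T]\hookrightarrow U^pH^{\lambda^{-1}\mathfrak{m}}[T]$; adding the two contributions and summing over an atomic decomposition of a general $u$ yields the assertion.

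The step I expect to be the main obstacle is the $U^p$ estimate of the remainder $\mathrm{I}_2$: a bound pointwise in $t$ is useless for the $U^p$ norm, so one genuinely needs the $V^1\hookrightarrow U^p$ embedding together with the elementary but essential observation that for a $U^p$-atom $\max_k\|b_k\|_{H^{\mathfrak{m}}}\le(\sum_k\|b_k\|_{H^{\mathfrak{m}}}^p)^{1/p}$, which is what collapses $\sum_k\|b_k\|_{H^{\mathfrak{m}}}\int_{t_k}^{t_{k+1}}\|B-\varGamma\|_{L^\infty_x}$ to $\int_0^T\|B-\varGamma\|_{L^\infty_x}\le T\|B-\varGamma\|_{L^\infty_{t,x}[T]}$ with no combinatorial loss in the number of subintervals; this is precisely where $p>1$ enters, alongside the duality and embedding lemmas for the adapted spaces. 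The justification of the Duhamel identity in the first step is the one further technical point, but it reduces to an application of Lemma \ref{ZLEM:DuhamelBd} once the $\mathrm{D}U^p_\varGamma$-membership provided by the second step is in hand. By contrast the ``free'' part $\mathrm{I}_1$ transfers with only a bounded constant, so the smallness for $T$ small is supplied entirely by the discrepancy $\mathfrak{P}_{B-\varGamma}$ through the factor $T\|B-\varGamma\|_{L^\infty_{t,x}[T]}$.
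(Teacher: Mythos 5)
Your proof is correct and rests on the same two pillars as the paper's argument: the observation that a $B$-free solution $u$ solves the $\varGamma$-equation with forcing $\mathfrak{P}_{\varGamma-B}u$, and the fixed-time paraproduct bound $\left\|\mathfrak{P}_Dw\right\|_{H^{\lambda^{-1}\mathfrak{m}}}\le C([\mathfrak{m}])\left\|D\right\|_{L^\infty_x}\left\|w\right\|_{H^{\mathfrak{m}}}$. Where you diverge is in how the forcing is placed in $\mathrm{D}U^p_\varGamma H^{\lambda^{-1}\mathfrak{m}}[T]$: the paper tests against $V^{p'}_\varGamma H^{\lambda\mathfrak{m}^{-1}}[T]$ and invokes the duality principle of Lemma \ref{ZLEM:UpVpDualityB}, whereas you observe that the $\mathfrak{S}_\varGamma$-transported primitive of an $L^1_tH^{\lambda^{-1}\mathfrak{m}}$ forcing is piecewise absolutely continuous and right-continuous, hence lies in $V^1_{\mathrm{rc}}\hookrightarrow U^p$ by Lemma \ref{ZLEM:UpVpEmbedding}. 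This is more elementary (no dual pairing needed), and $p>1$ enters your argument through that embedding exactly as it enters the paper's through the duality lemma. Your explicit splitting of the transported atom into the step part $\mathrm{I}_1$ and the remainder $\mathrm{I}_2$, together with the observation $\max_k\|b_k\|_{H^{\mathfrak{m}}}\le 1$, also makes the passage from single free solutions to atoms more transparent than the paper's one-line appeal to the atomic structure. One point worth flagging, which your proof shares with the paper's: what both arguments actually establish for an atom is the bound $1+C([\mathfrak{m}])\mathrm{e}^{C_1([\mathfrak{m}])(\cdots)}\,T\left\|B-\varGamma\right\|_{L^\infty_{t,x}[T]}$ --- your $\beta_k$ contribution corresponds to the $\|u^{\mathrm{in}}\|_{H^{\mathfrak{m}}}$ term in (\ref{ZEQN:DuhamelBd_EQN01}) that the paper silently drops --- and not the stated right-hand side with the bare factor $T\left\|B-\varGamma\right\|_{L^\infty_{t,x}[T]}$, which would vanish when $B=\varGamma$. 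The statement should carry the additional $1$, and that is the form in which the proposition is used later (to guarantee membership of $\psi,\psi'$ in $U^2_{B^\dagger}H^{s-1}[T]$); so this is a defect of the statement rather than of your argument.
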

	\begin{proof}
		Suppose first that $u=\mathfrak{S}_B(t,t_0)w$ is a free solution on $[0,T)$ to (\ref{ZEQN:NewPrOpHomgCopy})
		with $w\in H^{\mathfrak{m}}$, so that
		\begin{equation}\label{ZEQN:ChangeOfAdmForm_EQN01}
			\left(\partial_t-\mathrm{i}\triangle+\mathfrak{P}_\varGamma\right)u = \mathfrak{P}_{\varGamma-B}u
		\;.\end{equation}
		Now, observe that $\|\mathfrak{P}_bw\|_{H^{\lambda^{-1}\mathfrak{m}}[T]}\le C([\mathfrak{m}]) \|b\|_{L^\infty_x}\|w\|_{H^{\mathfrak{m}}[T]}$.
		Therefore, for $v\in V^{p'}_\varGamma H^{\lambda\mathfrak{m}^{-1}}[T]$, we have
		\[
			\left|\int_0^T\int_{\mathbb{R}^2_x}
				\overline{v}\,\mathfrak{P}_{\varGamma-B}u\,\mathrm{d}x\,\mathrm{d}t\right|
			\le
				C([\mathfrak{m}])
				T
				\left\|v\right\|_{L^\infty_tH^{\lambda\mathfrak{m}^{-1}}[T]}
				\left\|B-\varGamma\right\|_{L^\infty_{t,x}[T]}
				\left\|u\right\|_{L^\infty_tH^{\mathfrak{m}}[T]}
		\;.\]
		Using Lemma \ref{ZLEM:OldLemma4.9}, we obtain
		\begin{equation*}\begin{split}
			\left|\int_0^T\int_{\mathbb{R}^2_x}
				\overline{v}\,\mathfrak{P}_{\varGamma-B}u\,\mathrm{d}x\,\mathrm{d}t\right|
			\le\;&
				C([\mathfrak{m}])
				\mathrm{e}^{C_1([\mathfrak{m}])\left(\|\nabla B\|_{L^1_tL^\infty_x[1]}+\|\nabla\varGamma\|_{L^1_tL^\infty_x[1]}\right)}
		\\&
			\cdot
				T
				\left\|v\right\|_{V^{p'}_\varGamma H^{\lambda\mathfrak{m}^{-1}}[T]}
				\left\|B-\varGamma\right\|_{L^\infty_{t,x}[1]}
				\left\|w\right\|_{H^{\mathfrak{m}}}
		\;.\end{split}\end{equation*}
		Thus, by the duality principle of Lemma \ref{ZLEM:UpVpDualityB},
		\[
			\left\|\mathfrak{P}_{\varGamma-B}u\right\|_{\mathrm{D}U^p_BH^{\lambda^{-1}\mathfrak{m}}[T]}
			\le
				C([\mathfrak{m}])
				\mathrm{e}^{C_1([\mathfrak{m}])\left(\|\nabla B\|_{L^1_tL^\infty_x[1]}+\|\nabla\varGamma\|_{L^1_tL^\infty_x[1]}\right)}
				T
				\left\|B-\varGamma\right\|_{L^\infty_{t,x}[1]}
				\left\|w\right\|_{H^{\mathfrak{m}}}
		\;.\]
		Plugging into the Duhamel formula in Lemma \ref{ZLEM:DuhamelBd}, we find
		\[
			\left\|u\right\|_{U^p_BH^{\lambda^{-1}\mathfrak{m}}[T]}
			\le
				C([\mathfrak{m}])
				\mathrm{e}^{C_1([\mathfrak{m}])\left(\|\nabla B\|_{L^1_tL^\infty_x[1]}+\|\nabla\varGamma\|_{L^1_tL^\infty_x[1]}\right)}
				T
				\left\|B-\varGamma\right\|_{L^\infty_{t,x}[1]}
				\left\|w\right\|_{H^{\mathfrak{m}}}
		\;.\]
		This proves Proposition \ref{ZPROP:ChangeOfAdmForm} in the special case when $u$ is a free solution to (\ref{ZEQN:NewPrOpHomgCopy}).
		
		In particular, if now $u$ is a $U^p_BH^{\mathfrak{m}}[T]$ atom, then
		\[
			\left\|u\right\|_{U^p_BH^{\lambda^{-1}\mathfrak{m}}[T]}
			\le
				C([\mathfrak{m}])
				\mathrm{e}^{C_1([\mathfrak{m}])\left(\|\nabla B\|_{L^1_tL^\infty_x[1]}+\|\nabla\varGamma\|_{L^1_tL^\infty_x[1]}\right)}
				T
				\left\|B-\varGamma\right\|_{L^\infty_{t,x}[1]}
		\;.\]
		The assertion of Proposition \ref{ZPROP:ChangeOfAdmForm} now follow from the atomic structure of $U^p_BH^{\mathfrak{m}}[T]$.
	\end{proof}
	
\section{Construction of the Iteration Scheme} \label{ZSECT:ConstructItrtnScheme}

	The goal of the present section is to set up the iteration scheme (\ref{ZEQN:IntroItrtnScheme}), and show that the iterates $\phi^{[n]}$
	exist on a common time interval $T=T(\|\phi^{\mathrm{in}}\|_{H^s})$.
	The convergence of the iteration scheme to a solution of the Chern-Simons-Schr\"odinger system in the Coulomb gauge, (\ref{ZEQN:CSSCoul}), will be addressed in the next section.

\subsection{Setting up the iteration scheme}
	For convenience, we introduce the following notation.
	Define the bilinear maps $\mathcal{N}^2_\alpha$ for $\alpha=0,1,2$,
	and the quadrilinear maps $\mathcal{N}^4_t, \mathcal{N}^4_x$ on $\mathcal{S}(\mathbb{R}^2_x)$, by
	\begin{equation*}\begin{split}
		\mathcal{N}^2_i[u_1,u_2] :=&\; \epsilon_{ij}\frac{\partial_j}{(-\triangle)}\left(u_1u_2\right)
	\;,\\
		\mathcal{N}^2_0[u_1,u_2] :=&\; \left(-\triangle\right)^{-1}\left(\nabla u_1\wedge \nabla u_2\right)
	\;,\\
		\mathcal{N}^4_t[u_1,u_2,u_3,u_4] :=&\; \frac{\mathrm{rot}}{(-\triangle)}\left(\mathcal{N}^2_x[u_1,u_2]u_3u_4\right)
	\;,\\
		\mathcal{N}^4_x[u_1,u_2,u_3,u_4] :=&\; -\mathrm{i}\mathcal{N}^2_x[u_1,u_2]\cdot\mathcal{N}^2_x[u_3,u_4]
	\;,\end{split}\end{equation*}
	where we have also denoted $\mathcal{N}^2_x:=(\mathcal{N}^2_1,\mathcal{N}^2_2)$.
	We warn the reader that $\mathcal{N}^2_x$ is $\mathbb{R}^2$-valued while $\mathcal{N}^4_x$ is real-valued.
	Define also the trilinear map
	\[
		\mathcal{Q}\left[u_1,u_2,u_3\right] 
		:=
		\sum_\lambda \left[\mathrm{P}_\lambda\mathcal{N}^2_x\left[u_1,u_2\right]\cdot\nabla\mathrm{P}_{<2^5\lambda}u_3
			+ \mathrm{P}_{<2^5\lambda}\left(\mathrm{P}_\lambda\mathcal{N}^2_x\left[u_1,u_2\right]\cdot\nabla u_3\right)\right]
	\;,\]
	so that, in the notation of the Introduction,
	$\mathcal{Q}[\overline{\phi},\phi,\phi] = \mathfrak{Q}_{A_x}\phi$ for a solution $\phi$ to (\ref{ZEQN:CSSCoul}).

	Then the Chern-Simons-Schr\"odinger system in the Coulomb gauge, (\ref{ZEQN:CSSCoul}), can be written as
	\begin{equation}\label{ZEQN:CSSCoulSuccinct2}
	\left\{
		\begin{aligned}
			\left(\partial_t-\mathrm{i}\triangle+\mathfrak{P}_{A_x}\right)\phi
		=\;&
			\mathcal{Q}\left[\overline{\phi},\phi,\phi\right]
			+ \mathcal{N}^2_0\left[\overline{\phi},\phi\right]\phi
			+ \mathcal{N}^4_t\left[\overline{\phi},\phi,\overline{\phi},\phi\right]\phi
		\\&
			+ \mathcal{N}^4_x\left[\overline{\phi},\phi,\overline{\phi},\phi\right]\phi
			- \mathrm{i}\kappa\left|\phi\right|^2\phi
		\;,\\
			A_x =\;& -\mbox{$\frac{1}{2}$}\mathcal{N}^2_x\left[\overline{\phi},\phi\right]
		\;.
		\end{aligned}
	\right.
	\end{equation}
	Similarly, the iteration scheme (\ref{ZEQN:IntroItrtnScheme}) can be written succinctly as
	\begin{equation}\label{ZEQN:ItrtnSchemeSuccinct}
	\left\{
		\begin{aligned}
			\left(\partial_t-\mathrm{i}\triangle+\mathfrak{P}_{A_x^{[n-1]}}\right)\phi^{[n]}
		=\;&
			\mathcal{Q}\left[\overline{\phi^{[n]}},\phi^{[n]},\phi^{[n]}\right]
			+ \mathcal{N}^2_0\left[\overline{\phi^{[n]}},\phi^{[n]}\right]\phi^{[n]}
		\\&
			+ \mathcal{N}^4_t\left[\overline{\phi^{[n]}},\phi^{[n]},\overline{\phi^{[n]}},\phi^{[n]}\right]\phi^{[n]}
		\\&
			+ \mathcal{N}^4_x\left[\overline{\phi^{[n]}},\phi^{[n]},\overline{\phi^{[n]}},\phi^{[n]}\right]\phi^{[n]}
			- \mathrm{i}\kappa\left|\phi^{[n]}\right|^2\phi^{[n]}
		\;,\\
			A_x^{[n]} =\;& -\mbox{$\frac{1}{2}$}\mathcal{N}^2_x\left[\overline{\phi^{[n]}},\phi^{[n]}\right]
		\;,\\
			\phi^{[n]}(0) =\;& \phi^{\mathrm{in}}
		\;.
		\end{aligned}
	\right.
	\end{equation}

	We record the following easy estimate, which will play a key role in formulating the existence result for the iteration scheme, Theorem \ref{ZTHM:Sect5MainThm}.

	\begin{lemma} \label{ZLEM:EnergyEstmt_N2x}
		We have the estimate
		\begin{equation} \label{ZEQN:EnergyEstmt_N2x_EQN02}
			\left\|\mathcal{N}^2_x\left[u_1,u_2\right]\right\|_{L^\infty_x} \le C\left\|u_1\right\|_{H^1}\left\|u_2\right\|_{H^1}
		\;.\end{equation}
	\end{lemma}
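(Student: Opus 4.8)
The plan is to represent $\mathcal{N}^2_x$ via the Biot--Savart law recorded in Section \ref{ZSECT:Preliminaries} and to exploit the two-dimensional Sobolev embedding $H^1(\mathbb{R}^2)\hookrightarrow L^p(\mathbb{R}^2)$, valid for every $p\in[2,\infty)$. Writing out the definition, for each $i$ one has
\[
	\mathcal{N}^2_i[u_1,u_2](x)
	=
	\epsilon_{ij}\frac{\partial_j}{(-\triangle)}\left(u_1u_2\right)(x)
	=
	-\frac{\epsilon_{ij}}{2\pi}\int_{\mathbb{R}^2}\frac{x_j-y_j}{|x-y|^2}\,(u_1u_2)(y)\,\mathrm{d}y
\;,\]
so that, since $\left|\frac{x_j-y_j}{|x-y|^2}\right|\le|x-y|^{-1}$, the quantity $\left|\mathcal{N}^2_x[u_1,u_2](x)\right|$ is dominated pointwise by $\frac{1}{2\pi}\bigl(|\cdot|^{-1}\ast|u_1u_2|\bigr)(x)$. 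The one point requiring care --- and the only real obstacle --- is that $\frac{\partial_j}{(-\triangle)}$ is, up to Riesz transforms, a Riesz potential of order $1$ in the plane, and the endpoint bound from $L^2$ into $L^\infty$ for such a potential fails (its image lies only in $\mathrm{BMO}$); this is also why Bernstein's inequality cannot be invoked directly at low frequencies, as noted in Section \ref{ZSECT:Preliminaries}.

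To bypass this, I would split the kernel at the unit scale, $|x|^{-1}=K_0(x)+K_\infty(x)$ with $K_0:=|x|^{-1}\mathbbm{1}_{\{|x|\le1\}}$ and $K_\infty:=|x|^{-1}\mathbbm{1}_{\{|x|>1\}}$. An elementary computation in polar coordinates shows $K_0\in L^{3/2}(\mathbb{R}^2)$ and $K_\infty\in L^{3}(\mathbb{R}^2)$, with explicit finite norms. Applying Young's convolution inequality with the conjugate exponent pairs $\left(\tfrac32,3\right)$ and $\left(3,\tfrac32\right)$ respectively gives
\[
	\left\|\,|\cdot|^{-1}\ast|u_1u_2|\,\right\|_{L^\infty_x}
	\le
	\left\|K_0\right\|_{L^{3/2}_x}\left\|u_1u_2\right\|_{L^3_x}
	+
	\left\|K_\infty\right\|_{L^3_x}\left\|u_1u_2\right\|_{L^{3/2}_x}
	\lesssim
	\left\|u_1u_2\right\|_{L^3_x}+\left\|u_1u_2\right\|_{L^{3/2}_x}
\;.\]
(Alternatively, one could Littlewood--Paley decompose and use Bernstein with the gain of one derivative on the pieces $\mathrm{P}_\lambda$, $\lambda\ge2$, reserving the Biot--Savart argument for $\mathrm{P}_1$, matching the preliminaries; the direct argument above handles all frequencies uniformly.)

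Finally, Hölder's inequality yields $\left\|u_1u_2\right\|_{L^3_x}\le\left\|u_1\right\|_{L^6_x}\left\|u_2\right\|_{L^6_x}$ and $\left\|u_1u_2\right\|_{L^{3/2}_x}\le\left\|u_1\right\|_{L^3_x}\left\|u_2\right\|_{L^3_x}$, and the Sobolev embedding $H^1(\mathbb{R}^2)\hookrightarrow L^3(\mathbb{R}^2)\cap L^6(\mathbb{R}^2)$ bounds each factor by the corresponding $H^1$ norm; combining these with the two displays above gives (\ref{ZEQN:EnergyEstmt_N2x_EQN02}). I expect no genuine difficulty beyond the kernel split, which is exactly the device that converts the borderline $L^2$ information on $u_1u_2$ into the strictly better $L^{3/2}\cap L^3$ control that is available for free in two dimensions.
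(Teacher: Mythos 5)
Your argument is correct, and it is a genuinely different route from the one in the paper. You work directly with the Biot--Savart kernel, split $|x|^{-1}$ at unit scale into an $L^{3/2}$ piece and an $L^{3}$ piece, and close with Young's convolution inequality, H\"older, and the embedding $H^1(\mathbb{R}^2)\hookrightarrow L^3\cap L^6$; all the exponent arithmetic checks out, and the kernel integrability claims are elementary. The paper instead proves the strictly stronger dyadic bound $\left\|\mathrm{P}_\mu\mathcal{N}^2_x[u_1,u_2]\right\|_{L^4_x}\le C\mu^{-1}\left\|u_1\right\|_{H^1}\left\|u_2\right\|_{H^1}$ (i.e.\ a $B^1_{4,\infty}$ estimate) via a Littlewood--Paley decomposition, Hardy--Littlewood--Sobolev on each frequency-localised product, and Bernstein to pass to $L^\infty_x$. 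What the paper's route buys is the frequency-localised gain of a full derivative, which is of the same shape as the other dyadic estimates in Sections 4 and 5 and makes the summation over $\mu$ transparent; what your route buys is a shorter, entirely physical-space proof that needs no Littlewood--Paley machinery and that correctly identifies (and circumvents) the failure of the endpoint Riesz-potential bound $L^2\to L^\infty$. Since the lemma only asserts the $L^\infty_x$ estimate, your proof is a complete substitute.
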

	\begin{proof}
		By Bernstein's inequality, it suffices to prove the stronger estimate
		\begin{equation} \label{ZEQN:EnergyEstmt_N2x_EQN11}
			\left\|\mathcal{N}^2_x\left[u_1,u_2\right]\right\|_{B^1_{4,\infty}} \le C\left\|u_1\right\|_{H^1}\left\|u_2\right\|_{H^1}
		\;.\end{equation}
		By Hardy-Littlewood-Sobolev,
		\[
			\left\|\mathrm{P}_\mu\mathcal{N}^2_x\left[\mathrm{P}_\lambda u_1,\mathrm{P}_{\le\lambda}u_2\right]\right\|_{L^4_x}
			\le
			C\left\|\mathrm{P}_\lambda u_1\,\mathrm{P}_{\le\lambda}u_2\right\|_{L^{\frac{4}{3}}_x}
			\le
			C\lambda^{-1}\left\|u_1\right\|_{H^1}\left\|u_2\right\|_{H^1}
		\;.\]
		Summing up over $\lambda\gtrsim\mu$, and noting that $\mathcal{N}^2_x[u_1,u_2]$ is symmetric in $u_1,u_2$, we obtain
		\[
			\left\|\mathrm{P}_\mu\mathcal{N}^2_x\left[u_1,u_2\right]\right\|_{L^4_x}
			\le
			C\mu^{-1}\left\|u_1\right\|_{H^1}\left\|u_2\right\|_{H^1}
		\]
		which is (\ref{ZEQN:EnergyEstmt_N2x_EQN11}).
	\end{proof}

\subsection{Statement of the existence result}
	We will construct the iterates to (\ref{ZEQN:ItrtnSchemeSuccinct}) by solving the more general initial value problem
	\begin{equation}\label{ZEQN:MoreGeneralIVP}
		\left\{
			\begin{aligned}
				\left(\partial_t-\mathrm{i}\triangle+\mathfrak{P}_B\right)\psi
			=\;&
				\mathcal{Q}\left[\overline{\psi},\psi,\psi\right]
				+ \mathcal{N}^2_0\left[\overline{\psi},\psi\right]\psi
				+ \mathcal{N}^4_t\left[\overline{\psi},\psi,\overline{\psi},\psi\right]\psi
			\\&
				+\mathcal{N}^4_x\left[\overline{\psi},\psi,\overline{\psi},\psi\right]\psi
				- \mathrm{i}\kappa\left|\psi\right|^2\psi
			\;,\\
				\psi(0) =\;& \psi^{\mathrm{in}}\in H^{\mathfrak{m}}
			\;.
			\end{aligned}
		\right.
	\end{equation}
	Throughout this section, $s\ge 1$ is fixed. We impose the following hypotheses.
	\begin{enumerate}[(I)]
		\item \label{ZHYP:SobWt}
			$\mathfrak{m}$ is a Sobolev weight satisfying
			\begin{equation} \label{ZEQN:Impose_SobWt}
				s\le[\mathfrak{m}]_\star \le [\mathfrak{m}]^\star
				\le s+\frac{1}{8}
			\;.\end{equation}
			Note that, in particular, this implies
			\[
				\lambda^s \le \mathfrak{m}(\lambda) \le \lambda^{s+\frac{1}{8}}
			\;,\]
			and more generally
			\[
				\left(\frac{\lambda}{\mu}\right)^s \le \frac{\mathfrak{m}(\lambda)}{\mathfrak{m}(\mu)} \le
					\left(\frac{\lambda}{\mu}\right)^{s+\frac{1}{8}}
				\quad\mbox{ whenever }
				\lambda\ge\mu
			\;.\]

		\item \label{ZHYP:nablaB}
			$B$ is an admissible form which satisfies
			\begin{equation} \label{ZEQN:Impose_nablaB}
				\left\|\nabla B\right\|_{L^1_tL^\infty_x[1]} \le 1
			\;.\end{equation}
	\end{enumerate}
	Under these hypotheses, Lemma \ref{ZLEM:EnergyEstmt_N2x} and (\ref{ZEQN:OldLemma4.9_EQN01})
	guarantee the existence of a constant $K_1=K_1>0$, which we fix once and for all, such that
	\begin{equation} \label{ZEQN:K1_defn}
		\left\|\mathcal{N}^2_x\left[\psi_1,\psi_2\right]\right\|_{L^\infty_{t,x}[T]}
		\le
		\frac{K_1}{2}
		\left\|\psi_1\right\|_{U^2_BH^1[T]}
		\left\|\psi_2\right\|_{U^2_BH^1[T]}
	\;.\end{equation}
	The main result of this section is that the iterates to the iteration scheme (\ref{ZEQN:ItrtnSchemeSuccinct}) can be constructed,
	and they satisfy certain useful bounds.
	More precisely, we have the following.

	\begin{theorem} \label{ZTHM:Sect5MainThm}
		There exists a small constant $\delta_1=\delta_1(s)\in(0,1]$ such that the following holds.

		Assume the hypotheses (\ref{ZHYP:SobWt}), (\ref{ZHYP:nablaB}) above.
		Let $M>0$ and
		let $\psi^{\mathrm{in}}\in H^{\mathfrak{m}}$ with $\|\psi^{\mathrm{in}}\|_{H^{\mathfrak{m}}}\le M$.
		Suppose additionally that
		\begin{equation} \label{ZEQN:Impose_B}
			\left\|B\right\|_{L^\infty_{t,x}[1]} \le K_1M^2
		\end{equation}
		where $K_1$ is the constant appearing in (\ref{ZEQN:K1_defn}).

		Then, with the existence time $T:=\delta_1(1+M)^{-28}\le 1$,
		there exists a unique solution $\psi\in U^2_BH^{\mathfrak{m}}[T]$ to the initial value problem (\ref{ZEQN:MoreGeneralIVP}).
		This solution satisfies
		\[
			\left\|\psi\right\|_{U^2_BH^{\mathfrak{m}}[T]} \le 2M
		\;.\]
		Moreover, letting $\varGamma$ be the extension by zero of $-\frac{1}{2}\mathcal{N}^2_x[\overline{\psi},\psi]$
		from $(0,T]$ to $(0,1]$,
		we have that $\varGamma$ is an admissible form which also verifies
		hypothesis (\ref{ZHYP:nablaB}) and (\ref{ZEQN:Impose_B}).
	\end{theorem}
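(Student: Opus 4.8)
The plan is to solve (\ref{ZEQN:MoreGeneralIVP}) by a fixed-point argument in the ball
\[
	\mathbb{B} := \left\{\psi\in U^2_BH^{\mathfrak{m}}[T] : \|\psi\|_{U^2_BH^{\mathfrak{m}}[T]}\le 2M\right\}
\]
for the map
\[
	\varPhi(\psi)(t) := \mathfrak{S}_B(t,0)\psi^{\mathrm{in}} + \int_0^t\mathfrak{S}_B(t,t')\,\mathcal{F}[\psi](t')\,\mathrm{d}t' ,
\]
where $\mathcal{F}[\psi] := \mathcal{Q}[\overline{\psi},\psi,\psi] + \mathcal{N}^2_0[\overline{\psi},\psi]\psi + \mathcal{N}^4_t[\overline\psi,\psi,\overline\psi,\psi]\psi + \mathcal{N}^4_x[\overline\psi,\psi,\overline\psi,\psi]\psi - \mathrm{i}\kappa|\psi|^2\psi$ is the right-hand side of the first equation of (\ref{ZEQN:MoreGeneralIVP}). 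By Lemma \ref{ZLEM:DuhamelBd}, a function in $L^\infty_tH^{\mathfrak{m}}[T]$ with $\mathcal{F}[\psi]\in\mathrm{D}U^2_BH^{\mathfrak{m}}[T]$ solves (\ref{ZEQN:MoreGeneralIVP}) if and only if it is a fixed point of $\varPhi$, and the proof of that lemma in fact yields the clean bound
\[
	\|\varPhi(\psi)\|_{U^2_BH^{\mathfrak{m}}[T]}\le \|\psi^{\mathrm{in}}\|_{H^{\mathfrak{m}}} + \|\mathcal{F}[\psi]\|_{\mathrm{D}U^2_BH^{\mathfrak{m}}[T]} ,
\]
with the analogous inequality for $\varPhi(\psi)-\varPhi(\psi')$ in terms of $\|\mathcal{F}[\psi]-\mathcal{F}[\psi']\|_{\mathrm{D}U^2_BH^{\mathfrak{m}}[T]}$. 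Everything therefore reduces to multilinear estimates, in $\mathrm{D}U^2_BH^{\mathfrak{m}}[T]$, for the five terms comprising $\mathcal{F}$ and for their difference versions (one argument replaced by $\psi-\psi'$).

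These estimates I would prove by the duality principle of Lemma \ref{ZLEM:UpVpDualityB}: test the $\mathrm{D}U^2_BH^{\mathfrak{m}}[T]$ norm against $v\in V^2_BH^{\mathfrak{m}^{-1}}[T]$ of norm $\le 1$, expand all factors (including $v$) into Littlewood--Paley pieces, and control the resulting spacetime integrals by Hölder's inequality together with the loss-of-derivative Strichartz and energy estimates of Lemma \ref{ZLEM:OldLemma4.9}. Under our hypotheses the prefactors there are harmless: $\mathrm{e}^{C_1\|\nabla B\|_{L^1_tL^\infty_x[1]}}\le\mathrm{e}^{C_1}$ by (\ref{ZEQN:Impose_nablaB}), and $1+\|B\|_{L^\infty_{t,x}[1]}\lesssim 1+M^2$ by (\ref{ZEQN:Impose_B}); all constants depend only on $[\mathfrak{m}]\le s+\tfrac18$ via (\ref{ZEQN:Impose_SobWt}). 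Because the margin $[\mathfrak{m}]^\star\le s+\tfrac18$ above $s$ leaves a little room in the Bernstein/Hölder bookkeeping for $\mathcal{N}^2_0$, $\mathcal{N}^4_t$, $\mathcal{N}^4_x$ and the cubic term, and because each bound is performed on a short time interval, every nonlinear estimate closes with a surplus positive power of $T$:
\[
	\|\mathcal{F}[\psi]\|_{\mathrm{D}U^2_BH^{\mathfrak{m}}[T]} \lesssim T^{\theta}\left(1+\|\psi\|_{U^2_BH^{\mathfrak{m}}[T]}\right)^{4}\|\psi\|_{U^2_BH^{\mathfrak{m}}[T]}
\]
for a fixed $\theta>0$ (the quintilinear terms $\mathcal{N}^4_t[\,\cdot\,]\psi$, $\mathcal{N}^4_x[\,\cdot\,]\psi$ producing the largest polynomial factor), and likewise for the difference estimates. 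Feeding this into the two displayed inequalities, on $\mathbb{B}$ one gets $\|\varPhi(\psi)\|_{U^2_BH^{\mathfrak{m}}[T]}\le M + CT^{\theta}(1+2M)^4\cdot 2M$ and a contraction factor $\le CT^{\theta}(1+2M)^4$; both are $\le 2M$, respectively $\le\tfrac12$, once $T\le\delta_1(1+M)^{-28}$ for a small $\delta_1=\delta_1(s)$ (this is exactly where the exponent $28$, dictated by $\theta$ and the quintic degree, is pinned down). Banach's theorem then gives a unique fixed point $\psi\in\mathbb{B}$ with $\|\psi\|_{U^2_BH^{\mathfrak{m}}[T]}\le 2M$.

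Uniqueness in all of $U^2_BH^{\mathfrak{m}}[T]$, not just in $\mathbb{B}$, follows from the usual continuity argument: for any solution $\psi'\in U^2_BH^{\mathfrak{m}}[T]$, the map $t\mapsto\|\psi'\|_{U^2_BH^{\mathfrak{m}}[t]}$ is continuous, equals $\|\psi^{\mathrm{in}}\|_{H^{\mathfrak{m}}}\le M$ at $t=0$, and cannot reach $2M$ before $t=T$ by the same estimates, so $\psi'\in\mathbb{B}$ and hence $\psi'=\psi$. Finally, let $\varGamma$ be the extension by zero of $-\tfrac12\mathcal{N}^2_x[\overline\psi,\psi]$ from $(0,T]$ to $(0,1]$. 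Divergence-freeness is algebraic: $\partial_i\mathcal{N}^2_i[u_1,u_2]=\epsilon_{ij}\tfrac{\partial_i\partial_j}{-\triangle}(u_1u_2)=0$ by antisymmetry of $\epsilon$. Since $\psi\in\mathbb{B}$ and $H^{\mathfrak{m}}\hookrightarrow H^1$ (because $[\mathfrak{m}]_\star\ge s\ge 1$), we have $\|\psi\|_{U^2_BH^1[T]}\le\|\psi\|_{U^2_BH^{\mathfrak{m}}[T]}\le 2M$, so Lemma \ref{ZLEM:EnergyEstmt_N2x} together with (\ref{ZEQN:OldLemma4.9_EQN01}) — equivalently, the defining inequality (\ref{ZEQN:K1_defn}) of $K_1$ (applied with $\|\overline\psi\|_{H^1}=\|\psi\|_{H^1}$) — gives
\[
	\|\varGamma\|_{L^\infty_{t,x}[1]}=\tfrac12\|\mathcal{N}^2_x[\overline\psi,\psi]\|_{L^\infty_{t,x}[T]}\le\tfrac12\cdot\tfrac{K_1}{2}(2M)^2=K_1M^2 ,
\]
which is (\ref{ZEQN:Impose_B}) for $\varGamma$; note that the radius $2M$ and the constant $K_1$ are matched precisely so that this holds. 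The remaining bound $\|\nabla\varGamma\|_{L^1_tL^\infty_x[1]}\le 1$ is obtained by a Strichartz argument of exactly the same flavour as the nonlinear estimates: write $\partial_k\mathcal{N}^2_i[\overline\psi,\psi]$ as a Calderón--Zygmund operator applied to $\overline\psi\psi$, decompose by Littlewood--Paley, and use Hölder in $t$ together with (\ref{ZEQN:OldLemma4.9_EQN02}) to get a bound $\lesssim T^{\theta'}(1+M^2)^{2}\|\psi\|_{U^2_BH^{\mathfrak{m}}[T]}^2$, which is $\le 1$ once $\delta_1$ is small; this confirms that $\varGamma$ is an admissible form satisfying (\ref{ZEQN:Impose_nablaB}).

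I expect the main obstacle to be the suite of multilinear estimates itself, and within it the term $\mathcal{N}^2_0[\overline\psi,\psi]\psi$ coming from the scalar potential $A_0$: since $\mathcal{N}^2_0[\overline\psi,\psi]=(-\triangle)^{-1}(\nabla\overline\psi\wedge\nabla\psi)$ has very poor $\mathrm{high}\times\mathrm{high}\to\mathrm{low}$ interactions, controlling it in $\mathrm{D}U^2_BH^{\mathfrak{m}}[T]$ — along with $\nabla\varGamma\in L^1_tL^\infty_x$ — is what forces $s\ge 1$ and exhausts the small surplus $[\mathfrak{m}]^\star\le s+\tfrac18$. Carrying out all of these estimates while simultaneously retaining a positive power of $T$ and paying only the $\mu^{1/q}$ derivative loss built into Lemma \ref{ZLEM:OldLemma4.9}, uniformly for the original and difference versions of all five nonlinearities, is the technical heart of the argument; the rest is the bookkeeping of constants sketched above.
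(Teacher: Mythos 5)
Your proposal follows essentially the same route as the paper: a contraction mapping in the ball of radius $2M$ in $U^2_BH^{\mathfrak{m}}[T]$, with the nonlinearities estimated in $\mathrm{D}U^2_BH^{\mathfrak{m}}[T]$ via the duality principle of Lemma \ref{ZLEM:UpVpDualityB} combined with Littlewood--Paley decomposition and the transferred energy/Strichartz bounds of Lemma \ref{ZLEM:OldLemma4.9}, and with $\varGamma$'s admissibility obtained exactly as in the paper (the $L^\infty_{t,x}$ bound from the defining inequality for $K_1$ matched to the radius $2M$, divergence-freeness by antisymmetry of $\epsilon_{ij}$, and $\nabla\varGamma\in L^1_tL^\infty_x$ from the Riesz-transform representation of $\nabla\mathcal{N}^2_x$ plus a $T^{1/2}$ gain). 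The only caveat is that the suite of multilinear and difference estimates, which you correctly identify as the technical heart and whose proofs occupy most of the paper's Section \ref{ZSECT:ConstructItrtnScheme}, is asserted in outline rather than carried out; the strategy you describe for them is the one the paper uses.
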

	
	The basic idea of the proof of Theorem \ref{ZTHM:Sect5MainThm} is to choose $T$ so that an appropriate contraction map can be set up in the same
	\[
		\mathfrak{E}_{M,T} := \left\{\psi\in U^2_BH^{\mathfrak{m}}[T] \;\Big|\; \left\|\psi\right\|_{U^2_BH^{\mathfrak{m}}[T]}\le 2M\right\}
	\;.\]
	The task of proving Theorem \ref{ZTHM:Sect5MainThm} thus reduces to establishing multilinear estimates
	for each nonlinearity on the right-hand side of (\ref{ZEQN:MoreGeneralIVP}).

\subsection{Preliminary bounds}
	In proving our multilinear estimates we will heavily rely on the
	estimates in Lemma \ref{ZLEM:OldLemma4.9}. Due to our hypotheses (\ref{ZHYP:SobWt}) and (\ref{ZHYP:nablaB}),
	and also because of (\ref{ZEQN:Impose_B}),
	the estimates provided by Lemma 4.13 simplify considerably. For ease
	of exposition we will re-state these estimates here.
	
	\begin{definition}
		Let $\mathfrak{m}$ be a Sobolev weight.
		Let $T\in(0,1]$.
		We define the seminorm $\|\cdot\|_{\Psi^{\mathfrak{m}}[T]}$ on functions $\psi:[0,T)\times\mathbb{R}^2_x\rightarrow\mathbb{C}$ by
		\[
			\left\|\psi\right\|_{\Psi^{\mathfrak{m}}[T]}
			:=
			\left\|\psi\right\|_{L^\infty_tH^{\mathfrak{m}}[T]}
			+
			\sup_\mu \mu^{-\frac{1}{4}}\mathfrak{m}(\mu)\left\|\mathrm{P}_\mu\psi\right\|_{L^4_{t,x}[T]}
		\;.\]
		For $\sigma\in\mathbb{R}$, we define $\Psi^\sigma$ to be $\Psi^{\mathfrak{m}}$
		corresponding to $\mathfrak{m}(\lambda)=\lambda^\sigma$.
	\end{definition}
	
	\begin{lemma} \label{ZLEM:OldLemma4.9Restated}
		Let $\mathfrak{m}$ be a Sobolev weight such that $[\mathfrak{m}]\le C(s)$.
		Assume the hypothesis (\ref{ZHYP:nablaB})
		and assume $B$ satisfies (\ref{ZEQN:Impose_B}).
		Let $T\in(0,1]$.
		If $\psi\in V^2_BH^{\mathfrak{m}}[T]$, and $\widetilde{\psi}$ is either $\psi$ or $\overline{\psi}$, then we have
		\begin{equation}\label{ZEQN:OldLemma4.9Restated_EQN01}
			\left\|\widetilde{\psi}\right\|_{\Psi^{\mathfrak{m}}[T]}
			\le
			C(s)\left(1+M\right)^2\left\|\psi\right\|_{V^2_BH^{\mathfrak{m}}[T]}
		\;.\end{equation}
	\end{lemma}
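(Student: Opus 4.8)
The plan is to deduce Lemma \ref{ZLEM:OldLemma4.9Restated} directly from Lemma \ref{ZLEM:OldLemma4.9}, the embedding Lemma \ref{ZLEM:UpVpEmbeddingB}, and the two structural hypotheses (\ref{ZEQN:Impose_nablaB}) and (\ref{ZEQN:Impose_B}), which are precisely what is needed to collapse the awkward constant factors in Lemma \ref{ZLEM:OldLemma4.9}. The seminorm $\|\cdot\|_{\Psi^{\mathfrak{m}}[T]}$ splits into an $L^\infty_tH^{\mathfrak{m}}[T]$ piece and a square-function piece, and I would estimate them separately, treating first the case $\widetilde\psi=\psi$.

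For the $L^\infty_tH^{\mathfrak{m}}[T]$ piece, I would fix some exponent $p\in(2,\infty)$, invoke the embedding $V^2_BH^{\mathfrak{m}}[T]\hookrightarrow U^p_BH^{\mathfrak{m}}[T]$ from Lemma \ref{ZLEM:UpVpEmbeddingB}, and then apply (\ref{ZEQN:OldLemma4.9_EQN01}) with this $p$. Since $[\mathfrak{m}]\le C(s)$, the constants $C([\mathfrak{m}])$ and $C_1([\mathfrak{m}])$ become constants depending only on $s$; and since $\|\nabla B\|_{L^1_tL^\infty_x[1]}\le 1$ by hypothesis (\ref{ZEQN:Impose_nablaB}), the exponential $\mathrm{e}^{C_1([\mathfrak{m}])\|\nabla B\|_{L^1_tL^\infty_x[1]}}$ is likewise bounded by a constant depending only on $s$. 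This already yields $\|\psi\|_{L^\infty_tH^{\mathfrak{m}}[T]}\le C(s)\|\psi\|_{V^2_BH^{\mathfrak{m}}[T]}$, which is stronger than the claimed bound.

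For the square-function piece, I would note that $(4,4)$ is a Strichartz pair in the sense of Definition \ref{ZDEFN:StrichartzPair}, apply (\ref{ZEQN:OldLemma4.9_EQN02}) with $(q,r)=(4,4)$, multiply through by $\mu^{-\frac14}\mathfrak{m}(\mu)$, and take the supremum over $\mu$; the factor $\mu^{\frac14}\mathfrak{m}(\mu)^{-1}$ on the right of (\ref{ZEQN:OldLemma4.9_EQN02}) cancels exactly against the weight defining $\Psi^{\mathfrak{m}}[T]$. As before, $[\mathfrak{m}]\le C(s)$ and (\ref{ZEQN:Impose_nablaB}) absorb the prefactor $C([\mathfrak{m}],q)$ and the exponential into a constant depending only on $s$. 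The remaining factor $1+\|B\|_{L^\infty_{t,x}[1]}$ is bounded, via hypothesis (\ref{ZEQN:Impose_B}) and the fact that $K_1$ is a fixed universal constant, by $1+K_1M^2\le C(s)(1+M)^2$. Finally using the embedding $V^2_BH^{\mathfrak{m}}[T]\hookrightarrow U^4_BH^{\mathfrak{m}}[T]$ of Lemma \ref{ZLEM:UpVpEmbeddingB} once more to replace $\|\psi\|_{U^4_BH^{\mathfrak{m}}[T]}$ by $\|\psi\|_{V^2_BH^{\mathfrak{m}}[T]}$, I obtain the square-function bound by $C(s)(1+M)^2\|\psi\|_{V^2_BH^{\mathfrak{m}}[T]}$. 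Adding the two pieces gives (\ref{ZEQN:OldLemma4.9Restated_EQN01}) when $\widetilde\psi=\psi$.

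For the remaining case $\widetilde\psi=\overline\psi$, I would observe that each cutoff $\varphi_\mu$ is real and even, so $\widecheck{\varphi_\mu}$ is real, whence $\mathrm{P}_\mu\overline\psi=\overline{\mathrm{P}_\mu\psi}$; consequently $\|\mathrm{P}_\mu\overline\psi(t)\|_{L^2_x}=\|\mathrm{P}_\mu\psi(t)\|_{L^2_x}$ and $\|\mathrm{P}_\mu\overline\psi\|_{L^4_{t,x}[T]}=\|\mathrm{P}_\mu\psi\|_{L^4_{t,x}[T]}$, so that $\|\overline\psi\|_{\Psi^{\mathfrak{m}}[T]}=\|\psi\|_{\Psi^{\mathfrak{m}}[T]}$ and the bound transfers immediately. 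It is worth emphasising that the $\Psi^{\mathfrak{m}}[T]$ seminorm involves only Littlewood--Paley pieces in $L^\infty_tH^{\mathfrak{m}}[T]$ and $L^4_{t,x}[T]$, and is entirely oblivious to the $V^2_B$ structure, so no argument about whether $\overline\psi$ itself belongs to $V^2_BH^{\mathfrak{m}}[T]$ is required. There is essentially no obstacle here: the only care needed is bookkeeping of which constants depend only on $s$ and which on $M$, and recognising that hypothesis (\ref{ZEQN:Impose_B}) is exactly what produces the single factor $(1+M)^2$.
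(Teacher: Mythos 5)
Your proof is correct and follows essentially the same route as the paper, which simply observes that the estimate is a restatement of Lemma \ref{ZLEM:OldLemma4.9} via the $V^2_{\mathrm{rc}}\hookrightarrow U^4$ embedding; you have merely spelled out the bookkeeping (the $(4,4)$ Strichartz pair, the absorption of the exponential via (\ref{ZEQN:Impose_nablaB}), the factor $(1+M)^2$ from (\ref{ZEQN:Impose_B}), and the conjugation-invariance of the $\Psi^{\mathfrak{m}}$ seminorm) that the paper leaves implicit.
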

	\begin{proof}
		Due to the $V^2_{\mathrm{rc}}\hookrightarrow U^4$ embedding, 
		(\ref{ZEQN:OldLemma4.9Restated_EQN01}) is simply a restatement of Lemma \ref{ZLEM:OldLemma4.9}.
	\end{proof}

	\begin{lemma} \label{ZLEM:DispEstmt_LinftyCtrl}
		Let $\mathfrak{m}$ be a Sobolev weight such that $[\mathfrak{m}]\le C(s)$. Let $T\in(0,1]$.
		Then
		\[
			\left\|\mathrm{P}_\mu\psi\right\|_{L^4_tL^\infty_x[T]}
			\le
			C(s)\mu^{\frac{3}{4}}\mathfrak{m}(\mu)^{-1}\left\|\psi\right\|_{\Psi^{\mathfrak{m}}[T]}
		\;.\]
		In particular,
		if $\psi\in\Psi^1[T]$ then $\psi\in L^4_tL^\infty_x[T]$ with
		\[
			\left\|\psi\right\|_{L^4_tL^\infty_x[T]}
			\le
			C\left\|\psi\right\|_{\Psi^1[T]}
		\;.\]
	\end{lemma}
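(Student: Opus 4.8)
The plan is to obtain this directly from Bernstein's inequality together with the definition of the seminorm $\|\cdot\|_{\Psi^{\mathfrak{m}}[T]}$; no additional dispersive information is needed, and in fact the hypothesis $[\mathfrak{m}]\le C(s)$ plays no role in the frequency-localised bound.

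First I would fix $\mu\in\mathfrak{D}$. Since $\mathrm{P}_\mu\psi(t,\cdot)$ has spatial Fourier support in a ball of radius $\lesssim\mu$, Bernstein's inequality on $\mathbb{R}^2_x$ gives $\|\mathrm{P}_\mu\psi(t)\|_{L^\infty_x}\lesssim\mu^{1/2}\|\mathrm{P}_\mu\psi(t)\|_{L^4_x}$ for a.e.\ $t\in[0,T)$, the exponent $1/2$ reflecting the two spatial dimensions. Taking the $L^4$ norm in time over $[0,T)$ and then invoking the definition of $\Psi^{\mathfrak{m}}[T]$, which provides $\|\mathrm{P}_\mu\psi\|_{L^4_{t,x}[T]}\le\mu^{1/4}\mathfrak{m}(\mu)^{-1}\|\psi\|_{\Psi^{\mathfrak{m}}[T]}$, I obtain $\|\mathrm{P}_\mu\psi\|_{L^4_tL^\infty_x[T]}\lesssim\mu^{3/4}\mathfrak{m}(\mu)^{-1}\|\psi\|_{\Psi^{\mathfrak{m}}[T]}$, which is the first assertion.

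For the second assertion I would specialise to the weight $\mathfrak{m}(\mu)=\mu$ defining $\Psi^1$, so that the estimate just proved reads $\|\mathrm{P}_\mu\psi\|_{L^4_tL^\infty_x[T]}\lesssim\mu^{-1/4}\|\psi\|_{\Psi^1[T]}$. Since $\psi\in\Psi^1[T]\subset L^\infty_tH^1[T]\subset L^1_{t,x,\mathrm{loc}}$, the Littlewood-Paley partial sums $\mathrm{P}_{\le N}\psi$ converge to $\psi$ in $\mathcal{D}'$; but they are also Cauchy in $L^4_tL^\infty_x[T]$ by the geometric summability $\sum_{\mu\in\mathfrak{D}}\mu^{-1/4}=\sum_{k\ge0}2^{-k/4}<\infty$, hence converge there, and the common limit must be $\psi$. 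The triangle inequality then yields $\|\psi\|_{L^4_tL^\infty_x[T]}\le\sum_\mu\|\mathrm{P}_\mu\psi\|_{L^4_tL^\infty_x[T]}\lesssim\|\psi\|_{\Psi^1[T]}$. There is no genuine obstacle in this lemma; the one point worth flagging is that the $1/4$-loss built into the seminorm $\Psi^1$ is precisely what lets the dyadic sum converge, so that the frequency-localised gain is not dissipated.
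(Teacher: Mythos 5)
Your proof is correct and is exactly the argument the paper intends: the paper's proof consists of the single line ``This is trivial from Bernstein's inequality,'' and your Bernstein step $\|\mathrm{P}_\mu\psi(t)\|_{L^\infty_x}\lesssim\mu^{1/2}\|\mathrm{P}_\mu\psi(t)\|_{L^4_x}$ combined with the $L^4_{t,x}$ component of the $\Psi^{\mathfrak{m}}$ seminorm is precisely that computation. The careful justification of summing the dyadic pieces for the second assertion is a welcome (if routine) addition.
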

	\begin{proof}
		This is trivial from Bernstein's inequality.
	\end{proof}
	
\subsection{Estimates for $\mathcal{N}^2_0$, $\mathcal{N}^2_x$, $\mathcal{N}^4_t$}
	In this subsection, we collect various space-time estimates for $\mathcal{N}^2_0,\mathcal{N}^2_x,\mathcal{N}^4_t$,
	which we will need for our multilinear estimates, and also for our difference estimates in Section \ref{ZSECT:CnvgItrtnScheme}.
	
	\begin{lemma} \label{ZLEM:DispEstmt_N2x}
		Assume the hypothesis (\ref{ZHYP:SobWt}).
		Let $T\in(0,1]$.
		Then
		\begin{equation}\label{ZEQN:DispEstmt_N2x_EQN02}
			\left\|\mathrm{P}_\mu\mathcal{N}^2_x\left[\psi_1,\psi_2\right]\right\|_{L^2_tL^\infty_x[T]}
			\le
			C(s)
			\mu^{-\frac{1}{4}}
			\mathfrak{m}(\mu)^{-1}
			\left(
				\left\|\psi_1\right\|_{\Psi^{\mathfrak{m}}[T]}
				\left\|\psi_2\right\|_{\Psi^1[T]}
				+
				\left\|\psi_1\right\|_{\Psi^1[T]}
				\left\|\psi_2\right\|_{\Psi^{\mathfrak{m}}[T]}
			\right)
		\;.\end{equation}
	\end{lemma}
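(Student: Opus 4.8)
The plan is to perform a Littlewood--Paley trichotomy on the bilinear operator $\mathcal{N}^2_x[\psi_1,\psi_2] = \epsilon_{ij}\frac{\partial_j}{(-\triangle)}(\psi_1\psi_2)$, splitting the output frequency $\mu$ against the two input frequencies. Because $\mathcal{N}^2_x$ carries a net $-1$ derivative, the factor $\mu^{-1/4}\mathfrak{m}(\mu)^{-1}$ on the right-hand side is consistent with gaining one derivative relative to the product $\psi_1\psi_2$ measured in $\Psi^{\mathfrak{m}}$, losing a $1/4$ derivative in the output $L^2_tL^\infty_x$ norm coming from Bernstein, and distributing the regularity weight $\mathfrak{m}$ onto one factor while the other sits in $\Psi^1$. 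By the symmetry of $\mathcal{N}^2_x[\psi_1,\psi_2]$ in its two arguments, it suffices to treat the contributions where $\psi_2$ is the ``low-regularity'' factor (carrying $\Psi^1$) — the symmetric contribution gives the second term in the parentheses.

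\textbf{The three frequency regimes.} First I would handle the \emph{high--low} case, where $\psi_1$ is at frequency $\sim\mu$ (up to the usual $2^{\pm m}$ fudge) and $\psi_2$ is at frequency $\lesssim\mu$. Here I write $\mathrm{P}_\mu\mathcal{N}^2_x[\mathrm{P}_{\approx\mu}\psi_1, \mathrm{P}_{\ll\mu}\psi_2]$, use the Biot--Savart/Hardy--Littlewood--Sobolev representation to turn the multiplier $\frac{\partial_j}{(-\triangle)}$ at output scale $\mu$ into a gain of $\mu^{-1}$, then estimate in $L^2_tL^\infty_x$ via H\"older in time and space: put $\mathrm{P}_{\approx\mu}\psi_1$ in $L^4_tL^\infty_x$ (controlled by $\mu^{3/4}\mathfrak{m}(\mu)^{-1}\|\psi_1\|_{\Psi^{\mathfrak{m}}}$ using Lemma~\ref{ZLEM:DispEstmt_LinftyCtrl}) and $\psi_2$ in $L^4_tL^\infty_x$ (controlled by $\|\psi_2\|_{\Psi^1[T]}$, again by Lemma~\ref{ZLEM:DispEstmt_LinftyCtrl} after summing the low-frequency pieces since $\Psi^1$ has nonnegative-derivative weight). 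Collecting the powers of $\mu$ gives $\mu^{-1}\cdot\mu^{3/4}\mathfrak{m}(\mu)^{-1} = \mu^{-1/4}\mathfrak{m}(\mu)^{-1}$, as desired. The \emph{low--high} case, where $\psi_2$ is at frequency $\sim\mu$ and $\psi_1$ at frequency $\lesssim\mu$, is essentially identical but with the roles of which factor carries $\mathfrak{m}$ reversed; since we have already reduced to $\psi_2\in\Psi^1$, here instead I place $\mathrm{P}_{\approx\mu}\psi_2$ in $L^4_tL^\infty_x$ bounded by $\mu^{3/4}\|\psi_2\|_{\Psi^1}$ and $\mathrm{P}_{\lesssim\mu}\psi_1$ in $L^4_tL^\infty_x$, summing the dyadic pieces of $\psi_1$ against the weight $\mathfrak{m}$ — here I use that $[\mathfrak{m}]_\star\ge s\ge 1>0$ so the sum $\sum_{\nu\lesssim\mu}\nu^{3/4}\mathfrak{m}(\nu)^{-1}\cdot\mathfrak{m}(\nu)\nu^{-3/4}(\cdots)$ telescopes favourably and yields $\mu^{3/4}\mathfrak{m}(\mu)^{-1}\|\psi_1\|_{\Psi^{\mathfrak{m}}}$ times $\mu^{-1}$ from the multiplier.

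\textbf{The high--high case and the main obstacle.} The remaining and most delicate regime is \emph{high--high}, where both $\psi_1$ and $\psi_2$ sit at a common frequency $\nu\gg\mu$ and the output is at the low frequency $\mu$. Here Bernstein at frequency $\nu$ is not directly available for the elliptic multiplier — this is exactly the situation flagged in the ``Fourier analysis'' subsection where one must invoke the Biot--Savart law together with Hardy--Littlewood--Sobolev rather than Bernstein. I would estimate $\mathrm{P}_\mu\mathcal{N}^2_x[\mathrm{P}_\nu\psi_1, \mathrm{P}_{\approx\nu}\psi_2]$ by first writing it via the HLS-amenable kernel representation, so that the output $L^\infty_x$ norm is controlled by $\mu^{?}\|\mathrm{P}_\nu\psi_1\,\mathrm{P}_{\approx\nu}\psi_2\|_{L^p_x}$ for an appropriate $p$ — concretely, placing the product in $L^{4/3}_x$ (each factor in $L^{8/3}_x$, interpolated between the available $L^2_x$ and $L^4_x$ bounds on $\mathrm{P}_\nu\psi_j$, or more cleanly each factor in a suitable $L^4_x$-type space and pairing with time-integrability $L^2_t$ via H\"older $\frac12=\frac14+\frac14$). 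The key point is that summing over the high frequencies $\nu\gg\mu$ must converge: each factor $\mathrm{P}_\nu\psi_j$ in $\Psi^{\mathfrak{m}}$ or $\Psi^1$ supplies a decaying factor like $\mathfrak{m}(\nu)^{-1}\nu^{1/4}$ or $\nu^{-3/4}$, and the product of two such decaying-in-$\nu$ quantities, against the output multiplier's behaviour at scale $\mu$, must be summable in $\nu\ge\mu$. Since $[\mathfrak{m}]_\star\ge s\ge 1$, the weight $\mathfrak{m}(\nu)^{-1}$ decays at least like $\nu^{-1}$, which comfortably beats any polynomial growth in $\nu$ coming from Bernstein on the low-frequency output, so the $\nu$-sum converges geometrically and leaves a clean power of $\mu$; matching it to $\mu^{-1/4}\mathfrak{m}(\mu)^{-1}$ is then bookkeeping. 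I expect the main friction to be getting the H\"older exponents in $x$ to line up so that HLS applies with a positive gain while still leaving enough integrability to use the $L^4_{t,x}$ component of $\Psi^{\mathfrak{m}}$ (rather than only the $L^\infty_tH^{\mathfrak{m}}$ component), and in verifying that the $\nu$-summation constant depends only on $[\mathfrak{m}]$ and $s$ and not on the finer profile of $\mathfrak{m}$. Finally I would add the three regime contributions and note that the symmetric reduction accounts for the full symmetrized right-hand side of \eqref{ZEQN:DispEstmt_N2x_EQN02}.
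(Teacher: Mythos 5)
Your overall route is the same as the paper's: decompose dyadically, extract a factor $\mu^{-1}$ from the output multiplier $\partial_j(-\triangle)^{-1}$ at output frequency $\mu\ge 2$ (Hardy--Littlewood--Sobolev plus Bernstein at $\mu=1$), and pair the two factors in $L^4_tL^\infty_x\times L^4_tL^\infty_x$ via Lemma \ref{ZLEM:DispEstmt_LinftyCtrl}. The high--low and high--high regimes close exactly as you describe (and your worry about the high--high case is overstated: for $\mu\ge 2$ the output is localised away from frequency zero, so Bernstein applies directly and the $\nu$-sum converges using $\mathfrak{m}(\nu)^{-1}\le(\mu/\nu)^s\mathfrak{m}(\mu)^{-1}$; HLS is only needed at $\mu=1$).

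There is, however, a genuine error in your low--high case. You fix $\psi_1$ as the factor carrying $\Psi^{\mathfrak{m}}$ and then, when $\psi_2$ sits at the dominant frequency $\approx\mu$ and $\psi_1$ at frequencies $\lesssim\mu$, you claim that summing $\|\mathrm{P}_\nu\psi_1\|_{L^4_tL^\infty_x}\lesssim\nu^{3/4}\mathfrak{m}(\nu)^{-1}\|\psi_1\|_{\Psi^{\mathfrak{m}}}$ over $\nu\lesssim\mu$ ``yields $\mu^{3/4}\mathfrak{m}(\mu)^{-1}\|\psi_1\|_{\Psi^{\mathfrak{m}}}$''. It does not: since $\nu^{3/4}\mathfrak{m}(\nu)^{-1}\le\nu^{-1/4}$ is decreasing, the sum is dominated by its lowest-frequency term and is only $O(\|\psi_1\|_{\Psi^{\mathfrak{m}}})$, with no decay in $\mu$. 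Your low--high contribution is therefore bounded by $C\mu^{-1}\cdot\mu^{-1/4}\|\psi_2\|_{\Psi^1}\cdot\|\psi_1\|_{\Psi^{\mathfrak{m}}}=C\mu^{-5/4}\|\psi_1\|_{\Psi^{\mathfrak{m}}}\|\psi_2\|_{\Psi^1}$, and $\mu^{-5/4}\le C\mu^{-1/4}\mathfrak{m}(\mu)^{-1}$ would force $\mathfrak{m}(\mu)\lesssim\mu$, which fails under hypothesis (\ref{ZHYP:SobWt}) whenever $s>1$ or $\mathfrak{m}$ grows faster than linearly (e.g.\ $\mathfrak{m}(\mu)=\mu^{9/8}$). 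The repair is structural and is precisely why the right-hand side of (\ref{ZEQN:DispEstmt_N2x_EQN02}) is symmetrised: the weight $\mathfrak{m}$ must always be placed on whichever factor carries the dominant frequency. In the low--high regime you should estimate $\|\mathrm{P}_{\approx\mu}\psi_2\|_{L^4_tL^\infty_x}\le C\mu^{3/4}\mathfrak{m}(\mu)^{-1}\|\psi_2\|_{\Psi^{\mathfrak{m}}}$ and $\|\mathrm{P}_{\lesssim\mu}\psi_1\|_{L^4_tL^\infty_x}\le C\|\psi_1\|_{\Psi^1}$, producing the \emph{second} term in the parentheses. This is what the paper's proof does implicitly by writing the product as $\sum_\lambda\mathrm{P}_\lambda\psi_1\,\mathrm{P}_{\le\lambda}\psi_2$ plus the transpose and invoking symmetry at the end.
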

	\begin{proof}
		For the case $\mu=1$, the Bernstein and Hardy-Littlewood-Sobolev inequalities give
		\[
			\left\|\mathrm{P}_1\mathcal{N}^2_x\left[\psi_1,\psi_2\right]\right\|_{L^2_tL^\infty_x[T]}
			\lesssim
			T^{\frac{1}{2}}\left\|\psi_1\psi_2\right\|_{L^\infty_tL^1_x[T]}
			\le
			C\left\|\psi_1\right\|_{\Psi^1[T]}\left\|\psi_2\right\|_{\Psi^1[T]}
		\;.\]
		For $\mu\ge 2$, Bernstein's inequality and Lemma \ref{ZLEM:DispEstmt_LinftyCtrl} give
		\begin{equation*}\begin{split}
			\left\|\mathrm{P}_\mu\mathcal{N}^2_x\left[\mathrm{P}_\lambda\psi_1,\mathrm{P}_{\le\lambda}\psi_2\right]\right\|_{L^2_tL^\infty_x[T]}
		\lesssim\;&
			\mu^{-1}\left\|\mathrm{P}_\lambda\psi_1\right\|_{L^4_tL^\infty_x[T]}\left\|\psi_2\right\|_{L^4_tL^\infty_x[T]}
		\\\lesssim\;&
			\mu^{-1}\lambda^{\frac{3}{4}}\mathfrak{m}(\lambda)^{-1}\left\|\psi_1\right\|_{\Psi^{\mathfrak{m}}[T]}\left\|\psi_2\right\|_{\Psi^1[T]}
		\\\le\;&
			C(s)\mu^{-1+s}\lambda^{\frac{3}{4}-s}\mathfrak{m}(\mu)^{-1}\left\|\psi_1\right\|_{\Psi^{\mathfrak{m}}[T]}\left\|\psi_2\right\|_{\Psi^1[T]}
		\end{split}\end{equation*}
		where the last inequality is due to hypothesis (\ref{ZHYP:SobWt}).
		Summing over $\lambda\gtrsim\mu$ and noting the symmetry of $\mathcal{N}^2_x[\psi_1,\psi_2]$ in $\psi_1,\psi_2$,
		we obtain (\ref{ZEQN:DispEstmt_N2x_EQN02}).
	\end{proof}
	
	\begin{lemma} \label{ZLEM:DispEstmt_N2t}
		Assume the hypothesis (\ref{ZHYP:SobWt}).
		Let $T\in(0,1]$.
		Then, for $\mu\ge 2$,
		\begin{equation}\label{ZEQN:DispEstmt_N2t_EQN05}
			\left\|\mathrm{P}_\mu\mathcal{N}^2_0\left[\psi_1,\psi_2\right]\right\|_{L^\infty_tL^1_x[T]}
		\le
			C(s)\mu^{-1}\mathfrak{m}(\mu)^{-1}
			\left(
				\left\|\psi_1\right\|_{\Psi^{\mathfrak{m}}[T]}
				\left\|\psi_2\right\|_{\Psi^1[T]}
				+
				\left\|\psi_1\right\|_{\Psi^1[T]}
				\left\|\psi_2\right\|_{\Psi^{\mathfrak{m}}[T]}
			\right)
		\;.\end{equation}
		We also have the estimate
		\begin{equation}\label{ZEQN:DispEstmt_N2t_EQN06}
			\left\|\mathcal{N}^2_0\left[\psi_1,\psi_2\right]\right\|_{L^4_tL^\infty_x[T]}
		\le
			C\left\|\psi_1\right\|_{\Psi^1[T]}\left\|\psi_2\right\|_{\Psi^1[T]}
		\;.\end{equation}
	\end{lemma}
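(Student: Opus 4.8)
The plan is to exploit the \emph{null structure} of $\mathcal{N}^2_0$. Since
\[
	\nabla u_1\wedge\nabla u_2 = \partial_1 u_1\,\partial_2 u_2 - \partial_2 u_1\,\partial_1 u_2 = \partial_1\!\left(u_1\,\partial_2 u_2\right) - \partial_2\!\left(u_1\,\partial_1 u_2\right)
\]
is antisymmetric in $u_1,u_2$, we may write $\mathcal{N}^2_0[u_1,u_2] = \frac{\partial_j}{(-\triangle)}\!\left(\epsilon_{jk}\,u_1\,\partial_k u_2\right) = -\frac{\partial_j}{(-\triangle)}\!\left(\epsilon_{jk}\,u_2\,\partial_k u_1\right)$, so one power of $(-\triangle)^{-1}$ can always be traded for a gradient placed on whichever input is convenient. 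I would then Littlewood--Paley decompose $\psi_1,\psi_2$, localise the output to frequency $\mu$, and split the trilinear sum into the \emph{unbalanced} regime, in which $\max(\lambda,\nu)\sim\mu$ with the remaining input at frequency $\lesssim\mu$, and the \emph{high--high-to-low} regime, in which $\lambda\sim\nu\gg\mu$. For $\mu\ge 2$ the frequency-localised multipliers $\mathrm{P}_\mu(-\triangle)^{-1}$ and $\mathrm{P}_\mu\frac{\partial_j}{(-\triangle)}$ are convolutions with $L^1_x$ kernels of mass $\lesssim\mu^{-2}$ and $\lesssim\mu^{-1}$, hence gain that many powers of $\mu$ on every $L^p_x$; the exceptional output frequency $\mu=1$, where these multipliers are singular, I would treat via the Biot--Savart representation together with Hardy--Littlewood--Sobolev, as in the proof of Lemma \ref{ZLEM:EnergyEstmt_N2x}.

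For (\ref{ZEQN:DispEstmt_N2t_EQN05}) (where $\mu\ge 2$) only the $L^\infty_tH^{\mathfrak{m}}$ and $L^\infty_tH^1$ parts of the seminorms enter. In the unbalanced regime, say $\psi_1$ resonant with $\lambda\sim\mu\gtrsim\nu$, I first sum the low frequencies of $\psi_2$ and then bound by $\mu^{-2}$ (from $(-\triangle)^{-1}$) times $\|\nabla\mathrm{P}_\lambda\psi_1\|_{L^\infty_tL^2_x[T]}\,\|\nabla\mathrm{P}_{\lesssim\mu}\psi_2\|_{L^\infty_tL^2_x[T]}$; using $\|\nabla\mathrm{P}_{\lesssim\mu}\psi_2\|_{L^2_x}\le\|\psi_2\|_{H^1}$, $\|\nabla\mathrm{P}_\lambda\psi_1\|_{L^2_x}\lesssim\lambda\,\mathfrak{m}(\lambda)^{-1}\|\psi_1\|_{\Psi^{\mathfrak{m}}[T]}$, and $\lambda\sim\mu$, this is $\lesssim\mu^{-1}\mathfrak{m}(\mu)^{-1}\|\psi_1\|_{\Psi^{\mathfrak{m}}[T]}\|\psi_2\|_{\Psi^1[T]}$ after summing the boundedly many $\lambda\sim\mu$, and the sub-case with $\psi_2$ resonant gives the companion term. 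In the high--high-to-low regime I invoke the null form so that only one derivative survives; since $\mathrm{P}_\mu\frac{\partial_j}{(-\triangle)}$ gains $\mu^{-1}$, I obtain, for each $\lambda\sim\nu\gg\mu$,
\[
	\left\|\mathrm{P}_\mu\mathcal{N}^2_0\!\left[\mathrm{P}_\lambda\psi_1,\mathrm{P}_\nu\psi_2\right]\right\|_{L^\infty_tL^1_x[T]} \lesssim \mu^{-1}\left\|\mathrm{P}_\lambda\psi_1\right\|_{L^\infty_tL^2_x[T]}\left\|\nabla\mathrm{P}_\nu\psi_2\right\|_{L^\infty_tL^2_x[T]} \lesssim \mu^{-1}\,\mathfrak{m}(\lambda)^{-1}\left\|\psi_1\right\|_{\Psi^{\mathfrak{m}}[T]}\left\|\psi_2\right\|_{\Psi^1[T]}\;,
\]
and then sum over $\lambda$ via $\sum_{\lambda\ge\mu}\mathfrak{m}(\lambda)^{-1}\lesssim\mathfrak{m}(\mu)^{-1}$ (constant depending only on $[\mathfrak{m}]$), which is valid because hypothesis (\ref{ZHYP:SobWt}) forces $[\mathfrak{m}]_\star\ge s\ge 1>0$; assigning $\mathfrak{m}$ to $\psi_2$ instead gives the remaining term.

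For (\ref{ZEQN:DispEstmt_N2t_EQN06}) I would sum over output frequencies and, for each $\mu$, pass from $L^\infty_x$ to $L^2_x$ by Bernstein at a cost of $\mu$, feeding in the dispersive bounds $\|\mathrm{P}_\nu\psi\|_{L^4_tL^\infty_x[T]}\lesssim\nu^{-1/4}\|\psi\|_{\Psi^1[T]}$ of Lemma \ref{ZLEM:DispEstmt_LinftyCtrl} and $\|\mathrm{P}_\nu\psi\|_{L^\infty_tL^2_x[T]}\lesssim\nu^{-1}\|\psi\|_{\Psi^1[T]}$. For $\mu\ge 2$: in the unbalanced regime, putting the low input in $L^4_tL^\infty_x$ and the resonant one in $L^\infty_tL^2_x$ yields $\lesssim\mu^{-1}\nu^{3/4}\|\psi_1\|_{\Psi^1[T]}\|\psi_2\|_{\Psi^1[T]}$, which sums over $\nu\le\mu$ to $\lesssim\mu^{-1/4}\|\psi_1\|_{\Psi^1[T]}\|\psi_2\|_{\Psi^1[T]}$; in the high--high-to-low regime the null form turns $\mu^{-2}$ into $\mu^{-1}$, exactly cancelling the Bernstein factor, leaving $\lesssim\nu^{-1/4}\|\psi_1\|_{\Psi^1[T]}\|\psi_2\|_{\Psi^1[T]}$ per $\lambda\sim\nu\gg\mu$, summable over $\nu\gg\mu$ to $\lesssim\mu^{-1/4}(\cdots)$. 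The $\mu=1$ term, to which only low--low and high--high frequencies contribute, I would bound by $\|\mathrm{P}_1\mathcal{N}^2_0[\psi_1,\psi_2]\|_{L^4_x}$ (Bernstein), then by $\|h\|_{L^{4/3}_x}$ with $h$ schematically $\psi_1\nabla\psi_2$, using the Biot--Savart/Hardy--Littlewood--Sobolev bound, H\"older $L^{4/3}_x\leftarrow L^2_x\cdot L^4_x$, and the $L^4_{t,x}$-component of $\|\cdot\|_{\Psi^1[T]}$. A geometric sum over $\mu$ then finishes.

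The hard part is the high--high-to-low interaction in (\ref{ZEQN:DispEstmt_N2t_EQN05}): estimated without cancellation it forces the full $\mu^{-2}$ from $(-\triangle)^{-1}$ and two derivatives off the two inputs, leaving $\sum_{\lambda\gg\mu}\lambda\,\mathfrak{m}(\lambda)^{-1}$, which converges to $\lesssim\mathfrak{m}(\mu)^{-1}$ only for $[\mathfrak{m}]_\star>1$ and diverges logarithmically at $[\mathfrak{m}]_\star=1$. The null structure strips one derivative, reducing the relevant series to $\sum_{\lambda\gg\mu}\mathfrak{m}(\lambda)^{-1}\lesssim\mathfrak{m}(\mu)^{-1}$, valid throughout the range $[\mathfrak{m}]_\star\ge s\ge 1$ imposed by (\ref{ZHYP:SobWt}). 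This is precisely the mechanism that degrades once a derivative is lost in passing to differences, forcing $s\ge 1$, as flagged in the Introduction.
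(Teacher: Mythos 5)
Your proposal is correct and takes essentially the same route as the paper's proof: both hinge on writing $\nabla\psi_1\wedge\nabla\psi_2$ in divergence form so that the output multiplier $\mathrm{P}_\mu\partial_j(-\triangle)^{-1}$ costs only $\mu^{-1}$ while a single derivative falls on one input, followed by the same Littlewood--Paley summation $\sum_{\lambda\gtrsim\mu}\mathfrak{m}(\lambda)^{-1}\lesssim\mathfrak{m}(\mu)^{-1}$ using $[\mathfrak{m}]_\star\ge s\ge 1$, and the same Bernstein/Hardy--Littlewood--Sobolev treatment of the $\mu=1$ block. The only differences are cosmetic (explicit regime-splitting versus the paper's single sum over $\lambda\gtrsim\mu$ plus skew-symmetry, and passing through $L^2_x$ rather than $L^{4/3}_x$ in the second estimate), and your closing remark correctly identifies why the high--high-to-low interaction pins down the threshold $s\ge 1$.
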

	\begin{proof}
		For the proof of (\ref{ZEQN:DispEstmt_N2t_EQN05}), we have
		\begin{equation*}\begin{split}
			\left\|\mathrm{P}_\mu\mathcal{N}^2_0\left[\mathrm{P}_\lambda\psi_1,\mathrm{P}_{\le\lambda}\psi_2\right]\right\|_{L^\infty_tL^1_x[T]}
		\lesssim\;&
			\mu^{-1}
			\left\|\mathrm{P}_\lambda\psi_1\right\|_{L^\infty_tL^2_x[T]}
			\left\|\nabla\psi_2\right\|_{L^\infty_tL^2_x[T]}
		\\\lesssim\;&
			\mu^{-1}
			\mathfrak{m}(\lambda)^{-1}
			\left\|\psi_1\right\|_{\Psi^{\mathfrak{m}}[T]}
			\left\|\psi_2\right\|_{\Psi^1[T]}
		\\\le\;&
			C(s)\mu^{-1}\mathfrak{m}(\mu)^{-1}\left(\frac{\mu}{\lambda}\right)^s
			\left\|\psi_1\right\|_{\Psi^{\mathfrak{m}}[T]}
			\left\|\psi_2\right\|_{\Psi^1[T]}
		\end{split}\end{equation*}
		where the last inequality follows from the hypothesis (\ref{ZHYP:SobWt}).
		Summing over $\lambda\gtrsim\mu$, and noting that $\mathcal{N}^2_0[\psi_1,\psi_2]$ is skew-symmetric in $\psi_1,\psi_2$,
		we obtain (\ref{ZEQN:DispEstmt_N2t_EQN05}).
		
		We turn to the proof of (\ref{ZEQN:DispEstmt_N2t_EQN06}). By Bernstein (and also Hardy-Littlewood-Sobolev for $\mu=1$)
		and Lemma \ref{ZLEM:DispEstmt_LinftyCtrl}, we have
		\begin{equation*}\begin{split}
			\left\|\mathrm{P}_\mu\mathcal{N}^2_0\left[\mathrm{P}_\lambda\psi_1,\mathrm{P}_{\le\lambda}\psi_2\right]\right\|_{L^4_tL^\infty_x[T]}
		\lesssim\;&
			\mu^{\frac{1}{2}}\left\|\mathrm{P}_\lambda\psi_1\,\nabla\mathrm{P}_{\le\lambda}\psi_2\right\|_{L^4_tL^{\frac{4}{3}}_x[T]}
		\\\lesssim\;&
			\mu^{\frac{1}{2}}\lambda^{-\frac{3}{4}}\left\|\psi_1\right\|_{\Psi^1[T]}\left\|\psi_2\right\|_{\Psi^1[T]}
		\end{split}\end{equation*}
		The right-hand side is summable over $\lambda\gtrsim\mu$.
		Since $\mathcal{N}^2_0$ is skew-symmetric, we have (\ref{ZEQN:DispEstmt_N2t_EQN06}).
	\end{proof}	

	\begin{lemma} \label{ZLEM:DispEstmt_N4t}
		Assume the hypothesis (\ref{ZHYP:SobWt}).
		Let $T\in(0,1]$.
		Then
		\begin{equation} \label{ZEQN:DispEstmt_N4t_EQN01}
			\left\|\mathrm{P}_\mu\mathcal{N}^4_t\left[\psi_1,\psi_2,\psi_3,\psi_4\right]\right\|_{L^2_tL^\infty_x[T]}
		\le
			C(s)\mu^{-\frac{1}{4}}\mathfrak{m}(\mu)^{-1}
			\sum_{\ell=1}^4 \left\|\psi_\ell\right\|_{\Psi^{\mathfrak{m}}[T]}
			\prod_{\substack{k=1\\k\neq\ell}}^4 \left\|\psi_k\right\|_{\Psi^1[T]}
		\;.\end{equation}
	\end{lemma}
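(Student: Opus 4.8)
The plan is to exploit the fact that $\mathcal{N}^4_t$ carries \emph{two} smoothing operators. Writing $\vec V := \mathcal{N}^2_x[\psi_1,\psi_2]$, we have directly from the definitions $\mathcal{N}^4_t[\psi_1,\psi_2,\psi_3,\psi_4] = \frac{\mathrm{rot}}{(-\triangle)}\big(\vec V\,\psi_3\psi_4\big)$, so that $\mathcal{N}^4_t$ inherits the one-derivative gain of the outer operator $\frac{\mathrm{rot}}{(-\triangle)}$ in addition to the one already present inside $\vec V$ through $\mathcal{N}^2_i[a,b]=\epsilon_{ij}\frac{\partial_j}{(-\triangle)}(ab)$. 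Since the asserted bound is symmetric in all four arguments while $\mathcal{N}^4_t$ is only symmetric in $(\psi_1,\psi_2)$ and separately in $(\psi_3,\psi_4)$, it suffices to prove the estimate in the two cases where the $\Psi^{\mathfrak m}[T]$-norm is placed on $\psi_1$ (the ``potential side'') and on $\psi_3$ (the ``density side''); the remaining two placements follow by symmetry.

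First I would perform a Littlewood--Paley decomposition of the quadrilinear expression, writing $\psi_i=\sum_{\lambda_i}\mathrm{P}_{\lambda_i}\psi_i$, and sort the contributions to $\mathrm{P}_\mu\mathcal{N}^4_t$ according to the output frequency $\mu$ and the two largest input frequencies, which must be comparable whenever $\mu$ is not itself the largest. The factors are then controlled using the tools already available: the frequency-localized form of Lemma \ref{ZLEM:DispEstmt_N2x}, from whose proof one reads off $\|\mathrm{P}_\eta\mathcal{N}^2_x[\mathrm{P}_\lambda\psi_1,\mathrm{P}_{\lesssim\lambda}\psi_2]\|_{L^2_tL^\infty_x[T]}\lesssim\eta^{-1}\lambda^{3/4}\mathfrak m(\lambda)^{-1}\|\psi_1\|_{\Psi^{\mathfrak m}[T]}\|\psi_2\|_{\Psi^1[T]}$ for $\eta\ge2$ (with the obvious Hardy--Littlewood--Sobolev variant when $\eta=1$); the bound $\|\mathcal{N}^2_x[\psi_1,\psi_2]\|_{L^\infty_{t,x}[T]}\lesssim\|\psi_1\|_{\Psi^1[T]}\|\psi_2\|_{\Psi^1[T]}$ from Lemma \ref{ZLEM:EnergyEstmt_N2x}; the estimate $\|\mathrm{P}_\nu\psi\|_{L^4_tL^\infty_x[T]}\lesssim\nu^{3/4}\mathfrak m(\nu)^{-1}\|\psi\|_{\Psi^{\mathfrak m}[T]}$ of Lemma \ref{ZLEM:DispEstmt_LinftyCtrl}; the elementary bound $\|\mathrm{P}_\nu\psi\|_{L^\infty_tL^\infty_x[T]}\lesssim\|\psi\|_{\Psi^1[T]}$ obtained from Bernstein applied to a frequency-localized function together with $\psi\in L^\infty_tH^1$; and the Sobolev embedding $H^1(\mathbb{R}^2)\hookrightarrow L^q_x$ for finite $q$. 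For the outer operator I would use that, for $\mu\ge2$, $\mathrm{P}_\mu\frac{\mathrm{rot}}{(-\triangle)}$ maps $L^p_x\to L^q_x$ with a gain of $\mu^{-1}$, and in particular $L^\infty_x\to L^\infty_x$ with norm $\lesssim\mu^{-1}$, while for $\mu=1$ it maps $L^{4/3}_x\to L^\infty_x$ by the Biot--Savart law followed by Bernstein. In each configuration the triple product $\vec V\,\psi_3\psi_4$ is arranged to lie in $L^2_tL^p_x[T]$ for a suitable $p$ --- two of the three factors contributing an $L^4_t$-norm and the third an $L^\infty_t$-norm --- and the decay $\mu^{-1/4}\mathfrak m(\mu)^{-1}$ is produced by combining the $\mu^{-1}$ from $\frac{\mathrm{rot}}{(-\triangle)}$ with a single $\mu^{3/4}$-loss coming from placing one frequency-$\mu$ (or frequency-$\lambda$, $\lambda\gtrsim\mu$) factor into $L^4_tL^\infty_x$; all dyadic sums over the high scale $\lambda\gtrsim\mu$ converge geometrically to their endpoint value because $\lambda^{3/4}\mathfrak m(\lambda)^{-1}$ is decreasing, which is where the hypothesis $[\mathfrak m]_\star\ge s\ge1>3/4$ is used.

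The step I expect to be the main obstacle is the family of ``high--high-to-low'' configurations, in which the output frequency $\mu$ is much smaller than a common high input scale $\lambda$: for instance when all of $\psi_1,\dots,\psi_4$ sit at frequency $\approx\lambda$ and cancel down to $\mu\ll\lambda$, or when $\psi_1,\psi_2$ cancel inside $\vec V$ and the result interacts with a low-frequency product $\psi_3\psi_4$. These are precisely the terms for which a naive Strichartz estimate loses derivatives, and here one must genuinely use both smoothing operators --- the $\mu^{-1}$ from the outer operator (available precisely because the product then lands at frequency $\approx\mu$) is what absorbs, after summation in $\lambda$, the $\lambda^{3/4}$ cost of the high-frequency factor --- and it is also here that the regularity threshold enters. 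By contrast, the configurations where $\mu$ is comparable to the largest input frequency, as well as the lowest-frequency case $\mu=1$, are more routine and are handled by the same ingredients with room to spare. A minor technical point is that whenever a purely low-frequency factor such as $\mathrm{P}_{\ll\mu}\psi_4$ would have to be placed in an $L^\infty_x$-type norm, one works instead in $L^q_x$ for a large fixed finite $q$, which avoids a logarithmic loss and is harmless since $H^1(\mathbb{R}^2)\hookrightarrow L^q_x$.
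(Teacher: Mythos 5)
Your proposal is correct and follows essentially the same route as the paper: the $\mu=1$ case via Hardy--Littlewood--Sobolev, and for $\mu\ge 2$ a split according to whether $\mathcal{N}^2_x[\psi_1,\psi_2]$ sits at frequency $\gtrsim\mu$ (handled by Lemma \ref{ZLEM:DispEstmt_N2x}, which supplies the $\mu^{-\frac14}\mathfrak{m}(\mu)^{-1}$ decay) or $\ll\mu$ (handled by the $\mu^{-1}$ gain of the outer operator against a $\mu^{\frac34}\mathfrak{m}(\mu)^{-1}$ paraproduct bound for $\mathrm{P}_{\approx\mu}(\psi_3\psi_4)$ in $L^2_tL^\infty_x$). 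Your finer Littlewood--Paley bookkeeping and the $L^q_x$ substitute for $L^\infty_x$ are harmless variations; the key lemmas invoked and the mechanism producing the decay are identical to the paper's.
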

	\begin{proof}
		We first deal with the case $\mu=1$. By Bernstein, Hardy-Littlewood-Sobolev, and Lemma \ref{ZLEM:EnergyEstmt_N2x}, we find
		\begin{equation*}\begin{split}
			\left\|\mathrm{P}_1\mathcal{N}^4_t\left[\psi_1,\psi_2,\psi_3,\psi_4\right]\right\|_{L^2_tL^\infty_x[T]}
		\lesssim\;&
			\left\|\mathcal{N}^2_x\left[\psi_1,\psi_2\right]\psi_3\psi_4\right\|_{L^\infty_tL^1_x[T]}
		\lesssim
			\prod_{\ell=1}^4\left\|\psi_\ell\right\|_{\Psi^1[T]}
		\end{split}\end{equation*}
		as required.
		
		Suppose now $\mu\ge 2$. Then, by Bernstein,
		\begin{equation*}\begin{split}
			\left\|\mathrm{P}_\mu\mathcal{N}^4_t\left[\psi_1,\psi_2,\psi_3,\psi_4\right]\right\|_{L^2_tL^\infty_x[T]}
		\lesssim\;&
			\left\|\mathrm{P}_{\gtrsim\mu}\mathcal{N}^2_x\left[\psi_1,\psi_2\right]\psi_3\psi_4\right\|_{L^2_tL^2_x[T]}
		\\&+
			\mu^{-1}\left\|\mathrm{P}_{\ll\mu}\mathcal{N}^2_x\left[\psi_1,\psi_2\right]
				\mathrm{P}_{\approx\mu}\left(\psi_3\psi_4\right)\right\|_{L^2_tL^\infty_x[T]}
		\\\lesssim\;&
			\left\|\mathrm{P}_{\gtrsim\mu}\mathcal{N}^2_x\left[\psi_1,\psi_2\right]\right\|_{L^2_tL^\infty_x[T]}
			\left\|\psi_3\right\|_{L^\infty_tL^4_x[T]}
			\left\|\psi_4\right\|_{L^\infty_tL^4_x[T]}
		\\&+
			\mu^{-1}\left\|\mathcal{N}^2_x\left[\psi_1,\psi_2\right]\right\|_{L^\infty_{t,x}[T]}
			\left\|\mathrm{P}_{\approx\mu}\left(\psi_3\psi_4\right)\right\|_{L^2_tL^\infty_x[T]}
		\\=:\;& \mathrm{I} + \mathrm{II}
		\;.\end{split}\end{equation*}
		Using Lemma \ref{ZLEM:DispEstmt_N2x} and the Sobolev embedding $H^1\hookrightarrow L^4_x$,
		\[
			\mathrm{I} \le C(s)\mu^{-\frac{1}{4}}\mathfrak{m}(\mu)^{-1}
			\left(
				\left\|\psi_1\right\|_{\Psi^{\mathfrak{m}}[T]}
				\left\|\psi_2\right\|_{\Psi^1[T]}
				+
				\left\|\psi_1\right\|_{\Psi^1[T]}
				\left\|\psi_2\right\|_{\Psi^{\mathfrak{m}}[T]}
			\right)
			\left\|\psi_3\right\|_{\Psi^1[T]}
			\left\|\psi_4\right\|_{\Psi^1[T]}
		\;.\]
		Now, we have
		\begin{equation*}\begin{split}
			\left\|\mathrm{P}_\lambda\psi_3\,\mathrm{P}_{\le\lambda}\psi_4\right\|_{L^2_tL^\infty_x[T]}
		\lesssim\;&
			\left\|\mathrm{P}_\lambda\psi_3\right\|_{L^4_tL^\infty_x[T]}
			\left\|\psi_4\right\|_{L^4_tL^\infty_x[T]}
		\\\lesssim\;&
			\lambda^{\frac{3}{4}}\mathfrak{m}(\lambda)^{-1}
			\left\|\psi_3\right\|_{\Psi^{\mathfrak{m}}[T]}
			\left\|\psi_4\right\|_{\Psi^1[T]}
		\\\le\;&
			C(s)
			\lambda^{\frac{3}{4}-s}\mathfrak{m}(\mu)^{-1}\mu^s
			\left\|\psi_3\right\|_{\Psi^{\mathfrak{m}}[T]}
			\left\|\psi_4\right\|_{\Psi^1[T]}
		\end{split}\end{equation*}
		where the last inequality is due to the hypothesis (\ref{ZHYP:SobWt}).
		Summing over $\lambda\gtrsim\mu$ we obtain, by symmetry,
		\[
			\left\|\mathrm{P}_{\approx\mu}\left(\psi_3\psi_4\right)\right\|_{L^2_tL^\infty_x[T]}
		\le
			C(s)
			\mu^{\frac{3}{4}}\mathfrak{m}(\mu)^{-1}
			\left(
				\left\|\psi_3\right\|_{\Psi^{\mathfrak{m}}[T]}
				\left\|\psi_4\right\|_{\Psi^1[T]}
				+
				\left\|\psi_3\right\|_{\Psi^1[T]}
				\left\|\psi_4\right\|_{\Psi^{\mathfrak{m}}[T]}
			\right)
		\;.\]
		Hence, by Lemma \ref{ZLEM:EnergyEstmt_N2x},
		\[
			\mathrm{II}
			\le
			C(s)
			\mu^{-\frac{1}{4}}\mathfrak{m}(\mu)^{-1}
			\left\|\psi_1\right\|_{\Psi^1[T]}
			\left\|\psi_2\right\|_{\Psi^1[T]}
			\left(
				\left\|\psi_3\right\|_{\Psi^{\mathfrak{m}}[T]}
				\left\|\psi_4\right\|_{\Psi^1[T]}
				+
				\left\|\psi_3\right\|_{\Psi^1[T]}
				\left\|\psi_4\right\|_{\Psi^{\mathfrak{m}}[T]}
			\right)
		\;.\]
		The proof is complete.
	\end{proof}
	
\subsection{Multilinear estimates}
	
	We now estimate each of the nonlinearities in (\ref{ZEQN:MoreGeneralIVP}) in $\mathrm{D}U^2_BH^{\mathfrak{m}}[T]$.
	This is accomplished with the aid of the duality principle, Lemma \ref{ZLEM:UpVpDualityB}.
	
	\begin{lemma} \label{ZLEM:MLEstmt_Q}
		Assume the hypotheses (\ref{ZHYP:SobWt}), (\ref{ZHYP:nablaB}). Let $T\in(0,1]$. Then
		\[
			\left\|\mathcal{Q}\left[\psi_1,\psi_2,\psi_3\right]\right\|_{\mathrm{D}U^2_BH^{\mathfrak{m}}[T]}
			\le
			C(s)T^{\frac{1}{2}}
			\sum_{\ell=1}^3 \left\|\psi_\ell\right\|_{\Psi^{\mathfrak{m}}[T]}
				\prod_{\substack{k=1\\k\neq\ell}}^3  \left\|\psi_k\right\|_{\Psi^1[T]}
		\;.\]
	\end{lemma}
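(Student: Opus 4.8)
The plan is to dualise against the companion space and then cash in the smoothing already recorded in Lemma~\ref{ZLEM:DispEstmt_N2x}. By the duality principle of Lemma~\ref{ZLEM:UpVpDualityB} (applied with $p=p'=2$) it suffices to show, for every $v$ with $\|v\|_{V^2_BH^{\mathfrak{m}^{-1}}[T]}\le 1$, that
\[
	\left|\int_0^T\!\!\int_{\mathbb{R}^2_x}\overline{v}\,\mathcal{Q}[\psi_1,\psi_2,\psi_3]\,\mathrm{d}x\,\mathrm{d}t\right|
	\le
	C(s)\,T^{\frac12}\sum_{\ell=1}^3\left\|\psi_\ell\right\|_{\Psi^{\mathfrak{m}}[T]}\prod_{\substack{k=1\\k\neq\ell}}^3\left\|\psi_k\right\|_{\Psi^1[T]}
	\;;
\]
in fact I will only produce the two terms with $\ell\in\{1,2\}$, which is enough since $\mathfrak{m}(\mu)\gtrsim\mu^s\ge\mu$ forces $\|\cdot\|_{\Psi^1[T]}\le\|\cdot\|_{\Psi^{\mathfrak{m}}[T]}$ (the factor $\psi_3$ never carries the top frequency, so $\|\psi_3\|_{\Psi^1}$ always suffices).

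Both summands defining $\mathcal{Q}$ are, for each dyadic $\lambda$, a product of $\mathrm{P}_\lambda\mathcal{N}^2_x[\psi_1,\psi_2]$ (frequency-localised to $|\xi|\approx\lambda$) with a derivative of a frequency-restriction of $\psi_3$; the product then has frequency $\lesssim\lambda$, so in the space-time pairing against $\overline v$ one may replace $v$ by $\mathrm{P}_{\lesssim\lambda}v$ and, simultaneously, replace the $\psi_3$-factor by $\nabla\mathrm{P}_{\lesssim\lambda}\psi_3$. After this reduction I estimate, for either summand, by H\"older in $x$ and $t$, putting $\mathrm{P}_{\lesssim\lambda}v$ in $L^\infty_tL^2_x[T]$, $\mathrm{P}_\lambda\mathcal{N}^2_x[\psi_1,\psi_2]$ in $L^2_tL^\infty_x[T]$, and $\nabla\mathrm{P}_{\lesssim\lambda}\psi_3$ in $L^2_tL^2_x[T]$ (time exponents $\infty,2,2$ and space exponents $2,\infty,2$, both admissible).

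For the three factors: first, the energy estimate~(\ref{ZEQN:OldLemma4.9_EQN01}) together with hypothesis~(\ref{ZHYP:nablaB}) and $[\mathfrak{m}^{-1}]=[\mathfrak{m}]\le s+\tfrac18$ gives $\|v\|_{L^\infty_tH^{\mathfrak{m}^{-1}}[T]}\le C(s)\|v\|_{V^2_BH^{\mathfrak{m}^{-1}}[T]}$ with no dependence on $\|B\|_{L^\infty_{t,x}}$, hence (using that $\mathfrak{m}(2\mu)\ge 2^s\mathfrak{m}(\mu)$ by~(\ref{ZHYP:SobWt}), so that $\mathfrak{m}$ is dominated by its value at the top frequency) $\|\mathrm{P}_{\lesssim\lambda}v\|_{L^\infty_tL^2_x[T]}\le C(s)\,\mathfrak{m}(\lambda)\,\|v\|_{V^2_BH^{\mathfrak{m}^{-1}}[T]}$. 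Second, Lemma~\ref{ZLEM:DispEstmt_N2x} gives $\|\mathrm{P}_\lambda\mathcal{N}^2_x[\psi_1,\psi_2]\|_{L^2_tL^\infty_x[T]}\le C(s)\,\lambda^{-\frac14}\mathfrak{m}(\lambda)^{-1}\big(\|\psi_1\|_{\Psi^{\mathfrak{m}}[T]}\|\psi_2\|_{\Psi^1[T]}+\|\psi_1\|_{\Psi^1[T]}\|\psi_2\|_{\Psi^{\mathfrak{m}}[T]}\big)$. Third, and crucially, the derivative on $\psi_3$ is harmless in $L^2_x$: since $\mathrm{P}_{\lesssim\lambda}$ is a Fourier multiplier with symbol bounded by $1$ that commutes with $\nabla$, one has $\|\nabla\mathrm{P}_{\lesssim\lambda}\psi_3\|_{L^2_tL^2_x[T]}\le T^{\frac12}\|\nabla\psi_3\|_{L^\infty_tL^2_x[T]}\le T^{\frac12}\|\psi_3\|_{\Psi^1[T]}$, and this crude passage from $L^2_t$ to $L^\infty_t$ is exactly where the $T^{1/2}$ comes from.

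Multiplying the three bounds, the factors $\mathfrak{m}(\lambda)$ cancel and the contribution of the frequency $\lambda$ is $\le C(s)\,T^{1/2}\lambda^{-1/4}\|v\|_{V^2_BH^{\mathfrak{m}^{-1}}[T]}\big(\|\psi_1\|_{\Psi^{\mathfrak{m}}[T]}\|\psi_2\|_{\Psi^1[T]}+\|\psi_1\|_{\Psi^1[T]}\|\psi_2\|_{\Psi^{\mathfrak{m}}[T]}\big)\|\psi_3\|_{\Psi^1[T]}$; summing the geometric series $\sum_{\lambda\in\mathfrak{D}}\lambda^{-1/4}<\infty$ and taking the supremum over $v$ finishes the proof. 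I expect the only delicate part to be the bookkeeping in the reduction step: one must see that the paraproduct structure confines \emph{both} the output and the differentiated factor $\nabla\psi_3$ to frequencies $\lesssim\lambda$, so that the strong gain $\lambda^{-1/4}\mathfrak{m}(\lambda)^{-1}$ of Lemma~\ref{ZLEM:DispEstmt_N2x} is spent neither on the derivative nor on absorbing the $H^{\mathfrak{m}^{-1}}$-weight of $v$ (for which $\mathfrak{m}(\lambda)$ is exactly enough), and one must estimate $v$ through the energy inequality rather than a Strichartz inequality so that no power of $M$ (equivalently of $\|B\|_{L^\infty_{t,x}}$) creeps into the final constant.
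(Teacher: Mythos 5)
Your proposal is correct and follows essentially the same route as the paper: dualise via Lemma \ref{ZLEM:UpVpDualityB}, place $\mathrm{P}_\mu\mathcal{N}^2_x[\psi_1,\psi_2]$ in $L^2_tL^\infty_x$ using Lemma \ref{ZLEM:DispEstmt_N2x}, keep the dual function and the differentiated factor in energy norms, and sum a convergent dyadic series; the only (harmless) difference is organisational, in that you bundle the low-frequency sums over $v$ and $\psi_3$ into single $\mathrm{P}_{\lesssim\lambda}$ pieces by orthogonality, whereas the paper sums over three constrained dyadic parameters.
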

	\begin{proof}
		By duality, it suffices to prove the estimate
		\begin{equation}\label{ZEQN:MLEstmt_Q_EQN11}
			\left|\int_0^T\int_{\mathbb{R}^2}
				\overline{\psi_0}\,\mathcal{Q}\left[\psi_1,\psi_2,\psi_3\right]\,\mathrm{d}x\,\mathrm{d}t\right|
			\le
			C(s)T^{\frac{1}{2}}
			\left\|\psi_0\right\|_{V^2_BH^{\mathfrak{m}^{-1}}[T]}
			\sum_{\ell=1}^3 \left\|\psi_\ell\right\|_{\Psi^{\mathfrak{m}}[T]}
				\prod_{\substack{k=1\\k\neq\ell}}^3 \left\|\psi_k\right\|_{\Psi^1[T]}
		\;.\end{equation}
		Using Lemma \ref{ZLEM:DispEstmt_N2x}, we have
		\begin{equation*}\begin{split}
			\bigg|\int_0^T\int_{\mathbb{R}^2}
		&
			\mathrm{P}_\nu\overline{\psi_0}\,
			\mathrm{P}_\mu\mathcal{N}^2_x\left[\psi_1,\psi_2\right]
			\cdot\nabla\mathrm{P}_\lambda\psi_3
			\,\mathrm{d}x\,\mathrm{d}t
			\bigg|
		\\\lesssim\;&
			T^{\frac{1}{2}}
			\left\|\mathrm{P}_\nu\psi_0\right\|_{L^\infty_tL^2_x[T]}
			\left\|\mathrm{P}_\mu\mathcal{N}^2_x\left[\psi_1,\psi_2\right]\right\|_{L^2_tL^\infty_x[T]}
			\left\|\mathrm{P}_\lambda\psi_3\right\|_{L^\infty_tH^1[T]}
		\\\le\;&
			C(s)T^{\frac{1}{2}}
			\mathfrak{m}(\nu)
			\mu^{-\frac{1}{4}}\mathfrak{m}(\mu)^{-1}
			\left\|\psi_0\right\|_{V^2_BH^{\mathfrak{m}^{-1}}[T]}
			\sum_{\ell=1}^3 \left\|\psi_\ell\right\|_{\Psi^{\mathfrak{m}}[T]}
				\prod_{\substack{k=1\\k\neq\ell}}^3 \left\|\psi_k\right\|_{\Psi^1[T]}
		\\\le\;&
			C(s)T^{\frac{1}{2}}
			\nu^{s+\frac{1}{8}}
			\mu^{-\frac{1}{4}-s}
			\left\|\psi_0\right\|_{V^2_BH^{\mathfrak{m}^{-1}}[T]}
			\sum_{\ell=1}^3 \left\|\psi_\ell\right\|_{\Psi^{\mathfrak{m}}[T]}
				\prod_{\substack{k=1\\k\neq\ell}}^3 \left\|\psi_k\right\|_{\Psi^1[T]}
		\end{split}\end{equation*}
		where the last two inequalities follow from the hypotheses (\ref{ZHYP:SobWt}), (\ref{ZHYP:nablaB}).
		Now, the right-hand side is summable over $\{\mu\approx\max(\lambda,\nu)\}$ to give (\ref{ZEQN:MLEstmt_Q_EQN11}).
	\end{proof}

	\begin{lemma} \label{ZLEM:MLEstmt_N2tphi}
		Assume the hypotheses (\ref{ZHYP:SobWt}), (\ref{ZHYP:nablaB}). Assume also that $B$ satisfies (\ref{ZEQN:Impose_B}). Let $T\in(0,1]$. Then
		\[
			\left\|\mathcal{N}^2_0\left[\psi_1,\psi_2\right]\psi_3\right\|_{\mathrm{D}U^2_BH^{\mathfrak{m}}[T]}
			\le
			C(s)\left(1+M\right)^2T^{\frac{1}{2}}
			\sum_{\ell=1}^3 \left\|\psi_\ell\right\|_{\Psi^{\mathfrak{m}}[T]}
				\prod_{\substack{k=1\\k\neq\ell}}^3 \left\|\psi_k\right\|_{\Psi^1[T]}
		\;.\]
	\end{lemma}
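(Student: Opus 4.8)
Following the proof of Lemma~\ref{ZLEM:MLEstmt_Q}, the plan is to apply the duality principle of Lemma~\ref{ZLEM:UpVpDualityB}, which reduces the claim to proving
\[
\left|\int_0^T\int_{\mathbb{R}^2} \overline{\psi_0}\,\mathcal{N}^2_0[\psi_1,\psi_2]\,\psi_3\,\mathrm{d}x\,\mathrm{d}t\right|
\le
C(s)(1+M)^2\,T^{\frac12}\,\left\|\psi_0\right\|_{V^2_BH^{\mathfrak{m}^{-1}}[T]}
\sum_{\ell=1}^3\left\|\psi_\ell\right\|_{\Psi^{\mathfrak{m}}[T]}\prod_{\substack{k=1\\k\neq\ell}}^3\left\|\psi_k\right\|_{\Psi^1[T]}
\]
for an arbitrary $\psi_0\in V^2_BH^{\mathfrak{m}^{-1}}[T]$, noting that $\mathfrak{m}^{-1}$ is again a Sobolev weight with $[\mathfrak{m}^{-1}]=[\mathfrak{m}]\le C(s)$. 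One then Littlewood--Paley trichotomises the integrand, localising $\psi_0,\mathcal{N}^2_0[\psi_1,\psi_2],\psi_3$ to dyadic frequencies $\nu,\mu,\lambda$ respectively: a non-zero contribution forces the largest of $\nu,\mu,\lambda$ to be attained at least twice, which gives the three regimes (a) $\nu\approx\lambda\gtrsim\mu$, (b) $\nu\approx\mu\gtrsim\lambda$, and (c) $\lambda\approx\mu\gtrsim\nu$; the fully low-frequency corner $\mu\approx 1$ (so also $\nu,\lambda\approx 1$) is disposed of separately via the crude bound~\eqref{ZEQN:DispEstmt_N2t_EQN06}.

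The two decisions to be made in each regime are which of the two estimates of Lemma~\ref{ZLEM:DispEstmt_N2t} to invoke for $\mathcal{N}^2_0[\psi_1,\psi_2]$, and onto which of $\psi_1,\psi_2,\psi_3$ the single permitted factor of the Sobolev weight $\mathfrak{m}$ is placed. In regime (a), where $\mathcal{N}^2_0[\psi_1,\psi_2]$ is at low frequency, I would use~\eqref{ZEQN:DispEstmt_N2t_EQN06} (which costs $\left\|\psi_1\right\|_{\Psi^1}\left\|\psi_2\right\|_{\Psi^1}$), pair $\mathrm{P}_\nu\psi_0\in L^\infty_tL^2_x$ with $\mathrm{P}_\lambda\psi_3\in L^4_tL^2_x$ (obtained from $L^\infty_tL^2_x$ at the cost of a factor $T^{1/4}$), and place $\mathfrak{m}$ on $\psi_3$; Hölder is $L^2_x\cdot L^\infty_x\cdot L^2_x$ in space and $L^\infty_t\cdot L^4_t\cdot L^4_t$ in time, producing the power of $T$, and since $\nu\approx\lambda$ the weights $\mathfrak{m}(\nu)$ and $\mathfrak{m}(\lambda)^{-1}$ cancel, so the residual double sum over $\nu\approx\lambda$ closes by Cauchy--Schwarz exactly as in Lemma~\ref{ZLEM:MLEstmt_Q}, using $\left\|\psi_0\right\|_{L^\infty_tH^{\mathfrak{m}^{-1}}}\lesssim\left\|\psi_0\right\|_{V^2_BH^{\mathfrak{m}^{-1}}}$ from~\eqref{ZEQN:OldLemma4.9_EQN01} together with~\eqref{ZEQN:Impose_nablaB}. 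In regimes (b) and (c), where $\mathcal{N}^2_0[\psi_1,\psi_2]$ sits at the top frequency $\mu$, I would instead use the decay estimate~\eqref{ZEQN:DispEstmt_N2t_EQN05} (this is the only one of the two estimates supplying an $L^1_x$ factor, which is needed since $\psi_0$ and $\psi_3$ will both be put in $L^\infty_x$), and this forces $\mathfrak{m}$ onto $\psi_1$ or $\psi_2$ while leaving $\psi_3\in\Psi^1$; I would then place $\mathrm{P}_\nu\psi_0$ and $\mathrm{P}_\lambda\psi_3$ in $L^4_tL^\infty_x$ via Lemma~\ref{ZLEM:DispEstmt_LinftyCtrl}, with Hölder $L^1_x\cdot L^\infty_x\cdot L^\infty_x$ in space and $L^\infty_t\cdot L^4_t\cdot L^4_t$ in time. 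The bound $\left\|\mathrm{P}_\nu\psi_0\right\|_{L^4_tL^\infty_x}\lesssim\nu^{3/4}\mathfrak{m}(\nu)\left\|\psi_0\right\|_{\Psi^{\mathfrak{m}^{-1}}}$ is where hypothesis~\eqref{ZEQN:Impose_B} enters: Lemma~\ref{ZLEM:OldLemma4.9Restated}, applied with the weight $\mathfrak{m}^{-1}$, upgrades $\left\|\psi_0\right\|_{\Psi^{\mathfrak{m}^{-1}}}\lesssim(1+M)^2\left\|\psi_0\right\|_{V^2_BH^{\mathfrak{m}^{-1}}}$, the origin of the factor $(1+M)^2$ in the statement.

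I expect regime (c) to be the only delicate point: $\mathcal{N}^2_0[\psi_1,\psi_2]$ and $\psi_3$ both at the top frequency $\mu\approx\lambda$, and $\psi_0$ at the lower frequency $\nu$. The dangerous frequency factor here is $\nu^{3/4}\mathfrak{m}(\nu)$ from the Bernstein cost on $\psi_0$; I would control it by combining the $\mu^{-1}\mathfrak{m}(\mu)^{-1}$ decay of~\eqref{ZEQN:DispEstmt_N2t_EQN05}, the Sobolev gain $\left\|\mathrm{P}_\lambda\psi_3\right\|_{L^4_tL^\infty_x}\lesssim\lambda^{-1/4}\left\|\psi_3\right\|_{\Psi^1}$ (Lemma~\ref{ZLEM:DispEstmt_LinftyCtrl} applied to the $\Psi^1$, rather than $\Psi^{\mathfrak{m}}$, norm of $\psi_3$), and the growth bound $\mathfrak{m}(\lambda)\ge\mathfrak{m}(\nu)(\lambda/\nu)^s$ from hypothesis~\eqref{ZEQN:Impose_SobWt}, which permits trading $\mathfrak{m}(\lambda)^{-1}$ for a power of $\lambda^{-1}$; after summing over $\lambda\gtrsim\nu$ the frequency factor collapses to $\nu^{-1/2}$, which is summable over $\nu$. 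Regime (b) is easier because there $\psi_3$ is at low frequency and carries no growing power of $\lambda$, so the $\mathfrak{m}(\mu)^{-1}$ decay simply cancels $\mathfrak{m}(\nu)\approx\mathfrak{m}(\mu)$. Beyond this, the main thing requiring constant care is the bookkeeping: in each regime exactly one of $\psi_1,\psi_2,\psi_3$ must carry $\Psi^{\mathfrak{m}}$ and $\psi_0$ must always be paired with the dual weight $\mathfrak{m}^{-1}$, so that the final bound has precisely the symmetric form in the statement.
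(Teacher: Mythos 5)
Your proposal is correct and follows essentially the same route as the paper's proof: the same duality reduction, the same trichotomy with the $\nu\approx\lambda\gtrsim\mu$ block handled via the $L^4_tL^\infty_x$ bound (\ref{ZEQN:DispEstmt_N2t_EQN06}) on $\mathcal{N}^2_0$ with the weight on $\psi_3$, and the two high-$\mu$ blocks handled via the $L^\infty_tL^1_x$ decay (\ref{ZEQN:DispEstmt_N2t_EQN05}) with $\mathrm{P}_\nu\psi_0,\mathrm{P}_\lambda\psi_3$ placed in $L^4_tL^\infty_x$, which is exactly where the paper's $(1+M)^2$ arises. Your frequency bookkeeping in the delicate regime $\mu\approx\lambda\gg\nu$ reproduces the paper's exponent $\nu^{3/4+s}\mu^{-1-s}\lambda^{-1/4}$.
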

	\begin{proof}
		By duality, it suffices to prove the estimate
		\begin{equation} \label{ZEQN:MLEstmt_N2tphi_EQN11}
			\bigg|\int_0^T\int_{\mathbb{R}^2}
				\overline{\psi_0}\mathcal{N}^2_0\left[\psi_1,\psi_2\right]\psi_3\,\mathrm{d}x\,\mathrm{d}t\bigg|
			\le
			C(s)\left(1+M\right)^2T^{\frac{1}{2}}
			\left\|\psi_0\right\|_{V^2_BH^{\mathfrak{m}^{-1}}[T]}
			\sum_{\ell=1}^3 \left\|\psi_\ell\right\|_{\Psi^{\mathfrak{m}}[T]}
				\prod_{\substack{k=1\\k\neq\ell}}^3 \left\|\psi_k\right\|_{\Psi^1[T]}
		\;.\end{equation}
		By (\ref{ZEQN:DispEstmt_N2t_EQN06}), H\"{o}lder's inequality gives
		\begin{equation*}\begin{split}
			\sum_\nu\bigg|\int_0^T\int_{\mathbb{R}^2}
		&
				\mathrm{P}_\nu\overline{\psi_0}\,
				\mathrm{P}_{\lesssim\nu}\mathcal{N}^2_0\left[\psi_1,\psi_2\right]\,
				\mathrm{P}_{\approx\nu}\psi_3\,\mathrm{d}x\,\mathrm{d}t\bigg|
		\\\lesssim\;&
			\int_0^T \left\|\mathcal{N}^2_0\left[\psi_1,\psi_2\right]\right\|_{L^\infty_x}
				\sum_\nu \left\|\mathrm{P}_\nu\psi_0\right\|_{L^2_x}\left\|\mathrm{P}_{\approx\nu}\overline{\psi_3}\right\|_{L^2_x}
				\,\mathrm{d}t
		\\\lesssim\;&
			T^{\frac{3}{4}}
			\left\|\psi_0\right\|_{L^\infty_tH^{\mathfrak{m}^{-1}}[T]}
			\left\|\mathcal{N}^2_0\left[\psi_1,\psi_2\right]\right\|_{L4_tL^\infty_x[T]}
			\left\|\psi_3\right\|_{l^\infty_tH^{\mathfrak{m}}[T]}
		\\\le\;&
			C(s)
			T^{\frac{1}{2}}
			\left\|\psi_0\right\|_{V^2_BH^{\mathfrak{m}^{-1}}[T]}
			\left\|\psi_1\right\|_{\Psi^1[T]}
			\left\|\psi_2\right\|_{\Psi^1[T]}
			\left\|\psi_3\right\|_{\Psi^{\mathfrak{m}}[T]}
		\end{split}\end{equation*}
		where the last inequality follows from Lemma \ref{ZLEM:DispEstmt_N2t} and the hypotheses (\ref{ZHYP:SobWt}) and (\ref{ZHYP:nablaB}).

		By the Littlewood-Paley trichotomy, to prove (\ref{ZEQN:MLEstmt_N2tphi_EQN11}) it remains to show
		\[
			\left(\sum_{\mu\approx\lambda\gg\nu} + \sum_{\nu\approx\lambda\gg\mu}\right)
			\left|\int_0^T\int_{\mathbb{R}^2}
			\mathrm{P}_\nu\overline{\psi_0}\,\mathrm{P}_\mu\mathcal{N}^2_0\left[\psi_1,\psi_2\right]\,\mathrm{P}_\lambda\psi_3\,\mathrm{d}x\,\mathrm{d}t\right|
			\le
			\mbox{RHS(\ref{ZEQN:MLEstmt_N2tphi_EQN11})}
		\;.\]
		For this, we note using (\ref{ZEQN:DispEstmt_N2t_EQN05}) and Lemmas \ref{ZLEM:OldLemma4.9Restated}, \ref{ZLEM:DispEstmt_LinftyCtrl} that
		\begin{equation*}\begin{split}
			\bigg|\int_0^T
		&
			\int_{\mathbb{R}^2}
			\mathrm{P}_\nu\overline{\psi_0}\,\mathrm{P}_\mu\mathcal{N}^2_0\left[\psi_1,\psi_2\right]\,\mathrm{P}_\lambda\psi_3\,\mathrm{d}x\,\mathrm{d}t\bigg|
		\\\lesssim\;&
			T^{\frac{1}{2}}
			\left\|\mathrm{P}_\nu\psi_0\right\|_{L^4_tL^\infty_x[T]}
			\left\|\mathrm{P}_\mu\mathcal{N}^2_0\left[\psi_1,\psi_2\right]\right\|_{L^\infty_tL^1_x[T]}
			\left\|\mathrm{P}_\lambda\psi_3\right\|_{L^4_tL^\infty_x[T]}
		\\\le\;&
			C(s)\left(1+M\right)^2T^{\frac{1}{2}}
			\nu^{\frac{3}{4}}\mathfrak{m}(\nu)
			\mu^{-1}\mathfrak{m}(\mu)^{-1}
			\lambda^{\frac{3}{4}-1}
			\left\|\psi_0\right\|_{V^2_BH^{\mathfrak{m}^{-1}}[T]}
			\sum_{\ell=1}^3 \left\|\psi_\ell\right\|_{\Psi^{\mathfrak{m}}[T]}
				\prod_{\substack{k=1\\k\neq\ell}}^3 \left\|\psi_k\right\|_{\Psi^1[T]}
		\\\le\;&
			C(s)\left(1+M\right)^2T^{\frac{1}{2}}
			\nu^{\frac{3}{4}+s}
			\mu^{-1-s}
			\lambda^{-\frac{1}{4}}
			\left\|\psi_0\right\|_{V^2_BH^{\mathfrak{m}^{-1}}[T]}
			\sum_{\ell=1}^3 \left\|\psi_\ell\right\|_{\Psi^{\mathfrak{m}}[T]}
				\prod_{\substack{k=1\\k\neq\ell}}^3 \left\|\psi_k\right\|_{\Psi^1[T]}
		\;.\end{split}\end{equation*}
		The right-hand side is summable over $\{\mu\approx\lambda\gg\nu\}$ and over $\{\mu\approx\nu\gg\lambda\}$, as desired.
	\end{proof}

	\begin{lemma} \label{ZLEM:MLEstmt_N4tphi}
		Assume the hypotheses (\ref{ZHYP:SobWt}), (\ref{ZHYP:nablaB}).
		Let $T\in(0,1]$.
		Then
		\[
			\left\|\mathcal{N}^4_t\left[\psi_1,\psi_2,\psi_3,\psi_4\right]\psi_5\right\|_{\mathrm{D}U^2_BH^{\mathfrak{m}}[T]}
		\le
			C(s)T^{\frac{1}{2}}
			\sum_{\ell=1}^5 \left\|\psi_\ell\right\|_{\Psi^{\mathfrak{m}}[T]} \prod_{\substack{k=1\\k\neq\ell}}^5 \left\|\psi_k\right\|_{\Psi^1[T]}
		\;.\]
	\end{lemma}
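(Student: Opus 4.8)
The plan is to follow the template of the proof of Lemma~\ref{ZLEM:MLEstmt_N2tphi}, but to take advantage of the fact that, unlike $\mathcal{N}^2_0$, the quadrilinear form $\mathcal{N}^4_t$ already obeys a favourable bound in $L^2_tL^\infty_x$ carrying a genuine frequency gain $\mu^{-1/4}$ (Lemma~\ref{ZLEM:DispEstmt_N4t}). I would begin by appealing to the duality principle of Lemma~\ref{ZLEM:UpVpDualityB}, which reduces the claim to the spacetime pairing bound
\begin{equation*}
	\left|\int_0^T\int_{\mathbb{R}^2}\overline{\psi_0}\,\mathcal{N}^4_t[\psi_1,\psi_2,\psi_3,\psi_4]\,\psi_5\,\mathrm{d}x\,\mathrm{d}t\right|
	\le
	C(s)\,T^{\frac12}\,\left\|\psi_0\right\|_{V^2_BH^{\mathfrak{m}^{-1}}[T]}\sum_{\ell=1}^5\left\|\psi_\ell\right\|_{\Psi^{\mathfrak{m}}[T]}\prod_{\substack{k=1\\k\neq\ell}}^5\left\|\psi_k\right\|_{\Psi^1[T]}
\end{equation*}
valid for every $\psi_0\in V^2_BH^{\mathfrak{m}^{-1}}[T]$. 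A key structural observation is that $\psi_0$ will never be placed into a Strichartz-type norm in what follows, only into $L^\infty_tH^{\mathfrak{m}^{-1}}[T]$: the embedding $V^2_BH^{\mathfrak{m}^{-1}}[T]\hookrightarrow U^4_BH^{\mathfrak{m}^{-1}}[T]\hookrightarrow L^\infty_tH^{\mathfrak{m}^{-1}}[T]$ has a constant $C(s)$ by Lemma~\ref{ZLEM:UpVpEmbeddingB} and (\ref{ZEQN:OldLemma4.9_EQN01}), using only hypothesis~(\ref{ZHYP:nablaB}) and $[\mathfrak{m}]\le C(s)$. This is precisely why the estimate, unlike Lemma~\ref{ZLEM:MLEstmt_N2tphi}, carries no factor $(1+M)$ and needs no hypothesis on $\|B\|_{L^\infty_{t,x}}$.

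Setting $G:=\mathcal{N}^4_t[\psi_1,\psi_2,\psi_3,\psi_4]$ and writing $\int\overline{\psi_0}\,G\,\psi_5=\sum_\nu\int\overline{\mathrm{P}_\nu\psi_0}\,\mathrm{P}_{\approx\nu}(G\psi_5)$, I would decompose $\mathrm{P}_{\approx\nu}(G\psi_5)$ by the Littlewood--Paley trichotomy into three regimes according to the frequencies of $G$ (call it $\mu$) and of $\psi_5$ (call it $\lambda$): (i) $\mu\approx\nu\gg\lambda$; (ii) $\lambda\approx\nu\gg\mu$; (iii) $\mu\approx\lambda\gtrsim\nu$. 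In all three regimes the estimation is carried out the same way: place $\mathrm{P}_\nu\psi_0$ in $L^\infty_tL^2_x$, the relevant piece of $G$ in $L^2_tL^\infty_x$ via Lemma~\ref{ZLEM:DispEstmt_N4t} (or via the variant described below), and the relevant piece of $\psi_5$ in $L^2_tL^2_x\le T^{1/2}L^\infty_tL^2_x$; this last factor produces exactly the gain $T^{1/2}$. The resulting dyadic sums over $\mu,\nu,\lambda$ are then closed by Cauchy--Schwarz in the frequency parameters, exploiting that hypothesis~(\ref{ZHYP:SobWt}) yields $(\lambda/\nu)^s\le\mathfrak{m}(\lambda)/\mathfrak{m}(\nu)$ whenever $\lambda\ge\nu$ — so that the weight $\mathfrak{m}(\nu)^{-1}$ coming from $\psi_0$ and the $\mathfrak{m}$-weight (or the factor $\mu^{-1/4}\mathfrak{m}(\mu)^{-1}$) coming from the nonlinearity combine correctly — and that the leftover negative powers $\nu^{-1/4}$ and $\lambda^{-1/4}$ are summable over dyadic frequencies. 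Regime~(i) produces the summands $\ell=1,\dots,4$, regime~(ii) produces the summand $\ell=5$, and regime~(iii) produces all of them (splitting according to whether the $\mathfrak{m}$-weight is assigned to $\psi_5$ or to one of $\psi_1,\dots,\psi_4$).

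The single new ingredient this requires is a version of Lemma~\ref{ZLEM:DispEstmt_N4t} in which the extra regularity is \emph{not} distributed onto one of the four inputs of $\mathcal{N}^4_t$, namely
\begin{equation*}
	\left\|\mathrm{P}_\mu\mathcal{N}^4_t[\psi_1,\psi_2,\psi_3,\psi_4]\right\|_{L^2_tL^\infty_x[T]}
	\le
	C(s)\,\mu^{-\frac14}\prod_{\ell=1}^4\left\|\psi_\ell\right\|_{\Psi^1[T]}
	\;.
\end{equation*}
This is needed precisely in regime~(ii) and in the part of regime~(iii) yielding the $\ell=5$ term, where weight-matching against $\psi_0\in H^{\mathfrak{m}^{-1}}$ forces $\psi_5$ to carry the $\mathfrak{m}$-weight, leaving only $\Psi^1$ norms of $\psi_1,\dots,\psi_4$ available. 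I would obtain this variant by re-running the proof of Lemma~\ref{ZLEM:DispEstmt_N4t} with the Sobolev weight $\mathfrak{m}(\lambda)=\lambda$ in place of the fixed weight (that argument uses only $[\mathfrak{m}]_\star>3/4$, which $\mathfrak{m}(\lambda)=\lambda$ satisfies), which reduces to the corresponding $\Psi^1$-only forms of Lemma~\ref{ZLEM:DispEstmt_N2x} and of the bilinear bound for $\mathrm{P}_{\approx\mu}(\psi_3\psi_4)$. I expect the main obstacle to be exactly this bookkeeping — separating cases by which of the five factors carries the $\mathfrak{m}$-regularity and checking that in every regime the frequency weights balance, in particular that the external factor $\psi_5$ can always be made to carry the regularity with only $\Psi^1$ left inside $\mathcal{N}^4_t$ — rather than any individual spacetime estimate, all of which reduce to Bernstein, Hardy--Littlewood--Sobolev, Lemma~\ref{ZLEM:EnergyEstmt_N2x}, Lemma~\ref{ZLEM:DispEstmt_N2x} and Lemma~\ref{ZLEM:DispEstmt_LinftyCtrl}.
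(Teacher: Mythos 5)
Your proposal is correct and follows essentially the same route as the paper's proof: duality via Lemma \ref{ZLEM:UpVpDualityB}, the Littlewood--Paley trichotomy with $\mathrm{P}_\nu\psi_0$ and $\mathrm{P}_\lambda\psi_5$ placed in $L^\infty_tL^2_x$-based norms and the relevant piece of $\mathcal{N}^4_t$ placed in $L^2_tL^\infty_x$ via Lemma \ref{ZLEM:DispEstmt_N4t}, with the $T^{\frac12}$ gain from H\"older in time and the dyadic sums closed exactly as you describe. The ``$\Psi^1$-only'' variant of Lemma \ref{ZLEM:DispEstmt_N4t} that you single out as the one new ingredient is indeed required in the regime where $\psi_5$ carries the $\mathfrak{m}$-weight --- the paper uses it silently in its diagonal case --- and it follows, as you say, by rerunning that lemma's proof with the weight $\lambda\mapsto\lambda$.
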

	\begin{proof}
		By duality, it suffices to prove the estimate
		\begin{equation} \label{ZEQN:MLEstmt_N4tphi_EQN11}
			\bigg|\int_0^T\int_{\mathbb{R}^2}
				\overline{\psi_0}\mathcal{N}^4_t\left[\psi_1,\psi_2,\psi_3,\psi_4\right]\psi_5\,\mathrm{d}x\,\mathrm{d}t\bigg|
			\le
			C(s)T^{\frac{1}{2}}
			\left\|\psi_0\right\|_{V^2_BH^{\mathfrak{m}^{-1}}[T]}
			\sum_{\ell=1}^5 \left\|\psi_\ell\right\|_{\Psi^{\mathfrak{m}}[T]}
				\prod_{\substack{k=1\\k\neq\ell}}^5 \left\|\psi_k\right\|_{\Psi^1[T]}
		\;.\end{equation}

		Firstly, using H\"older's inequality and Lemma \ref{ZLEM:DispEstmt_N4t}, we have
		\begin{equation*}\begin{split}
			\sum_\nu \bigg|
			\int_0^T
		&	
			\int_{\mathbb{R}^2} \mathrm{P}_\nu\overline{\psi_0}
				\,\mathrm{P}_{\lesssim\nu}\mathcal{N}^4_t\left[\psi_1,\psi_2,\psi_3,\psi_4\right]
				\,\mathrm{P}_{\approx\nu}\psi_5
				\,\mathrm{d}x\,\mathrm{d}t\bigg|
		\\\lesssim\;&
			\int_0^T \left\|\mathcal{N}^4_t\left[\psi_1,\psi_2,\psi_3,\psi_4\right](t)\right\|_{L^\infty_x}
				\sum_\nu \left\|\mathrm{P}_\nu\psi_0(t)\right\|_{L^2_x} \left\|\mathrm{P}_{\approx\nu}\psi_5(t)\right\|_{L^2_x}\mathrm{d}t
		\\\le\;&
			C(s)T^{\frac{1}{2}}
			\left\|\psi_0\right\|_{L^\infty_tH^{\mathfrak{m}^{-1}}[T]}
			\left\|\mathcal{N}^4_t\left[\psi_1,\psi_2,\psi_3,\psi_4\right]\right\|_{L^2_tL^\infty_x[T]}
			\left\|\psi_5\right\|_{L^\infty_tH^{\mathfrak{m}}[T]}
		\\\le\;&
			C(s)T^{\frac{1}{2}}
			\left\|\psi_0\right\|_{V^2_BH^{\mathfrak{m}^{-1}}[T]}
			\left(\prod_{\ell=1}^4 \left\|\psi_\ell\right\|_{\Psi^1[T]}\right)
			\left\|\psi_5\right\|_{\Psi^{\mathfrak{m}}[T]}
		\;.\end{split}\end{equation*}

		By the Littlewood-Paley trichotomy, to prove (\ref{ZEQN:MLEstmt_N4tphi_EQN11}) it remains to show that
		\begin{equation}\label{ZEQN:MLEstmt_N4tphi_EQN12}
			\left(\sum_{\mu\approx\nu\gg\lambda} + \sum_{\mu\approx\lambda\gg\nu}\right)
			\left|\int_0^T\int_{\mathbb{R}^2}
				\mathrm{P}_\nu\overline{\psi_0}
				\,\mathrm{P}_\mu\mathcal{N}^4_t\left[\psi_1,\psi_2,\psi_3,\psi_4\right]
				\,\mathrm{P}_\lambda\psi_5
				\,\mathrm{d}x\,\mathrm{d}t
			\right|
			\le
			\mbox{RHS(\ref{ZEQN:MLEstmt_N4tphi_EQN11})}
		\;.\end{equation}
		For this, using H\"older's inequality and Lemma \ref{ZLEM:DispEstmt_N4t} again, we find
		\begin{equation*}\begin{split}
			\bigg|\int_0^T\int_{\mathbb{R}^2}
		&
				\mathrm{P}_\nu\overline{\psi_0}
				\,\mathrm{P}_\mu\mathcal{N}^4_t\left[\psi_1,\psi_2,\psi_3,\psi_4\right]
				\,\mathrm{P}_\lambda\psi_5
				\,\mathrm{d}x\,\mathrm{d}t
			\bigg|
		\\\lesssim\;&
			T^{\frac{1}{2}}
			\left\|\mathrm{P}_\nu\psi_0\right\|_{L^\infty_tL^2_x[T]}
			\left\|\mathrm{P}_\mu\mathcal{N}^4_t\left[\psi_1,\psi_2,\psi_3,\psi_4\right]\right\|_{L^2_tL^\infty_x[T]}
			\left\|\mathrm{P}_\lambda\psi_5\right\|_{L^\infty_tL^2_x[T]}
		\\\le\;&
			C(s)T^{\frac{1}{2}}
			\mathfrak{m}(\nu)\mu^{-1}\mathfrak{m}(\mu)^{-1}\lambda^{-1}
			\left\|\psi_0\right\|_{V^2_BH^{\mathfrak{m}^{-1}}[T]}
			\left(
				\sum_{\ell=1}^4 \left\|\psi_\ell\right\|_{\Psi^{\mathfrak{m}}[T]}
				\prod_{\substack{k=1\\k\neq\ell}}^4 \left\|\psi_k\right\|_{\Psi^1[T]}
			\right)
			\left\|\psi_5\right\|_{\Psi^1[T]}
		\;.\end{split}\end{equation*}
		Due to hypothesis (\ref{ZHYP:SobWt}), the right-hand side is summable over $\{\mu\approx\max(\nu,\lambda)\}$.
		Therefore we obtain (\ref{ZEQN:MLEstmt_N4tphi_EQN12}) as required.
	\end{proof}

	\begin{lemma} \label{ZLEM:MLEstmt_N4xphi}
		Assume the hypotheses (\ref{ZHYP:SobWt}), (\ref{ZHYP:nablaB}).
		Let $T\in(0,1]$.
		Then
		\[
			\left\|\mathcal{N}^4_x\left[\psi_1,\psi_2,\psi_3,\psi_4\right]\psi_5\right\|_{\mathrm{D}U^2_BH^{\mathfrak{m}}[T]}
		\le
			C(s)T^{\frac{1}{2}}
			\sum_{\ell=1}^5 \left\|\psi_\ell\right\|_{\Psi^{\mathfrak{m}}[T]} \prod_{\substack{k=1\\k\neq\ell}}^5 \left\|\psi_k\right\|_{\Psi^1[T]}
		\;.\]
	\end{lemma}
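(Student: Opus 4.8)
The plan is to follow the same template as Lemmas \ref{ZLEM:MLEstmt_Q}--\ref{ZLEM:MLEstmt_N4tphi}. First, invoke the duality principle of Lemma \ref{ZLEM:UpVpDualityB}, which reduces the claim to the pentalinear spacetime bound
\[
	\left|\int_0^T\!\!\int_{\mathbb{R}^2} \overline{\psi_0}\;\mathcal{N}^2_x\!\left[\psi_1,\psi_2\right]\cdot\mathcal{N}^2_x\!\left[\psi_3,\psi_4\right]\,\psi_5\,\mathrm{d}x\,\mathrm{d}t\right|
	\le C(s)\,T^{\frac{1}{2}}\left\|\psi_0\right\|_{V^2_BH^{\mathfrak{m}^{-1}}[T]}\sum_{\ell=1}^5\left\|\psi_\ell\right\|_{\Psi^{\mathfrak{m}}[T]}\!\!\prod_{\substack{k=1\\k\neq\ell}}^5\left\|\psi_k\right\|_{\Psi^1[T]}
\]
for all $\psi_0$ in the unit ball of $V^2_BH^{\mathfrak{m}^{-1}}[T]$. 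The guiding heuristic is that $W:=\mathcal{N}^2_x[\psi_1,\psi_2]\cdot\mathcal{N}^2_x[\psi_3,\psi_4]$ plays the role of the potential $|A_x|^2$, so this is morally a potential-times-solution term with a very smoothing (and bounded) potential --- the most benign of all the nonlinearities in (\ref{ZEQN:MoreGeneralIVP}).

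The two inputs I would assemble about $W$ are: (a) Lemma \ref{ZLEM:EnergyEstmt_N2x} gives $\|W\|_{L^\infty_{t,x}[T]}\le C\prod_{k=1}^4\|\psi_k\|_{\Psi^1[T]}$; and (b) a frequency-localised bound
\[
	\left\|\mathrm{P}_\mu W\right\|_{L^2_tL^\infty_x[T]}
	\le C(s)\,\mu^{-\frac14}\mathfrak{m}(\mu)^{-1}\sum_{\ell=1}^4\left\|\psi_\ell\right\|_{\Psi^{\mathfrak{m}}[T]}\prod_{\substack{k=1\\k\neq\ell}}^4\left\|\psi_k\right\|_{\Psi^1[T]}.
\]
For (b) I would split $W$ with a secondary Littlewood--Paley decomposition into a piece in which one of the two $\mathcal{N}^2_x$ factors is frequency-localised at a scale $\gtrsim\mu$ (estimated by Lemma \ref{ZLEM:DispEstmt_N2x}, then summed over that scale using $\mathfrak{m}(\rho)\gtrsim\rho^s$ with $s\ge1$) times the other factor in $L^\infty_{t,x}$ (Lemma \ref{ZLEM:EnergyEstmt_N2x}); the high$\times$high piece is handled identically after using that $\mathrm{P}_\mu$ is bounded on $L^\infty_x$ uniformly in $\mu$.

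With (a) and (b) in hand I would run the Littlewood--Paley trichotomy in the output frequency $\nu$ of $\psi_0$, the frequency $\mu$ of $W$, and the frequency $\lambda$ of $\psi_5$. When $W$ is at low frequency ($\mu\ll\max(\nu,\lambda)$, which forces $\nu\approx\lambda$) I would use the crude estimate $\int_0^T\|W(t)\|_{L^\infty_x}\sum_\nu\|\mathrm{P}_\nu\psi_0\|_{L^2_x}\|\mathrm{P}_{\approx\nu}\psi_5\|_{L^2_x}\,\mathrm{d}t$, bounding $\|W\|_{L^1_tL^\infty_x[T]}\le T\|W\|_{L^\infty_{t,x}[T]}\le T^{1/2}\|W\|_{L^\infty_{t,x}[T]}$ (as $T\le1$) by (a) and applying Cauchy--Schwarz in $\nu$ against $\psi_0\in L^\infty_tH^{\mathfrak{m}^{-1}}[T]$ and $\psi_5\in L^\infty_tH^{\mathfrak{m}}[T]$; here $\psi_5$ supplies the $\Psi^{\mathfrak{m}}$ norm. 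When $W$ is at high frequency ($\mu\gtrsim\max(\nu,\lambda)$) I would apply H\"older in the form $L^\infty_tL^2_x\cdot L^2_tL^\infty_x\cdot L^\infty_tL^2_x$ --- which is where the factor $T^{1/2}$ comes from --- and insert (b); now one of $\psi_1,\dots,\psi_4$ supplies the $\Psi^{\mathfrak{m}}$ norm, the gain $\mathfrak{m}(\mu)^{-1}$ cancels the growth of $\|\mathrm{P}_\nu\psi_0\|_{L^2_x}$ (in the regime $\mu\approx\nu$) or of $\|\mathrm{P}_\lambda\psi_5\|_{L^2_x}$ (in the regime $\mu\approx\lambda$), and the residual $\mu^{-1/4}$, together with the elementary gain $\mathfrak{m}(\nu)/\mathfrak{m}(\lambda)\lesssim\nu/\lambda$ coming from $s\ge1$ in hypothesis (\ref{ZHYP:SobWt}), makes the off-diagonal frequency sums converge. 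Finally I would replace $\|\psi_0\|_{L^\infty_tH^{\mathfrak{m}^{-1}}[T]}$ by $\|\psi_0\|_{V^2_BH^{\mathfrak{m}^{-1}}[T]}$ via the embedding $V^2_BH^{\mathfrak{m}^{-1}}[T]\hookrightarrow U^4_BH^{\mathfrak{m}^{-1}}[T]$ (Lemma \ref{ZLEM:UpVpEmbeddingB}) and the energy estimate (\ref{ZEQN:OldLemma4.9_EQN01}), whose exponential constant is absorbed using $[\mathfrak{m}^{-1}]=[\mathfrak{m}]\le C(s)$ and hypothesis (\ref{ZHYP:nablaB}); note that, in contrast to Lemma \ref{ZLEM:MLEstmt_N2tphi}, no use of (\ref{ZEQN:Impose_B}) is needed because no Strichartz bound for $\psi_0$ is invoked. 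I expect the only step requiring care to be (b) --- extracting the $\mu^{-1/4}\mathfrak{m}(\mu)^{-1}$ decay for the product of two nonlocal quadratic forms --- together with the bookkeeping that keeps exactly one of the five inputs in $\Psi^{\mathfrak{m}}$; but since each $\mathcal{N}^2_x$ gains a full derivative while $\mathfrak{m}(\lambda)\gtrsim\lambda^s$ with $s\ge1$, every summation closes with room to spare, and this estimate is strictly easier than Lemmas \ref{ZLEM:DispEstmt_N2t} and \ref{ZLEM:MLEstmt_N2tphi}.
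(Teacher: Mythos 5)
Your argument is correct and follows essentially the same route as the paper: duality via Lemma \ref{ZLEM:UpVpDualityB}, then the Littlewood--Paley trichotomy with Lemma \ref{ZLEM:EnergyEstmt_N2x} supplying the $L^\infty_{t,x}$ control and Lemma \ref{ZLEM:DispEstmt_N2x} the $\mu^{-\frac14}\mathfrak{m}(\mu)^{-1}$ gain, with the $\Psi^{\mathfrak{m}}$ norm landing on $\psi_5$ in the low-potential-frequency regime and on one of $\psi_1,\dots,\psi_4$ otherwise. The only (harmless) organizational difference is that you localise the product $W=\mathcal{N}^2_x[\psi_1,\psi_2]\cdot\mathcal{N}^2_x[\psi_3,\psi_4]$ and prove the auxiliary bound (b) for $\mathrm{P}_\mu W$, whereas the paper localises a single $\mathcal{N}^2_x$ factor directly (exploiting symmetry to put the higher frequency on the first factor), which lets it skip that intermediate step.
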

	\begin{proof}
		By duality, it suffices to prove the estimate
		\begin{equation} \label{ZEQN:MLEstmt_N4xphi_EQN11}
			\bigg|\int_0^T\int_{\mathbb{R}^2}
				\overline{\psi_0}\mathcal{N}^4_x\left[\psi_1,\psi_2,\psi_3,\psi_4\right]\psi_5\,\mathrm{d}x\,\mathrm{d}t\bigg|
			\le
			C(s)T^{\frac{1}{2}}
			\left\|\psi_0\right\|_{V^2_BH^{\mathfrak{m}^{-1}}[T]}
			\sum_{\ell=1}^5 \left\|\psi_\ell\right\|_{\Psi^{\mathfrak{m}}[T]}
				\prod_{\substack{k=1\\k\neq\ell}}^5 \left\|\psi_k\right\|_{\Psi^1[T]}
		\;.\end{equation}
		By the Littlewood-Paley trichotomy and symmetry, it suffices to verify the estimates
		\begin{equation} \label{ZEQN:MLEstmt_N4xphi_EQN12}
			\sum_{\mu,\nu\;:\;\mu\gtrsim\nu}
			\left|\int_0^T\int_{\mathbb{R}^2}
				\mathrm{P}_\nu\overline{\psi_0}\,
				\mathrm{P}_\mu\mathcal{N}^2_x\left[\psi_1,\psi_2\right]\cdot\mathrm{P}_{\lesssim\mu}\mathcal{N}^2_x\left[\psi_3,\psi_4\right]
				\,\psi_5
				\,\mathrm{d}x\,\mathrm{d}t
			\right|
			\le
			\mbox{RHS(\ref{ZEQN:MLEstmt_N4xphi_EQN11})}
		\end{equation}
		and
		\begin{equation} \label{ZEQN:MLEstmt_N4xphi_EQN13}
			\sum_\nu
			\left|\int_0^T\int_{\mathbb{R}^2}
				\mathrm{P}_\nu\overline{\psi_0}\,
				\mathrm{P}_{\ll\nu}\mathcal{N}^2_x\left[\psi_1,\psi_2\right]\cdot\mathrm{P}_{\ll\nu}\mathcal{N}^2_x\left[\psi_3,\psi_4\right]
				\,\mathrm{P}_{\approx\nu}\psi_5
				\,\mathrm{d}x\,\mathrm{d}t
			\right|
			\le
			\mbox{RHS(\ref{ZEQN:MLEstmt_N4xphi_EQN11})}
		\;.\end{equation}

		Using Lemmas \ref{ZLEM:EnergyEstmt_N2x} and \ref{ZLEM:DispEstmt_N2x}, a typical summand on the left-hand side of (\ref{ZEQN:MLEstmt_N4xphi_EQN12}) is controlled by
		\begin{equation*}\begin{split}
			T^{\frac{1}{2}}
		&
			\left\|\mathrm{P}_\nu\psi_0\right\|_{L^\infty_tL^2_x[T]}
			\left\|\mathrm{P}_\mu\mathcal{N}^2_x\left[\psi_1,\psi_2\right]\right\|_{L^2_tL^\infty_x[T]}
			\left\|\mathcal{N}^2_x\left[\psi_3,\psi_4\right]\right\|_{L^\infty_{t,x}[T]}
			\left\|\psi_5\right\|_{L^\infty_tL^2_x[T]}
		\\\le\;&
			C(s)T^{\frac{1}{2}}
			\mathfrak{m}(\nu)
			\mu^{-\frac{1}{4}}\mathfrak{m}(\mu)^{-1}
			\left\|\psi_0\right\|_{V^2_BH^{\mathfrak{m}^{-1}}[T]}
		\\&\cdot
			\left(
				\left\|\psi_1\right\|_{\Psi^{\mathfrak{m}}[T]}
				\left\|\psi_2\right\|_{\Psi^1[T]}
				+
				\left\|\psi_1\right\|_{\Psi^1[T]}
				\left\|\psi_2\right\|_{\Psi^{\mathfrak{m}}[T]}
			\right)
			\left\|\psi_3\right\|_{\Psi^1[T]}
			\left\|\psi_4\right\|_{\Psi^1[T]}
			\left\|\psi_5\right\|_{\Psi^1[T]}
		\;.\end{split}\end{equation*}
		Summing over $\{\mu\gtrsim\nu\}$, we obtain (\ref{ZEQN:MLEstmt_N4xphi_EQN12}) as required.

		As for (\ref{ZEQN:MLEstmt_N4xphi_EQN13}), we use Lemma \ref{ZLEM:EnergyEstmt_N2x} to obtain the estimate
		\begin{equation*}\begin{split}
			\mbox{LHS(\ref{ZEQN:MLEstmt_N4xphi_EQN13})}
		\lesssim\;&
			\int_0^T
			\left\|\mathcal{N}^2_x\left[\psi_1,\psi_2\right](t)\right\|_{L^\infty_x}
			\left\|\mathcal{N}^2_x\left[\psi_3,\psi_4\right](t)\right\|_{L^\infty_x}
			\left(\sum_\nu
				\left\|\mathrm{P}_\nu\psi_0(t)\right\|_{L^2_x}
				\left\|\mathrm{P}_{\approx\nu}\psi_5(t)\right\|_{L^2_x}
			\right)
			\mathrm{d}t
		\\\le\;&
			C(s)T
			\left\|\psi_0\right\|_{L^\infty_tH^{\mathfrak{m}^{-1}}[T]}
			\left\|\mathcal{N}^2_x\left[\psi_1,\psi_2\right]\right\|_{L^\infty_{t,x}[T]}
			\left\|\mathcal{N}^2_x\left[\psi_3,\psi_4\right]\right\|_{L^\infty_{t,x}[T]}
			\left\|\psi_5\right\|_{L^\infty_tH^{\mathfrak{m}}[T]}
		\\\le\;&
			C(s)T^{\frac{1}{2}}
			\left\|\psi_0\right\|_{V^2_BH^{\mathfrak{m}^{-1}}[T]}
			\left(\prod_{k=1}^4 \left\|\psi_k\right\|_{\Psi^1[T]}\right)
			\left\|\psi_5\right\|_{\Psi^{\mathfrak{m}}[T]}
		\;.\end{split}\end{equation*}
		This verifies (\ref{ZEQN:MLEstmt_N4xphi_EQN13}) and hence completes the proof.
	\end{proof}
	
	\begin{lemma} \label{ZLEM:MLEstmt_phi3}
		Assume the hypotheses (\ref{ZHYP:SobWt}), (\ref{ZHYP:nablaB}).
		Assume also that $B$ satisfies (\ref{ZEQN:Impose_B}).
		Let $T\in(0,1]$.
		Then
		\[
			\left\|\psi_1\psi_2\psi_3\right\|_{\mathrm{D}U^2_BH^{\mathfrak{m}}[T]}
		\le
			C(s)\left(1+M\right)^2T^{\frac{1}{2}}
			\sum_{\ell=1}^3 \left\|\psi_\ell\right\|_{\Psi^{\mathfrak{m}}[T]} \prod_{\substack{k=1\\k\neq\ell}}^3 \left\|\psi_k\right\|_{\Psi^1[T]}
		\;.\]
	\end{lemma}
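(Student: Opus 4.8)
The plan is to argue by duality and a dyadic decomposition, exactly as in the preceding multilinear lemmas. By the duality principle, Lemma~\ref{ZLEM:UpVpDualityB}, it suffices to bound
\[
	\left|\int_0^T\int_{\mathbb{R}^2} \overline{\psi_0}\,\psi_1\psi_2\psi_3\,\mathrm{d}x\,\mathrm{d}t\right|
\]
by $C(s)(1+M)^2 T^{1/2}\|\psi_0\|_{V^2_BH^{\mathfrak{m}^{-1}}[T]}\sum_{\ell=1}^3\|\psi_\ell\|_{\Psi^{\mathfrak{m}}[T]}\prod_{k\ne\ell}\|\psi_k\|_{\Psi^1[T]}$, uniformly over $\psi_0\in V^2_BH^{\mathfrak{m}^{-1}}[T]$. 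Since $\mathfrak{m}^{-1}$ is a Sobolev weight with $[\mathfrak{m}^{-1}]=[\mathfrak{m}]\le s+\frac18$, Lemma~\ref{ZLEM:OldLemma4.9Restated} applied to the weight $\mathfrak{m}^{-1}$ gives $\|\psi_0\|_{L^\infty_tH^{\mathfrak{m}^{-1}}[T]}\le\|\psi_0\|_{\Psi^{\mathfrak{m}^{-1}}[T]}\le C(s)(1+M)^2\|\psi_0\|_{V^2_BH^{\mathfrak{m}^{-1}}[T]}$ --- this is the only place where hypothesis~(\ref{ZEQN:Impose_B}) is used --- so it is enough to prove the displayed estimate with $\|\psi_0\|_{L^\infty_tH^{\mathfrak{m}^{-1}}[T]}$ in place of $(1+M)^2\|\psi_0\|_{V^2_BH^{\mathfrak{m}^{-1}}[T]}$.

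I would then Littlewood--Paley decompose all four factors. The spacetime integral of $\mathrm{P}_\nu\overline{\psi_0}\cdot\mathrm{P}_{\lambda_1}\psi_1\cdot\mathrm{P}_{\lambda_2}\psi_2\cdot\mathrm{P}_{\lambda_3}\psi_3$ vanishes unless the largest of $\nu,\lambda_1,\lambda_2,\lambda_3$ is comparable to the second largest; using the symmetry of the claimed inequality under permutations of $\psi_1,\psi_2,\psi_3$, this reduces matters to two configurations: (A) the top frequency is attained by $\psi_0$ together with $\psi_1$, with $\psi_2,\psi_3$ at comparable-or-smaller frequency; and (B) the top frequency $\lambda$ is attained by $\psi_1$ together with $\psi_2$, while $\psi_0$ (at some $\nu\lesssim\lambda$) and $\psi_3$ have comparable-or-smaller frequency. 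In configuration~(A), working pointwise in $t$, I would place $\mathrm{P}_\nu\psi_0$ and $\mathrm{P}_{\approx\nu}\psi_1$ in $L^2_x$ and the two low factors in $L^\infty_x$; a Cauchy--Schwarz in $\nu$ (the $\mathfrak{m}$-weights being reciprocal) bounds $\sum_\nu\|\mathrm{P}_\nu\psi_0(t)\|_{L^2_x}\|\mathrm{P}_{\approx\nu}\psi_1(t)\|_{L^2_x}$ by $C(s)\|\psi_0(t)\|_{H^{\mathfrak{m}^{-1}}}\|\psi_1(t)\|_{H^{\mathfrak{m}}}$, while $\sup_\nu\|\mathrm{P}_{\lesssim\nu}\psi_j(t)\|_{L^\infty_x}\le\sum_\lambda\|\mathrm{P}_\lambda\psi_j(t)\|_{L^\infty_x}$ has $L^2_t[T]$-norm $\lesssim T^{1/4}\sum_\lambda\|\mathrm{P}_\lambda\psi_j\|_{L^4_tL^\infty_x[T]}\lesssim C(s)T^{1/4}\|\psi_j\|_{\Psi^1[T]}$ by Lemma~\ref{ZLEM:DispEstmt_LinftyCtrl} and the geometric convergence of $\sum_\lambda\lambda^{-1/4}$. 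Integrating in $t$ (with one further Cauchy--Schwarz applied to $\psi_2,\psi_3$) then produces the factor $T^{1/2}$ and the bound $C(s)T^{1/2}\|\psi_0\|_{L^\infty_tH^{\mathfrak{m}^{-1}}}\|\psi_1\|_{\Psi^{\mathfrak{m}}}\|\psi_2\|_{\Psi^1}\|\psi_3\|_{\Psi^1}$, as needed.

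In configuration~(B), for each fixed top frequency $\lambda$ I would place the two high factors $\mathrm{P}_\lambda\psi_1$ and $\mathrm{P}_{\approx\lambda}\psi_2$ in $L^4_{t,x}$ (putting the $\mathfrak{m}$-weight on $\psi_1$), place $\mathrm{P}_\nu\psi_0$ in $L^2_x$, and bound $\mathrm{P}_{\lesssim\lambda}\psi_3$ in $L^\infty_x$ by Bernstein, which costs only $(1+\log\lambda)^{1/2}\|\psi_3(t)\|_{H^1}$. The definitions of $\Psi^{\mathfrak{m}}$ and $\Psi^1$ give $\|\mathrm{P}_\lambda\psi_1\|_{L^4_{t,x}}\le\lambda^{1/4}\mathfrak{m}(\lambda)^{-1}\|\psi_1\|_{\Psi^{\mathfrak{m}}}$ and $\|\mathrm{P}_{\approx\lambda}\psi_2\|_{L^4_{t,x}}\lesssim\lambda^{-3/4}\|\psi_2\|_{\Psi^1}$, and a Cauchy--Schwarz over $\nu$ together with hypothesis~(\ref{ZHYP:SobWt}) gives $\sum_{\nu\lesssim\lambda}\|\mathrm{P}_\nu\psi_0(t)\|_{L^2_x}\lesssim\mathfrak{m}(\lambda)\|\psi_0(t)\|_{H^{\mathfrak{m}^{-1}}}$. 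The time-H\"older split ($L^\infty_t$ for $\psi_0$ and $\psi_3$, $L^4_t$ for $\psi_1$ and $\psi_2$) again yields $T^{1/2}$. Collecting the powers of $\lambda$, the residual factor is $\mathfrak{m}(\lambda)\cdot\lambda^{1/4}\mathfrak{m}(\lambda)^{-1}\cdot\lambda^{-3/4}\cdot(1+\log\lambda)^{1/2}=\lambda^{-1/2}(1+\log\lambda)^{1/2}$, which is summable over dyadic $\lambda\ge1$, so this configuration also obeys the desired bound. Restoring the permutation symmetry among $\psi_1,\psi_2,\psi_3$ then completes the argument.

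The main obstacle is configuration~(B), the $\mathrm{high}\times\mathrm{high}\rightarrow\mathrm{low}$ interaction: summability over frequencies hinges on the factor $\mathfrak{m}(\lambda)$ gained by summing the low frequencies of $\psi_0$ being exactly cancelled by the extra $\lambda^{-1}$ furnished by the $\Psi^1$ bound on the second high factor, leaving the net summable power $\lambda^{-1/2}$, and on the Bernstein loss incurred on the low factor $\psi_3$ being only logarithmic. A secondary point needing care is the extraction of the power $T^{1/2}$: in both configurations this forces one to place two of the four factors in $L^\infty_t$ and only two in $L^4_t$ (or $L^4_{t,x}$), rather than the naive choice of $L^4_t$ for each of $\psi_1,\psi_2,\psi_3$.
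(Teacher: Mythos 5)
Your proof is correct and follows essentially the same route as the paper's: duality via Lemma \ref{ZLEM:UpVpDualityB}, the Littlewood--Paley trichotomy reducing to the same two frequency configurations, and H\"older combined with the $\Psi^{\mathfrak{m}}$/$\Psi^1$ bounds, with your configuration (A) matching the paper's verbatim. The only deviation is in the high$\times$high$\rightarrow$low case, where the paper places $\mathrm{P}_\nu\psi_0$ and $\psi_3$ in $L^4_tL^\infty_x$ and the two high factors in $L^\infty_tL^2_x$ (so no logarithm appears), whereas you put the high pair in $L^4_{t,x}$ and absorb a harmless $(1+\log\lambda)^{1/2}$ Bernstein loss on $\mathrm{P}_{\lesssim\lambda}\psi_3$; both splits close, and your bookkeeping of the $T^{\frac{1}{2}}$ factor and of the frequency sums is accurate.
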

	\begin{proof}
		By duality, it suffices to prove the estimate
		\begin{equation} \label{ZEQN:MLEstmt_phi3_EQN11}
			\left|\int_0^T\int_{\mathbb{R}^2}
				\overline{\psi_0}\psi_1\psi_2\psi_3\,\mathrm{d}x\,\mathrm{d}t\right|
			\le
			C(s)\left(1+M\right)^2T^{\frac{1}{2}}
			\left\|\psi_0\right\|_{V^2_BH^{\mathfrak{m}^{-1}}[T]}
			\sum_{\ell=1}^3 \left\|\psi_\ell\right\|_{\Psi^{\mathfrak{m}}[T]} \prod_{\substack{k=1\\k\neq\ell}}^3 \left\|\psi_k\right\|_{\Psi^1[T]}
		\;.\end{equation}
		By the Littlewood-Paley trichotomy and symmetry, (\ref{ZEQN:MLEstmt_phi3_EQN11}) follows from the two estimates
		\begin{equation} \label{ZEQN:MLEstmt_phi3_EQN12}
			\sum_\nu \left|\int_0^T\int_{\mathbb{R}^2}
				\mathrm{P}_\nu\overline{\psi_0}\,\mathrm{P}_{\approx\nu}\psi_1
				\,\mathrm{P}_{\lesssim\nu}\psi_2\,\mathrm{P}_{\lesssim\nu}\psi_3
				\,\mathrm{d}x\,\mathrm{d}t\right|
			\le
			\mbox{RHS(\ref{ZEQN:MLEstmt_phi3_EQN11})}
		\end{equation}
		and
		\begin{equation} \label{ZEQN:MLEstmt_phi3_EQN13}
			\sum_{\lambda,\nu\;:\;\lambda\gg\nu} \left|\int_0^T\int_{\mathbb{R}^2}
				\mathrm{P}_\nu\overline{\psi_0}\,\mathrm{P}_\lambda\psi_1
				\,\mathrm{P}_{\approx\lambda}\psi_2\,\mathrm{P}_{\lesssim\lambda}\psi_3
				\,\mathrm{d}x\,\mathrm{d}t\right|
			\le
			\mbox{RHS(\ref{ZEQN:MLEstmt_phi3_EQN11})}
		\;.\end{equation}
		
		Using Lemma \ref{ZLEM:DispEstmt_LinftyCtrl}, we easily obtain (\ref{ZEQN:MLEstmt_phi3_EQN12}) as follows,
		\begin{equation*}\begin{split}
			\mbox{LHS(\ref{ZEQN:MLEstmt_phi3_EQN12})}
		\lesssim\;&
			\int_0^T \left(\sum_\nu \left\|\mathrm{P}_\nu\psi_0(t)\right\|_{L^2_x} \left\|\mathrm{P}_{\approx\nu}\psi_1(t)\right\|_{L^2_x}\right)
			\left\|\psi_2(t)\right\|_{L^\infty_x} \left\|\psi_3(t)\right\|_{L^\infty_x}\mathrm{d}t
		\\\le\;&
			C(s)T^{\frac{1}{2}}
			\left\|\psi_0\right\|_{L^\infty_tH^{\mathfrak{m}^{-1}}[T]}
			\left\|\psi_1\right\|_{L^\infty_tH^{\mathfrak{m}}[T]}
			\left\|\psi_2\right\|_{L^4_tL^\infty_x[T]}
			\left\|\psi_3\right\|_{L^4_tL^\infty_x[T]}
		\\\le\;&
			C(s)T^{\frac{1}{2}}
			\left\|\psi_0\right\|_{V^2_BH^{\mathfrak{m}^{-1}}[T]}
			\left\|\psi_1\right\|_{\Psi^{\mathfrak{m}}[T]}
			\left\|\psi_2\right\|_{\Psi^1[T]}
			\left\|\psi_3\right\|_{\Psi^1[T]}
		\;.\end{split}\end{equation*}
		
		We now turn to the proof of (\ref{ZEQN:MLEstmt_phi3_EQN13}).
		Using Lemma \ref{ZLEM:OldLemma4.9}, a typical summand on the left-hand side of (\ref{ZEQN:MLEstmt_phi3_EQN13}) is controlled by
		\begin{equation*}\begin{split}
			C(s)T^{\frac{1}{2}}
		&
			\left\|\mathrm{P}_\nu\psi_0\right\|_{L^4_tL^\infty_x[T]}
			\left\|\mathrm{P}_\lambda\psi_1\right\|_{L^\infty_tL^2_x[T]}
			\left\|\mathrm{P}_{\approx\lambda}\psi_2\right\|_{L^\infty_tL^2_x[T]}
			\left\|\psi_3\right\|_{L^4_tL^\infty_x[T]}
		\\\le\;&
			C(s)T^{\frac{1}{2}}\left(1+M\right)^2
			\nu^{\frac{3}{4}}\mathfrak{m}(\nu)
			\lambda^{-1}\mathfrak{m}(\lambda)^{-1}
			\left\|\psi_0\right\|_{V^2_BH^{\mathfrak{m}^{-1}}[T]}
			\left\|\psi_1\right\|_{\Psi^{\mathfrak{m}}[T]}
			\left\|\psi_2\right\|_{\Psi^1[T]}
			\left\|\psi_3\right\|_{\Psi^1[T]}
		\;.\end{split}\end{equation*}
		Summing up over $\{\lambda\gg\nu\}$, we obtain (\ref{ZEQN:MLEstmt_phi3_EQN13}) as required.
	\end{proof}
	
\subsection{Proof of Theorem \ref{ZTHM:Sect5MainThm}}
	Let $T\in(0,1]$ be fixed later. Define $\varSigma:\mathfrak{E}_{M,T}\rightarrow U^2_BH^{\mathfrak{m}}[T]$ by
	\begin{equation*}\begin{split}
		\varSigma(\psi)(t)
	:=\;&
		\mathfrak{S}_B(t,0)\psi^{\mathrm{in}} + \int_0^t \mathfrak{S}_B(t,t')\bigg[
			\mathcal{Q}\left[\overline{\psi},\psi,\psi\right](t')
			+ \left(\mathcal{N}^2_0\left[\overline{\psi},\psi\right]\psi\right)(t')
	\\&
		+ \left(\mathcal{N}^4_t\left[\overline{\psi},\psi,\overline{\psi},\psi\right]\psi\right)(t')
		+ \left(\mathcal{N}^4_x\left[\overline{\psi},\psi,\overline{\psi},\psi\right]\psi\right)(t')
		- \mathrm{i}\kappa\left(|\psi|^2\psi\right)(t')
		\bigg]\,\mathrm{d}t'
	\;.\end{split}\end{equation*}
	Our goal is to show that $\varSigma$ defines a contraction map $\mathfrak{E}_{M,T}\rightarrow\mathfrak{E}_{M,T}$.
	
	Suppose $\psi_1\in\mathfrak{E}_{M,T}$.
	By Lemma \ref{ZLEM:OldLemma4.9Restated}, we have $\|\psi_1\|_{\Psi^{\mathfrak{m}}[T]}\le C(s)(1+M)^2M$ for every $\psi\in\mathfrak{E}_{M,T}$.
	Now, applying Lemmas \ref{ZLEM:MLEstmt_Q}, \ref{ZLEM:MLEstmt_N2tphi}, \ref{ZLEM:MLEstmt_N4tphi}, \ref{ZLEM:MLEstmt_N4xphi}, \ref{ZLEM:MLEstmt_phi3},
	replacing the $\Psi^1[T]$ norms by $\Psi^{\mathfrak{m}}[T]$ norms, we have
	\begin{equation*}\begin{split}
		\left\|\mathcal{Q}\left[\overline{\psi_1},\psi_1,\psi_1\right]\right\|_{\mathrm{D}U^2_BH^{\mathfrak{m}}[T]}
	\le\;&
		C(s)T^{\frac{1}{2}}\left(1+M\right)^6M^3
	\;,\\
		\left\|\mathcal{N}^2_0\left[\overline{\psi_1},\psi_1\right]\psi_1\right\|_{\mathrm{D}U^2_BH^{\mathfrak{m}}[T]}
	\le\;&
		C(s)T^{\frac{1}{2}}\left(1+M\right)^8M^3
	\;,\\
		\left\|\mathcal{N}^4_t\left[\overline{\psi_1},\psi_1\,\overline{\psi_1},\psi_1\right]\psi_1\right\|_{\mathrm{D}U^2_BH^{\mathfrak{m}}[T]}
	\le\;&
		C(s)T^{\frac{1}{2}}\left(1+M\right)^{10}M^5
	\;,\\
		\left\|\mathcal{N}^4_x\left[\overline{\psi_1},\psi_1\,\overline{\psi_1},\psi_1\right]\psi_1\right\|_{\mathrm{D}U^2_BH^{\mathfrak{m}}[T]}
	\le\;&
		C(s)T^{\frac{1}{2}}\left(1+M\right)^{10}M^5
	\;,\\
		\left\|\left|\psi_1\right|^2\psi_1\right\|_{\mathrm{D}U^2_BH^{\mathfrak{m}}[T]}
	\le\;&
		C(s)T^{\frac{1}{2}}\left(1+M\right)^8M^5
	\;.\end{split}\end{equation*}
	Summing these up, we obtain
	\[
		\left\|\varSigma(\psi_1)\right\|_{U^2_BH^{\mathfrak{m}}[T]}
	\le
		M + C(s)T^{\frac{1}{2}}\left(1+M\right)^{14}M
	\;.\]
	If $\psi_2$ is another element of $\mathfrak{E}_{M,T}$, then a similar argument shows
	\[
		\left\|\varSigma(\psi_1)-\varSigma(\psi_2)\right\|_{U^2_BH^{\mathfrak{m}}[T]}
	\le
		C(s)T^{\frac{1}{2}}\left(1+M\right)^{14}\left\|\psi_1-\psi_2\right\|_{U^2_BH^{\mathfrak{m}}[T]}
	\;.\]
	Hence, we see that by choosing $T=\delta_1(s)(1+M)^{-28}$ for sufficiently small $\delta_1=\delta_1(s)\in(0,1]$,
	we could ensure that $\varSigma$ indeed defines a contraction map $\mathfrak{E}_{M,T}\rightarrow\mathfrak{E}_{M,T}$.

	The unique fixed point $\psi\in\mathfrak{E}_{M,T}$ is then the desired solution to (\ref{ZEQN:MoreGeneralIVP}).
	Moreover, by (\ref{ZEQN:K1_defn}),
	\[
		\left\|\mathcal{N}^2_x\left[\overline{\psi},\psi\right]\right\|_{L^\infty_{t,x}[1]} \le 2K_1M^2
	\;.\]
	Thus, letting $\varGamma$ be the extension by zero of $-\frac{1}{2}\mathcal{N}^2_x[\overline{\psi},\psi]$ to $[0,1)$,
	we have that $\varGamma$ satisfies (\ref{ZEQN:Impose_B}).

	It remains to check that $\varGamma$ verifies the hypothesis (\ref{ZHYP:nablaB}),
	provided we choose $\delta_1(s)$ smaller if necessary.
	For this, we need the following estimate.

	\begin{lemma} \label{ZLEM:DispEstmt_nablaN2x}
		Let $T\in(0,1]$. Then
		\[
			\left\|\nabla\mathcal{N}^2_x\left[\psi_1,\psi_2\right]\right\|_{L^1_tL^\infty_x[T]}
			\lesssim
			T^{\frac{1}{2}}
			\left\|\psi_1\right\|_{\Psi^1[T]}
			\left\|\psi_2\right\|_{\Psi^1[T]}
		\;.\]
	\end{lemma}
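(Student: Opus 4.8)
The plan is to exploit that $\nabla\mathcal{N}^2_x$ is, up to composition of Riesz transforms, a zeroth-order operator: since $\mathcal{N}^2_i[u_1,u_2]=\epsilon_{ij}\frac{\partial_j}{(-\triangle)}(u_1u_2)$, the operator $\partial_k\mathcal{N}^2_i$ has Fourier symbol $-\epsilon_{ij}\xi_j\xi_k/|\xi|^2$, which is bounded by $1$. Because such operators are not bounded on $L^\infty_x$, I would decompose the output into inhomogeneous Littlewood--Paley pieces, estimate $\|\mathrm{P}_\mu\nabla\mathcal{N}^2_x[\psi_1,\psi_2]\|_{L^\infty_x}$ separately for $\mu=1$ and for $\mu\ge 2$, and sum at the end. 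In both cases the two factors $\psi_1,\psi_2$ will be placed (after integration in time) into $L^4_tL^\infty_x[T]$, which is controlled by $\Psi^1[T]$ through Lemma \ref{ZLEM:DispEstmt_LinftyCtrl}; the additional $T^{1/2}$ is produced by a H\"older step in the time variable that converts $L^1_tL^\infty_x$ into the product of two $L^4_tL^\infty_x$ factors.

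For the low-frequency piece $\mu=1$, the symbol $m_1(\xi):=\varphi_1(\xi)\cdot(-\epsilon_{ij}\xi_j\xi_k/|\xi|^2)$ is bounded and compactly supported, hence $m_1\in L^2$ and $\|\widecheck{m_1}\|_{L^2_x}\lesssim 1$, so $\|\mathrm{P}_1\nabla\mathcal{N}^2_x[\psi_1,\psi_2](t)\|_{L^\infty_x}\lesssim\|\widecheck{m_1}\|_{L^2_x}\|(\psi_1\psi_2)(t)\|_{L^2_x}\le\|\psi_1(t)\|_{L^\infty_x}\|\psi_2(t)\|_{L^2_x}$. Integrating in time by H\"older and using $\|\psi_2\|_{L^\infty_tL^2_x[T]}\le\|\psi_2\|_{\Psi^1[T]}$ together with $\|\psi_1\|_{L^4_tL^\infty_x[T]}\lesssim\|\psi_1\|_{\Psi^1[T]}$ (Lemma \ref{ZLEM:DispEstmt_LinftyCtrl}) yields a bound $\lesssim T^{3/4}\|\psi_1\|_{\Psi^1[T]}\|\psi_2\|_{\Psi^1[T]}\le T^{1/2}\|\psi_1\|_{\Psi^1[T]}\|\psi_2\|_{\Psi^1[T]}$, since $T\le 1$.

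For $\mu\ge 2$, the key observation is that $\mathrm{P}_\mu\nabla\mathcal{N}^2_x$ is convolution with a kernel whose $L^1_x$ norm is $O(1)$ \emph{uniformly in} $\mu$: its symbol is $\Phi(\xi/\mu)$ for the fixed function $\Phi(\eta)=(\varphi_1(\eta)-\varphi_1(2\eta))(-\epsilon_{ij}\eta_j\eta_k/|\eta|^2)$, which is smooth and compactly supported in the annulus $\{\tfrac12\le|\eta|\le 2\}$, so $\widecheck{\Phi}$ is Schwartz and rescaling preserves its $L^1$ norm. Performing the paraproduct decomposition $\psi_1\psi_2=\sum_\lambda\big(\mathrm{P}_\lambda\psi_1\,\mathrm{P}_{\le\lambda}\psi_2+\mathrm{P}_{<\lambda}\psi_1\,\mathrm{P}_\lambda\psi_2\big)$, only the scales $\lambda\gtrsim\mu$ feed into $\mathrm{P}_\mu$, and then $\|\mathrm{P}_\mu\nabla\mathcal{N}^2_x[\mathrm{P}_\lambda\psi_1,\mathrm{P}_{\le\lambda}\psi_2](t)\|_{L^\infty_x}\lesssim\|\mathrm{P}_\lambda\psi_1(t)\|_{L^\infty_x}\|\mathrm{P}_{\le\lambda}\psi_2(t)\|_{L^\infty_x}$. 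Integrating in time, applying H\"older, and invoking Lemma \ref{ZLEM:DispEstmt_LinftyCtrl} with the Sobolev weight $\mathfrak{m}(\lambda)=\lambda$ (for which $\|\mathrm{P}_\lambda\psi\|_{L^4_tL^\infty_x[T]}\lesssim\lambda^{-1/4}\|\psi\|_{\Psi^1[T]}$) bounds such a term by $\lesssim T^{1/2}\lambda^{-1/4}\|\psi_1\|_{\Psi^1[T]}\|\psi_2\|_{\Psi^1[T]}$; the other paraproduct term is handled identically using the symmetry of $\mathcal{N}^2_x$. Summing over the dyadic $\lambda\gtrsim\mu$ is a geometric series giving $\lesssim T^{1/2}\mu^{-1/4}\|\psi_1\|_{\Psi^1[T]}\|\psi_2\|_{\Psi^1[T]}$, and summing this over the dyadic $\mu\ge 2$ is again geometric, yielding the claimed estimate after adding the $\mu=1$ contribution.

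The one point that requires care, and is the main obstacle, is ensuring the frequency summations actually converge. A cruder bound on $\|\mathrm{P}_\mu\nabla\mathcal{N}^2_x[\psi_1,\psi_2]\|_{L^\infty_x}$ that passes through an $L^2_x$ bound on the localized kernel (hence a Bernstein loss of $\mu$) or that does not extract the $\lambda^{-1/4}$ gain of Lemma \ref{ZLEM:DispEstmt_LinftyCtrl} would leave a sum like $\sum_\mu\mu^{1/4}$ that diverges. It is therefore essential to keep the high-frequency factor in $L^4_tL^\infty_x[T]$ and to use the uniform $L^1_x$-boundedness of the Littlewood--Paley-localized Calder\'on--Zygmund operator; with those two ingredients the dyadic $\mu$-sum is geometric and the proof closes.
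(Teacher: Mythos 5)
Your argument is correct and follows essentially the same route as the paper: dyadic decomposition of the output frequency, a paraproduct decomposition of $\psi_1\psi_2$ with only $\lambda\gtrsim\mu$ contributing, placement of the high-frequency factor in $L^4$ via the $\Psi^1$ norm, H\"older in time for the $T^{\frac{1}{2}}$, and geometric summation in $\lambda$ and $\mu$. The only (cosmetic) difference is in how the frequency-localized zeroth-order multiplier is bounded: the paper combines Bernstein's inequality with the $L^4_x$-boundedness of the Riesz transforms, whereas you invoke the uniform $L^1_x$ (resp.\ $L^2_x$ for the low-frequency block) kernel bounds directly; both yield the same $\mu^{-\frac{1}{4}}$ decay.
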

	\begin{proof}
		Recalling that $\mathfrak{R}$ denotes the Riesz transform, observe that $\nabla\mathcal{N}^2_x[v_1,v_2]$ and $\mathfrak{R}^2(v_1v_2)$ have the same components.
		By Bernstein's inequality, the boundedness of the Riesz transform on $L^4_x$, and Lemma \ref{ZLEM:DispEstmt_LinftyCtrl}, we have
		\begin{equation*}\begin{split}
			\left\|\nabla\mathcal{N}^2_x\left[\mathrm{P}_\lambda\psi_1,\mathrm{P}_{\le\lambda}\psi_2\right]\right\|_{L^1_tL^\infty_x[T]}
		\lesssim\;&
			T^{\frac{1}{2}}\mu^{\frac{1}{2}}
			\left\|\mathrm{P}_\lambda\psi_1\,\mathrm{P}_{\le\lambda}\psi_2\right\|_{L^2_tL^4_x[T]}
		\\\lesssim\;&
			T^{\frac{1}{2}}\mu^{\frac{1}{2}}
			\left\|\mathrm{P}_\lambda\psi_1\right\|_{L^4_{t,x}[T]}
			\left\|\psi_2\right\|_{L^4_tL^\infty_x[T]}
		\\\lesssim\;&
			T^{\frac{1}{2}}\mu^{\frac{1}{2}}\lambda^{-\frac{3}{4}}
			\left\|\psi_1\right\|_{\Psi^1[T]}
			\left\|\psi_2\right\|_{\Psi^1[T]}
		\;.\end{split}\end{equation*}
		Summing up over $\lambda\gtrsim\mu$ and noting the symmetry of $\mathcal{N}^2_x[v_1,v_2]$ in $v_1$ and $v_2$,
		we obtain the desired estimate.
	\end{proof}

	By Lemma \ref{ZLEM:DispEstmt_nablaN2x}, noting that $\|\psi\|_{\Psi^s[T]} \le C(s)(1+M)^2M$, we have
	\[
		\left\|\varGamma\right\|_{L^1_tL^\infty_x[T]}
		\le
		C(s)T^{\frac{1}{2}}\mu^{\frac{1}{2}}\left(1+M\right)^4M^2
	\;.\]
	Hence, indeed, by choosing $\delta_1(s)$ smaller if necessary and setting $T=\delta_1(s)(1+M)^{-28}$ as before,
	we can ensure that the right-hand side is $\le 1$ and, consequently, (\ref{ZHYP:nablaB}) holds for $\varGamma$.

	The proof of Theorem \ref{ZTHM:Sect5MainThm} is complete.

\section{Convergence of the Iteration Scheme} \label{ZSECT:CnvgItrtnScheme}

	Using Theorem \ref{ZTHM:Sect5MainThm},
	we may inductively construct the iterates $\phi^{[n]}$ of the iteration scheme (\ref{ZEQN:ItrtnSchemeSuccinct}),
	initialised with $A_x^{[0]}=0$.
	For the proof of Theorem \ref{ZTHM:MainThm}, it remains
	to show that the iterates $\phi^{[n]}$ converge to a solution $\phi$ of the Chern-Simons-Schr\"{o}dinger system in the Coulomb gauge,
	(\ref{ZEQN:CSSCoulSuccinct2}),
	and to verify the $H^s$ continuity of the solution map and the weak Lipschitz estimate (\ref{ZEQN:WkLipBd}).
	
	The technical core of both tasks is that of estimating $\|\psi-\psi'\|_{L^\infty_tH^{s-1}[T]}$
	where both $\psi,\psi'$ solve (\ref{ZEQN:MoreGeneralIVP}) with possibly different admissible forms $B,B'$ respectively,
	and possibly different initial data.
	This is provided for by the following result.
	
	\begin{theorem}\label{ZTHM:Sect6MainThm}
		Given a small $\varepsilon\in(0,1]$,
		there exists $\delta_2=\delta(s,\varepsilon)\in(0,\delta_1(s)]$, where $\delta_1(s)$ is as in Theorem \ref{ZTHM:Sect5MainThm},
		such that the following is true.
	
		Let $M>0$ and suppose $B,B',B^\dagger$ are admissible forms satisfying the hypothesis (\ref{ZHYP:nablaB}) and (\ref{ZEQN:Impose_B}).
		Assume that $\mathfrak{m}(\lambda)=\lambda^s$, and $T=\delta_2(s,\varepsilon)(1+M)^{-28}$.
		Let $\psi\in U^2_BH^s[T]$ and $\psi'\in U^2_{B'}H^s[T]$ are solutions given by Theorem \ref{ZTHM:Sect5MainThm} to (\ref{ZEQN:MoreGeneralIVP}),
		with admissible forms $B,B'$ respectively,
		such that $\|\psi(0)\|_{H^s}\le M$, $\|\psi'(0)\|_{H^s}\le M$.
		Then
		\begin{equation}\label{ZEQN:Sect6MainThm_EQN01}
			\left\|\psi-\psi'\right\|_{U^2_{B^\dagger} H^{s-1}[T]}
		\le
			\varepsilon\left(
				\left\|B-B^\dagger\right\|_{L^2_tL^\infty_x[T]}
				+
				\left\|B'-B^\dagger\right\|_{L^2_tL^\infty_x[T]}
			\right)
			+
			C(s)\left\|\psi(0)-\psi'(0)\right\|_{H^{s-1}}
		\;.\end{equation}
	\end{theorem}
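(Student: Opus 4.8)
The plan is to set $w:=\psi-\psi'$ and run a difference estimate for $\|w\|_{U^2_{B^\dagger}H^{s-1}[T]}$ against the single modified principal operator $(\partial_t-\mathrm{i}\triangle+\mathfrak{P}_{B^\dagger})$. Because $\psi$ solves (\ref{ZEQN:MoreGeneralIVP}) with form $B$ and $\psi'$ with form $B'$, and $\mathfrak{P}$ is linear in its form, subtracting the two equations (after adding and removing $\mathfrak{P}_{B^\dagger}$) yields
\[
	\left(\partial_t-\mathrm{i}\triangle+\mathfrak{P}_{B^\dagger}\right)w
	=
	\mathfrak{P}_{B^\dagger-B}\psi+\mathfrak{P}_{B'-B^\dagger}\psi'+\left(\mathcal{N}(\psi)-\mathcal{N}(\psi')\right),
\]
where $\mathcal{N}(\cdot)$ abbreviates the whole right-hand side of (\ref{ZEQN:MoreGeneralIVP}). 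Both $\psi,\psi'$ — hence $w$ — lie in $L^\infty_tH^s[T]\subset L^\infty_tH^{s-1}[T]$, and, by Proposition \ref{ZPROP:ChangeOfAdmForm} applied with $\mathfrak{m}(\lambda)=\lambda^s$ (so $\lambda^{-1}\mathfrak{m}=\lambda^{s-1}$), they lie in $U^2_{B^\dagger}H^{s-1}[T]$; thus $\|w\|_{U^2_{B^\dagger}H^{s-1}[T]}$ is a priori finite and may legitimately be absorbed later. Applying the Duhamel bound, Lemma \ref{ZLEM:DuhamelBd}, with weight $\mathfrak{m}(\lambda)=\lambda^{s-1}$ and admissible form $B^\dagger$, reduces the theorem to controlling the $\mathrm{D}U^2_{B^\dagger}H^{s-1}[T]$ norms of the three source terms, against $\|\psi(0)-\psi'(0)\|_{H^{s-1}}$ and against a small multiple of $\|w\|_{U^2_{B^\dagger}H^{s-1}[T]}$.

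For the two paradifferential error terms I would use the elementary bound $\|\mathfrak{P}_bv(t)\|_{H^{s-1}}\lesssim\|b(t)\|_{L^\infty_x}\|v(t)\|_{H^s}$: the single derivative inside $\mathfrak{P}_b$ is compensated exactly by dropping one Sobolev order, while the low-frequency truncation of $b$ lets one extract $\|b(t)\|_{L^\infty_x}$. Since $\mathfrak{S}_{B^\dagger}$ is unitary on $L^2_x$ (Lemma \ref{ZLEM:NewPrOpL2Exstnce}) and bounded on $H^{s-1}$ with constant $\lesssim_s 1$ under (\ref{ZEQN:Impose_nablaB}) (Proposition \ref{ZPROP:NewPrOpHmLWP}), the atomic structure of $U^2$ gives the embedding $L^1_tH^{s-1}[T]\hookrightarrow\mathrm{D}U^2_{B^\dagger}H^{s-1}[T]$, whence, with Cauchy-Schwarz in time,
\[
	\left\|\mathfrak{P}_bv\right\|_{\mathrm{D}U^2_{B^\dagger}H^{s-1}[T]}
	\lesssim_s
	\left\|b\right\|_{L^1_tL^\infty_x[T]}\left\|v\right\|_{L^\infty_tH^s[T]}
	\le
	T^{\frac{1}{2}}\left\|b\right\|_{L^2_tL^\infty_x[T]}\left\|v\right\|_{L^\infty_tH^s[T]}.
\]
Taking $(b,v)=(B^\dagger-B,\psi)$ and $(B'-B^\dagger,\psi')$, using $\|\psi\|_{L^\infty_tH^s[T]},\|\psi'\|_{L^\infty_tH^s[T]}\le C(s)M$ (from (\ref{ZEQN:OldLemma4.9_EQN01}) and Theorem \ref{ZTHM:Sect5MainThm}) and $T=\delta_2(1+M)^{-28}$, so that $T^{1/2}M\lesssim\delta_2^{1/2}$, these two terms are bounded by $C(s)\delta_2^{1/2}\big(\|B-B^\dagger\|_{L^2_tL^\infty_x[T]}+\|B'-B^\dagger\|_{L^2_tL^\infty_x[T]}\big)$, which is $\le\varepsilon$ times that quantity once $\delta_2$ is chosen small.

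For the nonlinear difference $\mathcal{N}(\psi)-\mathcal{N}(\psi')$ I would expand each of $\mathcal{Q}$, $\mathcal{N}^2_0$, $\mathcal{N}^4_t$, $\mathcal{N}^4_x$, and the cubic power nonlinearity evaluated at $\psi$ minus the same at $\psi'$ as a telescoping sum of multilinear terms, each having exactly one argument equal to $w$ or $\overline w$ and the rest among $\psi,\overline\psi,\psi',\overline{\psi'}$. Each such term would be estimated in $\mathrm{D}U^2_{B^\dagger}H^{s-1}[T]$ by a \emph{difference} version of the relevant estimate of Section \ref{ZSECT:ConstructItrtnScheme}, in which the distinguished argument is measured in $\Psi^{s-1}[T]$ and the others in $\Psi^1[T]$; these are obtained by rerunning the proofs of Lemmas \ref{ZLEM:MLEstmt_Q}--\ref{ZLEM:MLEstmt_phi3} (Littlewood-Paley trichotomy together with the duality principle Lemma \ref{ZLEM:UpVpDualityB}) essentially verbatim, the only new point being to recheck summability of the dyadic sums when the $\Psi^{s-1}$-factor sits in the highest-frequency slot. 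This recheck is exactly where $s\ge1$ is needed; I expect the main obstacle to be the $\mathcal{N}^2_0$-nonlinearity with $w$ (or $\overline w$) in a high-frequency slot, since $\mathcal{N}^2_0[u_1,u_2]=(-\triangle)^{-1}(\nabla u_1\wedge\nabla u_2)$ carries a severe $\mathrm{high}\times\mathrm{high}\rightarrow\mathrm{low}$ interaction for which $s=1$ is critical. Granting these difference estimates, Lemma \ref{ZLEM:OldLemma4.9Restated} with $\mathfrak{m}(\lambda)=\lambda^{s-1}$ (it only asks $[\mathfrak{m}]\le C(s)$, not hypothesis (\ref{ZHYP:SobWt})) together with $U^2_{B^\dagger}H^{s-1}[T]\hookrightarrow V^2_{B^\dagger}H^{s-1}[T]$ yields $\|w\|_{\Psi^{s-1}[T]}\lesssim_s(1+M)^2\|w\|_{U^2_{B^\dagger}H^{s-1}[T]}$, while each surviving $\psi$- or $\psi'$-factor obeys $\|\cdot\|_{\Psi^1[T]}\lesssim_s(1+M)^2M$; tracking powers of $(1+M)$ exactly as in the proof of Theorem \ref{ZTHM:Sect5MainThm}, the sum of all these terms is $\le C(s)T^{1/2}(1+M)^{14}\|w\|_{U^2_{B^\dagger}H^{s-1}[T]}=C(s)\delta_2^{1/2}\|w\|_{U^2_{B^\dagger}H^{s-1}[T]}$.

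Combining the three estimates with the Duhamel inequality gives
\[
	\left\|w\right\|_{U^2_{B^\dagger}H^{s-1}[T]}
	\le
	C(s)\left\|\psi(0)-\psi'(0)\right\|_{H^{s-1}}
	+
	C(s)\delta_2^{\frac{1}{2}}\big(\left\|B-B^\dagger\right\|_{L^2_tL^\infty_x[T]}+\left\|B'-B^\dagger\right\|_{L^2_tL^\infty_x[T]}\big)
	+
	C(s)\delta_2^{\frac{1}{2}}\left\|w\right\|_{U^2_{B^\dagger}H^{s-1}[T]}.
\]
Choosing $\delta_2=\delta_2(s,\varepsilon)\in(0,\delta_1(s)]$ so small that $C(s)\delta_2^{1/2}\le\min\left(\tfrac{1}{2},\tfrac{\varepsilon}{4}\right)$, one absorbs the last term into the left-hand side and relabels constants to reach (\ref{ZEQN:Sect6MainThm_EQN01}). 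The $L^\infty_tH^{s-1}[T]$ difference estimate anticipated at the start of the section then follows from $U^2_{B^\dagger}H^{s-1}[T]\hookrightarrow L^\infty_tH^{s-1}[T]$ (Lemma \ref{ZLEM:OldLemma4.9}).
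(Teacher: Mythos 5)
Your proposal follows the paper's own proof essentially step for step: the same difference equation for $w=\psi-\psi'$ relative to the single operator $(\partial_t-\mathrm{i}\triangle+\mathfrak{P}_{B^\dagger})$, the same splitting $\mathfrak{P}_{B^\dagger-B}\psi+\mathfrak{P}_{B'-B^\dagger}\psi'$, the appeal to Proposition \ref{ZPROP:ChangeOfAdmForm} for a priori finiteness, telescoped multilinear difference estimates proved by duality against $V^2_{B^\dagger}H^{-(s-1)}[T]$, and the final absorption via smallness of $\delta_2$. The one substantive inaccuracy is the normalization you assign to the auxiliary difference estimates: with the distinguished factor measured in $\Psi^{s-1}[T]$, the remaining factors must be measured in $\Psi^{s}[T]$, not $\Psi^{1}[T]$, which is exactly what the paper's Lemmas \ref{ZLEM:DiffEstmt_N2x}--\ref{ZLEM:DiffEstmt_N4t} do. For example, in the low$\times$high piece of $\mathcal{N}^2_0[\psi_1,\omega]$ one gets
\begin{equation*}
	\left\|\mathrm{P}_\mu\mathcal{N}^2_0\left[\mathrm{P}_{\approx\mu}\psi_1,\mathrm{P}_{\lesssim\mu}\omega\right]\right\|_{L^1_x}
	\lesssim \mu^{-1}\cdot\mu\left\|\mathrm{P}_{\approx\mu}\psi_1\right\|_{L^2_x}\left\|\omega\right\|_{L^2_x}
	\lesssim \mu^{-1}\left\|\psi_1\right\|_{H^1}\left\|\omega\right\|_{H^{s-1}}
\end{equation*}
if only the $H^1$ norm of $\psi_1$ is used, in place of the $\mu^{-s}$ decay of (\ref{ZEQN:DiffEstmt_N2t_EQN02}); the subsequent dyadic sum over the regime $\mu\approx\nu\gg\lambda$ in the analogue of (\ref{ZEQN:DiffEstmt_N2tphi_EQN12}) then carries a factor $\nu^{s-\frac{5}{4}}$ and diverges once $s\ge\frac{5}{4}$. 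This is not a fatal gap — Theorem \ref{ZTHM:Sect5MainThm} together with Lemma \ref{ZLEM:OldLemma4.9Restated} gives $\|\psi\|_{\Psi^s[T]},\|\psi'\|_{\Psi^s[T]}\le C(s)(1+M)^3$, so placing the undistinguished factors in $\Psi^s$ costs nothing in the final power of $(1+M)$ — but the lemmas as you state them are false for general $s\ge 1$, and you should formulate them with $\Psi^s$ norms on the non-difference factors from the outset rather than hoping the $\Psi^1$ version survives the summability check you rightly flag as the delicate point.
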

	
	Note that Proposition \ref{ZPROP:ChangeOfAdmForm} already guarantees that $\psi,\psi'\in U^2_{B^\dagger} H^{s-1}[T]$.
	Thus, the left-hand side of (\ref{ZEQN:Sect6MainThm_EQN01}) is finite.

	The proof of Theorem \ref{ZTHM:Sect6MainThm} is straightforward but rather labourious.
	The rest of this section will be devoted to this proof.
	
	Explicitly, the difference equation for $\psi-\psi'$ can be written
	\begin{equation}\label{ZEQN:DifferenceEqn}\begin{split}
		\left(\partial_t-\mathrm{i}\triangle+\mathfrak{P}_{B^\dagger}\right)\left(\psi-\psi'\right)
	=\;&
		\mathfrak{P}_{B^\dagger-B}\psi + \mathfrak{P}_{B'-B^\dagger}\psi'
	\\&
		+ \left(\mathcal{Q}\left[\overline{\psi},\psi,\psi\right] - \mathcal{Q}\left[\overline{\psi'},\psi',\psi'\right]\right)
	\\&
		+ \left(\mathcal{N}^2_0\left[\overline{\psi},\psi\right]\psi - \mathcal{N}^2_0\left[\overline{\psi'},\psi'\right]\psi'\right)
	\\&
		+ \left(
			\mathcal{N}^4_t\left[\overline{\psi},\psi,\overline{\psi},\psi\right]\psi
			- \mathcal{N}^4_t\left[\overline{\psi'},\psi',\overline{\psi'},\psi'\right]\psi'
		\right)
	\\&
		+ \left(
			\mathcal{N}^4_x\left[\overline{\psi},\psi,\overline{\psi},\psi\right]\psi
			- \mathcal{N}^4_x\left[\overline{\psi'},\psi',\overline{\psi'},\psi'\right]\psi'
		\right)
	\\&
		- \mathrm{i}\kappa\left(\left|\psi\right|^2\psi - \left|\psi'\right|^2\psi'\right)
	\;.\end{split}\end{equation}
	The proof of Theorem \ref{ZTHM:Sect6MainThm} proceeds in the exact same manner as that of Theorem \ref{ZTHM:Sect5MainThm}.
	We estimate the $\mathrm{D}U^2_{B^\dagger}H^{s-1}[T]$ norm of each term of the right-hand side of (\ref{ZEQN:DifferenceEqn}),
	by testing against a $V^2_{B^\dagger}H^{-(s-1)}[T]$ function and invoking the duality principle of Lemma \ref{ZLEM:UpVpDualityB}.
	
\subsection{Difference estimates for $\mathcal{N}^2_0,\mathcal{N}^2_x,\mathcal{N}^4_t$}
	First we need the following preliminary estimates, which are analogous to those of Lemmas
	\ref{ZLEM:DispEstmt_N2x}, \ref{ZLEM:DispEstmt_N2t}, \ref{ZLEM:DispEstmt_N4t}.
	Eventually, the indeterminate function $\omega$ will be substituted with $\psi-\psi'$
	or its complex conjugate.

	\begin{lemma} \label{ZLEM:DiffEstmt_N2x}
		Let $T\in (0,1]$. Then
		\begin{equation} \label{ZEQN:DiffEstmt_N2x_EQN02}
			\left\|\mathrm{P}_\mu\mathcal{N}^2_x\left[\psi_1,\omega\right]\right\|_{L^2_tL^\infty_x[T]}
			\le
			C(s)\mu^{\frac{3}{4}-s}
			\left\|\psi_1\right\|_{\Psi^s[T]}
			\left\|\omega\right\|_{\Psi^{s-1}[T]}
		\;.\end{equation}
	\end{lemma}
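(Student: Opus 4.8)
The statement is a variant of Lemma \ref{ZLEM:DispEstmt_N2x} in which one of the two factors carries one fewer derivative, with the Sobolev weight taken to be $\mathfrak m(\lambda)=\lambda^s$. So the plan is simply to follow the proof of Lemma \ref{ZLEM:DispEstmt_N2x}, performing the paraproduct decomposition $\mathcal N^2_x[\psi_1,\omega]=\sum_\lambda(\mathcal N^2_x[\mathrm P_\lambda\psi_1,\mathrm P_{\le\lambda}\omega]+\mathcal N^2_x[\mathrm P_{<\lambda}\psi_1,\mathrm P_\lambda\omega])$ — or rather exploiting that $\mathcal N^2_x$ is symmetric, so it suffices to bound $\mathrm P_\mu\mathcal N^2_x[\mathrm P_\lambda\psi_1,\mathrm P_{\le\lambda}\omega]$ and sum over $\lambda\gtrsim\mu$ — and keeping careful track of where the missing derivative on $\omega$ must be absorbed. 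Let me sketch the two relevant frequency regimes.

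First, for $\mu=1$ I would use Bernstein and Hardy--Littlewood--Sobolev to put $\mathrm P_1\mathcal N^2_x[\psi_1,\omega]$ into $L^2_tL^\infty_x[T]$, bounding by $T^{1/2}\|\psi_1\omega\|_{L^\infty_tL^1_x[T]}\lesssim T^{1/2}\|\psi_1\|_{L^\infty_tL^2_x[T]}\|\omega\|_{L^\infty_tL^2_x[T]}\lesssim \|\psi_1\|_{\Psi^s[T]}\|\omega\|_{\Psi^{s-1}[T]}$ since $s\ge1$ so $H^s,H^{s-1}\hookrightarrow L^2_x$. For $\mu\ge2$, I would apply Bernstein to the multiplier $\mathrm P_\mu\mathcal N^2_x$ (which gains $\mu^{-1}$ as a Fourier multiplier on the piece frequency-localised near $\mu$) and then Hölder in $x$ with Lemma \ref{ZLEM:DispEstmt_LinftyCtrl}:
\[
\left\|\mathrm P_\mu\mathcal N^2_x\left[\mathrm P_\lambda\psi_1,\mathrm P_{\le\lambda}\omega\right]\right\|_{L^2_tL^\infty_x[T]}
\lesssim \mu^{-1}\left\|\mathrm P_\lambda\psi_1\right\|_{L^4_tL^\infty_x[T]}\left\|\omega\right\|_{L^4_tL^\infty_x[T]},
\]
but here one should instead keep the derivative count balanced by using $\|\omega\|_{L^4_tL^\infty_x[T]}\lesssim\|\omega\|_{\Psi^{s-1}[T]}$ only if $s-1\ge1$; for general $s\ge1$ one wants the cheaper bound $\|\mathrm P_{\le\lambda}\omega\|_{L^\infty_tL^2_x[T]}$ together with a Bernstein factor $\mu^{?}$. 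The clean way: bound $\|\mathrm P_\mu\mathcal N^2_x[\mathrm P_\lambda\psi_1,\mathrm P_{\le\lambda}\omega]\|_{L^2_tL^\infty_x[T]}\lesssim\mu^{-1/2}\|\mathrm P_\lambda\psi_1\|_{L^4_tL^\infty_x[T]}\|\mathrm P_{\le\lambda}\omega\|_{L^4_tL^2_x[T]}$, then use Lemma \ref{ZLEM:DispEstmt_LinftyCtrl} on the first factor ($\lesssim\lambda^{3/4-s}\|\psi_1\|_{\Psi^s[T]}$) and $\|\mathrm P_{\le\lambda}\omega\|_{L^4_{t,x}[T]}\lesssim\lambda^{1/4-(s-1)}\|\omega\|_{\Psi^{s-1}[T]}$ from the definition of $\Psi^{s-1}[T]$ (summing the $L^4_{t,x}$ pieces, which is where the $\lambda^{1/4}$ Bernstein factor enters relative to the $\mathfrak m(\mu)\mu^{-1/4}$ normalisation). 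Multiplying the powers of $\lambda$ and $\mu$ and choosing the split so the net $\lambda$-exponent is negative (which holds because $s\ge1$ gives room), I sum the geometric series over $\lambda\gtrsim\mu$ and the leftover power of $\mu$ is exactly $\mu^{3/4-s}$.

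The only mild subtlety — and the step I would be most careful with — is the bookkeeping of Bernstein factors so that the two $L^4$-type norms assemble correctly into $\|\psi_1\|_{\Psi^s[T]}$ and $\|\omega\|_{\Psi^{s-1}[T]}$ with a $\lambda$-summable surplus; this is purely a matter of tracking exponents using hypothesis (\ref{ZHYP:SobWt}) (here specialised to $\mathfrak m(\lambda)=\lambda^s$) exactly as in Lemma \ref{ZLEM:DispEstmt_N2x}. There is no genuine new difficulty: the low-frequency factor $\mathrm P_{\le\lambda}\omega$ never needs more than an $L^2_x$ or $L^4_{t,x}$ bound, both of which cost at most $\lambda^{-(s-1)}$ times $\|\omega\|_{\Psi^{s-1}[T]}$, and the high-frequency factor $\mathrm P_\lambda\psi_1$ is handled exactly as before. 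Finally, invoking the symmetry of $\mathcal N^2_x[\cdot,\cdot]$ disposes of the complementary paraproduct piece and yields (\ref{ZEQN:DiffEstmt_N2x_EQN02}).
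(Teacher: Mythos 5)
Your overall strategy (dyadic decomposition, Bernstein, H\"older against the $L^4$-type components of the $\Psi$ norms) is the same as the paper's, and your low--high and high--low regimes are fine. But there is a genuine gap in the high$\times$high$\to$low interaction, i.e.\ the contribution of $\mathrm{P}_\mu\mathcal{N}^2_x[\mathrm{P}_\lambda\psi_1,\mathrm{P}_{\approx\lambda}\omega]$ with $\lambda\gg\mu$, which is exactly what your term $\mathcal{N}^2_x[\mathrm{P}_\lambda\psi_1,\mathrm{P}_{\le\lambda}\omega]$ reduces to in that regime. With your H\"older split the dyadic factor is
\[
\mu^{-\frac12}\,\lambda^{\frac34-s}\,\lambda^{\frac14-(s-1)}=\mu^{-\frac12}\,\lambda^{2-2s}\;,
\]
and at $s=1$ --- the most important case --- the exponent is $0$, so the sum over $\lambda\gg\mu$ diverges; the claim that ``$s\ge1$ gives room'' to make the net $\lambda$-exponent negative fails at the endpoint. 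The paper avoids this by using a different split in the high--high regime only: put \emph{both} factors in $L^4_{t,x}$ (so each costs $\lambda^{\frac14}$ over its Sobolev weight, rather than one of them costing $\lambda^{\frac34}$ via $L^4_tL^\infty_x$), estimate the product in $L^2_{t,x}$, and apply Bernstein $L^2_x\to L^\infty_x$ at the output frequency $\mu$ (cost $\mu$, cancelled by the $\mu^{-1}$ of the multiplier). This gives $\lambda^{\frac14-s}\lambda^{\frac14-(s-1)}=\lambda^{\frac32-2s}$, summable for $s>\frac34$, with total $\lesssim\mu^{\frac32-2s}\le\mu^{\frac34-s}$.

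A second, related slip: you cannot dispose of the complementary paraproduct piece ``by symmetry''. Symmetry of $\mathcal{N}^2_x$ exchanges the roles of $\psi_1$ and $\omega$, but the right-hand side of (\ref{ZEQN:DiffEstmt_N2x_EQN02}) is not symmetric --- $\psi_1$ is measured in $\Psi^s[T]$ and $\omega$ in $\Psi^{s-1}[T]$ --- so the piece with $\omega$ at the high frequency must be estimated on its own. It does go through (with $\omega$'s frequency pinned at $\approx\mu$ one gets $\mu^{-\frac12}\cdot\mu^{\frac54-s}\cdot O(1)\le\mu^{\frac34-s}$, using $\sum_{\lambda\lesssim\mu}\lambda^{\frac34-s}\lesssim1$ for $s\ge1$), but it is a separate computation rather than a consequence of symmetry, and its $\lambda\gg\mu$ tail again falls into the high--high regime discussed above. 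This is why the paper writes out the fully general dyadic bound for $\mathrm{P}_\mu\mathcal{N}^2_x[\mathrm{P}_\lambda\psi_1,\mathrm{P}_\rho\omega]$ and then sums over the three Littlewood--Paley regions explicitly.
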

	\begin{proof}
		For $\mu=1$, Bernstein and Hardy-Littlewood-Sobolev give us
		\begin{equation*}\begin{split}
			\left\|\mathrm{P}_1\mathcal{N}^2_x\left[\psi_1,\omega\right]\right\|_{L^2_tL^\infty_x[T]}
		\lesssim\;&
			T^{\frac{1}{4}}\left\|\psi_1\omega\right\|_{L^4_tL^{\frac{4}{3}}[T]}
		\\\lesssim\;&
			\left\|\psi_1\right\|_{L^4_{t,x}[T]}\left\|\omega\right\|_{L^\infty_tL^2_x[T]}
		\\\le\;&
			C(s)\left\|\psi_1\right\|_{\Psi^s[T]}\left\|\omega\right\|_{\Psi^{s-1}[T]}
		\;.\end{split}\end{equation*}
		Now, suppose instead that $2\le\mu\in\mathfrak{D}$. By Bernstein,
		\begin{equation*}\begin{split}
			\left\|\mathrm{P}_\mu\mathcal{N}^2_x\left[\mathrm{P}_\lambda\psi_1,\mathrm{P}_\rho\omega\right]\right\|_{L^2_tL^\infty_x[T]}
		\lesssim\;&
			\mu^{-\frac{1}{2}}\left\|\mathrm{P}_\lambda\psi_1\,\mathrm{P}_\rho\omega\right\|_{L^2_tL^4_x[T]}
		\\\lesssim\;&
			\mu^{-\frac{1}{2}}
			\left\|\mathrm{P}_\lambda\psi_1\right\|_{L^4_tL^\infty_x[T]}
			\left\|\mathrm{P}_\rho\omega\right\|_{L^4_{t,x}[T]}
		\\\le\;&
			C(s)\mu^{-\frac{1}{2}}\lambda^{\frac{1}{4}-s}\rho^{\frac{1}{4}-(s-1)}
			\left\|\psi_1\right\|_{\Psi^s[T]}
			\left\|\omega\right\|_{\Psi^{s-1}[T]}
		\;.\end{split}\end{equation*}
		Performing the relevant summations, we obtain
		\begin{equation*}\begin{split}
			\left\|\mathrm{P}_\mu\mathcal{N}^2_x\left[\mathrm{P}_{\lesssim\mu}\psi_1,\mathrm{P}_{\approx\mu}\omega\right]\right\|_{L^2_tL^\infty_x[T]}
		+\;&
			\left\|\mathrm{P}_\mu\mathcal{N}^2_x\left[\mathrm{P}_{\approx\mu}\psi_1,\mathrm{P}_{\lesssim\mu}\omega\right]\right\|_{L^2_tL^\infty_x[T]}
		\\&\le
			C(s)\mu^{\frac{3}{4}-s}
			\left\|\psi_1\right\|_{\Psi^s[T]}
			\left\|\omega\right\|_{\Psi^{s-1}[T]}
		\;.\end{split}\end{equation*}
		By the Littlewood-Paley trichotomy, to prove (\ref{ZEQN:DiffEstmt_N2x_EQN02}) it remains to show
		\begin{equation} \label{ZEQN:DiffEstmt_N2x_EQN12}
			\sum_{\lambda\;:\;\lambda\gg\mu}
			\left\|\mathrm{P}_\mu\mathcal{N}^2_x\left[\mathrm{P}_\lambda\psi_1,\mathrm{P}_{\approx\lambda}\omega\right]\right\|_{L^2_tL^\infty_x[T]}
			\le
			C(s)\mu^{\frac{3}{4}-s}
			\left\|\psi_1\right\|_{\Psi^s[T]}
			\left\|\omega\right\|_{\Psi^{s-1}[T]}
		\;.\end{equation}
		For this, we use Bernstein to estimate
		\begin{equation*}\begin{split}
			\left\|\mathrm{P}_\mu\mathcal{N}^2_x\left[\mathrm{P}_\lambda\psi_1,\mathrm{P}_{\approx\lambda}\omega\right]\right\|_{L^2_tL^\infty_x[T]}
		\lesssim\;&
			\left\|\mathrm{P}_\lambda\psi_1\,\mathrm{P}_{\approx\lambda}\omega\right\|_{L^2_{t,x}[T]}
		\\\lesssim\;&
			\left\|\mathrm{P}_\lambda\psi_1\right\|_{L^4_{t,x}[T]}
			\left\|\mathrm{P}_{\approx\lambda}\omega\right\|_{L^4_{t,x}[T]}
		\\\le\;&
			C(s)\lambda^{\frac{1}{4}-s}\lambda^{\frac{1}{4}-(s-1)}
			\left\|\psi_1\right\|_{\Psi^s[T]}
			\left\|\omega\right\|_{\Psi^{s-1}[T]}
		\;,\end{split}\end{equation*}
		and (\ref{ZEQN:DiffEstmt_N2x_EQN12}) follows immediately.
	\end{proof}

	\begin{lemma} \label{ZLEM:DiffEstmt_N2t}
		Let $T\in (0,1]$. Then
		\begin{equation} \label{ZEQN:DiffEstmt_N2t_EQN02}
			\left\|\mathrm{P}_\mu\mathcal{N}^2_0\left[\psi_1,\omega\right]\right\|_{L^\infty_tL^1_x[T]}
			\le
			C(s)\mu^{-s}
			\left\|\psi_1\right\|_{\Psi^s[T]}
			\left\|\omega\right\|_{\Psi^{s-1}[T]}
		\quad\mbox{for }
			\mu\ge 2
		\;.\end{equation}
		We also have
		\begin{equation} \label{ZEQN:DiffEstmt_N2t_EQN03}
			\left\|\mathrm{P}_{\lesssim\mu}\mathcal{N}^2_0\left[\psi_1,\omega\right]\right\|_{L^\infty_{t,x}[T]}
			\le
			C(s)\mu
			\left\|\psi_1\right\|_{\Psi^s[T]}
			\left\|\omega\right\|_{\Psi^{s-1}[T]}
		\;.\end{equation}
	\end{lemma}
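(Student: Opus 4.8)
The plan is to establish the first estimate (\ref{ZEQN:DiffEstmt_N2t_EQN02}) by a Littlewood--Paley decomposition of $\mathcal{N}^2_0[\psi_1,\omega]$ that exploits the divergence structure of the wedge of gradients, and then to deduce the second estimate (\ref{ZEQN:DiffEstmt_N2t_EQN03}) from (\ref{ZEQN:DiffEstmt_N2t_EQN02}) together with Bernstein's inequality, treating the lowest-frequency block by hand. Throughout I use only the bound $\|\psi\|_{L^\infty_tH^\sigma[T]}\le\|\psi\|_{\Psi^\sigma[T]}$ and its consequence $\|\mathrm{P}_\lambda\psi\|_{L^\infty_tL^2_x[T]}\le\lambda^{-\sigma}\|\psi\|_{\Psi^\sigma[T]}$; the $L^4_{t,x}$ part of the $\Psi$-norms plays no role here.

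For (\ref{ZEQN:DiffEstmt_N2t_EQN02}), the key identity is $\nabla u_1\wedge\nabla u_2=-\nabla\cdot(u_1\nabla^\perp u_2)=-\nabla\cdot(u_2\nabla^\perp u_1)$, so that $\mathcal{N}^2_0[u_1,u_2]=-(-\triangle)^{-1}\partial_j\bigl((u_1\nabla^\perp u_2)_j\bigr)$ and, once the output is localised at frequency $\mu$, the multiplier $\mathrm{P}_\mu(-\triangle)^{-1}\partial_j$ yields a gain $\mu^{-1}$. I split $\mathcal{N}^2_0[\psi_1,\omega]$ by the Littlewood--Paley trichotomy into: (a) $\psi_1$ at frequency $\approx\mu$ and $\omega$ at frequency $\ll\mu$; (b) $\omega$ at frequency $\approx\mu$ and $\psi_1$ at frequency $\ll\mu$ (reduced to the shape of (a) by the skew-symmetry of $\mathcal{N}^2_0$); and (c) both inputs at a common frequency $\rho\gtrsim\mu$. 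In each region I place the surviving inner derivative on the factor of lower regularity (on $\omega$ in (a) and (c), on $\psi_1$ in (b)), estimate the resulting $L^1_x$ product via $\|fg\|_{L^1_x}\le\|f\|_{L^2_x}\|g\|_{L^2_x}$ together with Bernstein for the derivative, and sum over the free dyadic parameters. In (a) and (b) one frequency is pinned to $\approx\mu$ and the sums are finite, each contributing $\lesssim\mu^{-s}\|\psi_1\|_{\Psi^s[T]}\|\omega\|_{\Psi^{s-1}[T]}$. In (c) I write
\[
	\rho\,\|\mathrm{P}_\rho\psi_1\|_{L^2_x}\,\|\mathrm{P}_{\approx\rho}\omega\|_{L^2_x}
	=
	\rho^{2-2s}\,\bigl(\rho^{s}\|\mathrm{P}_\rho\psi_1\|_{L^2_x}\bigr)\,\bigl(\rho^{s-1}\|\mathrm{P}_{\approx\rho}\omega\|_{L^2_x}\bigr),
\]
and, since $2-2s\le 0$ for $s\ge 1$, I bound $\rho^{2-2s}\le\mu^{2-2s}$ for $\rho\gtrsim\mu$ and sum the remaining product over $\rho$ by Cauchy--Schwarz; this yields a contribution $\lesssim\mu^{-1}\mu^{2-2s}\|\psi_1\|_{H^s}\|\omega\|_{H^{s-1}}=\mu^{1-2s}\|\psi_1\|_{H^s}\|\omega\|_{H^{s-1}}$, which is $\le\mu^{-s}\|\psi_1\|_{\Psi^s[T]}\|\omega\|_{\Psi^{s-1}[T]}$ because $1-2s\le -s$. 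Every factor is taken in $L^\infty_t$ of a spatial space, so there is no genuine time integration.

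For (\ref{ZEQN:DiffEstmt_N2t_EQN03}), I sum over $2\le\nu\lesssim\mu$ the elementary bound $\|\mathrm{P}_\nu\mathcal{N}^2_0[\psi_1,\omega]\|_{L^\infty_{t,x}[T]}\lesssim\nu^{2}\|\mathrm{P}_\nu\mathcal{N}^2_0[\psi_1,\omega]\|_{L^\infty_tL^1_x[T]}$, which by (\ref{ZEQN:DiffEstmt_N2t_EQN02}) is $\lesssim\nu^{2-s}\|\psi_1\|_{\Psi^s[T]}\|\omega\|_{\Psi^{s-1}[T]}$; since $2-s\le 1$ for $s\ge 1$, the sum is $\lesssim\mu\,\|\psi_1\|_{\Psi^s[T]}\|\omega\|_{\Psi^{s-1}[T]}$. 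The remaining block $\mathrm{P}_1\mathcal{N}^2_0[\psi_1,\omega]$ I handle on the Fourier side: from the divergence form, $|\widehat{\mathcal{N}^2_0[u_1,u_2]}(\zeta)|\le|\zeta|^{-1}\,|\widehat{u_1\nabla^\perp u_2}(\zeta)|\le|\zeta|^{-1}\|u_1\nabla^\perp u_2\|_{L^1_x}$, and since $\int_{|\zeta|\le 2}|\zeta|^{-1}\,\mathrm{d}\zeta<\infty$ in two dimensions (integrability near the origin of the Biot--Savart kernel), one gets
\[
	\|\mathrm{P}_1\mathcal{N}^2_0[u_1,u_2]\|_{L^\infty_x}
	\le
	\bigl\|\varphi_1\,\widehat{\mathcal{N}^2_0[u_1,u_2]}\bigr\|_{L^1_\zeta}
	\lesssim
	\|u_1\nabla^\perp u_2\|_{L^1_x}.
\]
Applying this to the Littlewood--Paley pieces of $\psi_1$ and $\omega$, splitting once more into the (finitely many) interactions among frequencies $\lesssim 1$ and the high--high interactions at a common frequency $\rho\gg 1$, bounding $\|\mathrm{P}_\rho\psi_1\,\nabla^\perp\mathrm{P}_{\approx\rho}\omega\|_{L^1_x}\le\|\mathrm{P}_\rho\psi_1\|_{L^2_x}\|\nabla\mathrm{P}_{\approx\rho}\omega\|_{L^2_x}\lesssim\rho\|\mathrm{P}_\rho\psi_1\|_{L^2_x}\|\mathrm{P}_{\approx\rho}\omega\|_{L^2_x}$, and summing over $\rho$ by the same Cauchy--Schwarz as in case (c), gives $\|\mathrm{P}_1\mathcal{N}^2_0[\psi_1,\omega]\|_{L^\infty_{t,x}[T]}\lesssim\|\psi_1\|_{H^s}\|\omega\|_{H^{s-1}}\le\mu\,\|\psi_1\|_{\Psi^s[T]}\|\omega\|_{\Psi^{s-1}[T]}$.

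The step I expect to be the main obstacle is the endpoint $s=1$: there the high--high summations in both (\ref{ZEQN:DiffEstmt_N2t_EQN02}) and the $\mathrm{P}_1$-block of (\ref{ZEQN:DiffEstmt_N2t_EQN03}) are logarithmically divergent if one estimates them term-by-term with a geometric series, and they must instead be closed using the exact borderline $2-2s=0$ together with Cauchy--Schwarz over dyadic scales. This is precisely the mechanism that fails for $s<1$, as noted in the overview of the proof. A secondary but essential point is that the inner derivative in the divergence form must always be placed on the factor carrying the weaker norm; placing it on the other factor costs a full power of $\mu$ in the high--low interactions and breaks the estimate.
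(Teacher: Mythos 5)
Your proof is correct and follows essentially the same route as the paper: the divergence identity $\nabla u_1\wedge\nabla u_2=\nabla\cdot(\cdots)$ to gain $\mu^{-1}$ from $\mathrm{P}_\mu(-\triangle)^{-1}\partial_j$, the Littlewood--Paley trichotomy with Cauchy--Schwarz over dyadic scales for the high--high block (this is exactly the paper's preliminary estimate for $\mathrm{P}_\mu(\nabla\mathrm{P}_{\gg\mu}\psi_1\,\mathrm{P}_{\gg\mu}\omega)$), and Bernstein plus summation over $2\le\nu\lesssim\mu$ for the second estimate. The only (cosmetic) differences are that in the high--low case the paper puts the inner derivative on the high-frequency factor $\psi_1$ rather than on $\omega$ (both placements close, as your computation shows), and the paper disposes of the $\mathrm{P}_1$ block more directly via $\|\nabla\psi_1\,\omega\|_{L^1_x}\le\|\psi_1\|_{H^1}\|\omega\|_{L^2_x}$ without a further dyadic decomposition.
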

	\begin{proof}
		We first observe the preliminary estimate
		\begin{equation} \label{ZEQN:DiffEstmt_N2t_EQN11}
			\left\|\mathrm{P}_\mu\left(\nabla\mathrm{P}_{\gg\mu}\psi_1\,\mathrm{P}_{\gg\mu}\omega\right)\right\|_{L^1_x}
			\le
			C(s)\mu^{-2(s-1)}
			\left\|\psi_1\right\|_{H^s}
			\left\|\omega\right\|_{H^{s-1}}
		\;.\end{equation}
		Indeed, since $s\ge 1$, for $\lambda\gtrsim\mu$ we have
		\[
			\left\|\nabla\mathrm{P}_\lambda\psi_1\,\mathrm{P}_{\approx\lambda}\omega\right\|_{L^1_x}
			\le
			C(s)
			\mu^{-(2s-2)}\lambda^{2s-1}
			\left\|\mathrm{P}_\lambda\psi_1\right\|_{L^2_x}
			\left\|\mathrm{P}_{\approx\lambda}\omega\right\|_{L^2_x}
		\;.\]
		Summing over $\lambda\gg\mu$, (\ref{ZEQN:DiffEstmt_N2t_EQN11}) follows using Cauchy-Schwarz.

		We turn to the proof of (\ref{ZEQN:DiffEstmt_N2t_EQN02}).
		Let $\mu\ge 2$ be fixed. We have
		\begin{equation*}\begin{split}
			\left\|\mathrm{P}_\mu\mathcal{N}^2_0\left[\mathrm{P}_{\approx\mu}\psi_1,\mathrm{P}_{\lesssim\mu}\omega\right]\right\|_{L^\infty_tL^1_x[T]}
		\lesssim\;&
			\mu^{-1}\left\|\nabla\mathrm{P}_{\approx\mu}\psi_1\,\mathrm{P}_{\lesssim\mu}\omega\right\|_{L^\infty_tL^1_x[T]}
		\\\le\;&
			C(s)\mu^{-s}
			\left\|\psi_1\right\|_{L^\infty_tH^s[T]}
			\left\|\omega\right\|_{L^\infty_tH^{s-1}[T]}
		\end{split}\end{equation*}
		and similarly
		\[
			\left\|\mathrm{P}_\mu\mathcal{N}^2_0\left[\mathrm{P}_{\lesssim\mu}\psi_1,\mathrm{P}_{\approx\mu}\omega\right]\right\|_{L^\infty_tL^1_x[T]}
		\lesssim
			C(s)\mu^{-s}
			\left\|\psi_1\right\|_{L^\infty_tH^s[T]}
			\left\|\omega\right\|_{L^\infty_tH^{s-1}[T]}
		\;.\]
		On the other hand, by (\ref{ZEQN:DiffEstmt_N2t_EQN11}),
		\[
			\left\|\mathrm{P}_\mu\mathcal{N}^2_0\left[\mathrm{P}_{\gg\mu}\psi_1,\mathrm{P}_{\gg\mu}\omega\right]\right\|_{L^\infty_tL^1_x[T]}
			\le
			C(s)\mu^{1-2s}
			\left\|\psi_1\right\|_{L^\infty_tH^s[T]}
			\left\|\omega\right\|_{L^\infty_tH^{s-1}[T]}
		\;.\]
		Hence, due to the Littlewood-Paley trichotomy, we obtain (\ref{ZEQN:DiffEstmt_N2t_EQN02}).

		We now prove (\ref{ZEQN:DiffEstmt_N2t_EQN03}). By Bernstein, Hardy-Littlewood-Sobolev and H\"{o}lder, we have
		\[
			\left\|\mathrm{P}_1\mathcal{N}^2_0\left[\psi_1,\omega\right]\right\|_{L^\infty_{t,x}[T]}
			\lesssim
			\left\|\nabla\psi_1\,\omega\right\|_{L^\infty_tL^1_x[T]}
			\le
			C(s)
			\left\|\psi_1\right\|_{L^\infty_tH^s[T]}
			\left\|\omega\right\|_{L^\infty_tH^{s-1}[T]}
		\;.\]
		For $\nu\ge 2$, Bernstein's inequality and (\ref{ZEQN:DiffEstmt_N2t_EQN02}) give
		\[
			\left\|\mathrm{P}_\nu\mathcal{N}^2_0\left[\psi_1,\omega\right]\right\|_{L^\infty_{t,x}[T]}
			\le
			C(s)\nu
			\left\|\psi_1\right\|_{\Psi^s[T]}
			\left\|\omega\right\|_{\Psi^{s-1}[T]}
		\]
		since $2-s\le 1$.
		Hence, (\ref{ZEQN:DiffEstmt_N2t_EQN03}) follows by summing the preceding estimates.
	\end{proof}

	\begin{lemma} \label{ZLEM:DiffEstmt_N4t}
		Let $T\in (0,1]$. Then
		\begin{equation} \label{ZEQN:DiffEstmt_N4t_EQN01}
			\left\|\mathrm{P}_\mu\mathcal{N}^4_t\left[\psi_1,\omega,\psi_2,\psi_3\right]\right\|_{L^2_tL^\infty_x[T]}
			\le
			C(s)\mu^{\frac{3}{4}-s}
			\left\|\omega\right\|_{\Psi^{s-1}[T]}
			\prod_{\ell=1}^3
			\left\|\psi_\ell\right\|_{\Psi^s[T]}
		\end{equation}
		and
		\begin{equation} \label{ZEQN:DiffEstmt_N4t_EQN02}
			\left\|\mathrm{P}_\mu\mathcal{N}^4_t\left[\psi_1,\psi_2,\psi_3,\omega\right]\right\|_{L^2_tL^\infty_x[T]}
			\le
			C(s)\mu^{\frac{3}{4}-s}
			\left\|\omega\right\|_{\Psi^{s-1}[T]}
			\prod_{\ell=1}^3
			\left\|\psi_\ell\right\|_{\Psi^s[T]}
		\end{equation}
	\end{lemma}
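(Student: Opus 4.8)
The plan is to start from the algebraic identity $\mathrm{rot}\,\mathcal{N}^2_x[u_1,u_2]=u_1u_2$, which follows immediately from $\mathcal{N}^2_x[u_1,u_2]_i=\epsilon_{ij}\partial_j\tfrac{1}{(-\triangle)}(u_1u_2)$ and $\partial_1^2+\partial_2^2=\triangle$. Expanding $\mathrm{rot}(fV)=f\,\mathrm{rot}\,V+(\partial_1f)V_2-(\partial_2f)V_1$ with $V=\mathcal{N}^2_x[u_1,u_2]$, rewriting $(\partial_1f)V_2-(\partial_2f)V_1=-\nabla f\cdot\nabla\tfrac{1}{(-\triangle)}(u_1u_2)$, and integrating by parts, the two resulting copies of $\tfrac{1}{(-\triangle)}(u_1u_2u_3u_4)$ cancel and one obtains the clean formula
\[
	\mathcal{N}^4_t[u_1,u_2,u_3,u_4]=-\frac{\mathrm{div}}{(-\triangle)}\Big(u_3u_4\,\nabla\tfrac{1}{(-\triangle)}(u_1u_2)\Big).
\]
So in both cases the task reduces to estimating $\mathrm{P}_\mu$ of $\tfrac{\mathrm{div}}{(-\triangle)}$ --- an operator of order $-1$ --- applied to the product of a scalar $fg$ (two of the slots) with a vector field $\nabla\tfrac{1}{(-\triangle)}(hk)$ (two further slots). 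I would handle the bottom frequency $\mu=1$ separately through the Biot--Savart representation and Hardy--Littlewood--Sobolev, reducing it to an $L^\infty_tL^1_x[T]$ bound in which $\omega$ is placed in $L^\infty_tL^2_x[T]\hookleftarrow\Psi^{s-1}[T]$ (legitimate because $s\ge1$) and the remaining factors are disposed of by $H^{1/2}\hookrightarrow L^4_x$, $\Psi^s[T]\hookrightarrow\Psi^1[T]$ and Lemma~\ref{ZLEM:EnergyEstmt_N2x}. For $\mu\ge2$ I would run the Littlewood--Paley trichotomy according to whether the output frequency $\mu$ is supplied by the vector field (frequency $\gtrsim\mu$) or by the scalar product (combined frequency $\approx\mu$); the $\mu^{-1}$ from $\tfrac{\mathrm{div}}{(-\triangle)}$ is either spent against a factor $\mu$ from Bernstein's inequality $\|\mathrm{P}_\mu(\cdot)\|_{L^\infty_x}\lesssim\mu\|\mathrm{P}_\mu(\cdot)\|_{L^2_x}$ (leaving an $L^2_tL^2_x[T]$ estimate of the product) or kept in full (leaving an $L^\infty_{t,x}[T]$ bound on the low-frequency factor).

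For the estimate (\ref{ZEQN:DiffEstmt_N4t_EQN01}), where $\omega$ enters through the vector field $\nabla\tfrac{1}{(-\triangle)}(\psi_1\omega)$, whose components agree up to signs with those of $\mathcal{N}^2_x[\psi_1,\omega]$, Lemma~\ref{ZLEM:DiffEstmt_N2x} gives $\|\mathrm{P}_{\gtrsim\mu}\nabla\tfrac{1}{(-\triangle)}(\psi_1\omega)\|_{L^2_tL^\infty_x[T]}\lesssim\mu^{3/4-s}\|\psi_1\|_{\Psi^s[T]}\|\omega\|_{\Psi^{s-1}[T]}$ (summable since $s>3/4$), and a frequency-by-frequency Bernstein bound gives $\|\mathrm{P}_{\ll\mu}\nabla\tfrac{1}{(-\triangle)}(\psi_1\omega)\|_{L^\infty_{t,x}[T]}\lesssim\mu\,\|\psi_1\|_{\Psi^s[T]}\|\omega\|_{\Psi^{s-1}[T]}$. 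The scalar product is $\psi_2\psi_3$ with $\psi_2,\psi_3\in\Psi^s[T]$, for which $\psi_i\in L^\infty_tL^4_x[T]$ and $\|\mathrm{P}_{\approx\mu}(\psi_2\psi_3)\|_{L^2_tL^\infty_x[T]}\lesssim\mu^{3/4-s}\|\psi_2\|_{\Psi^s[T]}\|\psi_3\|_{\Psi^s[T]}$, the latter being the case $\mathfrak{m}(\lambda)=\lambda^s$ of the product estimate already established inside the proof of Lemma~\ref{ZLEM:DispEstmt_N4t}. Feeding these into the trichotomy, each of the two resulting pieces comes out equal to the claimed $\mu^{3/4-s}$, so this case is essentially mechanical.

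The estimate (\ref{ZEQN:DiffEstmt_N4t_EQN02}) is where I expect the main difficulty. Here the vector field $\mathcal{G}:=\nabla\tfrac{1}{(-\triangle)}(\psi_1\psi_2)$ is smooth --- $\|\mathcal{G}\|_{L^\infty_{t,x}[T]}\lesssim\|\psi_1\|_{\Psi^s[T]}\|\psi_2\|_{\Psi^s[T]}$ by Lemma~\ref{ZLEM:EnergyEstmt_N2x} and $\|\nabla\mathcal{G}\|_{L^1_tL^\infty_x[T]}$ is controlled by Lemma~\ref{ZLEM:DispEstmt_nablaN2x} --- so whenever the output frequency is supplied by $\mathcal{G}$, its smoothness together with the compensating high frequency forced onto $\psi_3$ or $\omega$ makes that piece summable with room to spare, and the sub-case where $\mathcal{G}$ and $\psi_3\omega$ are both near $\mu$ is handled by pairing $\|\mathrm{P}_{\approx\mu}\mathcal{G}\|_{L^2_tL^\infty_x[T]}\lesssim\mu^{-1/4-s}$ (Lemma~\ref{ZLEM:DispEstmt_N2x}, which carries a full power of $\mu$ in reserve over the target) against the crude frequency-localised bound $\|\mathrm{P}_{\lesssim\mu}(\psi_3\omega)\|_{L^\infty_tL^2_x[T]}\lesssim\mu\,\|\psi_3\|_{\Psi^s[T]}\|\omega\|_{\Psi^{s-1}[T]}$ coming from $\|\psi_3\omega\|_{L^1_x}\le\|\psi_3\|_{L^2_x}\|\omega\|_{L^2_x}$ and Bernstein $L^1_x\to L^2_x$. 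The genuinely delicate piece is the one in which the output frequency $\approx\mu$ comes from $\psi_3\omega$ while $\mathcal{G}$ is at frequency $\ll\mu$, where one needs
\[
	\left\|\mathrm{P}_{\approx\mu}(\psi_3\omega)\right\|_{L^2_tL^\infty_x[T]}\lesssim\mu^{7/4-s}\left\|\psi_3\right\|_{\Psi^s[T]}\left\|\omega\right\|_{\Psi^{s-1}[T]}.
\]
In its high--low and high$\times$high-to-low sub-pieces the low-regularity factor $\omega$ must \emph{not} be placed in $L^4_x$ --- that would illegitimately demand $s\ge\tfrac32$ through $H^{s-1}\hookrightarrow L^4_x$ --- but rather kept in $L^\infty_tL^2_x[T]$, or put into $L^4_{t,x}[T]$ via the $L^4$-content of $\Psi^{s-1}[T]$, with Bernstein $L^2_x\to L^4_x$ used to pass between spaces, while $\psi_3$ is spent through the $L^4_{t,x}[T]$-content of $\Psi^s[T]$ and Lemma~\ref{ZLEM:DispEstmt_LinftyCtrl}. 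Done this way the power of the internal high frequency is of the form $\lambda^{1-2s}$, $\lambda^{5/4-2s}$ or $\lambda^{11/4-2s}$: each sums to the corresponding power of $\mu$, and each of those is dominated by the target $\mu^{7/4-s}$ precisely because $s\ge1$, with equality exactly at $s=1$ --- the same borderline already met in Lemma~\ref{ZLEM:DiffEstmt_N2t}. Throughout, the constants are independent of $T$ and only a harmless factor $T^{1/2}$ is collected from Hölder in time.
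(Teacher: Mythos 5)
Your proposal is correct and follows essentially the same route as the paper's proof: pull the factor $\mu^{-1}$ out of the order $-1$ multiplier, run a Littlewood--Paley trichotomy on the product inside, and feed in Lemmas \ref{ZLEM:EnergyEstmt_N2x}, \ref{ZLEM:DispEstmt_N2x}, \ref{ZLEM:DiffEstmt_N2x} together with Bernstein and H\"older, keeping $\omega$ in $L^\infty_tL^2_x$ or $L^4_tL^\infty_x$ (never $L^\infty_tL^4_x$) in the delicate low$\times$high piece of (\ref{ZEQN:DiffEstmt_N4t_EQN02}), exactly as the paper does. Your opening identity $\mathcal{N}^4_t[u_1,u_2,u_3,u_4]=-\tfrac{\mathrm{div}}{(-\triangle)}\bigl(u_3u_4\,\nabla\tfrac{1}{(-\triangle)}(u_1u_2)\bigr)$ is correct but inessential, since $\nabla\tfrac{1}{(-\triangle)}(u_1u_2)$ is just a $90^\circ$ rotation of $\mathcal{N}^2_x[u_1,u_2]$ and $\tfrac{\mathrm{div}}{(-\triangle)}$ has the same order as $\tfrac{\mathrm{rot}}{(-\triangle)}$, so the paper obtains the identical estimates working directly with the original form.
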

	\begin{proof}
		We first prove (\ref{ZEQN:DiffEstmt_N4t_EQN01}) in the case $\mu=1$.
		By the Bernstein, Hardy-Littlewood-Sobolev and H\"{o}lder inequalities, we have
		\begin{equation*}\begin{split}
			\left\|\mathrm{P}_1\mathcal{N}^4_t\left[\psi_1,\omega,\psi_2,\psi_3\right]\right\|_{L^2_tL^\infty_x[T]}
		\lesssim\;&
			\left\|\mathcal{N}^2_x\left[\psi_1,\omega\right]\right\|_{L^2_tL^\infty_x[T]}
			\left\|\psi_2\right\|_{L^\infty_tL^2_x[T]}
			\left\|\psi_3\right\|_{L^\infty_tL^2_x[T]}
		\\\le\;&
			C(s)\left\|\omega\right\|_{\Psi^{s-1}[T]}
			\prod_{\ell=1}^3
			\left\|\psi_\ell\right\|_{\Psi^s[T]}
		\end{split}\end{equation*}
		where we have used (\ref{ZEQN:DiffEstmt_N2x_EQN02}) to estimate $\|\mathcal{N}^2_x[\psi_1,\omega]\|_{L^2_tL^\infty_x[T]}$.

		Suppose now $2\le\mu\in\mathfrak{D}$. Then, by Bernstein,
		\[
			\left\|\mathrm{P}_\mu\mathcal{N}^4_t\left[\psi_1,\omega,\psi_2,\psi_3\right]\right\|_{L^2_tL^\infty_x[T]}
			\lesssim
			\mu^{-1}
			\left\|\mathrm{P}_\mu\left(\mathcal{N}^2_x\left[\psi_1,\omega\right]\psi_2\psi_3\right)\right\|_{L^2_tL^\infty_x[T]}
		\;.\]
		By Bernstein, H\"{o}lder and (\ref{ZEQN:DiffEstmt_N2x_EQN02}),
		\begin{equation*}\begin{split}
			\mu^{-1}
			\left\|\mathrm{P}_\mu\left(\mathrm{P}_{\gtrsim\mu}\mathcal{N}^2_x\left[\psi_1,\omega\right]\,\psi_2\psi_3\right)\right\|_{L^2_tL^\infty_x[T]}
		\lesssim\;&
			\left\|\mathrm{P}_{\gtrsim\mu}\mathcal{N}^2_x\left[\psi_1,\omega\right]\right\|_{L^2_tL^\infty_x[T]}
			\left\|\psi_2\right\|_{L^\infty_tL^4_x[T]}
			\left\|\psi_3\right\|_{L^\infty_tL^4_x[T]}
		\\\le\;&
			C(s)
			\mu^{\frac{3}{4}-s}
			\left\|\omega\right\|_{\Psi^{s-1}[T]}
			\prod_{\ell=1}^3
			\left\|\psi_\ell\right\|_{\Psi^s[T]}
		\;.\end{split}\end{equation*}
		On the other hand, for $\lambda\gtrsim\mu$, we have
		\begin{equation*}\begin{split}
			\mu^{-1}\Big\|\mathrm{P}_\mu
		&
			\left(\mathrm{P}_{\ll\mu}\mathcal{N}^2_x\left[\psi_1,\omega\right]\,\mathrm{P}_\lambda\psi_2\,\mathrm{P}_{\le\lambda}\psi_3\right)\Big\|_{L^2_tL^\infty_x[T]}
		\\\lesssim\;&
			\left\|\mathcal{N}^2_x\left[\psi_1,\omega\right]\right\|_{L^2_tL^\infty_x[T]}
			\left\|\mathrm{P}_\lambda\psi_2\right\|_{L^\infty_tL^4_x[T]}
			\left\|\psi_3\right\|_{L^\infty_tL^4_x[T]}
		\\\le\;&
			C(s)
			\lambda^{\frac{1}{2}-s}
			\left\|\omega\right\|_{\Psi^{s-1}[T]}
			\prod_{\ell=1}^3
			\left\|\psi_\ell\right\|_{\Psi^s[T]}
		\;.\end{split}\end{equation*}
		By summing over $\lambda\gtrsim\mu$ and noting the symmetry in $\psi_2,\psi_3$, we have
		\[
			\mu^{-1}
			\left\|\mathrm{P}_\mu\left(\mathrm{P}_{\ll\mu}\mathcal{N}^2_x\left[\psi_1,\omega\right]\,\psi_2\psi_3\right)\right\|_{L^2_tL^\infty_x[T]}
			\le
			C(s)
			\mu^{\frac{1}{2}-s}
			\left\|\omega\right\|_{\Psi^{s-1}[T]}
			\prod_{\ell=1}^3
			\left\|\psi_\ell\right\|_{\Psi^s[T]}
		\;.\]
		This completes the proof of (\ref{ZEQN:DiffEstmt_N4t_EQN01}).

		We turn to the proof of (\ref{ZEQN:DiffEstmt_N4t_EQN02}).
		The case $\mu=1$ is handled in exactly the same fashion as above.
		Suppose now $2\le\mu\in\mathfrak{D}$.
		Then, by Bernstein,
		\[
			\left\|\mathrm{P}_\mu\mathcal{N}^4_t\left[\psi_1,\psi_2,\psi_3,\omega\right]\right\|_{L^2_tL^\infty_x[T]}
			\lesssim
			\mu^{-1}
			\left\|\mathrm{P}_\mu\left(\mathcal{N}^2_x\left[\psi_1,\psi_2\right]\psi_3\omega\right)\right\|_{L^2_tL^\infty_x[T]}
		\;.\]
		By Bernstein, H\"{o}lder and (\ref{ZEQN:DispEstmt_N2x_EQN02}),
		\begin{equation*}\begin{split}
			\mu^{-1}
			\left\|\mathrm{P}_\mu\left(\mathrm{P}_{\gtrsim\mu}\mathcal{N}^2_x\left[\psi_1,\psi_2\right]\,\psi_3\omega\right)\right\|_{L^2_tL^\infty_x[T]}
		\lesssim\;&
			\mu^{\frac{1}{2}}
			\left\|\mathrm{P}_{\gtrsim\mu}\mathcal{N}^2_x\left[\psi_1,\psi_2\right]\right\|_{L^2_tL^\infty_x[T]}
			\left\|\psi_3\right\|_{L^\infty_tL^4_x[T]}
			\left\|\omega\right\|_{L^\infty_tL^2_x[T]}
		\\\le\;&
			C(s)
			\mu^{\frac{1}{4}-s}
			\left\|\omega\right\|_{\Psi^{s-1}[T]}
			\prod_{\ell=1}^3
			\left\|\psi_\ell\right\|_{\Psi^s[T]}
		\;.\end{split}\end{equation*}
		On the other hand, by Bernstein, H\"{o}lder and (\ref{ZEQN:EnergyEstmt_N2x_EQN02}) we have
		\begin{equation*}\begin{split}
			\mu^{-1}\Big\|\mathrm{P}_\mu
		&
			\left(\mathrm{P}_{\ll\mu}\mathcal{N}^2_x\left[\psi_1,\psi_2\right]\,\mathrm{P}_\lambda\psi_3\,\mathrm{P}_{\lesssim\lambda}\omega\right)\Big\|_{L^2_tL^\infty_x[T]}
		\\\lesssim\;&
			\left\|\mathcal{N}^2_x\left[\psi_1,\psi_2\right]\right\|_{L^\infty_{t,x}[T]}
			\left\|\mathrm{P}_\lambda\psi_3\right\|_{L^4_tL^\infty_x[T]}
			\left\|\omega\right\|_{L^\infty_tL^2_x[T]}
		\\\le\;&
			C(s)
			\lambda^{\frac{3}{4}-s}
			\left\|\omega\right\|_{\Psi^{s-1}[T]}
			\prod_{\ell=1}^3
			\left\|\psi_\ell\right\|_{\Psi^s[T]}
		\end{split}\end{equation*}
		which is summable over $\lambda\gtrsim\mu$; and also we have
		\begin{equation*}\begin{split}
			\mu^{-1}\Big\|\mathrm{P}_\mu
		&
			\left(\mathrm{P}_{\ll\mu}\mathcal{N}^2_x\left[\psi_1,\psi_2\right]\,\mathrm{P}_{\ll\mu}\psi_3\,\mathrm{P}_{\approx\mu}\omega\right)\Big\|_{L^2_tL^\infty_x[T]}
		\\\lesssim\;&
			\mu^{-1}
			\left\|\mathcal{N}^2_x\left[\psi_1,\psi_2\right]\right\|_{L^\infty_{t,x}[T]}
			\left\|\psi_3\right\|_{L^4_tL^\infty_x[T]}
			\left\|\omega\right\|_{L^4_tL^\infty_x[T]}
		\\\le\;&
			C(s)
			\mu^{\frac{1}{4}-s}
			\left\|\omega\right\|_{\Psi^{s-1}[T]}
			\prod_{\ell=1}^3
			\left\|\psi_\ell\right\|_{\Psi^s[T]}
		\;.\end{split}\end{equation*}
		By the Littlewood-Paley trichotomy, (\ref{ZEQN:DiffEstmt_N4t_EQN02}) is proved.
	\end{proof}
	
\subsection{Difference estimates for nonlinearities}
	We are now ready to estimate the $\mathrm{D}U^2_{B^\dagger}H^{s-1}[T]$ norm of each term of the right-hand side of (\ref{ZEQN:DifferenceEqn}).

	\begin{lemma} \label{ZLEM:DiffEstmt_PBphi}
		Assume the hypothesis (\ref{ZHYP:nablaB}).
		Let $T\in(0,1]$.
		Let $\varTheta$ be an admissible form.
		Then
		\[
			\left\|\mathfrak{P}_\varTheta\psi_1\right\|_{\mathrm{D}U^2_{B^\dagger}H^{s-1}[T]}
			\le
			C(s)
			T^{\frac{1}{2}}
			\left\|\varTheta\right\|_{L^2_tL^\infty_x[T]}
			\left\|\psi_1\right\|_{\Psi^s[T]}
		\;.\]
	\end{lemma}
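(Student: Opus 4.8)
The plan is to dualize. Invoking Lemma \ref{ZLEM:UpVpDualityB} with the Sobolev weight $\mathfrak{m}(\lambda)=\lambda^{s-1}$ (so that $H^{\mathfrak{m}^{-1}}=H^{1-s}$ and $[\mathfrak{m}]=s-1$), it suffices to prove, for an arbitrary $v\in V^2_{B^\dagger}H^{1-s}[T]$, the bilinear bound
\[
	\left|\int_0^T\int_{\mathbb{R}^2}\overline{v}\,\mathfrak{P}_\varTheta\psi_1\,\mathrm{d}x\,\mathrm{d}t\right|
	\le
	C(s)\,T^{\frac{1}{2}}\left\|\varTheta\right\|_{L^2_tL^\infty_x[T]}\left\|\psi_1\right\|_{\Psi^s[T]}\left\|v\right\|_{V^2_{B^\dagger}H^{1-s}[T]}
\;.\]

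Next I would decompose $\mathfrak{P}_\varTheta\psi_1=\mathrm{I}+\mathrm{II}$ into the two paraproduct pieces
\[
	\mathrm{I}:=\sum_\lambda\mathrm{P}_{\ll\lambda}\varTheta_i\,\mathrm{P}_\lambda\partial_i\psi_1\;,\qquad
	\mathrm{II}:=\sum_\lambda\mathrm{P}_\lambda\!\left(\mathrm{P}_{\ll\lambda}\varTheta_i\,\partial_i\psi_1\right)\;,
\]
and exploit the fact that in each piece the low-frequency factor $\mathrm{P}_{\ll\lambda}\varTheta$ pins the interacting frequencies to a single dyadic scale: each summand of $\mathrm{I}$ is frequency-localized at $\approx\lambda$, so only $\mathrm{P}_{\approx\lambda}v$ survives the pairing against $v$, while in $\mathrm{II}$ the outer $\mathrm{P}_\lambda$ forces $\partial_i\psi_1$ to be frequency-localized at $\approx\lambda$. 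In both cases the problem reduces to bounding a sum of the shape $\sum_\lambda\int_0^T\int_{\mathbb{R}^2}\overline{\mathrm{P}_{\approx\lambda}v}\,\mathrm{P}_{\ll\lambda}\varTheta_i\,\mathrm{P}_{\approx\lambda}\partial_i\psi_1\,\mathrm{d}x\,\mathrm{d}t$, the sum running over a bounded-overlap family in $\lambda$.

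To estimate that sum I would, pointwise in $t$, first apply H\"older in $x$ to extract $\left\|\varTheta(t)\right\|_{L^\infty_x}$ and then Cauchy--Schwarz in $\lambda$, writing $\left\|\mathrm{P}_{\approx\lambda}\partial_i\psi_1(t)\right\|_{L^2_x}\lesssim\lambda\left\|\mathrm{P}_{\approx\lambda}\psi_1(t)\right\|_{L^2_x}$ and splitting the trivial weight $1=\lambda^{1-s}\cdot\lambda^{s-1}$ between the $v$-factor and the $\psi_1$-factor. Since $\mathrm{P}_{\approx\lambda}$ has bounded overlap, the two resulting square-sums are controlled by $\left\|v(t)\right\|_{H^{1-s}}$ and $\left\|\psi_1(t)\right\|_{H^s}$ respectively, so the $t$-integrand is $\lesssim\left\|\varTheta(t)\right\|_{L^\infty_x}\left\|v(t)\right\|_{H^{1-s}}\left\|\psi_1(t)\right\|_{H^s}$. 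A final H\"older in $t$ with exponents $(2,\infty,2)$ gives $\left\|\varTheta\right\|_{L^2_tL^\infty_x[T]}\left\|v\right\|_{L^\infty_tH^{1-s}[T]}\left\|\psi_1\right\|_{L^2_tH^s[T]}$; the factor $T^{\frac{1}{2}}$ emerges from $\left\|\psi_1\right\|_{L^2_tH^s[T]}\le T^{\frac{1}{2}}\left\|\psi_1\right\|_{L^\infty_tH^s[T]}\le T^{\frac{1}{2}}\left\|\psi_1\right\|_{\Psi^s[T]}$, and $\left\|v\right\|_{L^\infty_tH^{1-s}[T]}\lesssim\left\|v\right\|_{V^2_{B^\dagger}H^{1-s}[T]}$ via the embedding $V^2_{B^\dagger}H^{1-s}[T]\hookrightarrow U^4_{B^\dagger}H^{1-s}[T]\hookrightarrow L^\infty_tH^{1-s}[T]$ of Lemmas \ref{ZLEM:UpVpEmbeddingB} and \ref{ZLEM:OldLemma4.9}, whose constant is harmless since $\left\|\nabla B^\dagger\right\|_{L^1_tL^\infty_x[1]}\le 1$ by hypothesis (\ref{ZHYP:nablaB}).

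Everything above is routine H\"older and Bernstein; the one point I would treat carefully is the frequency bookkeeping in $\mathrm{II}$. Summing over all high frequencies of $\psi_1$ would, at the endpoint $s=1$, threaten a logarithmic divergence (the relevant quantity being an $\ell^1$-tail of the $H^1$-coefficients of $\psi_1$, which the $\ell^2$ norm does not control); but precisely because the modulating factor is $\mathrm{P}_{\ll\lambda}\varTheta$, genuinely low-frequency, the output $\mathrm{P}_\lambda(\mathrm{P}_{\ll\lambda}\varTheta_i\,\partial_i\psi_1)$ vanishes unless $\psi_1$ is itself excited at frequency $\approx\lambda$. This turns the $\lambda$-sum into a bounded-overlap sum and lets the derivative be absorbed with no loss, which is why no bilinear or multilinear refinement is needed here. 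That is the step I would write out in detail; the rest is mechanical.
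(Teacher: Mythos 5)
Your proposal is correct and follows essentially the same route as the paper: dualize against $V^2_{B^\dagger}H^{-(s-1)}[T]$ via Lemma \ref{ZLEM:UpVpDualityB}, use the frequency localization built into $\mathfrak{P}_\varTheta$ to reduce to a bounded-overlap sum over dyadic scales, extract $\left\|\varTheta(t)\right\|_{L^\infty_x}$ by H\"older, absorb the derivative with Bernstein, apply Cauchy--Schwarz in the frequency parameter to produce $\left\|v(t)\right\|_{H^{-(s-1)}}\left\|\psi_1(t)\right\|_{H^s}$, gain $T^{\frac{1}{2}}$ by H\"older in time, and finish with the energy bound of Lemma \ref{ZLEM:OldLemma4.9}. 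The only difference is expository (you spell out the two paraproduct pieces and the endpoint $s=1$ bookkeeping, which the paper compresses into a single displayed chain of inequalities), so no further changes are needed.
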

	\begin{proof}
		By duality, it suffices to prove
		\begin{equation}\label{ZEQN:DiffEstmt_PBphi_EQN11}
			\left|\int_0^T\int_{\mathbb{R}^2_x}
				\overline{\omega_0}\,
				\mathfrak{P}_\varTheta\psi_1
			\,\mathrm{d}x\,\mathrm{d}t\right|
			\le
			C(s)
			T^{\frac{1}{2}}
			\left\|\omega_0\right\|_{V^2_{B^\dagger}H^{-(s-1)}[T]}
			\left\|\varTheta\right\|_{L^2_tL^\infty_x[T]}
			\left\|\psi_1\right\|_{\Psi^s[T]}
		\;.\end{equation}
		By H\"{o}lder and Bernstein, we have
		\begin{equation*}\begin{split}
			\left|\int_0^T\int_{\mathbb{R}^2_x}
				\overline{\omega_0}\,
				\mathfrak{P}_\varTheta\psi_1
			\,\mathrm{d}x\,\mathrm{d}t\right|
		\lesssim\;&
			\int_0^T
			\sum_\nu
				\left\|\mathrm{P}_\nu\omega_0(t)\right\|_{L^2_x}
				\left\|\varTheta(t)\right\|_{L^\infty_x}
				\nu\left\|\mathrm{P}_{\approx\nu}\psi_1(t)\right\|_{L^2_x}
			\,\mathrm{d}t
		\\\le\;&
			C(s)
			T^{\frac{1}{2}}
			\left\|\omega_0(t)\right\|_{L^\infty_tH^{-(s-1)}[T]}
			\left\|\varTheta\right\|_{L^2_tL^\infty_x[T]}
			\left\|\psi_1\right\|_{L^\infty_tH^s[T]}
		\;.\end{split}\end{equation*}
		Now use Lemma \ref{ZLEM:OldLemma4.9} to replace the $L^\infty_tH^{-(s-1)}[T]$
		norm of $\omega_0$ by the $V^2_{B^\dagger}H^{-(s-1)}[T]$.
		We thus obtain (\ref{ZEQN:DiffEstmt_PBphi_EQN11}).
	\end{proof}
	
	\begin{lemma} \label{ZLEM:DiffEstmt_Q}
		Assume the hypothesis (\ref{ZHYP:nablaB}).
		Let $T\in(0,1]$.
		Then we have
		\begin{equation}\label{ZEQN:DiffEstmt_Q_EQN01}
			\left\|\mathcal{Q}\left[\psi_1,\omega,\psi_2\right]\right\|_{\mathrm{D}U^2_{B^\dagger}H^{s-1}[T]}
			\le
			C(s)
			T^{\frac{1}{2}}
			\left\|\omega\right\|_{\Psi^{s-1}[T]}
			\prod_{\ell=1}^2
			\left\|\psi_\ell\right\|_{\Psi^s[T]}
		\end{equation}
		and
		\begin{equation}\label{ZEQN:DiffEstmt_Q_EQN02}
			\left\|\mathcal{Q}\left[\psi_1,\psi_2,\omega\right]\right\|_{\mathrm{D}U^2_{B^\dagger}H^{s-1}[T]}
			\le
			C(s)
			T^{\frac{1}{2}}
			\left\|\omega\right\|_{\Psi^{s-1}[T]}
			\prod_{\ell=1}^2
			\left\|\psi_\ell\right\|_{\Psi^s[T]}
		\end{equation}
	\end{lemma}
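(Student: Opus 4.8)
The plan is to prove both estimates by the same duality argument used for Lemma \ref{ZLEM:MLEstmt_Q}, feeding in the preliminary bounds of Lemma \ref{ZLEM:DiffEstmt_N2x} for \eqref{ZEQN:DiffEstmt_Q_EQN01} and of Lemma \ref{ZLEM:DispEstmt_N2x} for \eqref{ZEQN:DiffEstmt_Q_EQN02}. By the duality principle of Lemma \ref{ZLEM:UpVpDualityB}, applied with the Sobolev weight $\mathfrak{m}(\lambda)=\lambda^{s-1}$, it suffices to bound
\[
  \left|\int_0^T\int_{\mathbb{R}^2} \overline{\omega_0}\,\mathcal{Q}\left[\,\cdot\,,\cdot\,,\cdot\,\right]\,\mathrm{d}x\,\mathrm{d}t\right|
\]
by $C(s)T^{1/2}\left\|\omega_0\right\|_{V^2_{B^\dagger}H^{-(s-1)}[T]}$ times the asserted product of $\Psi$-seminorms, uniformly over the test function $\omega_0$. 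Unwinding the definition of $\mathcal{Q}$ and Littlewood--Paley decomposing every factor reduces the matter to schematic trilinear integrals $\int_0^T\int_{\mathbb{R}^2}\mathrm{P}_\nu\overline{\omega_0}\;\mathrm{P}_\mu\mathcal{N}^2_x[\,\cdot\,,\cdot\,]\cdot\nabla\mathrm{P}_\lambda(\,\cdot\,)$, where by the very construction of $\mathcal{Q}$ the Biot--Savart factor carries the highest frequency, $\mu\approx\max(\nu,\lambda)$; the two summands of $\mathcal{Q}$ (with outer projection $\mathrm{P}_\lambda$ respectively $\mathrm{P}_{<2^5\lambda}$) are handled identically since both force the output frequency to be $\lesssim\mu$.

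For \eqref{ZEQN:DiffEstmt_Q_EQN01} the difference slot $\omega$ sits inside $\mathcal{N}^2_x$. First I would estimate the middle factor by Lemma \ref{ZLEM:DiffEstmt_N2x},
\[
  \left\|\mathrm{P}_\mu\mathcal{N}^2_x[\psi_1,\omega]\right\|_{L^2_tL^\infty_x[T]}
  \le C(s)\,\mu^{3/4-s}\left\|\psi_1\right\|_{\Psi^s[T]}\left\|\omega\right\|_{\Psi^{s-1}[T]}\,,
\]
then pair it, via H\"older in time (which produces the $T^{1/2}$), with $\left\|\mathrm{P}_\nu\omega_0\right\|_{L^\infty_tL^2_x[T]}\lesssim\nu^{s-1}\left\|\omega_0\right\|_{L^\infty_tH^{-(s-1)}[T]}$ and with $\left\|\mathrm{P}_\lambda\nabla\psi_2\right\|_{L^\infty_tL^2_x[T]}\lesssim\lambda^{1-s}\left\|\psi_2\right\|_{\Psi^s[T]}$, and finally upgrade the $L^\infty_tH^{-(s-1)}$ norm of $\omega_0$ to its $V^2_{B^\dagger}H^{-(s-1)}[T]$ norm by Lemma \ref{ZLEM:OldLemma4.9} (whose constants are harmless by hypothesis \eqref{ZHYP:nablaB}). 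The typical summand $\nu^{s-1}\mu^{3/4-s}\lambda^{1-s}$ is then summed over $\mu\approx\max(\nu,\lambda)$; the derivative gain of $\mathcal{N}^2_x$ encoded in the exponent $3/4-s$ makes the sum converge for every $s\ge1$.

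For \eqref{ZEQN:DiffEstmt_Q_EQN02} the factor being differentiated is the difference $\omega$, while $\mathcal{N}^2_x[\psi_1,\psi_2]$ is an ordinary Biot--Savart term; here I would invoke Lemma \ref{ZLEM:DispEstmt_N2x} specialised to $\mathfrak{m}(\lambda)=\lambda^s$ (so that $\mu^{-1/4}\mathfrak{m}(\mu)^{-1}=\mu^{-1/4-s}$ and all $\Psi^{\mathfrak{m}}$-seminorms become $\Psi^s$-seminorms), pair it with $\left\|\mathrm{P}_\lambda\nabla\omega\right\|_{L^\infty_tL^2_x[T]}\lesssim\lambda^{2-s}\left\|\omega\right\|_{\Psi^{s-1}[T]}$ and with $\mathrm{P}_\nu\omega_0$ as above, and sum. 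The worst interaction is $\mu\approx\lambda\gg\nu$, where the summand is $\nu^{s-1}\mu^{-1/4-s}\lambda^{2-s}\approx\nu^{s-1}\mu^{7/4-2s}$: summing in $\nu\le\mu$ and then in $\mu$ costs at most a logarithm, which is absorbed by the surviving $\mu^{-1/4}$.

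The H\"older bookkeeping and the passages to $V^2$-norms via Lemmas \ref{ZLEM:OldLemma4.9} and \ref{ZLEM:DispEstmt_LinftyCtrl} are routine. The only point that genuinely needs attention is the convergence of these dyadic sums at the endpoint $s=1$, where the logarithm just noted would otherwise be fatal; it works out because $\mathcal{Q}$ places the Biot--Savart factor --- which by Lemmas \ref{ZLEM:DispEstmt_N2x} and \ref{ZLEM:DiffEstmt_N2x} is a full derivative (plus a fractional $L^4$ gain) smoother than $|\phi|^2$ --- at the top frequency, so every term of $\mathcal{Q}$ is strictly better behaved than the corresponding term of $|\phi|^2\phi$, and in particular perturbative. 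There is accordingly no analogue here of the high-high-to-low difficulty that forces $s\ge1$ in the treatment of $\mathcal{N}^2_0$.
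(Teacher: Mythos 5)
Your proposal is correct and follows essentially the same route as the paper: duality via Lemma \ref{ZLEM:UpVpDualityB}, H\"older in time to extract $T^{1/2}$, the bound from Lemma \ref{ZLEM:DiffEstmt_N2x} for \eqref{ZEQN:DiffEstmt_Q_EQN01} and from Lemma \ref{ZLEM:DispEstmt_N2x} (with $\mathfrak{m}(\lambda)=\lambda^s$) for \eqref{ZEQN:DiffEstmt_Q_EQN02}, followed by summation of $\nu^{s-1}\mu^{3/4-s}\lambda^{1-s}$ and $\nu^{s-1}\mu^{-1/4-s}\lambda^{2-s}$ over $\mu\gtrsim\max(\nu,\lambda)$. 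Your endpoint check at $s=1$ and the observation that $\mathcal{Q}$ forces the Biot--Savart factor to the top frequency match the paper's argument exactly.
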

	\begin{proof}
		By duality, the proof of (\ref{ZEQN:DiffEstmt_Q_EQN01}) reduces to verifying the estimate
		\begin{equation}\label{ZEQN:DiffEstmt_Q_EQN11}
			\bigg|\int_0^T\int_{\mathbb{R}^2_x}
				\overline{\omega_0}\,
				\mathcal{Q}\left[\psi_1,\omega,\psi_2\right]
			\,\mathrm{d}x\,\mathrm{d}t\bigg|
			\le
			C(s)
			T^{\frac{1}{2}}
			\left\|\omega_0\right\|_{V^2_{B^\dagger}H^{-(s-1)}[T]}
			\left\|\omega\right\|_{\Psi^{s-1}[T]}
			\prod_{\ell=1}^2
			\left\|\psi_\ell\right\|_{\Psi^s[T]}
		\;.\end{equation}
		By Lemma \ref{ZLEM:DiffEstmt_N2x}, we have
		\begin{equation*}\begin{split}
			\bigg|\int_0^T\int_{\mathbb{R}^2_x}
		&
			\mathrm{P}_\nu\overline{\omega_0}
			\,\mathrm{P}_\mu\mathcal{N}^2_x\left[\psi_1,\omega\right]
			\cdot\nabla\mathrm{P}_\lambda\psi_2
			\,\mathrm{d}x\,\mathrm{d}t\bigg|
		\\\lesssim\;&
			T^{\frac{1}{2}}
			\left\|\mathrm{P}_\nu\omega_0\right\|_{L^\infty_tL^2_x[T]}
			\left\|\mathrm{P}_\mu\mathcal{N}^2_x\left[\psi_1,\omega\right]\right\|_{L^2_tL^\infty_x[T]}
			\lambda\left\|\mathrm{P}_\lambda\psi_2\right\|_{L^\infty_tL^2_x[T]}
		\\\le\;&
			C(s)
			T^{\frac{1}{2}}
			\nu^{s-1}\mu^{\frac{3}{4}-s}\lambda^{1-s}
			\left\|\omega_0\right\|_{V^2_{B^\dagger}H^{-(s-1)}[T]}
			\left\|\omega\right\|_{\Psi^{s-1}[T]}
			\prod_{\ell=1}^2
			\left\|\psi_\ell\right\|_{\Psi^s[T]}
		\;.\end{split}\end{equation*}
		The right-hand side is summable over $\{\mu\gtrsim\max(\lambda,\nu)\}$.
		This gives (\ref{ZEQN:DiffEstmt_Q_EQN01}).
		
		We now turn to proving (\ref{ZEQN:DiffEstmt_Q_EQN02}).
		By duality, it suffices to prove
		\begin{equation}\label{ZEQN:DiffEstmt_Q_EQN21}
			\bigg|\int_0^T\int_{\mathbb{R}^2_x}
				\overline{\omega_0}\,
				\mathcal{Q}\left[\psi_1,\psi_2,\omega\right]
			\,\mathrm{d}x\,\mathrm{d}t\bigg|
			\le
			C(s)
			T^{\frac{1}{2}}
			\left\|\omega_0\right\|_{V^2_{B^\dagger}H^{-(s-1)}[T]}
			\left\|\omega\right\|_{\Psi^{s-1}[T]}
			\prod_{\ell=1}^2
			\left\|\psi_\ell\right\|_{\Psi^s[T]}
		\;.\end{equation}
		By Lemma \ref{ZLEM:DispEstmt_N2x},
		\begin{equation*}\begin{split}
			\bigg|\int_0^T\int_{\mathbb{R}^2_x}
		&
			\mathrm{P}_\nu\overline{\omega_0}
			\,\mathrm{P}_\mu\mathcal{N}^2_x\left[\psi_1,\psi_2\right]
			\cdot\nabla\mathrm{P}_\lambda\omega
			\,\mathrm{d}x\,\mathrm{d}t\bigg|
		\\\lesssim\;&
			T^{\frac{1}{2}}
			\left\|\mathrm{P}_\nu\omega_0\right\|_{L^\infty_tL^2_x[T]}
			\left\|\mathrm{P}_\mu\mathcal{N}^2_x\left[\psi_1,\psi_2\right]\right\|_{L^2_tL^\infty_x[T]}
			\lambda\left\|\mathrm{P}_\lambda\omega\right\|_{L^\infty_tL^2_x[T]}
		\\\le\;&
			C(s)
			T^{\frac{1}{2}}
			\nu^{s-1}\mu^{-\frac{3}{4}-s}\lambda^{2-s}
			\left\|\omega_0\right\|_{V^2_{B^\dagger}H^{-(s-1)}[T]}
			\left\|\omega\right\|_{\Psi^{s-1}[T]}
			\prod_{\ell=1}^2
			\left\|\psi_\ell\right\|_{\Psi^s[T]}
		\;.\end{split}\end{equation*}
		The right-hand side is summable over $\{\mu\gtrsim\max(\lambda,\nu)\}$ to give (\ref{ZEQN:DiffEstmt_Q_EQN21}).
	\end{proof}

	\begin{lemma} \label{ZLEM:DiffEstmt_N2tphi}
		Assume the hypothesis (\ref{ZHYP:nablaB}).
		Assume also that the admissible form $B$ satisfies (\ref{ZEQN:Impose_B}).
		Let $T\in(0,1]$.
		Then we have
		\begin{equation}\label{ZEQN:DiffEstmt_N2tphi_EQN01}
			\left\|\mathcal{N}^2_0\left[\psi_1,\omega\right]\psi_2\right\|_{\mathrm{D}U^2_{B^\dagger}H^{s-1}[T]}
			\le
			C(s)\left(1+M\right)^2
			T^{\frac{1}{2}}
			\left\|\omega\right\|_{\Psi^{s-1}[T]}
			\prod_{\ell=1}^2
			\left\|\psi_\ell\right\|_{\Psi^s[T]}
		\end{equation}
		and
		\begin{equation}\label{ZEQN:DiffEstmt_N2tphi_EQN02}
			\left\|\mathcal{N}^2_0\left[\psi_1,\psi_2\right]\omega\right\|_{\mathrm{D}U^2_{B^\dagger}H^{s-1}[T]}
			\le
			C(s)\left(1+M\right)^2
			T^{\frac{1}{2}}
			\left\|\omega\right\|_{\Psi^{s-1}[T]}
			\prod_{\ell=1}^2
			\left\|\psi_\ell\right\|_{\Psi^s[T]}
		\;.\end{equation}
	\end{lemma}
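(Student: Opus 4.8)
The plan is to establish (\ref{ZEQN:DiffEstmt_N2tphi_EQN01}) and (\ref{ZEQN:DiffEstmt_N2tphi_EQN02}) in parallel, following the proof of Lemma \ref{ZLEM:MLEstmt_N2tphi}. Here the output space is $H^{s-1}$, i.e. $\mathfrak{m}(\lambda)=\lambda^{s-1}$, so by the duality principle of Lemma \ref{ZLEM:UpVpDualityB} (with $p=p'=2$) it suffices to bound, for arbitrary $\omega_0$, the space--time pairings $\bigl|\int_0^T\int_{\mathbb{R}^2}\overline{\omega_0}\,\mathcal{N}^2_0[\psi_1,\omega]\psi_2\,\mathrm{d}x\,\mathrm{d}t\bigr|$ (resp. $\bigl|\int_0^T\int_{\mathbb{R}^2}\overline{\omega_0}\,\mathcal{N}^2_0[\psi_1,\psi_2]\omega\,\mathrm{d}x\,\mathrm{d}t\bigr|$) by the right-hand side of (\ref{ZEQN:DiffEstmt_N2tphi_EQN01}) (resp. (\ref{ZEQN:DiffEstmt_N2tphi_EQN02})) with the product of $\Psi$-norms augmented by a factor $\|\omega_0\|_{V^2_{B^\dagger}H^{-(s-1)}[T]}$. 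One then decomposes the spatial integral dyadically and runs the Littlewood--Paley trichotomy in the three frequencies $\nu$ (of $\omega_0$), $\mu$ (of $\mathcal{N}^2_0$), $\lambda$ (of $\psi_2$, resp. of $\omega$); only the three regimes in which the two largest of $\nu,\mu,\lambda$ are comparable contribute.

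In the regime where $\mathcal{N}^2_0$ is at the lowest frequency, $\nu\approx\lambda\gtrsim\mu$, I would bound $\mathrm{P}_{\lesssim\nu}\mathcal{N}^2_0$ in $L^\infty_{t,x}[T]$ using (\ref{ZEQN:DiffEstmt_N2t_EQN03}) (resp. in $L^4_tL^\infty_x[T]$ using (\ref{ZEQN:DispEstmt_N2t_EQN06})), put the remaining two factors in $L^2_x$, and sum over $\nu$ by Cauchy--Schwarz; the dyadic weights cancel exactly and the time integration costs only a power of $T$ that is at least $T^{\frac{1}{2}}$, hence $\le T^{\frac{1}{2}}$ since $T\le1$. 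In the two remaining regimes, $\nu\approx\mu\gg\lambda$ and $\mu\approx\lambda\gg\nu$, the frequency $\mu$ of $\mathcal{N}^2_0$ is $\ge2$, so I would use the $L^\infty_tL^1_x[T]$ bounds (\ref{ZEQN:DiffEstmt_N2t_EQN02}) (resp. (\ref{ZEQN:DispEstmt_N2t_EQN05})) on $\mathrm{P}_\mu\mathcal{N}^2_0$, together with the $L^4_tL^\infty_x[T]$ estimates of Lemma \ref{ZLEM:DispEstmt_LinftyCtrl} for $\mathrm{P}_\nu\omega_0$ and $\mathrm{P}_\lambda\psi_2$ (resp. $\mathrm{P}_\lambda\omega$); the H\"older split $L^\infty_t\times L^4_t\times L^4_t$ in time then yields exactly $T^{\frac{1}{2}}$. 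In every regime, the $L^\infty_tH^{-(s-1)}[T]$ (resp. $L^4_tL^\infty_x[T]$) norm of $\omega_0$ is finally replaced by $\|\omega_0\|_{V^2_{B^\dagger}H^{-(s-1)}[T]}$ via Lemma \ref{ZLEM:OldLemma4.9Restated}, which is legitimate since $B^\dagger$ satisfies (\ref{ZHYP:nablaB}) and (\ref{ZEQN:Impose_B}); it is precisely this step that produces the factor $(1+M)^2$. One also uses $\|\psi\|_{\Psi^1[T]}\le\|\psi\|_{\Psi^s[T]}$ for $s\ge1$ to absorb the $\Psi^1$-norms returned by Lemma \ref{ZLEM:DispEstmt_N2t}.

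The only real work left is the bookkeeping: one must check that the dyadic products obtained above --- schematically $\mu^{-s}\,\nu^{s-\frac{1}{4}}\,\lambda^{\frac{3}{4}-s}$ for (\ref{ZEQN:DiffEstmt_N2tphi_EQN01}) and $\mu^{-1-s}\,\nu^{s-\frac{1}{4}}\,\lambda^{\frac{7}{4}-s}$ for (\ref{ZEQN:DiffEstmt_N2tphi_EQN02}), with the two comparable frequencies identified --- sum to finite geometric series. Carrying out the inner-variable sum first and then the outer sum, the critical leftover exponent in the resonant regimes is $\mu^{\frac{1}{2}-s}$, which is strictly negative precisely because $s\ge1$. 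I expect this frequency summation --- and, more mundanely, keeping straight which slot the lower-regularity factor $\omega$ sits in, hence whether it is Lemma \ref{ZLEM:DiffEstmt_N2t} or Lemma \ref{ZLEM:DispEstmt_N2t} that is in force --- to be the only delicate point; the genuinely dangerous $\mathcal{N}^2_0$ (i.e. $A_0$) high$\times$high$\to$low interaction has already been absorbed into those two preliminary lemmas, whose estimates in turn are what account for the necessity of the hypothesis $s\ge1$.
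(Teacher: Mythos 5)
Your proposal follows essentially the same route as the paper: duality via Lemma \ref{ZLEM:UpVpDualityB}, the Littlewood--Paley trichotomy in $\nu,\mu,\lambda$, the low-frequency resonant regime handled with (\ref{ZEQN:DiffEstmt_N2t_EQN03}) (resp.\ (\ref{ZEQN:DispEstmt_N2t_EQN06})) plus Cauchy--Schwarz over $\nu$, the off-diagonal regimes with the $L^\infty_tL^1_x$ bounds (\ref{ZEQN:DiffEstmt_N2t_EQN02}) (resp.\ (\ref{ZEQN:DispEstmt_N2t_EQN05})) against two $L^4_tL^\infty_x$ factors, and the $(1+M)^2$ coming from converting $\omega_0$'s $V^2_{B^\dagger}$ norm via Lemma \ref{ZLEM:OldLemma4.9Restated}; the dyadic exponents you record agree with the paper's and sum as you claim. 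No gaps.
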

	\begin{proof}
		By duality, the proof of (\ref{ZEQN:DiffEstmt_N2tphi_EQN01}) reduces to verifying the estimate
		\begin{equation}\label{ZEQN:DiffEstmt_N2tphi_EQN11}\begin{split}
			\bigg|\int_0^T\int_{\mathbb{R}^2}
		&
			\overline{\omega_0}\,\mathcal{N}^2_0\left[\psi_1,\omega\right]\psi_2\,\mathrm{d}x\,\mathrm{d}t\bigg|
		\\&\le
			C(s)\left(1+M\right)^2
			T^{\frac{1}{2}}
			\left\|\omega_0\right\|_{V^2_{B^\dagger}H^{-(s-1)}[T]}
			\left\|\omega\right\|_{\Psi^{s-1}[T]}
			\prod_{\ell=1}^2
			\left\|\psi_\ell\right\|_{\Psi^s[T]}
		\;.\end{split}\end{equation}
		Using (\ref{ZEQN:DiffEstmt_N2t_EQN03}) we have
		\begin{equation*}\begin{split}
			\sum_{\nu}\bigg|\int_0^T\int_{\mathbb{R}^2}
		&
			\mathrm{P}_\nu\overline{\omega_0}\,\mathrm{P}_{\lesssim\nu}\mathcal{N}^2_0\left[\psi_1,\omega\right]\,
			\mathrm{P}_{\approx\nu}\psi_2\,\mathrm{d}x\,\mathrm{d}t\bigg|
		\\\lesssim\;&
			\int_0^T 
				\left\|\mathcal{N}^2_0\left[\psi_1,\omega\right](t)\right\|_{L^\infty_x}
			\sum_\nu
				\left\|\mathrm{P}_\nu\omega_0(t)\right\|_{L^2_x}
				\left\|\mathrm{P}_{\approx\nu}\psi_2(t)\right\|_{L^2_x}
			\mathrm{d}t
		\\\le\;&
			C(s)
			T
			\left\|\omega_0\right\|_{L^\infty_tH^{-(s-1)}[T]}
			\left\|\psi_1\right\|_{\Psi^s[T]}
			\left\|\omega\right\|_{\Psi^{s-1}[T]}
			\left\|\psi_2\right\|_{L^\infty_tH^s[T]}
		\\\le\;&
			C(s)
			T^{\frac{1}{2}}
			\left\|\omega_0\right\|_{V^2_{B^\dagger}H^{-(s-1)}[T]}
			\left\|\omega\right\|_{\Psi^{s-1}[T]}
			\prod_{\ell=1}^2
			\left\|\psi_\ell\right\|_{\Psi^s[T]}
		\;.\end{split}\end{equation*}
		By the Littlewood-Paley trichotomy, to prove (\ref{ZEQN:DiffEstmt_N2tphi_EQN11}) it remains to prove
		\begin{equation}\label{ZEQN:DiffEstmt_N2tphi_EQN12}
			\left(\sum_{\mu\approx\lambda\gg\nu} + \sum_{\mu\approx\nu\gg\lambda}\right)
			\bigg|\int_0^T\int_{\mathbb{R}^2}
			\mathrm{P}_\nu\overline{\omega_0}\,\mathrm{P}_\mu\mathcal{N}^2_0\left[\psi_1,\omega\right]\,
			\mathrm{P}_\lambda\psi_2\,\mathrm{d}x\,\mathrm{d}t\bigg|
		\le
			\mbox{RHS(\ref{ZEQN:DiffEstmt_N2tphi_EQN11})}
		\;.\end{equation}
		For this, we have from (\ref{ZEQN:DiffEstmt_N2t_EQN02}) and the hypotheses that, for $\mu\ge 2$,
		\begin{equation*}\begin{split}
			\bigg|\int_0^T\int_{\mathbb{R}^2}
		&
			\mathrm{P}_\nu\overline{\omega_0}\,\mathrm{P}_\mu\mathcal{N}^2_0\left[\psi_1,\omega\right]\,
			\mathrm{P}_\lambda\psi_2\,\mathrm{d}x\,\mathrm{d}t\bigg|
		\\\le\;&
			C(s)
			T^{\frac{1}{2}}
			\left\|\mathrm{P}_\nu\omega_0\right\|_{L^4_tL^\infty_x[T]}
			\left\|\mathrm{P}_\mu\mathcal{N}^2_0\left[\psi_1,\omega\right]\right\|_{L^\infty_tL^1_x[T]}
			\left\|\mathrm{P}_\lambda\psi_2\right\|_{L^4_tL^\infty_x[T]}
		\\\le\;&
			C(s)
			T^{\frac{1}{2}}
			\left(1+M\right)^2
				\nu^{\frac{3}{4}+(s-1)}
				\mu^{-s}
				\lambda^{\frac{3}{4}-s}
			\left\|\omega_0\right\|_{V^2_{B^\dagger}H^{-(s-1)}[T]}
			\left\|\omega\right\|_{\Psi^{s-1}[T]}
			\prod_{\ell=1}^2
				\left\|\psi_\ell\right\|_{\Psi^s[T]}
		\;.\end{split}\end{equation*}
		Therefore we have (\ref{ZEQN:DiffEstmt_N2tphi_EQN12}). Hence we have proved (\ref{ZEQN:DiffEstmt_N2tphi_EQN01}).

		The proof of (\ref{ZEQN:DiffEstmt_N2tphi_EQN02}) is similar.
		By duality, it suffices to verify the estimate
		\begin{equation}\label{ZEQN:DiffEstmt_N2tphi_EQN21}\begin{split}
			\bigg|\int_0^T\int_{\mathbb{R}^2}
		&
			\overline{\omega_0}\,\mathcal{N}^2_0\left[\psi_1,\psi_2\right]\omega\,\mathrm{d}x\,\mathrm{d}t\bigg|
		\\&\le
			C(s)\left(1+M\right)^2
			T^{\frac{1}{2}}
			\left\|\omega_0\right\|_{V^2_{B^\dagger}H^{-(s-1)}[T]}
			\left\|\omega\right\|_{\Psi^{s-1}[T]}
			\prod_{\ell=1}^2
			\left\|\psi_\ell\right\|_{\Psi^s[T]}
		\;.\end{split}\end{equation}
		Indeed, using (\ref{ZEQN:DispEstmt_N2t_EQN06}) and arguing as above, we obtain
		\begin{equation*}\begin{split}
			\sum_{\nu}\bigg|\int_0^T\int_{\mathbb{R}^2}
		&
			\mathrm{P}_\nu\overline{\omega_0}\,\mathrm{P}_{\lesssim\nu}\mathcal{N}^2_0\left[\psi_1,\psi_2\right]\,
			\mathrm{P}_{\approx\nu}\omega\,\mathrm{d}x\,\mathrm{d}t\bigg|
		\\\le\;&
			C(s)
			T^{\frac{3}{4}}
			\left\|\omega_0\right\|_{V^2_{B^\dagger}H^{-(s-1)}[T]}
			\left\|\omega\right\|_{\Psi^{s-1}[T]}
			\prod_{\ell=1}^2
			\left\|\psi_\ell\right\|_{\Psi^s[T]}
		\;.\end{split}\end{equation*}
		By the Littlewood-Paley trichotomy, to prove (\ref{ZEQN:DiffEstmt_N2tphi_EQN21}) it remains to prove
		\begin{equation}\label{ZEQN:DiffEstmt_N2tphi_EQN22}
			\left(\sum_{\mu\approx\lambda\gg\nu} + \sum_{\mu\approx\nu\gg\lambda}\right)
			\bigg|\int_0^T\int_{\mathbb{R}^2}
			\mathrm{P}_\nu\overline{\omega_0}\,\mathrm{P}_\mu\mathcal{N}^2_0\left[\psi_1,\psi_2\right]\,
			\mathrm{P}_\lambda\omega\,\mathrm{d}x\,\mathrm{d}t\bigg|
		\le
			\mbox{RHS(\ref{ZEQN:DiffEstmt_N2tphi_EQN21})}
		\;.\end{equation}
		Arguing as before, we have from (\ref{ZEQN:DispEstmt_N2t_EQN05}) and the hypotheses that, for $\mu\ge 2$,
		\begin{equation*}\begin{split}
			\bigg|\int_0^T\int_{\mathbb{R}^2}
		&
			\mathrm{P}_\nu\overline{\omega_0}\,\mathrm{P}_\mu\mathcal{N}^2_0\left[\psi_1,\psi_2\right]\,
			\mathrm{P}_\lambda\omega\,\mathrm{d}x\,\mathrm{d}t\bigg|
		\\\lesssim\;&
			T^{\frac{1}{2}}
			\left\|\mathrm{P}_\nu\omega_0\right\|_{L^4_tL^\infty_x[T]}
			\left\|\mathrm{P}_\mu\mathcal{N}^2_0\left[\psi_1,\psi_2\right]\right\|_{L^\infty_tL^1_x[T]}
			\left\|\mathrm{P}_\lambda\omega\right\|_{L^4_tL^\infty_x[T]}
		\\\le\;&
			C(s)
			T^{\frac{1}{2}}
			\left(1+M\right)^2
				\nu^{-\frac{1}{4}+s}
				\mu^{-s-1}
				\lambda^{\frac{7}{4}-s}
			\left\|\omega_0\right\|_{V^2_{B^\dagger}H^{-(s-1)}[T]}
			\left\|\omega\right\|_{U^2_BH^{s-1}[T]}
			\prod_{\ell=1}^2
				\left\|\psi_\ell\right\|_{U^2_BH^s[T]}
		\;.\end{split}\end{equation*}
		Thus (\ref{ZEQN:DiffEstmt_N2tphi_EQN22}) is immediate, and we have completed the proof of (\ref{ZEQN:DiffEstmt_N2tphi_EQN02}).
	\end{proof}

	\begin{lemma} \label{ZLEM:DiffEstmt_N4tphi}
		Assume the hypothesis (\ref{ZHYP:nablaB}).
		Let $T\in(0,1]$.
		Then we have the estimates
		\begin{equation}\label{ZEQN:DiffEstmt_N4tphi_EQN01}
			\left\|\mathcal{N}^4_t\left[\psi_1,\omega,\psi_2,\psi_3\right]\psi_4\right\|_{\mathrm{D}U^2_{B^\dagger}H^{s-1}[T]}
			\le
			C(s)
			T^{\frac{1}{2}}
			\left\|\omega\right\|_{\Psi^{s-1}[T]}
			\prod_{\ell=1}^4
			\left\|\psi_\ell\right\|_{\Psi^s[T]}
		\;,\end{equation}
		\begin{equation}\label{ZEQN:DiffEstmt_N4tphi_EQN02}
			\left\|\mathcal{N}^4_t\left[\psi_1,\psi_2,\psi_3,\omega\right]\psi_4\right\|_{\mathrm{D}U^2_{B^\dagger}H^{s-1}[T]}
			\le
			C(s)
			T^{\frac{1}{2}}
			\left\|\omega\right\|_{\Psi^{s-1}[T]}
			\prod_{\ell=1}^4
			\left\|\psi_\ell\right\|_{\Psi^s[T]}
		\;,\end{equation}
		\begin{equation}\label{ZEQN:DiffEstmt_N4tphi_EQN03}
			\left\|\mathcal{N}^4_t\left[\psi_1,\psi_2,\psi_3,\psi_4\right]\omega\right\|_{\mathrm{D}U^2_{B^\dagger}H^{s-1}[T]}
			\le
			C(s)
			T^{\frac{1}{2}}
			\left\|\omega\right\|_{\Psi^{s-1}[T]}
			\prod_{\ell=1}^4
			\left\|\psi_\ell\right\|_{\Psi^s[T]}
		\;.\end{equation}
	\end{lemma}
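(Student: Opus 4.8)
The plan is to run, essentially verbatim, the argument used for Lemma \ref{ZLEM:MLEstmt_N4tphi}, substituting the difference bounds of Lemma \ref{ZLEM:DiffEstmt_N4t} for Lemma \ref{ZLEM:DispEstmt_N4t} in the two slots where the indeterminate $\omega$ sits inside $\mathcal{N}^4_t$. First, for each of the three estimates I would apply the duality principle, Lemma \ref{ZLEM:UpVpDualityB}, with $p=p'=2$ and $\mathfrak{m}(\lambda)=\lambda^{s-1}$, thereby reducing the $\mathrm{D}U^2_{B^\dagger}H^{s-1}[T]$ bound to a spacetime estimate of the shape
\[
	\left|\int_0^T\int_{\mathbb{R}^2} \overline{\omega_0}\,\mathcal{N}^4_t[\cdots]\,(\text{outer factor})\,\mathrm{d}x\,\mathrm{d}t\right| \le \mbox{RHS}
\]
taken over all $\omega_0\in V^2_{B^\dagger}H^{-(s-1)}[T]$ with $\|\omega_0\|_{V^2_{B^\dagger}H^{-(s-1)}[T]}\le 1$. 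For (\ref{ZEQN:DiffEstmt_N4tphi_EQN01}) and (\ref{ZEQN:DiffEstmt_N4tphi_EQN02}) the outer factor is the genuine $\psi_4$, and the frequency-localized bound on $\mathrm{P}_\mu\mathcal{N}^4_t$ is (\ref{ZEQN:DiffEstmt_N4t_EQN01}) resp.\ (\ref{ZEQN:DiffEstmt_N4t_EQN02}); for (\ref{ZEQN:DiffEstmt_N4tphi_EQN03}) the indeterminate $\omega$ is the outer factor and all four arguments of $\mathcal{N}^4_t$ are $\psi$'s, so no difference estimate applies and I would instead use Lemma \ref{ZLEM:DispEstmt_N4t} with the choice $\mathfrak{m}(\lambda)=\lambda^s$, together with the trivial embedding $\Psi^s[T]\hookrightarrow\Psi^1[T]$ (valid since $s\ge 1$) to replace every $\Psi^1[T]$ norm that appears by a $\Psi^s[T]$ norm.

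Next I would decompose the integrand by Littlewood--Paley projections, writing $\nu$ for the frequency of $\omega_0$, $\mu$ for that of $\mathcal{N}^4_t[\cdots]$, and $\lambda$ for that of the remaining free factor, and split into the low-output (resonant) regime $\mu\lesssim\nu\approx\lambda$ and the two high-output regimes $\mu\approx\nu\gg\lambda$ and $\mu\approx\lambda\gg\nu$, exactly as in Lemma \ref{ZLEM:MLEstmt_N4tphi}. In the resonant regime I would place $\mathrm{P}_{\lesssim\nu}\mathcal{N}^4_t[\cdots]$ in $L^2_tL^\infty_x[T]$ --- summing the bound of Lemma \ref{ZLEM:DiffEstmt_N4t} (or \ref{ZLEM:DispEstmt_N4t}) over all dyadic $\mu$, which converges since the gain $\mu^{3/4-s}$ (resp.\ $\mu^{-1/4-s}$) is summable for $s\ge 1>\tfrac34$ --- put $\mathrm{P}_\nu\omega_0$ and $\mathrm{P}_{\approx\nu}(\text{outer})$ in $L^\infty_tL^2_x[T]$, pair the two $L^2_x$ factors by Cauchy--Schwarz in $\nu$ so that the weights $\nu^{-(s-1)}$ and $\nu^{\,s-1}$ cancel (leaving $\|\omega_0\|_{L^\infty_tH^{-(s-1)}[T]}$ times $\|(\text{outer})\|_{L^\infty_tH^{s-1}[T]}$, resp.\ $\|\cdot\|_{L^\infty_tH^s[T]}$), and extract $T^{1/2}$ from the elementary inequality $\int_0^T|f(t)|\,\mathrm{d}t\le T^{1/2}\|f\|_{L^2_t[0,T]}$ applied with $f$ the $L^\infty_x$-norm of $\mathcal{N}^4_t$. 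Finally I would pass from the $L^\infty_tH^{-(s-1)}[T]$ norm of $\omega_0$ to its $V^2_{B^\dagger}H^{-(s-1)}[T]$ norm via Lemma \ref{ZLEM:OldLemma4.9}, the exponential constant being harmless under hypothesis (\ref{ZHYP:nablaB}).

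For the two high-output regimes I would localize every factor in frequency, bound $\mathrm{P}_\mu\mathcal{N}^4_t$ in $L^2_tL^\infty_x[T]$ by Lemma \ref{ZLEM:DiffEstmt_N4t} (resp.\ \ref{ZLEM:DispEstmt_N4t}), $\mathrm{P}_\nu\omega_0$ in $L^\infty_tL^2_x[T]$, and the free factor in $L^\infty_tL^2_x[T]$ (converting a $\mathrm{P}_\lambda$ to $L^4_tL^\infty_x[T]$ via Lemma \ref{ZLEM:DispEstmt_LinftyCtrl} if more room is wanted), extract $T^{1/2}$ by H\"older in time, and then verify that the resulting powers of $\nu,\mu,\lambda$ --- which, under the constraint $\mu\approx\max(\nu,\lambda)$, reduce essentially to $\lambda^{3/4-2s}$ or $\nu^{-5/4}$ up to the other, lower, dyadic variable --- are summable, a Cauchy--Schwarz in the two remaining indices (or simply noting that a logarithmic loss is absorbed by the $\lambda^{-5/4}$ decay) closing the sums. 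I expect the only genuinely delicate point to be this last summability bookkeeping in the regime $\mu\approx\lambda\gg\nu$ of (\ref{ZEQN:DiffEstmt_N4tphi_EQN03}): the combined decay coming from $\mathcal{N}^4_t$ and from the low-regularity factor $\omega$ is only $\lambda^{3/4-2s}$, which at the endpoint $s=1$ barely survives the summation over the lower frequency; the reason the argument does not collapse here, unlike for $\mathcal{N}^2_0$ in Lemma \ref{ZLEM:DiffEstmt_N2t}, is the additional $|\phi|^2$-type smoothing built into $\mathcal{N}^4_t$ through the factor $\mathcal{N}^2_x$. Assembling the three regimes for each of (\ref{ZEQN:DiffEstmt_N4tphi_EQN01})--(\ref{ZEQN:DiffEstmt_N4tphi_EQN03}) yields the lemma.
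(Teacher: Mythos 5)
Your proposal is correct and follows essentially the same route as the paper: duality via Lemma \ref{ZLEM:UpVpDualityB}, a Littlewood--Paley trichotomy, the frequency-localized bounds of Lemma \ref{ZLEM:DiffEstmt_N4t} when $\omega$ sits inside $\mathcal{N}^4_t$ and of Lemma \ref{ZLEM:DispEstmt_N4t} (with $\Psi^s\hookrightarrow\Psi^1$) when $\omega$ is the outer factor, with the resonant regime closed by summing over $\mu$ and pairing the two $L^2_x$ factors by Cauchy--Schwarz in $\nu$. The only cosmetic difference is that for (\ref{ZEQN:DiffEstmt_N4tphi_EQN01})--(\ref{ZEQN:DiffEstmt_N4tphi_EQN02}) the paper sums the single localized bound $\nu^{s-1}\mu^{3/4-s}\lambda^{-s}$ over the whole trichotomy at once rather than isolating the resonant regime; your exponent bookkeeping in the remaining regimes matches the paper's.
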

	\begin{proof}
		We first prove (\ref{ZEQN:DiffEstmt_N4tphi_EQN01}). By duality, it suffices to prove the estimate
		\[
			\bigg|\int_0^T\int_{\mathbb{R}^2}
			\overline{\omega_0}\,\mathcal{N}^4_t\left[\psi_1,\omega,\psi_2,\psi_3\right]\psi_4\,\mathrm{d}x\,\mathrm{d}t\bigg|
		\le
			C(s)
			T^{\frac{1}{2}}
			\left\|\omega_0\right\|_{V^2_{B^\dagger}H^{-(s-1)}[T]}
			\left\|\omega\right\|_{\Psi^{s-1}[T]}
			\prod_{\ell=1}^4
			\left\|\psi_\ell\right\|_{\Psi^s[T]}
		\;.\]
		For this, using (\ref{ZEQN:DiffEstmt_N4t_EQN01}) from Lemma \ref{ZLEM:DiffEstmt_N4t} gives
		\begin{equation*}\begin{split}
			\bigg|\int_0^T\int_{\mathbb{R}^2}
		&
			\mathrm{P}_\nu\overline{\omega_0}\,
			\mathrm{P}_\mu\mathcal{N}^4_t\left[\psi_1,\omega,\psi_2,\psi_3\right]
			\,\mathrm{P}_\lambda\psi_4\,\mathrm{d}x\,\mathrm{d}t\bigg|
		\\\le\;&
			\left\|\mathrm{P}_\nu\omega_0\right\|_{L^\infty_tL^2_x[T]}
			\left\|\mathrm{P}_\mu\mathcal{N}^4_t\left[\psi_1,\omega,\psi_2,\psi_3\right]\right\|_{L^2_tL^\infty_x[T]}
			\left\|\mathrm{P}_\lambda\psi_4\right\|_{L^\infty_tL^2_x[T]}
		\\\le\;&
			C(s)
			T^{\frac{1}{2}}
			\nu^{s-1}
			\mu^{\frac{3}{4}-s}
			\lambda^{-s}
			\left\|\omega_0\right\|_{V^2_{B^\dagger}H^{-(s-1)}[T]}
			\left\|\omega\right\|_{\Psi^{s-1}[T]}
			\prod_{\ell=1}^4
			\left\|\psi_\ell\right\|_{\Psi^s[T]}
		\end{split}\end{equation*}
		which is certainly summable over the regime where the larger two of $\{\nu,\mu,\lambda\}$ are comparable.
		Therefore we have proved (\ref{ZEQN:DiffEstmt_N4tphi_EQN01}).
		
		The proof of (\ref{ZEQN:DiffEstmt_N4tphi_EQN02}) is exactly the same, except that (\ref{ZEQN:DiffEstmt_N4t_EQN02})
		is used in place of (\ref{ZEQN:DiffEstmt_N4t_EQN01}).
		
		We turn to the proof of (\ref{ZEQN:DiffEstmt_N4tphi_EQN03}).
		By duality, it suffices to prove the estimate
		\begin{equation} \label{ZEQN:DiffEstmt_N4tphi_EQN31}
			\bigg|\int_0^T\int_{\mathbb{R}^2}
			\overline{\omega_0}\,\mathcal{N}^4_t\left[\psi_1,\psi_2,\psi_3,\psi_4\right]\omega\,\mathrm{d}x\,\mathrm{d}t\bigg|
		\le
			C(s)
			T^{\frac{1}{2}}
			\left\|\omega_0\right\|_{V^2_{B^\dagger}H^{-(s-1)}[T]}
			\left\|\omega\right\|_{\Psi^{s-1}[T]}
			\prod_{\ell=1}^4
			\left\|\psi_\ell\right\|_{\Psi^s[T]}
		\;.\end{equation}
		Firstly, using Lemma \ref{ZLEM:DispEstmt_N4t}, we have
		\begin{equation*}\begin{split}
			\sum_\nu
			\bigg|\int_0^T\int_{\mathbb{R}^2}
		&
			\mathrm{P}_\nu\overline{\omega_0}\,
			\mathrm{P}_{\lesssim\nu}\mathcal{N}^4_t\left[\psi_1,\psi_2,\psi_3,\psi_4\right]
			\,\mathrm{P}_{\approx\nu}\omega\,\mathrm{d}x\,\mathrm{d}t\bigg|
		\\\lesssim\;&
			\int_0^T
			\sum_\nu
			\left\|\mathrm{P}_\nu\omega_0(t)\right\|_{L^2_x}
			\left\|\mathcal{N}^4_t\left[\psi_1,\psi_2,\psi_3,\psi_4\right](t)\right\|_{L^\infty_x}
			\left\|\mathrm{P}_{\approx\nu}\omega(t)\right\|_{L^2_x}
			\mathrm{d}t
		\\\le\;&
			C(s)
			T^{\frac{1}{2}}
			\left\|\omega_0\right\|_{L^\infty_tH^{-(s-1)}[T]}
			\left\|\mathcal{N}^4_t\left[\psi_1,\psi_2,\psi_3,\psi_4\right]\right\|_{L^2_tL^\infty_x[T]}
			\left\|\omega\right\|_{L^\infty_tH^{s-1}[T]}
		\\\le\;&
			C(s)
			T^{\frac{1}{2}}
			\left\|\omega_0\right\|_{V^2_{B^\dagger}H^{-(s-1)}[T]}
			\left\|\omega\right\|_{\Psi^{s-1}[T]}
			\prod_{\ell=1}^4
			\left\|\psi_\ell\right\|_{\Psi^s[T]}
		\;.\end{split}\end{equation*}
		By the Littlewood-Paley trichotomy, it remains to prove
		\begin{equation} \label{ZEQN:DiffEstmt_N4tphi_EQN32}
			\left(\sum_{\nu\approx\mu\gg\lambda} + \sum_{\lambda\approx\mu\gg\nu}\right)
			\bigg|\int_0^T\int_{\mathbb{R}^2}
			\mathrm{P}_\nu\overline{\omega_0}\,
			\mathrm{P}_\mu\mathcal{N}^4_t\left[\psi_1,\psi_2,\psi_3,\psi_4\right]
			\mathrm{P}_\lambda\omega\,\mathrm{d}x\,\mathrm{d}t\bigg|
			\le
			\mbox{RHS(\ref{ZEQN:DiffEstmt_N4tphi_EQN31})}
		\;.\end{equation}
		For this, using Lemma \ref{ZLEM:DispEstmt_N4t} we have
		\begin{equation*}\begin{split}
			\bigg|\int_0^T\int_{\mathbb{R}^2}
		&
			\mathrm{P}_\nu\overline{\omega_0}\,
			\mathrm{P}_\mu\mathcal{N}^4_t\left[\psi_1,\psi_2,\psi_3,\psi_4\right]
			\mathrm{P}_\lambda\omega\,\mathrm{d}x\,\mathrm{d}t\bigg|
		\\\le\;&
			C(s)
			T^{\frac{1}{2}}
			\left\|\mathrm{P}_\nu\omega_0\right\|_{L^\infty_tL^2_x[T]}
			\left\|\mathrm{P}_\mu\mathcal{N}^4_t\left[\psi_1,\psi_2,\psi_3,\psi_4\right]\right\|_{L^2_tL^\infty_x[T]}
			\left\|\mathrm{P}_\lambda\omega\right\|_{L^\infty_tL^2_x[T]}
		\\\le\;&
			C(s)
			T^{\frac{1}{2}}
			\nu^{s-1}
			\mu^{-\frac{1}{4}-s}
			\lambda^{-(s-1)}
			\left\|\omega_0\right\|_{V^2_{B^\dagger}H^{-(s-1)}[T]}
			\left\|\omega\right\|_{L^\infty_tH^{s-1}[T]}
			\prod_{\ell=1}^4
			\left\|\psi_\ell\right\|_{\Psi^s[T]}
		\;.\end{split}\end{equation*}
		Therefore (\ref{ZEQN:DiffEstmt_N4tphi_EQN32}) follows immediately.
	\end{proof}
	
	\begin{lemma} \label{ZLEM:DiffEstmt_N4xphi}
		Assume the hypothesis (\ref{ZHYP:nablaB}).
		Let $T\in(0,1]$.
		Then we have the estimates
		\begin{equation}\label{ZEQN:DiffEstmt_N4xphi_EQN01}
			\left\|\mathcal{N}^4_x\left[\psi_1,\psi_2,\psi_3,\omega\right]\psi_4\right\|_{\mathrm{D}U^2_{B^\dagger}H^{s-1}[T]}
			\le
			C(s)
			T^{\frac{1}{2}}
			\left\|\omega\right\|_{\Psi^{s-1}[T]}
			\prod_{\ell=1}^4
			\left\|\psi_\ell\right\|_{\Psi^s[T]}
		\end{equation}
		and
		\begin{equation}\label{ZEQN:DiffEstmt_N4xphi_EQN02}
			\left\|\mathcal{N}^4_x\left[\psi_1,\psi_2,\psi_3,\psi_4\right]\omega\right\|_{\mathrm{D}U^2_{B^\dagger}H^{s-1}[T]}
			\le
			C(s)
			T^{\frac{1}{2}}
			\left\|\omega\right\|_{\Psi^{s-1}[T]}
			\prod_{\ell=1}^4
			\left\|\psi_\ell\right\|_{\Psi^s[T]}
		\;.\end{equation}
	\end{lemma}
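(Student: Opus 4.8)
The plan is to mimic the proof of Lemma~\ref{ZLEM:MLEstmt_N4xphi} almost verbatim, the only change being that the slot of one $\mathcal{N}^2_x$ factor carrying $\omega$ (for~(\ref{ZEQN:DiffEstmt_N4xphi_EQN01})), respectively the outer factor $\omega$ (for~(\ref{ZEQN:DiffEstmt_N4xphi_EQN02})), is now measured in $\Psi^{s-1}[T]$ rather than in $\Psi^{\mathfrak{m}}[T]$. First I would invoke the duality principle, Lemma~\ref{ZLEM:UpVpDualityB}, reducing each estimate to a bound on $\left|\int_0^T\int_{\mathbb{R}^2}\overline{\omega_0}\,(\cdots)\,\mathrm{d}x\,\mathrm{d}t\right|$ over all $\omega_0$ in the unit ball of $V^2_{B^\dagger}H^{-(s-1)}[T]$; then decompose every factor into Littlewood--Paley pieces, estimate each frequency regime, and at the end use Lemma~\ref{ZLEM:OldLemma4.9} together with the embedding $V^2_{B^\dagger}\hookrightarrow U^4_{B^\dagger}$ (Lemma~\ref{ZLEM:UpVpEmbeddingB}) and hypothesis~(\ref{ZHYP:nablaB}) for $B^\dagger$ to replace any $\|\omega_0\|_{L^\infty_tH^{-(s-1)}[T]}$ or $\|\mathrm{P}_\nu\omega_0\|_{L^\infty_tL^2_x[T]}$ that appears by the $V^2_{B^\dagger}H^{-(s-1)}[T]$ norm. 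No power of $(1+M)$ enters, because throughout the factor $\mathrm{e}^{C_1\|\nabla B^\dagger\|_{L^1_tL^\infty_x[1]}}\le C$ is available, and the $L^\infty_{t,x}$-control of the all-$\psi$ factors $\mathcal{N}^2_x[\cdot,\cdot]$ comes from Lemma~\ref{ZLEM:EnergyEstmt_N2x} (using $s\ge1$) rather than from (\ref{ZEQN:K1_defn}).

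For~(\ref{ZEQN:DiffEstmt_N4xphi_EQN02}), writing $\mathcal{N}^4_x[\psi_1,\psi_2,\psi_3,\psi_4]=-\mathrm{i}\,\mathcal{N}^2_x[\psi_1,\psi_2]\cdot\mathcal{N}^2_x[\psi_3,\psi_4]$, both $\mathcal{N}^2_x$ factors involve only the $\psi_\ell$, and the argument is a transcription of the proof of Lemma~\ref{ZLEM:MLEstmt_N4xphi} with $\omega$ taking the role of $\psi_5$. By the Littlewood--Paley trichotomy and the symmetry $(\psi_1,\psi_2)\leftrightarrow(\psi_3,\psi_4)$ it reduces to the two regimes analogous to (\ref{ZEQN:MLEstmt_N4xphi_EQN12}) and (\ref{ZEQN:MLEstmt_N4xphi_EQN13}): in the first, the top-frequency $\mathcal{N}^2_x$ factor, localised to $\mathrm{P}_\mu$, is put in $L^2_tL^\infty_x[T]$ by Lemma~\ref{ZLEM:DispEstmt_N2x} (with $\mathfrak{m}(\lambda)=\lambda^s$, gaining $\mu^{-\frac14-s}$), the other $\mathcal{N}^2_x$ factor is put in $L^\infty_{t,x}[T]$ by Lemma~\ref{ZLEM:EnergyEstmt_N2x}, and $\omega_0,\omega$ are put in $L^\infty_tL^2_x[T]$, so that H\"older in time (three $L^\infty_t$ factors, one $L^2_t$ factor) produces the $T^{\frac12}$; in the second regime both $\mathcal{N}^2_x$ factors sit at low frequency and are estimated in $L^\infty_{t,x}[T]$, while $\sum_\nu\|\mathrm{P}_\nu\omega_0\|_{L^2_x}\|\mathrm{P}_{\approx\nu}\omega\|_{L^2_x}\le\|\omega_0\|_{H^{-(s-1)}}\|\omega\|_{H^{s-1}}$ by Cauchy--Schwarz. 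The frequency sums converge exactly as in Lemma~\ref{ZLEM:MLEstmt_N4xphi}, since $s\ge1>\tfrac14$.

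For~(\ref{ZEQN:DiffEstmt_N4xphi_EQN01}), writing $\mathcal{N}^4_x[\psi_1,\psi_2,\psi_3,\omega]=-\mathrm{i}\,\mathcal{N}^2_x[\psi_1,\psi_2]\cdot\mathcal{N}^2_x[\psi_3,\omega]$, the two $\mathcal{N}^2_x$ factors are no longer symmetric, so the trichotomy splits into three regimes according to which factor carries the top frequency. \emph{(i) $\mathcal{N}^2_x[\psi_3,\omega]$ top, localised to $\mathrm{P}_\mu$:} estimate it in $L^2_tL^\infty_x[T]$ by Lemma~\ref{ZLEM:DiffEstmt_N2x} (gaining $\mu^{\frac34-s}$), estimate $\mathcal{N}^2_x[\psi_1,\psi_2]$ in $L^\infty_{t,x}[T]$ by Lemma~\ref{ZLEM:EnergyEstmt_N2x}, put $\omega_0,\psi_4$ in $L^\infty_tL^2_x[T]$; H\"older in time gives $T^{\frac12}$ and the sum over $\mu\gtrsim\nu$ converges since $\tfrac34-s<0$. \emph{(ii) $\mathcal{N}^2_x[\psi_1,\psi_2]$ top, localised to $\mathrm{P}_\mu$:} estimate it in $L^2_tL^\infty_x[T]$ by Lemma~\ref{ZLEM:DispEstmt_N2x} (gaining $\mu^{-\frac14-s}$). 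Since the companion low-frequency factor $\mathrm{P}_{\lesssim\mu}\mathcal{N}^2_x[\psi_3,\omega]$ need not be bounded when $\omega$ is only in $L^2_x$ (the case $s=1$), so that Lemma~\ref{ZLEM:EnergyEstmt_N2x} is unavailable, I would instead use Hardy--Littlewood--Sobolev to get $\|\mathcal{N}^2_x[\psi_3,\omega]\|_{L^4_x}\lesssim\|\psi_3\,\omega\|_{L^{4/3}_x}\le\|\psi_3\|_{L^4_x}\|\omega\|_{L^2_x}\le C(s)\|\psi_3\|_{H^s}\|\omega\|_{H^{s-1}}$, whence by Bernstein $\|\mathrm{P}_{\lesssim\mu}\mathcal{N}^2_x[\psi_3,\omega]\|_{L^\infty_x}\lesssim\mu^{\frac12}\|\mathcal{N}^2_x[\psi_3,\omega]\|_{L^4_x}$; placing $\omega_0,\psi_4$ in $L^\infty_tL^2_x[T]$ and using H\"older in time yields a summand $\lesssim T^{\frac12}\mu^{\frac14-s}(\cdots)$, summable over $\mu\gtrsim\nu$ since $\tfrac14-s<0$. \emph{(iii) neither $\mathcal{N}^2_x$ factor is at top frequency:} then both are low, $\psi_4$ (or $\omega_0$) carries the top frequency, $\mathcal{N}^2_x[\psi_1,\psi_2]$ is bounded in $L^\infty_{t,x}[T]$ by Lemma~\ref{ZLEM:EnergyEstmt_N2x}, $\mathrm{P}_{\ll\nu}\mathcal{N}^2_x[\psi_3,\omega]$ is bounded in $L^2_tL^\infty_x[T]$ uniformly in $\nu$ by summing Lemma~\ref{ZLEM:DiffEstmt_N2x} over the dyadic frequencies (as $\sum_\mu\mu^{\frac34-s}<\infty$), and $\sum_\nu\|\mathrm{P}_\nu\omega_0\|_{L^2_x}\|\mathrm{P}_{\approx\nu}\psi_4\|_{L^2_x}\le\|\omega_0\|_{H^{-(s-1)}}\|\psi_4\|_{H^s}$, so H\"older in time again gives $T^{\frac12}$; this mirrors (\ref{ZEQN:MLEstmt_N4xphi_EQN13}).

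What remains is routine but lengthy: inside each of the regimes (i)--(iii) one decomposes further according to which two of the four frequencies (of $\omega_0$, $\mathcal{N}^2_x[\psi_1,\psi_2]$, $\mathcal{N}^2_x[\psi_3,\omega]$ and $\psi_4$) are comparable, and checks summability in each sub-regime, exactly as in Lemma~\ref{ZLEM:MLEstmt_N4xphi} and in the difference estimates already proved in Lemmas~\ref{ZLEM:DiffEstmt_Q}--\ref{ZLEM:DiffEstmt_N4tphi}. The one place calling for a little care is regime~(ii): one must check that loading the entire loss of one derivative onto the $\Psi^{s-1}[T]$ slot still leaves a negative net power of the top frequency $\mu$. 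This works because the top factor $\mathcal{N}^2_x[\psi_1,\psi_2]$ itself gains $\mu^{-\frac14-s}$ from the smoothing $|\nabla|^{-1}$ inside $\mathcal{N}^2_x$ (Lemma~\ref{ZLEM:DispEstmt_N2x}) while Bernstein costs only $\mu^{\frac12}$, and $-\tfrac14-s+\tfrac12=\tfrac14-s<0$ for $s\ge1$.
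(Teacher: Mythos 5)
Your proposal is correct and follows essentially the same route as the paper: duality via Lemma \ref{ZLEM:UpVpDualityB}, the Littlewood--Paley trichotomy, and the same component estimates (Lemmas \ref{ZLEM:EnergyEstmt_N2x}, \ref{ZLEM:DispEstmt_N2x}, \ref{ZLEM:DiffEstmt_N2x}), with the $T^{\frac12}$ coming from H\"older in time exactly as in Lemma \ref{ZLEM:MLEstmt_N4xphi}. The only (harmless) difference is organizational: for (\ref{ZEQN:DiffEstmt_N4xphi_EQN01}) the paper writes a single four-frequency summand $\nu^{s-1}\mu_1^{\frac14-s}\mu_2^{\frac34-s}\lambda^{-s}$ and sums over the regime where the two largest frequencies are comparable, whereas you case-split into three regimes and in your regime (ii) replace the paper's frequency-localized $L^\infty_{t,x}$ bound on $\mathrm{P}_{\mu_1}\mathcal{N}^2_x[\psi_1,\psi_2]$ by a Hardy--Littlewood--Sobolev plus Bernstein bound on the companion factor, which yields the same summable exponents.
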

	\begin{proof}
		By duality, the proof of (\ref{ZEQN:DiffEstmt_N4xphi_EQN01}) reduces to proving the estimate
		\begin{equation*}\begin{split}
			\bigg|\int_0^T\int_{\mathbb{R}^2_x}
			\overline{\omega_0}\,
			\mathcal{N}^4_x\left[\psi_1,\psi_2,\psi_3,\omega\right]\psi_4
			\,\mathrm{d}x\,\mathrm{d}t\bigg|
		\le\;&
			C(s)T^{\frac{1}{2}}
			\left\|\omega_0\right\|_{V^2_{B^\dagger}H^{-(s-1)}[T]}
			\left\|\omega\right\|_{\Psi^{s-1}[T]}
			\prod_{\ell=1}^4
			\left\|\psi_\ell\right\|_{\Psi^s[T]}
		\;.\end{split}\end{equation*}
		This, in turn, follows from using Lemmas \ref{ZLEM:EnergyEstmt_N2x} and \ref{ZLEM:DiffEstmt_N2x} to obtain
		\begin{equation*}\begin{split}
			\bigg|\int_0^T\int_{\mathbb{R}^2_x}
		&
			\mathrm{P}_\nu\overline{\omega_0}\,
			\mathrm{P}_{\mu_1}\mathcal{N}^2_x\left[\psi_1,\psi_2\right]\cdot
			\mathrm{P}_{\mu_2}\mathcal{N}^2_x\left[\psi_3,\omega\right]
			\,\mathrm{P}_\lambda\psi_4
			\,\mathrm{d}x\,\mathrm{d}t\bigg|
		\\\le\;&
			T^{\frac{1}{2}}
			\left\|\mathrm{P}_\nu\omega_0\right\|_{L^\infty_tL^2_x[T]}
			\left\|\mathrm{P}_{\mu_1}\mathcal{N}^2_x\left[\psi_1,\psi_2\right]\right\|_{L^\infty_{t,x}[T]}
			\left\|\mathrm{P}_{\mu_2}\mathcal{N}^2_x\left[\psi_3,\omega\right]\right\|_{L^2_tL^\infty_x[T]}
			\left\|\mathrm{P}_\lambda\psi_4\right\|_{L^\infty_tL^2_x[T]}
		\\\le\;&
			C(s)
			T^{\frac{1}{2}}
			\nu^{s-1}
			\mu_1^{\frac{1}{4}-s}
			\mu_2^{\frac{3}{4}-s}
			\lambda^{-s}
			\left\|\omega_0\right\|_{V^2_{B^\dagger}H^{-(s-1)}[T]}
			\left\|\omega\right\|_{\Psi^{s-1}[T]}
			\prod_{\ell=1}^4
			\left\|\psi_\ell\right\|_{\Psi^s[T]}
		\;,\end{split}\end{equation*}
		and observing that the right-hand side is summable over the regime where the two largest of $\{\nu,\mu_1,\mu_2,\lambda\}$
		are comparable.
		
		We now turn to the proof of (\ref{ZEQN:DiffEstmt_N4xphi_EQN02}). By duality, this reduces to proving
		\begin{equation*}\begin{split}
			\bigg|\int_0^T\int_{\mathbb{R}^2_x}
			\overline{\omega_0}\,
			\mathcal{N}^4_x\left[\psi_1,\psi_2,\psi_3,\psi_4\right]\omega
			\,\mathrm{d}x\,\mathrm{d}t\bigg|
		\le\;&
			C(s)T^{\frac{1}{2}}
			\left\|\omega_0\right\|_{V^2_{B^\dagger}H^{-(s-1)}[T]}
			\left\|\omega\right\|_{\Psi^{s-1}[T]}
			\prod_{\ell=1}^4
			\left\|\psi_\ell\right\|_{\Psi^s[T]}
		\;.\end{split}\end{equation*}
		Firstly, we have, using Lemma \ref{ZLEM:EnergyEstmt_N2x}, that
		\begin{equation*}\begin{split}
			\sum_\nu
			\bigg|\int_0^T\int_{\mathbb{R}^2_x}
		&
			\mathrm{P}_\nu\overline{\omega_0}\,
			\mathrm{P}_{\ll\nu}\mathcal{N}^2_x\left[\psi_1,\psi_2\right]\cdot
			\mathrm{P}_{\ll\nu}\mathcal{N}^2_x\left[\psi_3,\psi_4\right]\,
			\mathrm{P}_{\approx\nu}\omega
			\,\mathrm{d}x\,\mathrm{d}t\bigg|
		\\\le\;&
			\int_0^T \sum_\nu
			\left\|\mathrm{P}_\nu\omega_0(t)\right\|_{L^2_x}
			\left\|\mathcal{N}^2_x\left[\psi_1,\psi_2\right](t)\right\|_{L^\infty_x}
			\left\|\mathcal{N}^2_x\left[\psi_3,\psi_4\right](t)\right\|_{L^\infty_x}
			\left\|\mathrm{P}_{\approx\nu}\omega(t)\right\|_{L^2_x}
			\,\mathrm{d}t
		\\\le\;&
			C(s)T
			\left\|\omega_0\right\|_{L^\infty_tH^{-(s-1)}[T]}
			\left\|\mathcal{N}^2_x\left[\psi_1,\psi_2\right]\right\|_{L^\infty_{t,x}[T]}
			\left\|\mathcal{N}^2_x\left[\psi_3,\psi_4\right]\right\|_{L^\infty_{t,x}[T]}
			\left\|\omega\right\|_{L^\infty_tH^{s-1}[T]}
		\\\le\;&
			C(s)T^{\frac{1}{2}}
			\left\|\omega_0\right\|_{V^2_{B^\dagger}H^{-(s-1)}[T]}
			\left\|\omega\right\|_{\Psi^{s-1}[T]}
			\prod_{\ell=1}^4
			\left\|\psi_\ell\right\|_{\Psi^s[T]}
		\;.\end{split}\end{equation*}
		By the Littlewood-Paley trichotomy and symmetry, it remains to show that
		\begin{equation}\label{ZEQN:DiffEstmt_N4xphi_EQN21}\begin{split}
			\sum_{\mu,\nu\;:\;\mu\gtrsim\nu}
			\bigg|\int_0^T\int_{\mathbb{R}^2_x}
		&
			\mathrm{P}_\nu\overline{\omega_0}\,
			\mathrm{P}_\mu\mathcal{N}^2_x\left[\psi_1,\psi_2\right]\cdot
			\mathrm{P}_{\le\mu}\mathcal{N}^2_x\left[\psi_3,\psi_4\right]
			\,\omega
			\,\mathrm{d}x\,\mathrm{d}t\bigg|
		\\\le\;&
			C(s)T^{\frac{1}{2}}
			\left\|\omega_0\right\|_{V^2_{B^\dagger}H^{-(s-1)}[T]}
			\left\|\omega\right\|_{\Psi^{s-1}[T]}
			\prod_{\ell=1}^4
			\left\|\psi_\ell\right\|_{\Psi^s[T]}
		\;.\end{split}\end{equation}
		Indeed, by Lemmas \ref{ZLEM:EnergyEstmt_N2x} and \ref{ZLEM:DispEstmt_N2x}, we have
		\begin{equation*}\begin{split}
			\bigg|\int_0^T\int_{\mathbb{R}^2_x}
		&
			\mathrm{P}_\nu\overline{\omega_0}\,
			\mathrm{P}_\mu\mathcal{N}^2_x\left[\psi_1,\psi_2\right]\cdot
			\mathrm{P}_{\le\mu}\mathcal{N}^2_x\left[\psi_3,\psi_4\right]
			\,\omega
			\,\mathrm{d}x\,\mathrm{d}t\bigg|
		\\\le\;&
			T^{\frac{1}{2}}
			\left\|\mathrm{P}_\nu\omega_0\right\|_{L^\infty_tL^2_x[T]}
			\left\|\mathrm{P}_\mu\mathcal{N}^2_x\left[\psi_1,\psi_2\right]\right\|_{L^2_tL^\infty_x[T]}
			\left\|\mathcal{N}^2_x\left[\psi_3,\psi_4\right]\right\|_{L^\infty_{t,x}[T]}
			\left\|\omega\right\|_{L^\infty_tL^2_x[T]}
		\\\le\;&
			C(s)
			T^{\frac{1}{2}}
			\nu^{s-1}
			\mu^{-\frac{1}{4}-s}
			\left\|\omega_0\right\|_{V^2_{B^\dagger}H^{-(s-1)}[T]}
			\left\|\omega\right\|_{\Psi^{s-1}[T]}
			\prod_{\ell=1}^4
			\left\|\psi_\ell\right\|_{\Psi^s[T]}
		\;,\end{split}\end{equation*}
		and then (\ref{ZEQN:DiffEstmt_N4xphi_EQN21}) follows immediately.
	\end{proof}

	\begin{lemma} \label{ZLEM:DiffEstmt_phi3}
		Assume the hypothesis (\ref{ZHYP:nablaB}).
		Let $T\in(0,1]$.
		Then we have the estimate
		\begin{equation}\label{ZEQN:DiffEstmt_phi3_EQN01}
			\left\|\psi_1\psi_2\omega\right\|_{\mathrm{D}U^2_{B^\dagger}H^{s-1}[T]}
			\le
			C(s)
			T^{\frac{1}{2}}
			\left\|\omega\right\|_{\Psi^{s-1}[T]}
			\prod_{\ell=1}^2
			\left\|\psi_\ell\right\|_{\Psi^s[T]}
		\;.\end{equation}
	\end{lemma}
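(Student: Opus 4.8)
The plan is to follow the template of Lemma~\ref{ZLEM:MLEstmt_phi3}, being careful about which slot carries which norm, since here the factor $\omega$ only has regularity $s-1$ whereas $\psi_1,\psi_2$ have regularity $s$. By the duality principle of Lemma~\ref{ZLEM:UpVpDualityB} it suffices to bound $\bigl|\int_0^T\int_{\mathbb{R}^2}\overline{\omega_0}\,\psi_1\psi_2\omega\,\mathrm{d}x\,\mathrm{d}t\bigr|$ by $C(s)T^{\frac12}\|\omega_0\|_{V^2_{B^\dagger}H^{-(s-1)}[T]}\|\omega\|_{\Psi^{s-1}[T]}\prod_{\ell=1}^2\|\psi_\ell\|_{\Psi^s[T]}$ for all $\omega_0\in V^2_{B^\dagger}H^{-(s-1)}[T]$. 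Two design decisions govern the proof. First, the test function $\omega_0$ will only ever be measured in $L^\infty_tH^{-(s-1)}[T]$-type norms, which are recovered from the $V^2_{B^\dagger}H^{-(s-1)}[T]$ norm via Lemma~\ref{ZLEM:OldLemma4.9} at a cost of only $C(s)\mathrm{e}^{C_1\|\nabla B^\dagger\|_{L^1_tL^\infty_x[1]}}\le C(s)$ by hypothesis~(\ref{ZHYP:nablaB}); in particular $\omega_0$ is never tested in an $L^4_tL^\infty_x$ Strichartz norm, which is precisely what eliminates the factor $(1+M)^2$ appearing in Lemma~\ref{ZLEM:MLEstmt_phi3}. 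Second, the low-regularity factor $\omega$ will only ever be placed in $L^2_x$-based norms (either $L^\infty_tL^2_x$, which is legitimate because $s-1\ge0$ forces $H^{s-1}\hookrightarrow L^2_x$, or paired against a piece of $\overline{\omega_0}$ at a comparable dyadic scale), while all the $L^4_tL^\infty_x$ Strichartz norms will be borne by $\psi_1,\psi_2$, for which Lemma~\ref{ZLEM:DispEstmt_LinftyCtrl} gives $\|\mathrm{P}_\lambda\psi_\ell\|_{L^4_tL^\infty_x[T]}\lesssim\lambda^{3/4-s}\|\psi_\ell\|_{\Psi^s[T]}$ --- a genuine gain since $s\ge1$.

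Next I would write $\int\overline{\omega_0}\,\psi_1\psi_2\omega=\sum_\nu\int\mathrm{P}_\nu\overline{\omega_0}\,\mathrm{P}_{\approx\nu}(\psi_1\psi_2\omega)$ and apply the Littlewood--Paley trichotomy to the triple product $\psi_1\psi_2\omega$, which, using the $\psi_1\leftrightarrow\psi_2$ symmetry, leaves four regimes: \textup{(A1)} $\omega$ at frequency $\approx\nu$ with $\psi_1,\psi_2$ at frequencies $\lesssim\nu$; \textup{(A2)} $\psi_1$ at frequency $\approx\nu$ with $\psi_2,\omega$ at frequencies $\lesssim\nu$; \textup{(B)} $\psi_1$ and $\omega$ at a common frequency $\approx\lambda\gg\nu$ with $\psi_2$ at frequency $\lesssim\lambda$; \textup{(C)} $\psi_1$ and $\psi_2$ at a common frequency $\approx\lambda\gg\nu$ with $\omega$ at frequency $\lesssim\lambda$. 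In every regime I apply H\"older in space then in time, spending one factor $\|1\|_{L^2_t}=T^{\frac12}$. In \textup{(A1)} I would pair $\mathrm{P}_\nu\overline{\omega_0}$ with $\mathrm{P}_{\approx\nu}\omega$ in $L^2_x$ and sum over $\nu$ by Cauchy--Schwarz \emph{at each fixed time} (so that only a supremum over $t$ of an $\ell^2$-sum occurs, never an $\ell^1$-sum of suprema), with $\psi_1,\psi_2$ in $L^4_tL^\infty_x$; the dyadic weights cancel exactly and the time integration then produces $T^{\frac12}\|\omega_0\|_{L^\infty_tH^{-(s-1)}}\|\omega\|_{L^\infty_tH^{s-1}}\|\psi_1\|_{L^4_tL^\infty_x}\|\psi_2\|_{L^4_tL^\infty_x}$. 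In \textup{(A2)} I take $\mathrm{P}_\nu\overline{\omega_0}\in L^\infty_tL^2_x$ (crudely bounded by $\nu^{s-1}\|\omega_0\|_{L^\infty_tH^{-(s-1)}}$), $\mathrm{P}_{\approx\nu}\psi_1\in L^4_tL^\infty_x$ (gaining $\nu^{3/4-s}$), $\psi_2\in L^4_tL^\infty_x$, $\omega\in L^\infty_tL^2_x$; the net scale factor is $\nu^{s-1}\cdot\nu^{3/4-s}=\nu^{-1/4}$, dyadically summable. In \textup{(B)} and \textup{(C)} I place the two high-frequency $\psi$-factors in $L^4_tL^\infty_x$ (each contributing $\lambda^{3/4-s}$), keep $\omega_0$ and $\omega$ in $L^\infty_tL^2_x$, sum the low frequencies $\nu$ (and, in \textup{(C)}, $\rho$) up to $\lambda$, and are left with an outer dyadic sum over $\lambda$ whose exponent, $-s+\tfrac34$ in \textup{(B)} and $\tfrac12-s$ in \textup{(C)}, is strictly negative for $s\ge1$.

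The step I expect to demand the most care is the endpoint $s=1$. There the inner sums $\sum_{\nu\lesssim\lambda}\nu^{s-1}\|\mathrm{P}_\nu\omega_0\|_{L^2_x}$ and $\sum_{\rho\lesssim\lambda}\rho^{-(s-1)}\|\mathrm{P}_\rho\omega\|_{L^2_x}$ in regimes \textup{(B)}, \textup{(C)} degenerate from geometric series to sums of $O(\log\lambda)$ comparable terms, inserting extra powers of $\log\lambda$; one must verify that the outer sums $\sum_\lambda\lambda^{-\delta}(\log\lambda)^{c}$ still converge over dyadic $\lambda$, which they do for any $\delta>0$ since they amount to $\sum_k 2^{-\delta k}k^{c}$. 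The other point that must be respected --- not merely checked --- is that $\omega$ is never placed in $L^4_tL^\infty_x$: Lemma~\ref{ZLEM:DispEstmt_LinftyCtrl} only yields $\|\mathrm{P}_\mu\omega\|_{L^4_tL^\infty_x[T]}\lesssim\mu^{7/4-s}\|\omega\|_{\Psi^{s-1}[T]}$, so $\omega\notin L^4_tL^\infty_x[T]$ in general when $s\le\tfrac74$, and (as already noted) $\omega_0$ is never placed in a Strichartz norm; the four-regime decomposition is arranged precisely so that in each regime every $L^4_tL^\infty_x$ norm is carried by a $\psi$-factor, whose regularity $s\ge1$ makes it harmless. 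Summing the four regime bounds and replacing $\|\omega_0\|_{L^\infty_tH^{-(s-1)}[T]}$ throughout by $\|\omega_0\|_{V^2_{B^\dagger}H^{-(s-1)}[T]}$ yields~(\ref{ZEQN:DiffEstmt_phi3_EQN01}).
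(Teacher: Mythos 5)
Your proposal is correct and follows essentially the same route as the paper: duality via Lemma \ref{ZLEM:UpVpDualityB}, then frequency-localized H\"older placing $\omega_0$ and $\omega$ in $L^\infty_tL^2_x$ and $\psi_1,\psi_2$ in $L^4_tL^\infty_x$ via Lemma \ref{ZLEM:DispEstmt_LinftyCtrl}, followed by dyadic summation over the regime where the two largest frequencies are comparable. Your explicit Cauchy--Schwarz pairing of $\mathrm{P}_\nu\overline{\omega_0}$ with $\mathrm{P}_{\approx\nu}\omega$ at fixed time in the regime where these two carry the comparable highest frequencies is in fact necessary (the crude product of the individual dyadic bounds is not summable there), so you have made explicit a step the paper's one-line summability claim leaves implicit.
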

	\begin{proof}
		By duality, it suffices to prove the estimate
		\[
			\bigg|\int_0^T\int_{\mathbb{R}^2_x}
			\overline{\omega_0}\,\psi_1\psi_2\omega\,\mathrm{d}x\,\mathrm{d}t\bigg|
		\le
			C(s)
			T^{\frac{1}{2}}
			\left\|\omega_0\right\|_{\mathrm{V}^2_BH^{-(s-1)}[T]}
			\left\|\omega\right\|_{\Psi^{s-1}[T]}
			\prod_{\ell=1}^2
			\left\|\psi_\ell\right\|_{\Psi^s[T]}
		\;.\]
		For this, using Lemma \ref{ZLEM:DispEstmt_LinftyCtrl} we have
		\begin{equation*}\begin{split}
			\bigg|\int_0^T\int_{\mathbb{R}^2_x}
		&
			\mathrm{P}_\nu\overline{\omega_0}\,\mathrm{P}_{\mu_1}\psi_1\,\mathrm{P}_{\mu_2}\psi_2\,\mathrm{P}_\lambda\omega\,\mathrm{d}x\,\mathrm{d}t\bigg|
		\\\le\;&
			T^{\frac{1}{2}}
			\left\|\mathrm{P}_\nu\omega_0\right\|_{L^\infty_tL^2_x[T]}
			\left\|\mathrm{P}_{\mu_1}\psi_1\right\|_{L^4_tL^\infty_x[T]}
			\left\|\mathrm{P}_{\mu_2}\psi_2\right\|_{L^4_tL^\infty_x[T]}
			\left\|\mathrm{P}_\lambda\omega\right\|_{L^\infty_tL^2_x[T]}
		\\\le\;&
			C(s)
			T^{\frac{1}{2}}
			\nu^{s-1}
			\mu_1^{\frac{3}{4}-s}
			\mu_2^{\frac{3}{4}-s}
			\lambda^{-(s-1)}
			\left\|\omega_0\right\|_{V^2_{B^\dagger}H^{-(s-1)}[T]}
			\left\|\omega\right\|_{\Psi^{s-1}[T]}
			\prod_{\ell=1}^2
			\left\|\psi_\ell\right\|_{\Psi^s[T]}
		\;.\end{split}\end{equation*}
		Now simply observe that the right-hand side is summable over the regime where the two largest frequencies among $\{\nu,\mu_1,\mu_2,\lambda\}$
		are comparable.
		Therefore we obtain (\ref{ZEQN:DiffEstmt_phi3_EQN01}).
	\end{proof}

\subsection{Proof of Theorem \ref{ZTHM:Sect6MainThm}}
	By Theorem \ref{ZTHM:Sect5MainThm}, with $T\le\delta_1(s)(1+M)^{-28}$, we have the bounds $\|\psi\|_{U^2_BH^s[T]}\le 2M$ and $\|\psi'\|_{U^2_{B'}H^s[T]}\le 2M$.
	Thus, by Lemma \ref{ZLEM:OldLemma4.9Restated} and the $U^2\hookrightarrow V^2_{\mathrm{rc}}$ embedding, we have
	\[
		\left\|\psi\right\|_{\Psi^s[T]} \;,\; \left\|\psi'\right\|_{\Psi^s[T]}
		\le
		C(s)\left(1+M\right)^3
	\;.\]
	Similarly,
	\[
		\left\|\psi-\psi'\right\|_{\Psi^{s-1}[T]} \le C(s)\left(1+M\right)^2\left\|\psi-\psi'\right\|_{U^2_BH^{s-1}[T]}
	\;.\]
	Now, apply Lemmas \ref{ZLEM:DiffEstmt_PBphi},
	\ref{ZLEM:DiffEstmt_Q}, \ref{ZLEM:DiffEstmt_N2tphi}, \ref{ZLEM:DiffEstmt_N4tphi}, \ref{ZLEM:DiffEstmt_N4xphi}, \ref{ZLEM:DiffEstmt_phi3},
	with each $\psi_\ell$ being $\psi$ or $\psi'$ or their complex conjugates,
	and $\omega$ being $\psi-\psi'$ or its complex conjugate.
	We find, respectively,
	\begin{equation*}\begin{split}
		\left\|\mathfrak{P}_{B^\dagger-B}\psi\right\|_{\mathrm{D}U^2_{B^\dagger}H^{s-1}[T]}
	\le\;&
		C(s)T^{\frac{1}{2}}
		\left(1+M\right)^3
		\left\|B-B^\dagger\right\|_{L^2_tL^\infty_x[T]}
	\;,\\
		\left\|\mathfrak{P}_{B'-B^\dagger}\psi'\right\|_{\mathrm{D}U^2_{B^\dagger}H^{s-1}[T]}
	\le\;&
		C(s)T^{\frac{1}{2}}
		\left(1+M\right)^3
		\left\|B'-B^\dagger\right\|_{L^2_tL^\infty_x[T]}
	\;,\\
		\left\|\mathcal{Q}\left[\overline{\psi},\psi,\psi\right] - \mathcal{Q}\left[\overline{\psi'},\psi',\psi'\right]\right\|_{\mathrm{D}U^2_{B^\dagger}H^{s-1}[T]}
	\le\;&
		C(s)T^{\frac{1}{2}}
		\left(1+M\right)^8
		\left\|\psi-\psi'\right\|_{U^2_{B^\dagger}H^{s-1}[T]}
	\;,\\
		\left\|\mathcal{N}^2_0\left[\overline{\psi},\psi\right]\psi - \mathcal{N}^2_0\left[\overline{\psi'},\psi'\right]\psi'\right\|_{\mathrm{D}U^2_{B^\dagger}H^{s-1}[T]}
	\le\;&
		C(s)T^{\frac{1}{2}}
		\left(1+M\right)^{10}
		\left\|\psi-\psi'\right\|_{U^2_{B^\dagger}H^{s-1}[T]}
	\;,\\
		\left\|\mathcal{N}^4_t\left[\overline{\psi},\psi,\overline{\psi},\psi\right]\psi
		- \mathcal{N}^4_t\left[\overline{\psi'},\psi',\overline{\psi'},\psi'\right]\psi'\right\|_{\mathrm{D}U^2_{B^\dagger}H^{s-1}[T]}
	\le\;&
		C(s)T^{\frac{1}{2}}
		\left(1+M\right)^{14}
		\left\|\psi-\psi'\right\|_{U^2_{B^\dagger}H^{s-1}[T]}
	\;,\\
		\left\|\mathcal{N}^4_x\left[\overline{\psi},\psi,\overline{\psi},\psi\right]\psi
		- \mathcal{N}^4_x\left[\overline{\psi'},\psi',\overline{\psi'},\psi'\right]\psi'\right\|_{\mathrm{D}U^2_{B^\dagger}H^{s-1}[T]}
	\le\;&
		C(s)T^{\frac{1}{2}}
		\left(1+M\right)^{14}
		\left\|\psi-\psi'\right\|_{U^2_{B^\dagger}H^{s-1}[T]}
	\;,\\
		\left\|\left|\psi\right|^2\psi - \left|\psi'\right|^2\psi'\right\|_{\mathrm{D}U^2_{B^\dagger}H^{s-1}[T]}
	\le\;&
		C(s)T^{\frac{1}{2}}
		\left(1+M\right)^8
		\left\|\psi-\psi'\right\|_{U^2_{B^\dagger}H^{s-1}[T]}
	\;.\end{split}\end{equation*}
	Hence, applying Duhamel's formula to (\ref{ZEQN:DifferenceEqn}) and using the above estimates, we obtain
	\begin{equation*}\begin{split}
		\left\|\psi-\psi'\right\|_{U^2_{B^\dagger}H^{s-1}[T]}
	\le\;&
		\left\|\psi(0)-\psi'(0)\right\|_{H^{s-1}}
	\\&
		+ C(s)T^{\frac{1}{2}}\left(1+M\right)^3
		\left(
			\left\|B-B^\dagger\right\|_{L^2_tL^\infty_x[T]} + \left\|B'-B^\dagger\right\|_{L^2_tL^\infty_x[T]}
		\right)
	\\&
		+ C(s)T^{\frac{1}{2}}\left(1+M\right)^{14}\left\|\psi-\psi'\right\|_{U^2_{B^\dagger}H^{s-1}[T]}
	\;.\end{split}\end{equation*}
	Therefore, if $T=\delta_2(1+M)^{-28}$ with $\delta_2=\delta_2(s,\varepsilon)$ chosen sufficiently small,
	we can subtract the last term on the right-hand side from the left, and conclude
	\[
		\left\|\psi-\psi'\right\|_{U^2_{B^\dagger}H^{s-1}[T]}
		\le
		\varepsilon
		\left(
			\left\|B-B^\dagger\right\|_{L^2_tL^\infty_x[T]} + \left\|B'-B^\dagger\right\|_{L^2_tL^\infty_x[T]}
		\right)
		+ C\left\|\psi(0)-\psi'(0)\right\|_{H^{s-1}}
	\;.\]

	The proof of Theorem \ref{ZTHM:Sect6MainThm} is complete.

\section{Completion of the Proof of the Main Theorem} \label{ZSECT:Conclusion}

	We are now ready to complete the proofs of Theorem \ref{ZTHM:MainThm} and Corollary \ref{ZCOR:MainThmCor}.
	We first prove the following auxiliary lemma,
	which will be necessary to upgrade $L^\infty_tH^{s-1}[T]$ convergence to $L^\infty_tH^s[T]$ convergence in the following proofs.

	\begin{lemma} \label{ZLEM:SobWtLem}
		Let $s\ge 1$ and let $\mathcal{K}$ be a compact subset of $H^s$.
		Then there exists a Sobolev weight $\mathfrak{m}$ satisfying the hypothesis (\ref{ZHYP:SobWt}), which additionally satisfies
		\begin{equation} \label{ZEQN:SobWtLem_EQN01}
			\lim_{\lambda\to\infty} \frac{\mathfrak{m}(\lambda)}{\lambda} = \infty
		\end{equation}
		and
		\begin{equation} \label{ZEQN:SobWtLem_EQN02}
			\sup_{w\in\mathcal{K}}\left\|w\right\|_{H^{\mathfrak{m}}} \le 2\sup_{w\in\mathcal{K}} \left\|w\right\|_{H^s}
		\;.\end{equation}
	\end{lemma}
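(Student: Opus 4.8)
The plan is to exploit compactness of $\mathcal{K}$ in $H^s$ to obtain a uniform tail bound, and then to build a Sobolev weight $\mathfrak{m}$ which grows slightly faster than $\lambda^s$ (in fact faster than $\lambda$ past a point) along a sparse sequence of dyadic scales, but does so slowly enough that the hypothesis (\ref{ZHYP:SobWt}) — namely $s\le[\mathfrak{m}]_\star\le[\mathfrak{m}]^\star\le s+\tfrac18$ — is preserved. First I would set $D_0 := \sup_{w\in\mathcal{K}}\|w\|_{H^s}$ (finite since $\mathcal{K}$ is compact, hence bounded). By compactness, for each $j\ge 1$ there is a dyadic scale $\Lambda_j\in\mathfrak{D}$, which we may take strictly increasing with $\Lambda_j\ge 2^j$, such that
\[
	\sup_{w\in\mathcal{K}} \sum_{\lambda> \Lambda_j} \lambda^{2s}\|\mathrm{P}_\lambda w\|_{L^2_x}^2
	\le 4^{-j} D_0^2 .
\]
Indeed, a finite $\varepsilon$-net argument reduces this to the statement that the tail of the $H^s$ norm of a single fixed function tends to zero, which is immediate from the definition of the Besov-type $H^s$ norm used in the paper.

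Next I would define the weight. Put $\mathfrak{m}(1)=1$ and on each dyadic block prescribe the logarithmic increments $\theta(\lambda):=\log_2(\mathfrak{m}(2\lambda)/\mathfrak{m}(\lambda))$ as follows: set $\theta(\lambda)=s$ for all $\lambda$ except when $\Lambda_j\le\lambda<\Lambda_{j+1}$ for some $j$, in which case distribute a total extra increment of exactly $\tfrac18$ — say $\theta(\lambda)=s+\tfrac{1}{8N_j}$ on precisely the $N_j:=\log_2(\Lambda_{j+1}/\Lambda_j)$ scales in that block, so that the accumulated boost over the block is $\tfrac18$. Then $s\le\theta(\lambda)\le s+\tfrac18$ everywhere, so (\ref{ZHYP:SobWt}) holds and $[\mathfrak{m}]\le s+\tfrac18\le C(s)$. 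Moreover, telescoping, $\mathfrak{m}(\Lambda_j) = 2^{s\log_2\Lambda_j}\cdot 2^{(j-1)/8} = \Lambda_j^s\,2^{(j-1)/8}$, while for $\lambda^s\le\mathfrak{m}(\lambda)\le\lambda^s 2^{(j)/8}$ once $\lambda\ge\Lambda_j$; hence $\mathfrak{m}(\lambda)/\lambda \ge \lambda^{s-1}\to\infty$ if $s>1$, and if $s=1$ then $\mathfrak{m}(\lambda)/\lambda\ge 2^{(j-1)/8}\to\infty$ as $\lambda\ge\Lambda_j\to\infty$ along $j$. Since $\mathfrak{m}(\lambda)/\lambda$ is (up to the bounded factor $2^{\theta-1}$) monotone across blocks, this gives (\ref{ZEQN:SobWtLem_EQN01}).

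It remains to verify the norm bound (\ref{ZEQN:SobWtLem_EQN02}). For $w\in\mathcal{K}$ split the sum over $\lambda\le\Lambda_1$ and over the blocks $\Lambda_j\le\lambda<\Lambda_{j+1}$. On $\{\lambda\le\Lambda_1\}$ we have $\mathfrak{m}(\lambda)\le\lambda^{s}$ (no boosts yet), contributing at most $\|w\|_{H^s}^2\le D_0^2$. On the $j$-th block, $\mathfrak{m}(\lambda)^2\le \lambda^{2s} 2^{j/4}\le\lambda^{2s}\,2^{j}$, so using the tail estimate,
\[
	\sum_{\Lambda_j\le\lambda<\Lambda_{j+1}} \mathfrak{m}(\lambda)^2\|\mathrm{P}_\lambda w\|_{L^2_x}^2
	\le 2^{j}\sum_{\lambda>\Lambda_j}\lambda^{2s}\|\mathrm{P}_\lambda w\|_{L^2_x}^2
	\le 2^{j}\cdot 4^{-j}D_0^2 = 2^{-j}D_0^2 .
\]
Summing over $j\ge 1$ gives $\sum_{\lambda>\Lambda_1}\mathfrak{m}(\lambda)^2\|\mathrm{P}_\lambda w\|_{L^2_x}^2\le D_0^2$, so $\|w\|_{H^{\mathfrak{m}}}^2\le 2D_0^2$, i.e. $\|w\|_{H^{\mathfrak{m}}}\le\sqrt2\,D_0\le 2D_0$, uniformly in $w\in\mathcal{K}$. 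This is exactly (\ref{ZEQN:SobWtLem_EQN02}).

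The one delicate point — the step I expect to require the most care — is the bookkeeping that reconciles three competing demands on the increments $\theta(\lambda)$: they must stay in $[s,s+\tfrac18]$ (for (\ref{ZHYP:SobWt})), their partial sums beyond $\log_2\lambda\cdot s$ must diverge (for (\ref{ZEQN:SobWtLem_EQN01})), yet the accumulated boost must grow slowly enough — precisely, no faster than roughly $\tfrac14\log_2$ of the reciprocal tail mass — that the $2^{j}$ factor above is absorbed by the $4^{-j}$ gain. Choosing the block structure $\Lambda_j$ first from the compactness-derived tail bound, and only then fitting a total boost of $\tfrac18$ into each block regardless of its length, cleanly decouples these three requirements; the sparsity $\Lambda_j\ge 2^j$ is what makes $\mathfrak{m}/\lambda\to\infty$ while the fixed per-block boost of $\tfrac18$ keeps $[\mathfrak{m}]^\star\le s+\tfrac18$.
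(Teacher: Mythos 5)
Your proposal is correct and follows essentially the same route as the paper: compactness yields a uniform dyadic tail bound (you via an $\varepsilon$-net, the paper via a subsequence/contradiction argument), and the weight is $\lambda^s$ times a factor gaining $2^{1/8}$ per block, with the per-block boost of $\tfrac18$ small enough to be absorbed by the geometric tail decay. The only cosmetic difference is that you spread the $\tfrac18$ increment across each block while the paper concentrates it at the block boundaries; both satisfy hypothesis (\ref{ZHYP:SobWt}) and give (\ref{ZEQN:SobWtLem_EQN01})--(\ref{ZEQN:SobWtLem_EQN02}).
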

	\begin{proof}
		By rescaling, we may assume without loss of generality that
		\[
			\sup_{w\in\mathcal{K}} \left\|w\right\|_{H^s} = 1
		\;.\]

		We claim that, for every $\varepsilon\in(0,1]$, there exists $\varLambda=\varLambda(\varepsilon)\in\mathfrak{D}$ such that
		\[
			\sup_{w\in\mathcal{K}} \left(\sum_{\lambda\ge\varLambda}
				 \lambda^{2s}\left\|\mathrm{P}_\lambda w\right\|_{L^2_x}^2\right)^{\frac{1}{2}}
			\le \varepsilon
		\;.\]
		Indeed, suppose for a contradiction that our claim was false.
		Then, for every $\mu\in\mathfrak{D}$ there exists $w_\mu\in\mathcal{K}$ such that
		\[
			\left(\sum_{\lambda\;:\;\lambda\ge\mu}
				 \lambda^{2s}\left\|\mathrm{P}_\lambda w_\mu\right\|_{L^2_x}^2\right)^{\frac{1}{2}}
			> \varepsilon
		\;.\]
		Since $\mathcal{K}$ is compact, there exists a subsequence $\mu_m\to\infty$ such that $w_{\mu_m}$ converges to some $w_\infty\in\mathcal{K}$ in $H^s$.
		As $w_\infty\in H^s$, there exists $\nu\in\mathfrak{D}$ such that
		\[
			\left(\sum_{\lambda\;:\;\lambda\ge\nu}
				 \lambda^{2s}\left\|\mathrm{P}_\lambda w_\infty\right\|_{L^2_x}^2\right)^{\frac{1}{2}}
			\le \frac{\varepsilon}{2}
		\;.\]
		However, the triangle inequality gives, for $\mu_m\ge\nu$,
		\begin{equation*}\begin{split}
			\left(\sum_{\lambda\;:\;\lambda\ge\nu}
				\lambda^{2s}\left\|\mathrm{P}_\lambda \left(w_\infty-w_{\mu_m}\right)\right\|_{L^2_x}^2\right)^{\frac{1}{2}}
		\ge\;&
			\left(\sum_{\lambda\;:\;\lambda\ge\nu}
				\lambda^{2s}\left\|\mathrm{P}_\lambda w_{\mu_m}\right\|_{L^2_x}^2\right)^{\frac{1}{2}}
			-
			\left(\sum_{\lambda\;:\;\lambda\ge\nu}
				\lambda^{2s}\left\|\mathrm{P}_\lambda w_\infty\right\|_{L^2_x}^2\right)^{\frac{1}{2}}
		\\>\;& \frac{\varepsilon}{2}
		\end{split}\end{equation*}
		which contradicts the aforementioned convergence $w_{\mu_m}\to w_\infty$.

		Set $\nu_0:=1$ and for $m\in\{1,2,3,\ldots\}$ let $\nu_m$  be the smallest element of $\mathfrak{D}$ strictly greater than $\nu_{m-1}$
		such that
		\[
			\sup_{w\in\mathcal{K}} \left(\sum_{\lambda\ge\nu_m}
				 \lambda^{2s}\left\|\mathrm{P}_\lambda w\right\|_{L^2_x}^2\right)^{\frac{1}{2}}
			\le 2^{-m}
		\;.\]
		Set $\mathfrak{m}(\lambda):=2^{\frac{1}{8}m}\lambda^s$ for $\nu_m\le\lambda<\nu_{m+1}$.
		Clearly $\mathfrak{m}$ satisfies the hypothesis (\ref{ZHYP:SobWt}) and (\ref{ZEQN:SobWtLem_EQN01}),
		and it is straightforward to verify that (\ref{ZEQN:SobWtLem_EQN02}) also holds.
	\end{proof}

\subsection{Proof of Theorem \ref{ZTHM:MainThm}: Existence of solutions}
	Let $\varepsilon=\varepsilon(s,D)\in(0,1]$ be a small constant which we will choose later.
	Let $T=\delta_2(1+2D)^{-28}$ where $\delta_2=\delta_2(s,\varepsilon)$ is given in Theorem \ref{ZTHM:Sect6MainThm}.
	Let $M:=2D$.

	Now, let the initial data $\phi^{\mathrm{in}}\in H^s$ be given with $\|\phi^{\mathrm{in}}\|_{H^s}\le D$.
	By Lemma \ref{ZLEM:SobWtLem}, we may choose a Sobolev weight $\mathfrak{m}$ satisfying the hypothesis (\ref{ZHYP:SobWt}) and (\ref{ZEQN:SobWtLem_EQN01}),
	such that $\|\phi^{\mathrm{in}}\|_{H^{\mathfrak{m}}}\le M$.
	Using Theorem \ref{ZTHM:Sect5MainThm},
	starting from $A_x^{[0]}=0$,
	we construct the iterates $\phi^{[n]}\in U^2_{A_x^{[n-1]}}H^{\mathfrak{m}}[T]$ solving the iteration scheme (\ref{ZEQN:ItrtnSchemeSuccinct}).

	We claim that, provided $\varepsilon=\varepsilon(s,D)$ was chosen small enough,
	$\{\phi^{[n]}\}_{n=1}^\infty$ will be a Cauchy sequence $L^\infty_tH^s[T]$.
	Indeed, applying Theorem \ref{ZTHM:Sect6MainThm} with $B^\dagger = A_x^{[n-1]}$ and $(\psi,B) = (\phi^{[n]},A_x^{[n-1]})$ and $(\psi',B')=(\phi^{[n+1]},A_x^{[n]})$, we find
	\[
		\left\|\phi^{[n]}-\phi^{[n+1]}\right\|_{U^2_{A_x^{[n-1]}}H^{s-1}[T]}
		\le
		\varepsilon\left\|A_x^{[n-1]} - A_x^{[n]}\right\|_{L^2_tL^\infty_x[T]}
	\;.\]
	By Lemma \ref{ZLEM:DiffEstmt_N2x} and Lemma \ref{ZLEM:OldLemma4.9Restated}, we may replace the right-hand side by
	\begin{equation*}\begin{split}
		\left\|\phi^{[n]}-\phi^{[n+1]}\right\|_{U^2_{A_x^{[n-1]}}H^{s-1}[T]}
	\le\;&
		C(s)\varepsilon\left(\left\|\phi^{[n]}\right\|_{\Psi^s[T]} + \left\|\phi^{[n-1]}\right\|_{\Psi^s[T]}\right)
		\left\|\phi^{[n-1]}-\phi^{[n]}\right\|_{\Psi^{s-1}[T]}
	\\\le\;&
		C(s)\varepsilon\left(1+M\right)^5
		\left\|\phi^{[n-1]}-\phi^{[n]}\right\|_{U^2_{A_x^{[n-2]}}H^{s-1}[T]}
	\;.\end{split}\end{equation*}
	Choose $\varepsilon=\varepsilon(s,D)$ sufficiently small so that $C(s)\varepsilon(1+M)^5<\frac{1}{2}$ on the right-hand side.
	Then
	\[
		\left\|\phi^{[n]}-\phi^{[n+1]}\right\|_{U^2_{A_x^{[n-1]}}H^{s-1}[T]}
		\le
		\frac{1}{2}
		\left\|\phi^{[n-1]}-\phi^{[n]}\right\|_{U^2_{A_x^{[n-2]}}H^{s-1}[T]}
	\;.\]
	Now, Lemma \ref{ZLEM:OldLemma4.9Restated} gives the estimate
	\[
		\left\|\phi^{[n]}-\phi^{[n+1]}\right\|_{\Psi^{s-1}[T]}\le C(s)\left(1+M\right)^2\left\|\phi^{[n]}-\phi^{[n+1]}\right\|_{U^2_{A_x^{[n-1]}}H^{s-1}[T]}
	\;.\]
	Thus, $\{\phi^{[n]}\}_{n=1}^\infty$ is a Cauchy sequence in $\Psi^{s-1}[T]$ and hence in $L^\infty_tH^{s-1}[T]$.
	On the other hand, Lemma \ref{ZLEM:OldLemma4.9} also guarantees that $\{\phi^{[n]}\}_{n=1}^\infty$ is a bounded sequence in $L^\infty_tH^{\mathfrak{m}}[T]$.
	Due to (\ref{ZEQN:SobWtLem_EQN01}), we deduce that $\{\phi^{[n]}\}_{n=1}^\infty$ is also a Cauchy sequence in $L^\infty_tH^s[T]$, as claimed.
	
	Let $\phi$ be the limit of $\{\phi^{[n]}\}_{n=1}^\infty$ in $L^\infty_tH^s[T]$.
	By Lemma \ref{ZLEM:EnergyEstmt_N2x},
	\[
		A_x^{[n]} \rightarrow A_x := -\mbox{$\frac{1}{2}$}\mathcal{N}^2_x\left[\overline{\phi},\phi\right]
		\quad\mbox{ in }
		L^\infty_{t,x}[T]
	\;.\]
	Moreover, since $H^s$ controls the $L^4_x$ norm,
	\[
		\left\|\phi\right\|_{\Psi^s[T]} \le \liminf_{n\to\infty} \left\|\phi^{[n]}\right\|_{\Psi^s[T]}
	\;.\]
	In particular, $\|\phi\|_{\Psi^s[T]}\le C(s)(1+M)^3$.
	By our choice of $T=\delta_2(\varepsilon,s)(1+M)^{-28}$,
	Lemma \ref{ZLEM:DispEstmt_nablaN2x} guarantees that, $\|\nabla A_x\|_{L^1_tL^\infty_x[T]}\le 1$.
	Hence $A_x$ is an admissible form satisfying the hypothesis (\ref{ZHYP:nablaB}) and also (\ref{ZEQN:Impose_B}).
	
	Since $A_x^{[n]} \rightarrow A_x$ in $L^\infty_{t,x}[T]$ and $\phi^{[n]}\rightarrow\phi$ in $L^\infty_tH^s[T]$, we have
	\[
		\mathfrak{P}_{A_x^{[n-1]}}\phi^{[n]} \rightarrow \mathfrak{P}_{A_x}\phi
		\quad\mbox{ in }
		L^\infty_tH^{s-1}[T]
	\;.\]
	Since $\phi^{[n]}\rightarrow\phi$ in $\Psi^{s-1}[T]$, Lemmas
	\ref{ZLEM:DiffEstmt_Q}, \ref{ZLEM:DiffEstmt_N2tphi}, \ref{ZLEM:DiffEstmt_N4tphi}, \ref{ZLEM:DiffEstmt_N4xphi}, \ref{ZLEM:DiffEstmt_phi3}
	guarantee that
	\begin{equation*}\begin{split}
		\mathcal{Q}\left[\overline{\phi^{[n]}},\phi^{[n]},\phi^{[n]}\right]
	&\rightarrow
		\mathcal{Q}\left[\overline{\phi},\phi,\phi\right]
	\;,\\
		\mathcal{N}^2_0\left[\overline{\phi^{[n]}},\phi^{[n]}\right]\phi^{[n]}
	&\rightarrow
		\mathcal{N}^2_0\left[\overline{\phi},\phi\right]\phi
	\;,\\
		\mathcal{N}^4_t\left[\overline{\phi^{[n]}},\phi^{[n]},\overline{\phi^{[n]}},\phi^{[n]}\right]\phi^{[n]}
	&\rightarrow
		\mathcal{N}^4_t\left[\overline{\phi},\phi,\overline{\phi},\phi\right]\phi
	\;,\\
		\mathcal{N}^4_x\left[\overline{\phi^{[n]}},\phi^{[n]},\overline{\phi^{[n]}},\phi^{[n]}\right]\phi^{[n]}
	&\rightarrow
		\mathcal{N}^4_x\left[\overline{\phi},\phi,\overline{\phi},\phi\right]\phi
	\;,\\
		\left|\phi^{[n]}\right|^2\phi^{[n]}
	&\rightarrow
		\left|\phi\right|^2\phi
	\end{split}\end{equation*}
	in $\mathrm{D}U^2_{B^\dagger}H^{s-1}[T]$ for any admissible form $B^\dagger$.
	
	Hence, $\phi\in U^2_{A_x}H^{s-1}[T]$ is indeed a solution to the Chern-Simons-Schr\"odinger system in the Coulomb guage, (\ref{ZEQN:CSSCoulSuccinct2}).
	Furthermore, since $\phi\in\Psi^s[T]$,
	Lemmas \ref{ZLEM:MLEstmt_Q}, \ref{ZLEM:MLEstmt_N2tphi}, \ref{ZLEM:MLEstmt_N4tphi}, \ref{ZLEM:MLEstmt_N4xphi}, \ref{ZLEM:MLEstmt_phi3}
	guarantee that the right-hand side of (\ref{ZEQN:CSSCoulSuccinct2}) belongs to $\mathrm{D}U^2_{A_x}H^s[T]$.
	In particular, $\phi\in U^2_{A_x}H^s[T]$ and $\|\phi\|_{U^2_{A_x}H^s[T]}\le 2M$.
	
	This concludes the proof of the existence of solutions.

\subsection{Proof of Theorem \ref{ZTHM:MainThm}: Uniqueness of solutions, continuity of the solution map, regularity}
	
	The uniqueness of a solution, given initial data, is a consequence of the weak Lipschitz bound (\ref{ZEQN:WkLipBd}).
	In turn, the weak Lipschitz bound (\ref{ZEQN:WkLipBd}) is a straightforward consequence of Theorem \ref{ZTHM:Sect6MainThm}.
	Indeed, let $\varepsilon=\varepsilon(s,D)\in(0,1]$ a small constant (possibly smaller than the one chosen before) which we will choose later,
	and let $T=\delta_2(s,\varepsilon)(1+D)^{-28}$ be given by Theorem \ref{ZTHM:Sect6MainThm}.
	Then,
	for two solutions $(\phi,A_x)$ and $(\phi',A_x')$ to the Chern-Simons-Schr\"{o}dinger system (\ref{ZEQN:CSSCoulSuccinct2}) with $\phi(0),\phi'(0)\in\mathbb{B}_{H^s}(D)$,
	we have the estimate
	\[
		\left\|\phi-\phi'\right\|_{U^2_{A_x}H^{s-1}[T]} \le
		\varepsilon\left\|A_x-A_x'\right\|_{L^2_tL^\infty_x[T]}
		+
		\left\|\phi(0)-\phi'(0)\right\|_{H^{s-1}}
	\;.\]
	Arguing as before using Lemma \ref{ZLEM:DiffEstmt_N2x}, we have
	\[
		\left\|\phi-\phi'\right\|_{U^2_{A_x}H^{s-1}[T]} \le
		C(s)\varepsilon(1+D)^5\varepsilon\left\|\phi-\phi'\right\|_{U^2_{A_x}H^{s-1}[T]}
		+
		\left\|\phi(0)-\phi'(0)\right\|_{H^{s-1}}
	\;,\]
	so that, with $\varepsilon=\varepsilon(s,D)$ chosen sufficiently small, we obtain by Lemma \ref{ZLEM:OldLemma4.9},
	\[
		\left\|\phi-\phi'\right\|_{L^\infty_tH^{s-1}[T]}
		\le C(s)\left\|\phi-\phi'\right\|_{U^2_{A_x}H^{s-1}[T]}
		\le C(s)\left\|\phi(0)-\phi'(0)\right\|_{H^{s-1}}
	\;.\]
	This completes the proof of (\ref{ZEQN:WkLipBd}), which also implies the uniqueness statement for solutions.

	Next, we address the issue of the continuity of the solution map into $L^\infty_tH^s[T]$.
	Let $\phi^{\mathrm{in},[n]}$ be a sequence of initial data converging to $\phi^{\mathrm{in}}$ in $H^s$.
	By Lemma \ref{ZLEM:SobWtLem}, we may pick a Sobolev weight $\mathfrak{m}$ for
	$\mathcal{K}:=\{\phi^{\mathrm{in},[n]}\}_{n=1}^\infty\cup\{\phi^{\mathrm{in}}\}$.
	By the weak Lipschitz bound (\ref{ZEQN:WkLipBd}), the solutions $\phi^{[n]}$ converge to $\phi$ in $L^\infty_tH^{s-1}[T]$.
	On the other hand, $\{\phi^{[n]}\}_{n=1}^\infty$ is bounded in $L^\infty_tH^{\mathfrak{m}}[T]$.
	Hence, (\ref{ZEQN:SobWtLem_EQN02}) guarantees that $\{\phi^{[n]}\}_{n=1}^\infty$ is a Cauchy sequence in $L^\infty_tH^s[T]$.
	Thus, the solution map is continuous.
	
	It remains to prove the last statement and the norm growth estimate (\ref{ZEQN:NormGrowthEstmt}).
	By Theorem \ref{ZTHM:Sect5MainThm}, there exists $T_1=T_1(D_1)>0$ such that any $H^1$ solution $\phi$ to (\ref{ZEQN:CSSCoul})
	with $\|\phi(0)\|_{H^1}\le D_1$ exists up to $[0,T_1)$, and satisfies $\|\phi\|_{U^2_{A_x}H^1[T_1]}\le 2D_1$
	and thus, by Lemma \ref{ZLEM:OldLemma4.9Restated},
	\[
		\|\phi\|_{\Psi^1[T_1]}\lesssim (1+D_1)^2D_1
	\;.\]
	Moreover $A_x$ satisfies (\ref{ZEQN:Impose_B}) with $M=D_1$.
	If additionally $\phi\in C_{\mathrm{b}}([0,T_s),H^s)$ with $T_s\le T_1$,
	then using Lemmas \ref{ZLEM:MLEstmt_Q}, \ref{ZLEM:MLEstmt_N2tphi}, \ref{ZLEM:MLEstmt_N4tphi}, \ref{ZLEM:MLEstmt_N4xphi}, \ref{ZLEM:MLEstmt_phi3}
	respectively, we obtain
	\begin{equation*}\begin{split}
		\left\|\mathcal{Q}\left[\overline{\phi},\phi,\phi\right]\right\|_{\mathrm{D}U^2_{A_x}H^s[T_s]}
	\le\;&
		C(s)T_s^{\frac{1}{2}}\left(1+D_1\right)^6D_1^2\left\|\phi\right\|_{U^2_{A_x}H^s[T_s]}
	\;,\\
		\left\|\mathcal{N}^2_0\left[\overline{\phi},\phi\right]\phi\right\|_{\mathrm{D}U^2_{A_x}H^s[T_s]}
	\le\;&
		C(s)T_s^{\frac{1}{2}}\left(1+D_1\right)^8D_1^2\left\|\phi\right\|_{U^2_{A_x}H^s[T_s]}
	\;,\\
		\left\|\mathcal{N}^4_t\left[\overline{\phi},\phi,\overline{\phi},\phi\right]\phi\right\|_{\mathrm{D}U^2_{A_x}H^s[T_s]}
	\le\;&
		C(s)T_s^{\frac{1}{2}}\left(1+D_1\right)^{10}D_1^4\left\|\phi\right\|_{U^2_{A_x}H^s[T_s]}
	\;,\\
		\left\|\mathcal{N}^4_x\left[\overline{\phi},\phi,\overline{\phi},\phi\right]\phi\right\|_{\mathrm{D}U^2_{A_x}H^s[T_s]}
	\le\;&
		C(s)T_s^{\frac{1}{2}}\left(1+D_1\right)^{10}D_1^4\left\|\phi\right\|_{U^2_{A_x}H^s[T_s]}
	\;,\\
		\left\|\left|\phi\right|^2\phi\right\|_{\mathrm{D}U^2_{A_x}H^s[T_s]}
	\le\;&
		C(s)T_s^{\frac{1}{2}}\left(1+D_1\right)^8D_1^4\left\|\phi\right\|_{U^2_{A_x}H^s[T_s]}
	\;.\end{split}\end{equation*}
	Summing the above estimates, we conclude from Duhamel's formula that there exists a constant $C_0=C_0(s)>0$ such that
	\[
		\left\|\phi\right\|_{U^2_{A_x}H^s[T_s]}
		\le
		\left\|\phi(0)\right\|_{H^s}
		+
		C_0(s)T_s^{\frac{1}{2}}\left(1+D_1\right)^{14}\left\|\phi\right\|_{U^2_{A_x}H^s[T_s]}
	\;.\]
	Choose $T_\star=T_\star(s,D_1)$ such that $T_\star\le T_1$ and
	\[
		C_0(s)T_\star^{\frac{1}{2}}\left(1+D_1\right)^{14} \le \frac{1}{2}
	\;.\]
	Therefore, if $T_s\le T_\star$ we have $\|\phi\|_{U^2_{A_x}H^s[T_s]}\le 2\|\phi(0)\|_{H^s}$ and hence, by Lemma \ref{ZLEM:OldLemma4.9Restated},
	\[
		\left\|\phi\right\|_{L^\infty_tH^s[T_s]}\le C(s)\left(1+D_1\right)^2\left\|\phi(0)\right\|_{H^s} =: C_\star(s,D_1)\left\|\phi(0)\right\|_{H^s}
	\;.\]
	
	The proof of Theorem \ref{ZTHM:MainThm} is complete.
	
\subsection{Proof of Corollary \ref{ZCOR:MainThmCor}}
	By the Hardy-Littlewood-Sobolev inequality, we have
	\[
		\left\|A_x(t)\right\|_{L^4_x} \lesssim \left\|\left|\phi(t)\right|^2\right\|_{L^{\frac{4}{3}}_x}
		\lesssim \left\|\phi(t)\right\|_{L^2_x}\left\|\phi(t)\right\|_{L^4_x}
	\;.\]
	Therefore,
	\begin{equation*}\begin{split}
		\left\|\phi(t)\right\|_{H^1}^2
	\lesssim\;&
		\left\|\phi(t)\right\|_{L^2_x}^2 + \left\|\mathbf{D}_x\phi(t)\right\|_{L^2_x}^2
		+ \left\|A_x(t)\right\|_{L^4_x}^2\left\|\phi(t)\right\|_{L^4_x}^2
	\\\lesssim\;&
		\mathcal{M}(0) + \left\|\mathbf{D}_x\phi(t)\right\|_{L^2_x}^2 + \mathcal{M}(0)\left\|\phi(t)\right\|_{L^4_x}^4
	\;.\end{split}\end{equation*}
	
	If $\kappa>0$, then $\|\mathbf{D}_x\phi(t)\|_{L^2_x}^2\lesssim\mathcal{E}(0)$ and $\|\phi(t)\|_{L^4_x}^4\lesssim\mathcal{E}(0)$, so
	\[
		\left\|\phi(t)\right\|_{H^1}^2
		\lesssim
		\mathcal{M}(0) + \mathcal{E}(0) + \mathcal{M}(0)\mathcal{E}(0)
	\]
	as desired.
	
	On the other hand, if $\kappa\le 0$, then $\|\mathbf{D}_x\phi(t)\|_{L^2_x}^2\lesssim\mathcal{E}(0)+\|\phi(t)\|_{L^4_x}^4$.
	Now, note that the Gagliardo-Nirenberg interpolation inequality gives $\|\phi(t)\|_{L^4_x}^2\lesssim \|\phi(t)\|_{L^2_x}\|\phi(t)\|_{H^1}$.
	Thus,
	\[
		\left\|\phi(t)\right\|_{H^1}^2
		\lesssim
		\mathcal{M}(0) + \left(\mathcal{E}(0) + \mathcal{M}(0)\left\|\phi(t)\right\|_{H^1}^2\right) + \mathcal{M}(0)^2\left\|\phi(t)\right\|_{H^1}^2
	\;,\]
	and hence we see that if $\mathcal{M}(0)$ were small enough, then $\|\phi(t)\|_{H^1}\le C(\mathcal{M}(0),\mathcal{E}(0))$.
	
	This completes the proof of Corollary \ref{ZCOR:MainThmCor}.

\end{document}